\date{}
\def\theenumi{\arabic{enumi}}
\def\theenumii{\alph{enumii}}
\def\p@enumii{\theenumi.}
\def\theenumiii{\arabic{enumiii}}
\def\p@enumiii{(\theenumi)(\theenumii)}
\def\p@enumiv{\p@enumiii.\theenumiii}
\newtheorem{theorem}{Theorem}[section]
\newtheorem{corollary}[theorem]{Corollary}
\newtheorem{proposition}[theorem]{Proposition}
\newtheorem{lemma}[theorem]{Lemma}
\theoremstyle{definition}
\newtheorem{example}[theorem]{Example}
\newtheorem{notation}[theorem]{Notation}
\newtheorem{definition}[theorem]{Definition}
\newtheorem{remark}[theorem]{Remark}
\begin{document}

\title{Isoperimetric profiles and random walks on some 
permutation wreath products}
\author{Laurent Saloff-Coste\thanks{%
Partially supported by NSF grant DMS 1004771 and DMS 1404435} \\
{\small Department of Mathematics}\\
{\small Cornell University} \and Tianyi Zheng \\
{\small Department of Mathematics}\\
{\small Stanford University} }
\maketitle

\begin{abstract}
We study the isoperimetric profiles of certain families of finitely 
generated  groups defined via 
marked Schreier graphs and permutation wreath products. 
The groups we study are among the ``simplest'' examples within a 
much larger class of groups, all defined via marked Schreier graphs 
and/or action 
on rooted trees, which  
includes such examples as the long range group, Grigorchuck group and 
the basillica group.  The highly non-linear structure of these groups 
make them both interesting and difficult to study. 
Because of the 
relative simplicity of the Schreier graphs that define the groups we study here
(the key fact is that they contained very 
large regions that are ``one dimensional''), we are able to obtain sharp 
explicit bounds on the $L^1$ and $L^2$ isoperimetric profiles of these 
groups. As usual, 
these sharp isoperimetric profile estimates provide sharp bounds on the 
probability of return of simple random walk.
Nevertheless, within each of the families of groups we study there 
are also many cases 
for which the existing techniques appear inadequate and this leads to a variety 
of open problems.
\end{abstract}

\section{Introduction}\setcounter{equation}{0}

In the study of random walks on groups, some of the most basic and 
compelling questions are to understand what structural properties of 
a group impact the behavior of random walk and how this impact 
can be captured precisely. Naturally, this also leads 
to the question of describing
what random walk behaviors can possibly occur. 

To any finitely generated group, one can associate
the monotone non-increasing functions 
$$\Lambda_{1,G},\Lambda_{2,G} \mbox{ and } \Phi_G$$ which, respectively, 
describe the $L^1$- and $L^2$-isoperimetric profiles
and the return probability (or heat kernel decay) associated with 
the group $G$ (precise definitions are recalled below in Section
\ref{sec-isoandreturn}). 
From a coarse point of view explained in Section \ref{sec-isoandreturn}, 
these are group invariants in the 
sense that they do not depend on the particular choice of symmetric finite 
generating set that is used to define them. A
celebrated results of F\o lner and Kesten asserts that the 
dichotomy between amenable and non-amenable groups can be captured precisely 
using any one of these three invariants: A group is amenable if and only if 
$\Lambda_{1,G}$ (equivalently, $\Lambda_{2,G}$) is bounded below away from $0$,
and this is also equivalent to having $\Phi_G$ decay exponentially fast.

This paper focuses on these 
invariants and how they depend on the structure of the underlying 
group in the context of several interesting family of amenable groups. Let us 
stress that there are other related random walk characteristics such as 
entropy and speed that are of great interest but are not 
discussed here.  

To put this work in perspective, recall that among polycyclic groups or 
(almost equivalently) finitely generated 
discrete amenable subgroups of linear groups, the behaviors of 
$\Lambda_{1,G}$, $\Lambda_{2,G}$ and  $\Phi_G$ are well understood and fall 
in exactly 2 possible categories: 
\begin{itemize}
\item The group $G$ has exponential volume growth and
$$\Lambda_{1,G}(v)^2 \simeq \Lambda_{2,G}\simeq  \frac{1}{[\log (1+v)]^2}
\;\mbox{ and } \Phi_G(n)\simeq \exp(- n^{1/3}).$$ 
\item The volume growth $V_G$ satisfies $V_G(r)\simeq (1+r)^d$ for 
some integer $d$ and
$$\Lambda_{1,G}(v)^2 \simeq \Lambda_{2,G}(v)\simeq (1+v)^{-2/d} 
\;\mbox{ and } \Phi_G(n)\simeq (1+n)^{-d/2}.$$ 
\end{itemize} 
These can be considered as the ``classical'' behaviors. 
See \cite{Tessera} for the description of a larger class of groups 
for which only these behaviors can occur. 

By now it is well-known that, for more general groups, 
other behaviors can occur. See 
\cite{Erschler2006,Pittet2002,SCnotices}. For instance, 
the authors show in \cite{Saloff-Coste2013b} that the free solvable group 
$\mathbf S_{d,r}$ of derived length $d>2$ on $r$ generators satisfies
$$\Lambda_{1,\mathbf S_{d,r}}(v)^2 \simeq
\Lambda_{2,\mathbf S_{d,r}}(v)
\simeq \left(\frac{\log_d(v)}{\log_{d-1}(v)}\right)^{2/r}$$
and 
$$ \Phi_{\mathbf S_{d,r}}
(n)\simeq  \exp\left(-n \left(\frac{\log_{d-1}(n)}{\log_{d-2}(n)}\right)^{2/r}\right).$$
Here, $\log _d$ denotes the iterated logarithm, $d$-times.
See \cite{Saloff-Coste2013b} for the statement when $d=2$ in which case 
the formula for $\Phi_{\mathbf S_{d,r}}$ must be modified (the estimates for 
$\Lambda_{1,\mathbf S_{d,r}}$ and $\Lambda_{2,\mathbf S_{d,r}}$ remain valid as stated above).

Following the groundbreaking work of R. Grigorchuck regarding 
groups of intermediate volume growth and the many works that followed,
it has become apparent that it is important to
consider the case of subgroups of the automorphism 
group of a rooted tree as well as groups 
defined via explicit marked Schreier graphs 
(the word ``explicit'' is important here as any finitely generated 
group can be ``defined'' by its 
action on one of its marked Cayley graph).

In this paper we study a collections of examples of such groups and 
their associated  permutation wreath products. Most of the paper is devoted to two families. The first family has been considered in \cite{Kotowski2014}
where they are called ``bubble groups'' 
(see Section \ref{sec-bubble}). We call the other family 
``cyclic Neumann-Segal groups'' (see Section \ref{sec-NS}). It is part of  
a larger family considered by a number of different authors 
after some interesting properties were pointed out in  
\cite{Neumann1986}.
We obtain estimates on the $L^1$- and 
$L^2$-isoperimetric profiles of groups in these families. 
As is well-known, this yields assorted results for the probability of return 
of associated random walks. The examples we consider are, in a sense, 
among the simplest in the very large family alluded to above. 
The key feature that distinguishes these examples is that 
the Schreier graphs that are used to defined them have large one-dimensional 
pieces at all scales. This allow us to capture the isoperimetric profiles of 
the wreath extensions of some of these groups in a rather precise way.  
The resulting observed behaviors are diverse and quite interesting.

For example, 
we study a family  of groups depending on a continuous 
parameter $\gamma>1$ and for which we show that
$$\Phi_G(n)\simeq \exp
\left(- n 2^{-2((1+\gamma)\log_2(n))^{\gamma/(1+\gamma)}} \right).$$
See Remark \ref{rem-exaNS} (the coarse equivalence $\simeq$ is defined in \ref{sec-isoandreturn}).  We also obtain examples with
\begin{equation}\label{exagamma}
\Phi_G(n)\simeq\exp\left(-n^{\gamma}(\log n)^{1-\gamma}\right)
\end{equation}
for each $ \gamma\in [1/3,1)$ (see Example \ref{exa-bubb1} with 
$\gamma=\frac{\beta+1}{3\beta+1}$, $\beta\in (0,\infty)$) 
and 
$$\Phi_G(n) \simeq  \exp\left( -n^{1/3} (\log n)^{2(1+\kappa)/3}\right)$$
for each $\kappa>0$ (see Example \ref{exa-bubb2}).
We discuss a family of groups such that, for any $\gamma\in (1/3,1)$,
there is a group $G$ in the family for which 
\begin{equation} \label{osc} 
\exp\left(
-n^{\frac{\gamma+1}{3-\gamma}}  
\left(\log n \right)^{\frac{2(1-\gamma)}{3-\gamma}}  \right)
\lesssim 
\Phi_G (n) \lesssim 
\exp\left(- n^{\gamma} 
\left(\log n \right)^{1-\gamma}\right)
\end{equation}
with both extremes attained at certain times and a detailed coarse 
description of $\Phi_G$ available at all times. See \ref{exa-01} 
with $\gamma= \varkappa/(3\varkappa-2)$, $\varkappa\in (1,\infty)$.
These examples demonstrate the existence of 
a continuum of distinct (and explicit) behaviors and, for each, 
we obtain corresponding estimates for the 
functions $\Lambda_{1,G}$ and $\Lambda_{2,G}$.

The groups we study provided a host of additional behaviors. In particular,
they provides examples of pairs of groups $G_1,G_2$ for which the behaviors 
of the functions $\Lambda_{1,G_i},\Lambda_{2,G_i}$ and $\Phi_{G_i}$ can be 
described explicitly and such that the functions 
$\Lambda_{1,G_1}$ and $\Lambda_{1,G_2}$
are not comparable in the sense that
$$\liminf_{n\rightarrow \infty}\frac{\Lambda_{1,G_1}}{\Lambda_{1,G_2}}=0 
\mbox{ and }
\limsup_{n\rightarrow \infty}\frac{\Lambda_{1,G_1}}{\Lambda_{1,G_2}}=\infty,$$
with similar statement holding as well for $\Lambda_{2,G_i}$ and $\Phi_{G_i}$,
$i=1,2$.  Explicitly, fix $\gamma\in (1/3,1)$
and pick $G_1$ so that \ref{exagamma} holds. Pick $\gamma'\in (1/3,\gamma)$
so that 
$$\gamma'<\gamma < \frac{\gamma'+1}{3-\gamma'}= 
\gamma'+ \frac{(\gamma'-1)^2}{3-\gamma'},$$
(for instance $\gamma'=2/5<\gamma=1/2< 7/13$) and let $G_2$ be  such that
(\ref{osc}) holds with parameter $\gamma'$.  The various family of groups we study offer many further 
opportunities, which we do not pursue explicitly, 
to construct such examples.

The families we consider are rich enough that 
many intriguing questions remain open. For one thing, most of our sharp 
results concern wreath extensions 
$\mathbb Z\wr_{\mathcal S} \Gamma$ where $\Gamma$ is defined by its action on 
a mark Schreier graph $\mathcal S$ and, for the most part, 
understanding the isoperimetric profile of $\Gamma$ itself is an open question. 
Also, the groups in the two main families we consider 
depend on the choice of one (or two) infinite sequence(s) 
of integers and our analysis provides sharp results only in a certain 
parameter range.  This leaves much space for further studies.

This article is motivated by the works of a number of authors to whom 
we have borrowed results, problems and ideas. We make fundamental 
use of the work of A. Erschler on wreath products and adapt some of the ideas 
developed by K. Juschenko and her co-authors 
\cite{JBMS,Juschenko2013a,Juschenko2013}. 
The examples we consider (or some related cousins) 
have been studied before (with somewhat different viewpoints and focuses) 
in works 
including \cite{Bartholdigrowthtorsion,
Brieussel2011,FabGupta,Grigorchuk2000,Grigorchuk2011,
Neumann1986,Sidki2000}.

This article is organized as follows. Section \ref{sec-Prel} 
introduces definition and notation regarding  group actions, Schreier graphs,
groups of automorphisms of rooted trees and random walks. It contains the 
definitions of the $L^1$- and $L^2$-isoperimetric profiles and that of the 
random walk invariant $\Phi_G$. Section \ref{sec-low} provide techniques  
that produce lower bounds on $\Lambda_{1,G}$ and $\Lambda_{2,G}$. 
We make significant use of Erschler's wreath product isoperimetric inequality 
and of  related ideas developed in \cite{Saloff-Coste2014}.
By known techniques (e.g., \cite{CNash} and 
\cite[Section 2.1]{Saloff-Coste2014}), 
these translate into upper bounds on $\Phi_G$.

Section \ref{sec-UB} develops abstract considerations with the goal of 
providing upper bounds on the $L^2$-isoperimetric profiles and assorted 
lower bound for $\Phi_G$.  Regarding the $L^1$- and $L^2$-isoperimetric 
profiles, in all the cases where we obtain sharp bounds, 
it turns out that $\Lambda_{1,G}^2\simeq \Lambda_{2,G}$ (whether or not this 
is true in general is a well-known and important open question).  
Since we always have $\Lambda_{1,G}^{2}\lesssim \Lambda_{2,G}$, 
proving that  $\Lambda \lesssim \Lambda_{1,G}$ and 
$\Lambda_{2,G}\lesssim \Lambda^2$ for some function $\Lambda$ is
sufficient to prove that $\Lambda_{1,G}^2\simeq \Lambda_{2,G}\simeq \Lambda^2$.

Section \ref{sec-bubble} introduces the ``bubble group'' family.  
This is an uncountable family of groups parametrized by two integers 
sequences $(\mathbf a=(a_1,a_2,\dots)$ and $\mathbf b=(b_1,b_2,\dots)$.
We show that most of these groups have exponential volume growth 
(Lemma \ref{lem-bubexp}) and that 
they are amenable as long as $a_n$ tends to infinity
(Proposition \ref{pro-am}). The result of Section \ref{sec-low} apply readily 
but works is required to show how the 
abstract results of Section \ref{sec-UB} applies to these examples. 
When $\mathbf b$ is constant ($b_i=b>2$),
and $a_n$ is increasing fast enough, we obtain matching two-sided bounds on 
$\Lambda_{1,G},\Lambda_{2,G}$ and $\Phi_G$ where 
$G=\mathbb Z\wr _\mathcal S \Gamma_{\mathbf a,\mathbf b}$. Here $\Gamma_{\mathbf a,\mathbf b}$ is the bubble group associated with $\mathbf a ,\mathbf b$ and 
$\mathcal S$ is its defining Schreier graph.
See Theorem \ref{bubblebound}.

Section \ref{sec-NS} is devoted to a sub-family (cyclic Neumann-Segal groups) 
of the family of Neumann-Segal groups. The cyclic Neumann-Segal groups  
are parametrized by a sequence of even integers. To study the isoperimetric 
profile of these groups and their wreath extensions,  we apply the results
of Section \ref{sec-low}  (again, this is straightforward), and apply the 
result and ideas of Section \ref{sec-UB}.  In one particular case of interest, 
we are able to obtain sharp results not only not only for the wreath extension
$\mathbb Z\wr_{\mathcal S}\Gamma$ but also for the cyclic Neumann-Segal group 
$\Gamma$ itself. In fact, in this particular case, the groups $\Gamma$ and 
its wreath extension 
$\mathbb Z\wr _\mathcal S \Gamma$ have essentially the same behavior. See 
Theorem \ref{theo-NSgamma} and Remark \ref{rem-exaNS}.

\section{Preliminaries} \setcounter{equation}{0} \label{sec-Prel}
\subsection{Group actions}

A left action of a group $\Gamma$ on a space $X$ is a map 
$\varphi_{L}:\Gamma\times X \to X$ such that
$$\varphi_{L}(e_{\Gamma},x) =x \mbox{ and }
\varphi_{L}\left(g_{1}g_{2},x\right)  =\varphi_{L}\left(g_{1},\varphi_{L}\left(g_{2},x\right)\right).$$
Similarly a right action of $\Gamma$ on $X$ is a map 
$\varphi_{R}:X\times\Gamma \to X$ such that
$$\varphi_{R}(x,e_{\Gamma})  =x \mbox{ and }
\varphi_{R}\left(x,g_{1}g_{2}\right)  
=\varphi_{R}\left(\varphi_{R}\left(x,g_{1}\right),g_{2}\right).$$

In this article, it is natural to consider examples of both left and 
right actions, depending of the context. Given an action $\varphi_{*}$ of 
$\Gamma$ on $X$ where $*$ is either $L$ or $R$, we set
$$g\cdot x=\left\{\begin{array}{ll} \varphi_{L}(g,x) & \mbox{ if } *=L,\\    
\varphi_{R}(x,g^{-1}) & \mbox{ if } *=R.    
\end{array}\right.$$

Let 
$$\mathcal{P}_{f}(X)
=\oplus_{X}\mathbb{Z}_{2}$$ be the set of all
finite subsets of $X$. Any action of $\Gamma$ on $X$ extends to an action of $\Gamma$ on $\mathcal{P}_{f}(X)$.
If $f:X\to X$ is a function on $X$, and $g\in \Gamma$, 
we let $g\cdot f:X\to X$ be defined by
$$g\cdot f(x)=f(g^{-1}\cdot x).$$
It follows that
$\mbox{supp}(g\cdot f)=g\cdot\, \mbox{supp}(f)$. 

Note that it is not rare that a group is, in fact, described by its action
on a space $X$ (i.e., as a permutation group). We will encounter 
many such examples.

\begin{example}[The finite dihedral groups $D_{2n}$]\label{exa-finiteD} 
The following two figures define the dihedral group $D_{2n}$
of order $2n=20$ generated by two elements of order $2$, $s$ and $t$, with 
$(st)^{n}=e$. Figure \ref{D1} shows a marked Schreier graph on which 
the action of the group is faithful. It identifies the dihedral group as 
a particular subgroup of the symmetric group $S_{10}$ 
where the 10 objects that are permuted are the 
the vertices of the pictured line graph and the action of the generator 
$s$ and $t$ are described by the marked edges. In this particular case, since $s$ and $t$ are involutions, we do not need to indicate the edge orientation.  

Figure \ref{D2} gives the marked Cayley graph of the same group 
with the same generators $s,t$ and with the identity element $e$ identified.

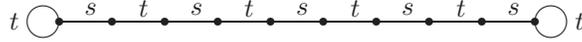
\begin{figure}[h]
\begin{center}\caption{Marked Schreier graph}\label{D1} 
\begin{picture}(200,30)(0,0) 
\put(10,10){\line(1,0){180}} 
\multiput(10,10)(20,0){10}{\circle*{3}}
\put(4,10){\circle{12}}\put(196,10){\circle{12}}
\put(-7,10){\makebox(0,0){$t$}}\put(207,10){\makebox(0,0){$t$}}
\multiput(22,15)(40,0){5}{\makebox(0,0){$s$}}
\multiput(42,15)(40,0){4}{\makebox(0,0){$t$}}
\end{picture}\end{center}\end{figure}

\begin{figure}[h]
\begin{center}\caption{Marked Cayley graph}\label{D2}
\begin{picture}(200,30)(0,0) 
\put(10,10){\line(1,0){180}} 
\multiput(10,10)(20,0){10}{\circle*{3}}

\put(10,-10){\line(1,0){180}} 
\multiput(10,-10)(20,0){10}{\circle*{3}}
\put(10,-10){\line(0,1){20}} 
\put(190,-10){\line(0,1){20}} 
\put(10,-10){\circle*{5}}
\put(4,0){\makebox(0,0){$t$}}\put(2,-15){\makebox(0,0){$e$}}
\put(194,0){\makebox(0,0){$t$}}

\multiput(22,-15)(40,0){5}{\makebox(0,0){$s$}}
\multiput(42,-15)(40,0){4}{\makebox(0,0){$t$}}
\multiput(22,15)(40,0){5}{\makebox(0,0){$s$}}
\multiput(42,15)(40,0){4}{\makebox(0,0){$t$}}
\end{picture}\end{center}\end{figure}
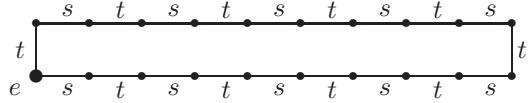

\begin{figure}[h]
\begin{center}\caption{A function $f$ and the function $(ts)\cdot f$}\label{D11} 
\begin{picture}(200,70)(0,0) 
\put(0,40){
\put(10,10){\line(1,0){180}} 
\multiput(10,10)(20,0){10}{\circle*{3}}
\put(4,10){\circle{12}}\put(196,10){\circle{12}}
\put(-7,10){\makebox(0,0){$t$}}\put(207,10){\makebox(0,0){$t$}}
\multiput(22,15)(40,0){5}{\makebox(0,0){$s$}}
\multiput(42,15)(40,0){4}{\makebox(0,0){$t$}}

{\color{red}
\put(10,18){\makebox(0,0){$0$}}
\put(30,18){\makebox(0,0){$5$}}
\multiput(50,18)(20,0){5}{\makebox(0,0){$0$}}
\multiput(170,18)(20,0){2}{\makebox(0,0){$0$}}
\put(150,18){\makebox(0,0){$2$}}}}

\put(10,10){\line(1,0){180}} 
\multiput(10,10)(20,0){10}{\circle*{3}}
\put(4,10){\circle{12}}\put(196,10){\circle{12}}
\put(-7,10){\makebox(0,0){$t$}}\put(207,10){\makebox(0,0){$t$}}
\multiput(22,15)(40,0){5}{\makebox(0,0){$s$}}
\multiput(42,15)(40,0){4}{\makebox(0,0){$t$}}

{\color{red}
\put(10,18){\makebox(0,0){$5$}}
\put(30,18){\makebox(0,0){$0$}}
\multiput(50,18)(20,0){3}{\makebox(0,0){$0$}}
\multiput(130,18)(20,0){4}{\makebox(0,0){$0$}}
\put(110,18){\makebox(0,0){$2$}}}

\end{picture}\end{center}\end{figure}
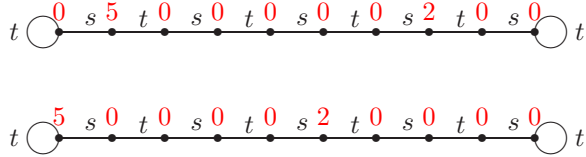

To understand how Picture \ref{D1} defines a subgroup of $\mathbb S_{10}$,
imagine the vertex of the graph as fixed boxes which contains the labels
$\{1,\dots,10\}$. The marking of the edges indicates how the group elements 
$s$ and $t$ each move the labels contained in the boxes, 
producing a permutation of the labels. A function $f$ on the Schreier graph
is really a function on the labels $\{1,\dots,10\}$ and it can be pictured 
by indicating the value of $f(x)$ above $x$ (where label $x$ is in box $x$).  
Suppose $f=\mathbf 1_{x_0}$ and $g\in \Gamma$. Then the function 
$g\cdot f $ is equal to $(g\cdot f)(x)=\mathbf 1_{x_0}(g^{-1}\cdot x)= 
\mathbf 1_{g\cdot x_0}(x)$. In words, to write down the picture describing 
the function $g\cdot f$, move the indicated values of $f$ along the 
Schreier graph according to the action of $g$. 
\end{example}

\subsection{Permutational wreath product}

Let $\Gamma$ be a finitely generated
group acting on a space $X$. Take a reference point $o$ in
$X$ and let $\mathcal{S}$ be the orbital Schreier graph of $o$ under
action of $\Gamma$. Given an auxiliary (finite or countable) group $H$,
the permutational wreath product $H\wr_{\mathcal{S}}\Gamma$
is the semidirect product 
$$H\wr_{\mathcal{S}}\Gamma = \left(\oplus_{\mathcal{S}}(H)_{x}\right)\rtimes\Gamma$$
where $\Gamma$ acts on the direct sum by permuting coordinates according
to the action $(g,x)\mapsto g \cdot x$ of $\Gamma $ on $\mathcal S$. 
More precisely, the multiplication rule is given
by 
\[
(f,g)(f',g')=(f [g\cdot f'],gg'),
\]
where $f,f'$ are functions $\mathcal{S}\rightarrow H$ with finite
support, $g,g'\in G$, and $(g\cdot f')(x)=f'(g^{-1}\cdot x)$ 
as defined earlier. 
Note that on the left-hand side of the equation displayed above, 
$(f,g)(f',g')$
indicates multiplication in $H\wr_{\mathcal{S}}\Gamma$ and that on the 
right-hand side, $f[g\cdot f']$ indicates multiplication of $f$ by $g\cdot f'$ in 
$\oplus_{\mathcal{S}}(H)_{x}$ whereas $gg'$ indicates multiplication 
in $\Gamma$.

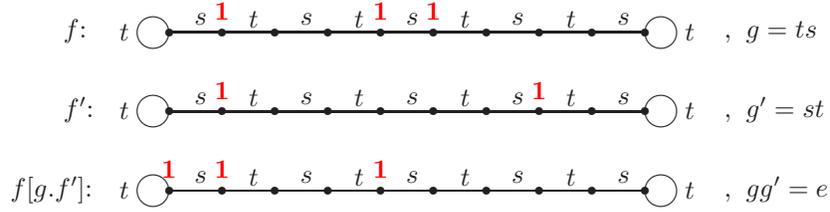
\begin{figure}[h]
\begin{center}\caption{Two elements $(f,g)$ and $(f',g')$ of 
$\mathbb Z_2\wr_{\mathcal S}\mathbb D_{20}
$ and their product} \label{D1*}  
\begin{picture}(200,90)(0,0) 

\put(-50,8){\makebox{$f[g.f']$:}}\put(220,8){\makebox{,\;\;$gg'=e$}}
\put(10,10){\line(1,0){180}} 
\multiput(10,10)(20,0){10}{\circle*{3}}
\put(4,10){\circle{12}}\put(196,10){\circle{12}}
\put(-7,10){\makebox(0,0){$t$}}\put(207,10){\makebox(0,0){$t$}}
\multiput(22,15)(40,0){5}{\makebox(0,0){$s$}}
\multiput(42,15)(40,0){4}{\makebox(0,0){$t$}}
{\color{red}
\put(10,18){\makebox(0,0){$\mathbf 1$}}
\put(30,18){\makebox(0,0){$\mathbf 1$}}
\put(90,18){\makebox(0,0){$\mathbf 1$}}}

\put(0,30){\put(-30,8){\makebox{$f'$:}}\put(220,8){\makebox{,\;\;$g'=st$}}
\put(10,10){\line(1,0){180}} 
\multiput(10,10)(20,0){10}{\circle*{3}}
\put(4,10){\circle{12}}\put(196,10){\circle{12}}
\put(-7,10){\makebox(0,0){$t$}}\put(207,10){\makebox(0,0){$t$}}
\multiput(22,15)(40,0){5}{\makebox(0,0){$s$}}
\multiput(42,15)(40,0){4}{\makebox(0,0){$t$}}
\color{red}{\put(30,18){\makebox(0,0){$\mathbf 1$}}
\put(150,18){\makebox(0,0){$\mathbf 1$}}}
}

\put(0,60){ \put(-30,8){\makebox{$f$:}}\put(220,8){\makebox{,\;\;$g=ts$}}
\put(10,10){\line(1,0){180}} 
\multiput(10,10)(20,0){10}{\circle*{3}}
\put(4,10){\circle{12}}\put(196,10){\circle{12}}
\put(-7,10){\makebox(0,0){$t$}}
\put(207,10){\makebox(0,0){$t$}}
\multiput(22,15)(40,0){5}{\makebox(0,0){$s$}}
\multiput(42,15)(40,0){4}{\makebox(0,0){$t$}}
{\color{red}\put(30,18){\makebox(0,0){$\mathbf 1$}}
\put(90,18){\makebox(0,0){$\mathbf 1$}}
\put(110,18){\makebox(0,0){$\mathbf 1$}}}
}

\end{picture}\end{center}\end{figure}

The ordinary wreath product
with ``lamp group'' $H$ and ``base group'' $\Gamma$ corresponds to the case
when $\Gamma$ acts on its own Cayley graph
by left multiplication. We write $H\wr\Gamma$ for this ordinary wreath
product. 

The groups $H$ and $\Gamma$ are naturally embedded in 
$H\wr_{\mathcal{S}}\Gamma$
via the injective maps
\[
h\mapsto\left(\boldsymbol{1}_{h}^{o},e_{\Gamma}\right),\ \mbox{where }\boldsymbol{1}_{h}^{o}(o)=h,\ \boldsymbol{1}_{k}^{o}(x)=e_{H}\ \mbox{if }x\neq o,
\]
and
\[
\gamma\mapsto\left(\mathbf{e}_{H}^{\mathcal{S}},\gamma\right),\ \mbox{where }\mathbf{e}_{H}^{\mathcal{S}}(x)=e_{H}\ \mbox{for every }x\in\mathcal{S}.
\]

Let $\mu_{H}$ and $\mu_{\Gamma}$ be symmetric probability measures
on $H$ and $\Gamma$ respectively. Using the above embeddings, $\mu_{H}$
and $\mu_{\Gamma}$ can be viewed as probability measures on 
$H\wr_{\mathcal{S}}\Gamma$ and we will often make this identification.
The measure 
\[
q=\mu_{H}\ast\mu_{\Gamma}\ast\mu_{H}
\]
on $H\wr_{\mathcal{S}}\Gamma$ 
is often referred to as the 
switch-walk-switch random walk where as 
$$\mathfrak q=\frac{1}{2}\left(\mu_{H}+\mu_{\Gamma}\right)$$
is known as the the switch-or-walk measure on $H\wr_{\mathcal{S}}\Gamma$.
  
The random walk driven by $q$ on $H\wr_{\mathcal{S}}\Gamma$
shares some similarities with the switch-walk-switch random walk on
ordinary wreath products. But there are some important differences.
On the ordinary wreath product, if we consider random walk on the
right, that is $\left\{ X_{1}X_{2}...X_{n}\right\} _{n}$ where $X_{i}$
are i.i.d. random variables distributed as $\mathfrak q$, the random walk can
be understood in terms of a walker on the Cayley graph of $\Gamma$ 
who changes the lamp-configuration along its path.  
On the permutation wreath product $H\wr_{\mathcal{S}}\Gamma$ based on the 
Schreier graph $\mathcal{S}$, the switches
happen along the inverted orbit. This creates much difficulty in analyzing
the behavior of such random walks in the general case. 

In what follows, we are going to consider random
walk on the left, that is $S_{n}=X_{n}X_{n-1}...X_{1}$ and much attention
will be paid to translation of the support of the lamp configuration,
rather than the inverted orbits directly. We will mostly work with 
the measure $\mathfrak q$.

\subsection{Groups acting on trees, activity} \label{sec-tree}

Let $\bar{d}=(d_{j})_{j\ge1}$ be a sequence of integers
$d_{j}\ge2$. The spherical homogeneous rooted tree $\mathbb{T}_{\bar{d}}$
is the tree where each vertex at level $j$ has $d_{j+1}$ children
in level $j+1$. The tree has a root at level $0$, which is denoted
by the empty sequence $\emptyset$. A vertex at level $k$ is encoded
by word $v=x_{1}...x_{k-1}x_{k}$, where $x_{i}$ is a letter in the
alphabet $\{0,...,d_{i}-1\}$. Here a word is read from left to right.
The integer $k$ is called the depth or level of the vertex $v$. We set  
$|v|=k$ and
let $\mathbb{T}_{\bar{d}}^{k}$ denote the set of vertices in level
$k$ of the tree $\mathbb{T}_{\bar{d}}$.

The group $\mbox{Aut}(\mathbb{T}_{\bar{d}})$ is the group of automorphisms
of $\mathbb{T}_{\bar{d}}$ fixing the root $\emptyset$. It is uncountable
and is often equipped 
with the topology of convergence on finite sets which turns it into a locally 
compact group. 
For $v\in\mathbb{T}_{\bar{d}}$,
consider the subtree $\mathbb{T}_{v,\bar{d}}$ of $\mathbb{T}_{\bar{d}}$
rooted at $v$. If $v$ is at level $k$, then this subtree is isomorphic
to the spherical homogeneous rooted tree 
$\mathbb{T}_{\text{\ensuremath{\tau}}^{k}\bar{d}}$,
where $\tau$ denotes the shift operator 
\[
\tau(m_{1},m_{2},...)=(m_{2},m_{3},...).
\]
The automorphism group $\mbox{Aut}(\mathbb{T}_{\bar{d}})$ admits
the following canonical description called ``wreath recursion'', 
\[
\mbox{Aut}(\mathbb{T}_{\bar{d}})
\simeq\mbox{Aut}(\mathbb{T}_{\tau\bar{d}})\wr_{\mathbb{T}_{\bar{d}}^{1}}S_{d_{1}},
\]

\[
g\mapsto(g_{0},...,g_{d_{1}-1})\sigma.
\]
Each $g_{i}$ is called the \emph{section }(or \emph{restriction})
of $g$ at vertex $i$, it gives the action of $g$ on the subtree
rooted at $i$. The rooted component $\sigma$ describes how these
subtrees are permuted. Given a word $v=x_{1}...x_{k}$, the right
action is defined recursively by  
\[
\varphi_{R}\left(x_{1}...x_{k},g\right)= \sigma\left(x_{1}\right)
\varphi_R(x_{2}\dots x_{k},g_{x_{1}}).
\]

One can also write the wreath recursion at level $k$ to have a
canonical isomorphism 
\[
\mbox{Aut}\left(\mathbb{T}_{\bar{d}}\right)\simeq\mbox{Aut}\left(\mathbb{T}_{\tau^{k}\bar{d}}\right)\wr_{\mathbb{T}_{\bar{d}}^{k}}\mbox{Aut}\left(\mathbb{T}_{\bar{d}}^{k}\right),
\]
\[
g\mapsto\left(g^k_{v},v\in\mathbb{T}_{\bar{d}}^{k}\right)\sigma^k.
\]
where the subscript $k$ indicates the level (not a power). In most case, 
the subscript $k$ is omitted because it is clear from the context that the 
decomposition is done at level $k$. Each $g_{v}=g_v^k$ 
is called the \emph{section }(or \emph{restriction})
of $g$ at vertex $v$, it describes the action of $g$ on the subtree
rooted at $v$. The permutation $\sigma=\sigma^k$ describes how vertices on
level $k$ are permuted.

Since the isomorphisms are canonical, we identify $g$ with its image
under the wreath recursions and, given a level $k$,  write 
\[
g=\left(g_{v},v\in\mathbb{T}_{\bar{d}}^{k}\right)\sigma.
\]

For every $g\in\mbox{Aut}(\mathbb{T}_{\bar{d}})$ the \emph{activity
function} $a_{g}(k)$ defined in \cite[Section 2.4]{Sidki2000} 
counts the number of nontrivial
sections $g_{v}$ at each level $k$, that is 
\[
a_{g}(k)=\#\left\{ v\in\mathbb{T}_{\bar{d}}^{k}:\ g_{v}\neq e\right\} .
\]

We say $G<\mbox{Aut}(\mathbb{T}_{\bar{d}})$, is a group of bounded
activity if for every element $g\in G$, $\sup_{k}a_{g}(k)<\infty$.
When $G$ is finitely generated, it's sufficient to check for each
generator of $G$ if it's activity growth is bounded.

\begin{example}\label{exa-Dinfty}
Figures \ref{D3}--\ref{D5} describe (a) the Cayley graph of the infinite 
dihedral group $D_\infty=<s,t: s^2=t^2=e>$,
(b) a marked Schreier graph that can be used to define $D_\infty$, 
and (c) the generator $t$ as an element 
of the automorphism group of the rooted binary tree
$\mbox{Aut}(\mathbb{T}_{\overline{2}})$. Here we set 
$s(x_1x_2\dots x_k)
=\overline{x_1}x_2\dots x_k$ with $\overline{x_1}=x_1+1 \mod 2$, that is
$s=(e,e)\tau$ where $\tau$ is the transposition at level 1, and
$t=(t,s) 1$ where $1$ stands the identity permutation at level $1$.  
Obviously, the definition of $t$ is recursive. If $x=x_1\dots x_k$
with $j$ being the first index such that $x_j=1$ then 
$t(x)=x_1\dots x_j \overline{x_{j+1}},\dots,x_k$.
The activity $a_s(k)$ vanishes for $k>1$. The activity $a_t(k)=1$ for all $k$. 
The very notable difference between $s$ and $t$ viewed as 
automorphisms of the tree is an artifact of this representation. 

Note that $t$ leaves invariant the end $o=0^\infty$ 
of the tree which 
corresponds to the left most vertex in the Schreier graph depicted on 
Figure \ref{D4}. 
In order to understand Figure \ref{D4} in terms of the binary tree and 
Figure \ref{D5}, one simply consider the orbit of $o=0^\infty$ under 
$D_\infty=<s,t>\subset \mbox{Aut}(\mathbb{T}_{\overline{2}})$.

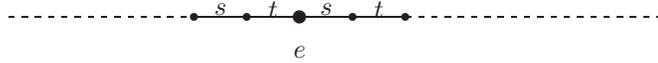
\begin{figure}[h]
\begin{center}\caption{Marked Cayley graph for $D_\infty
=<s,t: s^2=t^2=e>$}\label{D3}
\begin{picture}(250,30)(0,0) 
\put(80,10){\line(1,0){80}}
 \multiput(10,10)(5,0){50}{\line(1,0){2}}
\multiput(80,10)(20,0){5}{\circle*{3}}
\put(90,13){\makebox(0,0){$s$}}\put(130,13){\makebox(0,0){$s$}}
\put(110,13){\makebox(0,0){$t$}}\put(150,13){\makebox(0,0){$t$}}
\put(120,-3){\makebox(0,0){$e$}}\put(120,10){\circle*{5}}

\end{picture}\end{center}\end{figure}

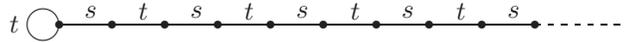
\begin{figure}[h]
\begin{center}\caption{Marked Schreier graph for $D_\infty$}\label{D4} 
\begin{picture}(200,30)(0,0) 
\put(10,10){\line(1,0){180}} 
\multiput(10,10)(20,0){10}{\circle*{3}}
\multiput(30,10)(5,0){40}{\line(1,0){2}}
\put(4,10){\circle{12}}
\put(-7,10){\makebox(0,0){$t$}}
\multiput(22,15)(40,0){5}{\makebox(0,0){$s$}}
\multiput(42,15)(40,0){4}{\makebox(0,0){$t$}}
\end{picture}\end{center}\end{figure}

\begin{figure}[t]
\begin{center}\caption{The generator $t$ viewed as an element of 
$\mbox{Aut}(\mathbb T_{\overline{2}})$}\label{D5} 
\begin{picture}(300,100)(-60,0) 
 \put(100,90){\circle*{3}}
\put(100,90){\line(4,-1){80}}\put(100,90){\line(-4,-1){80}}

\put(20,70){\circle*{3}}
\put(20,70){\line(2,-1){40}}\put(20,70){\line(-2,-1){40}}

\put(180,70){\circle*{3}}
\put(180,70){\line(2,-1){40}}\put(180,70){\line(-2,-1){40}}

\put(-20,50){\circle*{3}}
\put(-20,50){\line(1,-1){20}}\put(-20,50){\line(-1,-1){20}}
\put(-40,30){\circle*{3}}
\put(0,30){\circle*{3}}
\put(-40,30){\line(1,-2){10}}\put(-40,30){\line(-1,-2){10}}
\put(-50,10){\circle*{3}}\put(-30,10){\circle*{3}}
\multiput(-50,10)(0,-5){4}{\line(0,-1){2}}
\multiput(-30,10)(0,-5){4}{\line(0,-1){2}}
\put(-0,30){\line(1,-2){10}}\put(0,30){\line(-1,-2){10}}
\put(-10,10){\circle*{3}}\put(10,10){\circle*{3}}
\multiput(-10,10)(0,-5){4}{\line(0,-1){2}}
\multiput(10,10)(0,-5){4}{\line(0,-1){2}}

\put(80,0){\put(-20,50){\circle*{3}}
\put(-20,50){\line(1,-1){20}}\put(-20,50){\line(-1,-1){20}}
\put(-40,30){\circle*{3}}
\put(0,30){\circle*{3}}
\put(-40,30){\line(1,-2){10}}\put(-40,30){\line(-1,-2){10}}
\put(-50,10){\circle*{3}}\put(-30,10){\circle*{3}}
\multiput(-50,10)(0,-5){4}{\line(0,-1){2}}
\multiput(-30,10)(0,-5){4}{\line(0,-1){2}}
\put(-0,30){\line(1,-2){10}}\put(0,30){\line(-1,-2){10}}
\put(-10,10){\circle*{3}}\put(10,10){\circle*{3}}
\multiput(-10,10)(0,-5){4}{\line(0,-1){2}}
\multiput(10,10)(0,-5){4}{\line(0,-1){2}}
}

\put(160,0){\put(-20,50){\circle*{3}}
\put(-20,50){\line(1,-1){20}}\put(-20,50){\line(-1,-1){20}}
\put(-40,30){\circle*{3}}
\put(0,30){\circle*{3}}
\put(-40,30){\line(1,-2){10}}\put(-40,30){\line(-1,-2){10}}
\put(-50,10){\circle*{3}}\put(-30,10){\circle*{3}}
\multiput(-50,10)(0,-5){4}{\line(0,-1){2}}
\multiput(-30,10)(0,-5){4}{\line(0,-1){2}}
\put(-0,30){\line(1,-2){10}}\put(0,30){\line(-1,-2){10}}
\put(-10,10){\circle*{3}}\put(10,10){\circle*{3}}
\multiput(-10,10)(0,-5){4}{\line(0,-1){2}}
\multiput(10,10)(0,-5){4}{\line(0,-1){2}}}

\put(240,0){\put(-20,50){\circle*{3}}
\put(-20,50){\line(1,-1){20}}\put(-20,50){\line(-1,-1){20}}
\put(-40,30){\circle*{3}}
\put(0,30){\circle*{3}}
\put(-40,30){\line(1,-2){10}}\put(-40,30){\line(-1,-2){10}}
\put(-50,10){\circle*{3}}\put(-30,10){\circle*{3}}
\multiput(-50,10)(0,-5){4}{\line(0,-1){2}}
\multiput(-30,10)(0,-5){4}{\line(0,-1){2}}
\put(-0,30){\line(1,-2){10}}\put(0,30){\line(-1,-2){10}}
\put(-10,10){\circle*{3}}\put(10,10){\circle*{3}}
\multiput(-10,10)(0,-5){4}{\line(0,-1){2}}
\multiput(10,10)(0,-5){4}{\line(0,-1){2}}
}

{\color{blue}
\put(180,50){\vector(1,0){20}}\put(180,50){\vector(-1,0){20}}
\put(60,30){\vector(1,0){10}}\put(60,30){\vector(-1,0){10}}
\put(0,10){\vector(1,0){5}}\put(0,10){\vector(-1,0){5}}

\put(180,55){\makebox(0,0){$\tau$}}
\put(60,35){\makebox(0,0){$\tau$}}
\put(0,15){\makebox(0,0){$\tau$}}}
\end{picture}\end{center}\end{figure}
\end{example}

The stabilizers and rigid stabilizers are very important in the study
of branch groups (\cite{Grigorchuk2000,Grigorchuk2011}).
Let $\Gamma $ be a subgroup of $\mbox{Aut}(\mathbb{T}_{\bar{d}})$.  
Given a vertex $u\in\mathbb{T}_{\bar{d}}$, the stabilizer
of $u$ in $\Gamma$ is the subgroup 
\[
\mbox{St}_{\Gamma}(u)=\left\{ g\in\Gamma:\ g\cdot u=u\right\} 
\]
of $\mathbb{T}_{\bar{d}}$.  The subgroup
\[
\mbox{St}_{\Gamma}(k)=\cap_{u\in\mathbb{T}_{\bar{d}^k}}\mbox{St}_{\Gamma}(u)
\]
 is called the level $k$ stabilizer. The rigid vertex stabilizer
is 
\[
\mbox{rist}_{\Gamma}(u)=\left\{ g\in\Gamma:\ g\cdot v=v\ \mbox{for all }
v\notin\mathbb{T}_{u,\bar{d}}\right\} .
\]
The level $k$ rigid stabilizer is the direct product 
\[
\mbox{rist}_{\Gamma}(k)=\prod_{u\in\mathbb{T}_{\bar{d}}^{k}}\mbox{rist}_{\Gamma}(u).
\]

\subsection{$L^{2}$-isoperimetric profile and return probability}
\label{sec-isoandreturn}

Given a probability measure $\phi$ on a group $G$, 
let $(S^l_n)_0^\infty$ (resp, $(S^r_n)_0^\infty$)
denotes the trajectory of the left (resp. right) random walk driven by $\phi$ 
(often started at the identity element $e$).  More precisely, if 
$(X_n)_1^\infty$ are independent identically distributed $G$-valued 
random variables with law $\phi$, then
$$S_n^l= X_n\dots X_1X_0  \;\;(\mbox{ resp. }  S_n^r=X_0X_1\dots X_n).$$
Let $\mathbf P^x_{*,\phi},\; *=l \mbox{ or } r$ be the associated measure 
on $G^\mathbb N$ with $X_0=x$  and 
$\mathbf E^x _{*,\phi}$ the corresponding expectation 
$\mathbf E^x_{*, \phi}(F)=\int_{G^{\mathbf N}}
F(\omega)d\mathbf P^x_{*,\phi}(\omega)$. In particular,
$$\mathbf P^e_{*,\phi}(S_n=x)=
\mathbf E^e_{*,\phi}(\mathbf 1_{x}(S_n))=\phi^{(n)}(x).$$
In this work, we find it convenient to work (mostly, but not always) 
with the left version of the 
random walk and we will drop the subscript $l$ in the notation introduced above 
unless we need to emphasize the differences between left and right.
Observe that the random walk on the left is a right-invariant process
since $(X_n\dots X_0)y=X_n\dots (X_0y)$. 
When the measure $\phi$ is symmetric in the sense that 
$\phi(x)=\phi(x^{-1})$ for all $x\in G$, its Dirichlet form is defined by
$$\mathcal E_{\phi}(f,f)=\mathcal E_{G,\phi}(f,f)
=\frac{1}{2}\sum_{x,y\in G}|f(yx)-f(x)|^2\phi(y).$$
This is the Dirichlet form associated with random walk on the left, 
$\mathcal E_\phi^l=\mathcal E_\phi$ and $\mathcal E^r_\phi$ is defined similarly.

Given two functions $f_1,f_2$ taking real values but defined on an arbitrary 
domain (not necessarily a subset of $\mathbb R$), we write  $f\asymp g$
to signify that there are constants $c_1,c_2\in (0,\infty)$ such that  
$c_1f_1\le f_2\le c_2f_1$.
Given two monotone real functions $f_1,f_2$, write $f_1\simeq f_2$ 
if there exists $c_i\in (0,\infty)$ such that 
$c_1f_1(c_2t)\le f_2(t)\le c_3f_1(c_4t)$ on the domain of definition of 
$f_1,f_2$ (usually, $f_1,f_2$ will be defined on a neighborhood of $0$ or 
infinity and tend to $0$ or infinity at either $0$ or infinity. In some cases, 
one or both functions are defined only on a countable set such as $\mathbb N$). 
We denote the associated order by $\lesssim$.
Note that the equivalence relation $\simeq$ distinguishes between power 
functions of different degrees 
and between stretched exponentials $\exp(-t^\alpha)$ 
of different exponent $\alpha>0$ but does not distinguishes between 
different rates of exponential growth or decay.  In this paper, we will 
consider functions of the type $\exp(-n/\omega(n))$ where $\omega$
is slowly variant at infinity so that $\omega(ct)\sim \omega(t)$ 
for any fixed $c>0$ and $t$ tending to infinity.

Our main interest in this paper concerns the random walk group invariant
$\Phi_G$, a positive decreasing function defined on $[0,\infty)$ 
up to the equivalence relation $\simeq$ and  
which, according to \cite{PSCstab} 
describes the probability of return of any random walk on the 
group $G$ driven by a measure $\phi$ that is symmetric, 
as generating support, and a finite second moment with respect 
to a fixed word metric on $G$.  
Namely, for any finitely generated group $G$ and any measure $\phi$ 
as just described,
$$\forall\, n=1,2,\dots,\;\;\phi^{(2n)}(e)=\mathbf P_\phi^e(S_{2n}=e)
\simeq \Phi_G(n).$$ 

Given a symmetric probability 
measure $\phi$, set
$$\Lambda_{2,G,\phi}(v)=\Lambda_{2,\phi}(v)=\inf\{ 
\lambda_\phi(\Omega): \Omega\subset G,\; |\Omega|\le v\}$$
where
\begin{equation}\label{def-eig}
\lambda_\phi(\Omega)=
\inf\{ \mathcal E_{\phi}(f,f): \mbox{support}(f)\subset \Omega, \|f\|_2=1\}.
\end{equation}
In words, $\lambda_\phi(\Omega)$ is the lowest eigenvalue of 
the operator of convolution by $\delta_e-\phi$ with Dirichlet boundary
condition in $\Omega$. This operator is associated with the discrete 
time Markov process corresponding to the $\phi$-random walk killed 
outside $\Omega$.  The function $v\mapsto \Lambda_{2,\phi}(v)$ is called the 
$L^2$-isoperimetric profile or spectral profile of $\phi$ 
(it really is an iso-volumic profile). The $L^2$-isoperimetric profile 
of a group $G$ is defined as the 
$\simeq$-equivalence class $\Lambda_G$ of the functions
$\Lambda_\phi$ associated to any symmetric probability measure 
$\phi$ with finite generating support.

The $L^2$-isoperimetric profile $\Lambda_{2,\phi}$
is related to the analogous $L^1$-profile 
\begin{eqnarray*}
\Lambda_{1,G,\phi}(v)&=&\Lambda_{1,\phi}(v)\\
&=&\inf\left\{\frac{1}{2}\sum_{x,y}|f(yx)-f(x)|\phi(y): 
|\mbox{support}(f)|\le v,\;\;\|f\|_1=1\right\}.\end{eqnarray*}
Using an appropriate discrete co-area formula, 
$\Lambda_{1,\phi}$ can equivalently be defined by
$$\Lambda_{1,\phi}(v)= \inf
\left\{|\Omega|^{-1}\sum_{x,y}\mathbf 1_\Omega(x)
\mathbf 1_{G\setminus \Omega}(xy)\phi(y): 
|\Omega|\le v\right\}.$$
If we define the boundary of $\Omega$ to be the set   
$$\partial\Omega=\left\{(x,y)\in G\times G: x\in \Omega, y\in G\setminus \Omega
\right\}$$ and set
$\phi(\partial \Omega)= \sum_{x\in \Omega,xy\in G\setminus \Omega}\phi(y)$
then $\Lambda_{1,\phi}(v)= \inf\{\phi(\partial\Omega)/|\Omega|: |\Omega|\le v\}$.
It is well-known that
\begin{equation} \label{Cheeger}
\frac{1}{2}\Lambda^2_{1,\phi}\le \Lambda_{2,\phi}\le \Lambda_{1,\phi}.
\end{equation}

Recall that the F\o lner function $\mbox{F\o l}_{G,\phi}
$ can be defined by
$$\mbox{F\o l}_{G,\phi}(t)=\inf\{ v: \Lambda_{1,\phi}(v)\le  1/t\}$$
so that $\mbox{F\o l}_{G,\phi}(t)
=\Lambda_{1,\phi}^{-1}(1/t)$ (i.e., $\mbox{F\o l}_{G,\phi}$ is
the right-continuous inverse of the non-decreasing function $\Lambda_{1,\phi}$).

\begin{notation} By elementary comparison arguments, for any two symmetric 
finitely supported probability measure $\phi_1,\phi_2$ with generating support on a group $G$,
we have 
$$\Lambda_{1,G,\phi_1}\simeq \Lambda_{1,G,\phi_2}
\mbox{ and } \Lambda_{2,G,\phi_1}\simeq \Lambda_{2,G,\phi_2}.$$
For this reason we often denote by
$$ \Lambda_{1,G}\;\; (\mbox{ resp. }\; \Lambda_{2,G})$$
the $\simeq$-equivalence class of $\Lambda_{1,G,\phi_1}$ (resp. $\Lambda_{1,G,\phi_2}$) with $\phi$ as above. By abuse of notation, we sometimes write $$\Lambda_{p,G}=\Lambda_{p,G,\phi_1}$$
or understand $\Lambda_{p,G}$ as standing for a fixed representative. 
\end{notation}

\begin{remark} \label{enlarge}
In the definition of $\Lambda_{p,G,\phi}$ (here, $p=1,2$), 
it is not required that $\phi$ generates $G$. In particular, if 
$G_1$ is a subgroup of a group $G_2$ and $\phi$ 
is a symmetric measure supported on $G_1$
then we can consider $\Lambda_{p,G_i,\phi}$ for $i=1,2$. Simple considerations
imply that $\Lambda_{p,G_1,\phi}=\Lambda_{p,G_2,\phi}$. In some instance, 
it might nevertheless be much easier to estimate $\Lambda_{p,G_2,\phi}$ than 
$\Lambda_{p,G_1,\phi}$ directly.  If $\phi$ is finitely supported and
$G_2$ is finitely generated then a simple comparison argument yields 
$\Lambda_{p,G_1,\phi}
\le C(\phi,G_1,G_2) \Lambda_{p,G_2}$.
\end{remark}

\section{A comparison lower bound for $\Lambda$} \setcounter{equation}{0}
\label{sec-low}

Erschler's isoperimetric inequality \cite{Erschleriso} provides a 
lower bound on the isoperimetric profile $\Lambda_{1,G,\mu}$ 
when $G$ is an ordinary 
wreath product and the measure $\mu$ is well adapted to that structure.
See also 
\cite{Saloff-Coste2014} for corresponding results for $\Lambda_{2,G, \mu}$.
It is an important
tool which provides an upper bound on the return probability of the random walk
driven by $\mu$.  However the method does
not work directly on permutational wreath products. Here we develop
a simple but flexible method based on comparison with product
Markov chains. The lower bound on $\Lambda$ obtained in this way is
not always sharp, but it still provides useful information in
many interesting examples of permutation wreath products, 
see explicit estimates in Sections \ref{sec:Explicit-estimates} and 
\ref{sec-NS}.

We will need the following result regarding the isoperimetric profile of product
chains. The statement is an easily consequence of 
Erschler's isoperimetric inequality on wreath product of Markov chains. 
Here, for simplicity, we only consider the case needed for our purpose.

Let $X$ be a finite product $X=\prod_{i\in I}H_{i}$, where $I$ is a finite 
index set and each $H_i$ is a copy a given group $H$. On $H$, fix a symmetric 
probability measure $\eta$. For $i\in I$, let $\eta_i$ be the probability
measure on $X$ defined by $\eta_i(x)=\eta(x_i)$ if  $x=(x_j)_{j\in I}$
with $x_j=e_H$ for $j\neq i$, and $\eta(x)=0$ otherwise.

Let $\zeta$ be the probability measure 
\begin{equation}\label{zetaprod}
\zeta =\frac{1}{|I|}\sum_{i\in I}\eta_i. 
\end{equation}

\begin{proposition}
\label{product} There exists a constant $C_1$ such that, 
for any finite index set $I$, group $H$ and symmetric probability measure $\eta$ as above, 
and for any positive reals $v_0,s$ satisfying $s\in(0, 1/2]$ and
\[ \Lambda_{1,H,\eta}(v_0) >  s\]
the isoperimetric profile function $\Lambda_{1,X,\zeta}$ satisfies 
\[ \Lambda_{1,X,\zeta}(v)\ge s/C_1 \mbox{ for any } v\le C_1^{-1}
\left(v_0\right)^{|I|/C_1}.
\]
 \end{proposition}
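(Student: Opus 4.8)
The plan is to realize the product chain $(X,\zeta)$ as (a quotient/subchain of) a wreath product of Markov chains in Erschler's sense, and then invoke her isoperimetric inequality for wreath products directly. Concretely, one views $X=\prod_{i\in I}H_i$ together with the "base space" $I$: we are in the situation of the wreath product $H\wr_{I}\Gamma$ where $\Gamma$ acts trivially, or better, we think of $\zeta$ as the switch-or-walk measure on the lamplighter-type chain in which the "base walk" is the trivial walk on the single-vertex quotient but the "lamp chains" are copies of $(H,\eta)$ indexed by $I$. Since the base walk here is trivial, Erschler's inequality degenerates into a statement purely about the lamps, which is exactly what we want: the isoperimetric profile of $(\prod_{i\in I}H_i,\zeta)$ is controlled below by that of a single factor $(H,\eta)$, with the volume entering as a product over $|I|$ coordinates.

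**Key steps.** First I would recall Erschler's wreath product isoperimetric inequality in the Markov chain formulation (as in \cite{Erschleriso} and the $L^2$ version in \cite{Saloff-Coste2014}), stated so that for the wreath product of chains with base $(B,\beta)$ and fiber $(H,\eta)$, one has a lower bound of the form
\[
\Lambda_{1}(v)\ \gtrsim\ \min\Big\{\Lambda_{1,B,\beta}(\cdot),\ \Lambda_{1,H,\eta}\big(c\,v^{1/|B|}\big)\Big\}
\]
for appropriate arguments; specializing $B=I$ with the trivial (or holonomy-free) base dynamics kills the first term. Second, I would match $\zeta$ as defined in \eqref{zetaprod} with the measure appearing in Erschler's setup: $\zeta$ is precisely the uniform average over $i\in I$ of "do nothing on the base, move lamp $i$ by $\eta$", which is the switch-or-walk measure with trivial base move. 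Third, I would unwind the hypothesis $\Lambda_{1,H,\eta}(v_0)>s$: Erschler's bound gives $\Lambda_{1,X,\zeta}(v)\gtrsim \Lambda_{1,H,\eta}(c\,v^{1/|I|})$, so choosing $v$ with $c\,v^{1/|I|}\le v_0$, i.e. $v\le c'\,v_0^{|I|}$, forces (by monotonicity of $\Lambda_{1,H,\eta}$) $\Lambda_{1,X,\zeta}(v)\gtrsim \Lambda_{1,H,\eta}(v_0)>s$, up to the universal constant $C_1$ absorbing all the implied constants and the exponent loss $|I|/C_1$ versus $|I|$. The restriction $s\le 1/2$ is harmless and only used to ensure we stay in the regime where the inequality is stated without degenerate constants.

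**Main obstacle.** The delicate point is bookkeeping the constants: Erschler's inequality has an implied constant in the exponent of the volume (the $v^{1/|I|}$ can in general only be guaranteed as $v^{c/|I|}$ with a universal $c$), and one must check this constant is genuinely independent of $I$, $H$, and $\eta$ — which it is, since it comes only from the combinatorial/isoperimetric core of the argument, not from the specific groups. A secondary subtlety is that $(H,\eta)$ need not be irreducible/generating on $H$; but as noted in Remark \ref{enlarge}, $\Lambda_{1,H,\eta}$ is well-defined regardless, and Erschler's inequality does not require the fiber measure to generate. Thus the only real work is to quote the wreath-product inequality in the correct generality and carry the constant through; the probabilistic content is entirely contained in Erschler's theorem and \cite{Saloff-Coste2014}.
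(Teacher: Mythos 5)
Your high-level instinct to use Erschler's wreath-product isoperimetric inequality is the right one and matches the paper's strategy, but your description of the base dynamics contains a real error, and it is load-bearing. You say repeatedly that the base walk is ``trivial'' / ``on the single-vertex quotient'' and that Erschler's inequality therefore ``degenerates into a statement purely about the lamps.'' If that were literally the setup, the argument would fail: a walker who never moves on the base only ever switches the lamp at a single fixed coordinate, so the associated wreath-product chain reduces to one copy of $(H,\eta)$ and Erschler's bound gives back $\Lambda_{1,H,\eta}$ with no $|I|$ in the exponent. But the exponent $|I|$ in $(v_0)^{|I|/C_1}$ is the whole content of the proposition, and it comes precisely from the base being \emph{large and well-connected}. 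The paper takes the base to be $I$ equipped with the complete graph structure (including self-loops) and uses the uniform walk on $I$ as the base move, so that $\Lambda_{1,I}(u)\simeq 1-u/|I|$ is bounded away from $0$ on the relevant range; this is what makes the lamp term dominate in Erschler's min. ``Trivial base'' and ``base on the complete graph with uniform walk'' are not the same thing, and only the latter works.

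Your proposal also skips the step that connects the two chains. The measure $\zeta$ lives on $X=\prod_{i\in I}H_i$, not on the wreath product, while Erschler's \cite[Theorem 1]{Erschler2006} controls isoperimetry on the wreath-product chain. The paper bridges this by lifting each $Y\subset X$ to $\widetilde Y=\{(f,i):f\in Y,\ i\in I\}$ inside $\{H_i\}\wr I$ and checking the identity $\bar p(\widetilde Y,\widetilde Y^c)=\tfrac12|I|\,\zeta(\partial Y)$, which transports the wreath-product isoperimetric inequality back to $(X,\zeta)$. Without this lift there is no logical link between Erschler's conclusion (which is about subsets of the wreath product) and the object $\Lambda_{1,X,\zeta}$ you are trying to bound; identifying $\zeta$ directly with ``the switch-or-walk measure'' glosses over this. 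Once you replace the trivial base by the complete graph on $I$ with uniform base move and insert the $Y\mapsto\widetilde Y$ lift, your sketch becomes the paper's proof.
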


\begin{proof}
Equip the index set $I$ with, say, a cyclic group structure and 
its complete graph structure (including self-loops). Embed the 
product $\prod_{i\in I}H_{i}$
into the  wreath product $\left\{ H_{i}\right\} \wr I$ with $H_{i}$ as
lamps over $i$ and $I$ as base. See \cite{Erschler2006}.
Now, $\prod_{i\in I}H_{i}$ can be thought of as the space of lamp 
configurations. Let $Y\subset \prod_{i\in I}H_{i}$ be a finite subset. Define
\[
\widetilde{Y}=\left\{ (f,i):\ f\in Y,\ i\in I\right\} .
\]
Consider the transition kernel 
$\bar{p}$ on $\left\{ H_{i}\right\} \wr I$,
\[
\bar{p}((f,i),(f',i'))=\left\{ \begin{array}{cl}
\frac{1}{2}\eta_i(f'(i)f(i)^{-1}) & \mbox{ if } i=i'\mbox{ and } f=f'\ \mbox{except at }i,\\
\frac{1}{2|I|} & \mbox{ if }f=f'.
\end{array}\right.
\]
Then 
\[
\bar{p}(\tilde{Y},\tilde{Y}^c)=\frac{1}{2}|I|\zeta(\partial Y).
\]

In the base $I$ equipped with its complete graph structure, any set $U$ with
$u$ elements has boundary of weight $|I|^{-1}u(|I|-u)$ so that
$\Lambda_{1,I}(u)\simeq (1-u/|I|)$.

Thus \cite[Theorem 1]{Erschler2006}  together with the hypotheses that 
$ \Lambda_{1,H,\eta}(v_0) >  s$ and the fact that 
$\Lambda^{-1}_{1,I}(s)\ge  |I|(1-s)$ 
yield that
\[
\frac{\bar{p}(\tilde{Y},\tilde{Y}^c)}{|\tilde{Y}|} <s/K
\;\mbox{ implies }\;\;
|\tilde{Y}|
\ge\left(v_0\right)^{(1-s)|I|/K}.
\]
Since
$$|I||Y|= |\tilde{Y}| \;\mbox{ and }\;\;
\frac{\zeta(\partial Y)}{|Y|}=
\frac{\bar{p}(\tilde{Y},\tilde{Y}^c)}{2|\tilde{Y}|},$$
we obtain
\[ \Lambda_{1,X,\zeta}(v)\ge s/K \mbox{ for any } v\le \frac{1}{|I|}
\left(v_0\right)^{|I|/(2K)}
\]
because $s\in (0,1/2]$. 
The desired inequality follows by choosing $C_1>K$ large enough.
\end{proof}

\begin{remark}
Proposition \ref{product} 
concerns expansion of small sets
in $X$. When $X$ is a finite set, the proposition does not provide  
information about expansion of large
sets or the spectral gap. For example suppose that for each $i\in I$, 
$H_i=\mathbb{Z}_{2}$, the bound on $v$
in Proposition \ref{product} is 
 $C_1^{-1}2^{|I|/C_1}= C_1^{-1}|X|^{1/C_1}$. Therefore the stated isoperimetric
inequality is only effective for small sets.
\end{remark}

When $H=\mathbb{Z}_{2}$ or $\mathbb{Z}$, more precise estimates
are known thanks to the sharp isoperimetric inequalities on hypercubes
and Euclidean lattices.

\begin{example}\label{hypercube}
For the hypercube $X=\mathbb{Z}_{2}^{|I|}$, $\eta(1)=1$, from edge
expansion results on the hypercube, we have (see, e.g., \cite{Harper}),
\[
\Lambda_{1,X,\zeta}(v)\ge1-\frac{\log_{2}v}{|I|}.
\]
Thus, for any $K>1$ and $v\le 2^{|I|/K}$, 
\[
\Lambda_{1,X,\zeta}(v)\ge 1-\frac{1}{K}.
\]
This is to be compared with the conclusion provided by 
Proposition \ref{product} which states that
\[
\Lambda_{1,X,\zeta}(v)\ge 
 C^{-1}_1 \mbox{ for all } \; v\le C_1^{-1} 2^{|I|/C_1}. 
\]

\end{example}

\begin{example}\label{lattice}
For the Euclidean lattice $X=\mathbb{Z}^{|I|}$, $\eta(\pm1)=1/2$,
the sharp discrete isoperimetric inequality states that for any finite
set $A$ (see, e.g., \cite[Theorem 6.22]{LyonsPeres})
\[
2\left|\partial_{\zeta}A\right|\ge|A|^{\frac{|I|-1}{|I|}}.
\]
Therefore 
\[
\frac{\left|\partial_{\zeta}A\right|}{|A|}\le s \mbox{ implies }
|A|\ge\left(1/(2s)\right)^{|I|}.
\]
Equivalently,
$$\Lambda_{1,\mathbb Z^{|I|},\zeta}(v)\ge s \mbox{ for all } v\le (1/(2s))^{|I|}.$$
In this case, 
Proposition \ref{product} gives the weaker statement that
$$\Lambda_{1,\mathbb Z^{|I|},\zeta}(v)\ge s/C_1 
\mbox{ for all } v\le C_1^{-1}(1/s)^{|I|/C_1}.$$

\end{example}

For applications we have in mind, it is useful to consider the case when 
$\eta$ is a spread-out measure as in the following example.

\begin{example}
On the Euclidean lattice $X=\mathbb{Z}^{|I|}$, let 
$\eta=\frac{\boldsymbol{1}_{[-|I|,|I|]}}{2|I|+1}$,
i.e., $\eta$ is uniform on the interval $[-|I|,|I|]$ in $\mathbb{Z}$.
In this case, Proposition \ref{product} yields 
\[
\Lambda_{1,X,\zeta}(v)\ge 1/C_1\ \mbox{for all }\; v\le C_1^{-1}|I|^{|I|/C_1}.
\]
The important point here is that this estimate is uniform in $|I|$.
\end{example}

\begin{proposition}\label{volume-diameter}
There exists a constant $C_1$ such that the following holds.
Let $G$ be a finitely generated group equipped with a finite symmetric generating set
$S$. Suppose $X$ is a subgroup of 
$G$ of the form $X=\prod_{i\in I}H_{i}$ with  $H_i\simeq H$ for some group 
$H$. Let $\eta$ be a symmetric probability measure on $H$ with the 
property that 
$$\Lambda_{1,H,\eta}(v_0)\ge s_0$$
for positive real $v_0,s_0$ with $s_0\in (0,1/2]$. 
Let $\zeta$ be defined in terms of $\eta$ by {\em (\ref{zetaprod})} 
and assume further that 
\[
\max_{g\in\mbox{\em supp}(\zeta)}|g|_{(G,S)}\le R. 
\]
Then, letting $\mathbf u_1$ be the uniform probability measure on 
$\{e\}\cup S$, we have 
\[
\mbox{\ensuremath{\Lambda}}_{1,G,\mathbf u_1}(v)\ge\frac{1}{C_1|S|R},\ \mbox{for }v\le
C_1^{-1}\left(v_0\right)^{|I|/C_1}.
\]
\end{proposition}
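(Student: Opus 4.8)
The plan is to deduce Proposition \ref{volume-diameter} from Proposition \ref{product} by a routine comparison-of-Dirichlet-forms argument. The measure $\zeta$ of (\ref{zetaprod}) is a symmetric probability measure on $G$ whose support lies in the $R$-ball of $(G,S)$, so each element $g \in \mathrm{supp}(\zeta)$ can be written as a word $g = s_1 \cdots s_k$ in $S \cup \{e\}$ with $k \le R$. First I would invoke Proposition \ref{product} to get
\[
\Lambda_{1,X,\zeta}(v) \ge \frac{s_0}{C_1'} \quad \text{for all } v \le (C_1')^{-1}(v_0)^{|I|/C_1'},
\]
where $C_1'$ is the constant from that proposition (I am using $s = s_0 \in (0,1/2]$ directly). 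By Remark \ref{enlarge}, since $X$ is a subgroup of $G$ and $\zeta$ is supported on $X$, we have $\Lambda_{1,X,\zeta} = \Lambda_{1,G,\zeta}$, so the same lower bound holds for $\Lambda_{1,G,\zeta}$ with $\zeta$ viewed as a measure on $G$.

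Next I would compare $\Lambda_{1,G,\zeta}$ with $\Lambda_{1,G,\mathbf{u}_1}$. The standard comparison inequality for Dirichlet forms (see the references invoked in Notation/Remark \ref{enlarge}, e.g. the elementary argument that for symmetric measures $\phi_1,\phi_2$ with $\mathrm{supp}(\phi_1)$ contained in balls of radius $R$ for the metric generated by $\mathrm{supp}(\phi_2)$, one has $\mathcal{E}_{\phi_1} \le A(\phi_1,\phi_2,R)\, \mathcal{E}_{\phi_2}$) gives a constant $A = A(|S|,R)$, in fact of order $|S|R$, such that $\mathcal{E}_{\zeta}(f,f) \le A\, \mathcal{E}_{\mathbf{u}_1}(f,f)$ for all $f$; concretely, writing each $g$ in $\mathrm{supp}(\zeta)$ as a word of length $\le R$ in $\{e\}\cup S$ and using the triangle/telescoping inequality $|f(gx)-f(x)| \le \sum_{j}|f(s_1\cdots s_j x) - f(s_1 \cdots s_{j-1}x)|$ together with Cauchy–Schwarz in the $L^1$ setting, one bounds $\mathcal{E}_\zeta$ by $R$ times a sum of Dirichlet forms of uniform measures on single generators, each of which is $\le |S|\,\mathcal{E}_{\mathbf{u}_1}$ up to a constant; since $\zeta$ puts mass at least $1/|I|$ on none of this matters — the point is the word-length bound $R$ and the generating-set size $|S|$. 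This yields $\mathcal{E}_\zeta \le C |S| R\, \mathcal{E}_{\mathbf{u}_1}$, hence $\lambda_\zeta(\Omega) \le C|S|R\, \lambda_{\mathbf{u}_1}(\Omega)$ for every finite $\Omega$, and therefore, by the $L^1$ version of this comparison (which follows the same telescoping estimate directly at the level of the $L^1$-boundary functional, avoiding any Cauchy–Schwarz),
\[
\Lambda_{1,G,\zeta}(v) \le C|S|R\, \Lambda_{1,G,\mathbf{u}_1}(v).
\]

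Combining the two displays: for $v \le (C_1')^{-1}(v_0)^{|I|/C_1'}$ we get
\[
\Lambda_{1,G,\mathbf{u}_1}(v) \ge \frac{1}{C|S|R}\,\Lambda_{1,G,\zeta}(v) \ge \frac{s_0}{C C_1' |S| R} \ge \frac{1}{C C_1' |S| R},
\]
using $s_0 \le 1/2 \le 1$ so that $s_0 \le 1$, and reabsorbing — wait, we need a lower bound, so we keep $s_0$; but since the hypothesis only guarantees $s_0 \in (0,1/2]$ and the statement claims a bound independent of $s_0$, the intended reading is that $v_0$ is chosen so that $s_0$ can be taken to be an absolute constant (e.g. $s_0 = 1/2$), or else the final constant should legitimately depend on $s_0$; I would state it with $s_0$ folded into $C_1$ only if $s_0$ is bounded below, and otherwise carry $s_0$ explicitly. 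Choosing the final constant $C_1$ in the statement larger than $C C_1'/s_0$ (equivalently, absorbing $s_0$ when it is a fixed constant as in all our applications) gives exactly $\Lambda_{1,G,\mathbf{u}_1}(v) \ge (C_1 |S| R)^{-1}$ for $v \le C_1^{-1}(v_0)^{|I|/C_1}$, as desired.

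The only genuine obstacle is bookkeeping of constants in the Dirichlet-form comparison — making sure the factor is really $O(|S|R)$ and not, say, $O(|S|^R)$. The key is that one compares $\zeta$ to $\mathbf{u}_1$ directly via word-length-$\le R$ representations (each step of the telescoping sum is a single generator step, contributing a Dirichlet form controlled by $|S|\mathcal{E}_{\mathbf{u}_1}$), so the blow-up is linear in $R$ and in $|S|$; this is a completely standard estimate and I expect no real difficulty, only care.
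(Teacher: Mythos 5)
Your proof is correct and fills in exactly what the paper's one-line proof leaves implicit: apply Proposition \ref{product}, pass from $\Lambda_{1,X,\zeta}$ to $\Lambda_{1,G,\zeta}$ via Remark \ref{enlarge}, and compare $\zeta$ with $\mathbf u_1$ by the standard word-length telescoping at the $L^1$ level, which yields the linear-in-$R$ factor $\Lambda_{1,G,\zeta}\le C|S|R\,\Lambda_{1,G,\mathbf u_1}$ (your parenthetical display $\mathcal E_\zeta\le C|S|R\,\mathcal E_{\mathbf u_1}$ for the $L^2$ Dirichlet forms should actually carry $R^2$, but you correctly never use that version). Your flag about the $s_0$ factor is also well taken: since $C_1$ is quantified before $s_0$, the argument really gives $s_0/(C_1|S|R)$, and the stated conclusion is only literally correct once $s_0$ is bounded below, which holds in every application in the paper (always $s_0=1/2$).
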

\begin{proof}This follows immediately from Proposition \ref{product} together 
with an easy comparison argument to pass from the measure 
$\zeta$ defined at (\ref{zetaprod}) to the uniform probability measure 
$\mathbf u_1$ on $\{e\}\cup S$. 
\end{proof}

\begin{example}We show how this bound works on ordinary wreath products.
Consider the wreath product $G=H\wr\Gamma$, where $H$ and $\Gamma$
are finitely generated groups. Given a radius $R$, a natural
choice for $X$ is provided by the lamps over the ball of radius $R$ in the base
group $\Gamma$, that is 
\[
X=\prod_{x\in B_{\Gamma}(e_\Gamma,R)}(H)_{x}.
\]
Take $\eta=\mathbf{u}_{H,R}$ to be the uniform measure on the ball of radius
$R$ in $H$ centered at $e_H$. 
Then Proposition \ref{volume-diameter} implies that
\[
\mbox{\ensuremath{\Lambda}}_{1,G,\mathbf u_1}(v)\ge\frac{1}{C_1 R},\ \mbox{for all}\;
v\le C_1^{-1}\left(|B_{H}(e_H,R)|\right)^{|B_{\Gamma}(e_\Gamma,R)|/C_1}.
\]
In the case when both $\Gamma$ and $H$ are groups whose isoperimetric
function is sharply determined by the volume growth then this bound is
sharp and is equivalent to the bound provided directly by Erschler's 
inequality. For instance, if $\Gamma=H=\mathbb Z$ so that 
$G=\mathbb Z\wr \mathbb Z$, this gives
$$\Lambda_{1,\mathbb Z\wr \mathbb Z}(v)\ge c_1 \frac{\log\log v}{\log v}.$$
\end{example}

Now, we show that a similar bound works on the  permutational wreath products 
$G=H\wr_{\mathcal{S}}\Gamma$ where $\Gamma, H$ are finitely generated groups 
equipped with  finite symmetric generating sets $S_\Gamma, S_H$. 

Let $x\in\mathcal{S}$ be a vertex in the Schreier graph, let $g\in\Gamma$
be a group element such that 
\[
\left|g\right|_{(\Gamma}=d_{\mathcal{S}}(o,x),\ g.o=x.
\]
Then 
\begin{equation}
\left(\mathbf{1}_{h}^{x},e_{\Gamma}\right)=\left(\mathbf{e}_{H}^{\mathcal{S}},g\right)\left(\mathbf{1}_{h}^{o},e_{\Gamma}\right)\left(\mathbf{e}_{H}^{\mathcal{S}},g^{-1}\right),\label{eq:conjugation}
\end{equation}
thus 
\begin{equation}
\left|\left(\mathbf{1}_{h}^{x},e_{\Gamma}\right)\right|_{G}\le2\left|g\right|_{\Gamma}+\left|h\right|_{H}.\label{eq:length}
\end{equation}Let $\mathbf u_{\Gamma,r}$ (resp. $\mathbf{u}_{H,r}$)
denote the uniform probability measure on the ball $B_{\Gamma}\left(e_{\Gamma},r\right)$
in $\Gamma$ (resp. $B_{H}\left(e_{H},r\right)$ in $H$). 

\begin{corollary} Referring to the setting introduced above,
Let $\mathfrak q=\frac{1}{2}(\mathbf u_{\Gamma,1}+\mathbf{u}_{H,1})$
on $G=H\wr_{\mathcal{S}}\Gamma$. For each $r$, 
let $\eta_r$ be a symmetric probability measure on $H$ with 
support in the ball $B(e_H,r)$. Then
\[
\Lambda_{1,G,\mathfrak q}\left(v\right)
\ge\frac{1}{Cr}\ 
\mbox{for }v\le C^{-1}
\left(\Lambda^{-1}_{1,H,\eta_{r}}(1/2)\right)^{\left|B_{\mathcal{S}}(o,r)\right|/C}.
\]
\end{corollary}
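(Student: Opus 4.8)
The plan is to reduce the claimed bound on $\Lambda_{1,G,\mathfrak q}$ to Proposition \ref{volume-diameter} by exhibiting inside $G=H\wr_{\mathcal S}\Gamma$ a suitable direct product subgroup and a measure $\zeta$ on it of the required form, then comparing $\zeta$ with $\mathfrak q$. First I would fix the radius $r$ and take the index set to be $I=B_{\mathcal S}(o,r)$, the ball of radius $r$ around the base point in the Schreier graph. For each $x\in I$, the copies $(H)_x=\{(\mathbf 1_h^x,e_\Gamma):h\in H\}$ inside $\oplus_{\mathcal S}(H)_x\le G$ commute with one another, so $X:=\prod_{x\in I}(H)_x$ is a subgroup of $G$ isomorphic to $H^{|I|}$. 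On $H$ I would use the measure $\eta=\eta_r$ given in the statement, which by hypothesis is supported in $B_H(e_H,r)$; since $\Lambda_{1,H,\eta_r}^{-1}(1/2)$ is exactly the volume threshold $v_0$ for which $\Lambda_{1,H,\eta_r}(v_0)\ge 1/2=:s_0$, the hypothesis of Proposition \ref{volume-diameter} holds with $s_0=1/2$, and $\zeta$ is the measure on $X$ built from $\eta_r$ via \eqref{zetaprod}.

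Next I would control the word length in $(G,S)$ of elements in the support of $\zeta$, where $S=S_\Gamma\cup S_H$ (or the generating set underlying $\mathfrak q$). Each such element is of the form $(\mathbf 1_h^x,e_\Gamma)$ with $x\in B_{\mathcal S}(o,r)$ and $h\in B_H(e_H,r)$. By \eqref{eq:length}, choosing $g\in\Gamma$ with $|g|_\Gamma=d_{\mathcal S}(o,x)\le r$ and $g\cdot o=x$, we get $|(\mathbf 1_h^x,e_\Gamma)|_G\le 2|g|_\Gamma+|h|_H\le 2r+r=3r$, so $R:=3r$ works. Proposition \ref{volume-diameter} then gives $\Lambda_{1,G,\mathbf u_1}(v)\ge \frac{1}{C_1|S|R}=\frac{1}{3C_1|S|r}$ for $v\le C_1^{-1}v_0^{|I|/C_1}=C_1^{-1}(\Lambda_{1,H,\eta_r}^{-1}(1/2))^{|B_{\mathcal S}(o,r)|/C_1}$, where $\mathbf u_1$ is the uniform measure on $\{e\}\cup S$. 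Finally, since $\mathfrak q=\frac12(\mathbf u_{\Gamma,1}+\mathbf u_{H,1})$ and $\mathbf u_1$ are two symmetric finitely supported measures on $G$ with the same (finite) generating support, an elementary comparison argument (as in the Notation paragraph and Remark \ref{enlarge}) yields $\Lambda_{1,G,\mathfrak q}\simeq\Lambda_{1,G,\mathbf u_1}$, and absorbing $|S|$ and the comparison constants into a single constant $C$ gives the stated inequality $\Lambda_{1,G,\mathfrak q}(v)\ge \frac{1}{Cr}$ for $v\le C^{-1}(\Lambda_{1,H,\eta_r}^{-1}(1/2))^{|B_{\mathcal S}(o,r)|/C}$.

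The only genuine content beyond bookkeeping is the verification that the copies $(H)_x$, $x\in I$, really do generate a direct product inside $G$ and that the measure $\zeta$ lands on $X$ in the precise form required by \eqref{zetaprod}; this is immediate from the semidirect product structure $G=(\oplus_{\mathcal S}(H)_x)\rtimes\Gamma$, since the functions supported on distinct vertices multiply coordinatewise. I expect the main (mild) obstacle to be tracking the various constants — the dependence on $|S|$, the factor $3$ in $R=3r$, and the comparison constant between $\mathfrak q$ and $\mathbf u_1$ — and making sure they can all be folded into one constant $C$ uniformly in $r$; this is the reason the statement is phrased with a single unspecified $C$. No new idea is needed: the corollary is exactly Proposition \ref{volume-diameter} specialized to the product of lamp groups over a Schreier ball, with the length estimate \eqref{eq:length} supplying the bound on $R$ in terms of $r$.
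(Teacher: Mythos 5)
Your proof is correct and follows essentially the same route as the paper's (very terse) argument: define $\zeta$ as the average of the measures $(\eta_r)_x$ over $x\in B_{\mathcal S}(o,r)$, use the conjugation estimate \eqref{eq:length} to bound the $G$-word-length of everything in $\mathrm{supp}(\zeta)$ by $3r$, and feed this into Proposition \ref{volume-diameter} with $s_0=1/2$ and $v_0=\Lambda_{1,H,\eta_r}^{-1}(1/2)$. The only thing you spell out that the paper leaves implicit is the final comparison between $\mathbf u_1$ and $\mathfrak q$, which is a standard and correct step.
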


\begin{proof}

Consider the measure 
\[
\zeta=\frac{1}{\left|B_{\mathcal{S}}(o,r)\right|}\sum_{x\in B_{\mathcal{S}}(o,r)}
\left(\eta_{r}\right)_{x}.
\]
then by (\ref{eq:length}) we have
\[
\max_{g\in\mbox{supp}(\zeta)}|g|_{G}\le3r.
\]
The conclusion follows from Proposition \ref{volume-diameter}.
\end{proof}

\begin{example}\label{exa-Dpro-low}
On the permutational wreath product $G=(\mathbb{Z}/2\mathbb{Z})\wr_{\mathcal{S}}\Gamma$,
we have a lower bound for $L^{1}$-isoperimetric profile 
\[
\Lambda_{1,G}\left(v\right)\ge\frac{1}{C_1r}\ \mbox{for }v
\le C_1^{-1} 2^{\left|B_{\mathcal{S}}(o,r)\right|/C_1}.
\]
If instead take $\tilde{G}=\mathbb{Z}\wr_{\mathcal{S}}\Gamma$, we take 
$\eta_r$ to be the uniform probability on $\mathbb Z\cap [-r,r]$.
This yields
\[
\Lambda_{1,\widetilde{G}}\left(v\right)\ge\frac{1}{C_1r}\ \mbox{for }v
\le C_1^{-1}r^{\left|B_{\mathcal{S}}(o,r)\right|/C_1}.
\]

For instance, when $\Gamma$ is the infinite Dihedral group 
$D_\infty$ of 
Example \ref{exa-Dinfty}, this gives
$$\Lambda_{1,(\mathbb Z/2\mathbb Z)\wr D_\infty}(v)\gtrsim \frac{1}{\log v}
\mbox{ and }
\Lambda_{1,\mathbb Z\wr D_\infty}(v)\gtrsim \frac{\log\log v}{\log v}.$$

More interesting examples are provided by \cite{Bondarenko}. 
Bondarenko proves that any group $\Gamma$ generated 
by a polynomial automaton of degree $d$ has 
a defining marked Schreier graph $(\mathcal S, o)$  such that  
$V_{\mathcal S}(o,r) \le A^{(\log r)^{d+1}}$. 
The estimate is sharp (\cite[Theorem 4]{Bondarenko}) in the sense that 
Bondarenko gives examples of degree $d$ automaton groups for which 
$B^{(\log r)^{d+1}}\le V_{\mathcal S}(o,r) \le A^{(\log r)^{d+1}}$ for some 
$1<B\le A<\infty$.  The so-called long range group belong to this class, 
with degree $d=1$. See also \cite{Amir2009}.
For such a group $\Gamma$, the Schreier graph volume lower bound 
$B^{(\log r)^{d+1}}\le V_{\mathcal S}(o,r) $ 
 yields (with lamp group $H$ equals to $\mathbb Z/ 2\mathbb Z$ or
$\mathbb Z$ or any group of polynomial volume growth),
$$\Lambda_{1,H\wr_{\mathcal S}\Gamma}
(v)\ge \exp\left(-C (\log \log v)^{1/(d+1)}
\right)$$ 
and, consequently, 
$$\Phi_{H\wr_{\mathcal S}\Gamma}(n) \le \exp
\left(- \frac{n}{\exp\left(C(\log n)^{1/(d+1)}\right)}
\right).$$ 

Regarding $\Lambda_{2}$ upper bounds and $\Phi$ lower bounds, we 
note that \cite{Amir2009} shows that any degree $d$ automaton group embeds into an 
appropriate degree $d$ mother group. For degree $d=0,1,2$, the mother groups 
can be studied by the technique of section \ref{sec-UB} as long as 
some appropriate ``resistance estimates'' can be derived for the associated 
Schreier graph. We will not pursue 
this here, in part because none of the resulting bounds appear to capture 
the real behavior of $\Lambda_1$, $\Lambda_2$ and $\Phi$ for these examples.
We note the degree $d$ automaton groups are known to be amenable for 
$d=0,1,2$ and that the amenability  question is open in degree greater 
than $2$.     
\end{example}

\begin{proposition}\label{PermutationProp}
On the permutational wreath product $G=H\wr_{\mathcal{S}}\Gamma$ 
with $\Gamma,H$ finitely generated,
let $\mathfrak q=\frac{1}{2}(\mu+\nu)$, where 
$\mu=\mathbf u_{\Gamma,1}$ on $\Gamma$ and $\nu$ 
is a symmetric probability measure
on $H$. Then the  $L^{2}$-isoperimetric profile of $\mathfrak q$ on $G$ 
satisfies
\[
\Lambda_{2,G,q}\left(v\right)\ge\frac{1}{C_1r^{2}}\ \mbox{for }v\le 
C^{-1}\left(\Lambda_{2,H,\nu}^{-1}\left(1/r^{2}\right)\right)^{\left|B_{\mathcal{S}}(o,r)\right|/C}.
\]

\end{proposition}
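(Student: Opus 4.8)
The plan is to reduce the $L^2$ statement to a product-chain estimate, exactly paralleling the $L^1$ argument behind Proposition \ref{product} and its corollaries, but using the $L^2$ version of Erschler's wreath-product isoperimetric inequality recorded in \cite{Saloff-Coste2014} rather than the $L^1$ version from \cite{Erschler2006}. First I would fix a radius $r$ and work inside the subgroup $X=\prod_{x\in B_{\mathcal S}(o,r)}(H)_x$ of $G$, which sits inside $H\wr_{\mathcal S}\Gamma$ as the configurations supported in the ball $B_{\mathcal S}(o,r)$. On $X$ consider the product measure $\zeta=|B_{\mathcal S}(o,r)|^{-1}\sum_{x\in B_{\mathcal S}(o,r)}(\nu)_x$, which induces on $X$ the product Markov chain $\frac1{|I|}\sum_i P_i$ with each $P_i$ a copy of the $\nu$-chain on $H$. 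By the $L^2$ analogue of Proposition \ref{product} one gets
\[
\Lambda_{2,X,\zeta}(v)\ge \frac{1}{C}\,\Lambda_{2,H,\nu}(v_0)
\quad\text{for } v\le C^{-1}\bigl(v_0\bigr)^{|B_{\mathcal S}(o,r)|/C},
\]
and choosing $v_0=\Lambda_{2,H,\nu}^{-1}(1/r^2)$ makes the right-hand side of the profile bound equal to $1/(Cr^2)$ on the stated range of $v$.

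Next I would transfer this bound from $(X,\zeta)$ to $(G,\mathfrak q)$. Here one uses the conjugation identity (\ref{eq:conjugation}) and the length bound (\ref{eq:length}): any generator in the support of $\zeta$, being a lamp element $(\mathbf 1_h^x,e_\Gamma)$ with $h\in\mathrm{supp}(\nu)$ and $d_{\mathcal S}(o,x)\le r$, can be written as a word of length at most $2r+|h|_H$ in the generators $\mathrm{supp}(\mu)\cup\mathrm{supp}(\nu)$ of $\mathfrak q$. This is the point at which the quadratic factor $r^2$ (rather than the linear $r$ appearing in the $L^1$ corollary) enters: under a comparison of Dirichlet forms the $L^2$ profile degrades by the square of the word length of the comparing words, since $\mathcal E_{\mathfrak q}\gtrsim (\#S)^{-1}R^{-2}\,\mathcal E_\zeta$ when each $\zeta$-step is an $R$-step word in $\mathfrak q$-generators (Cauchy–Schwarz on telescoping sums). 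Combining with Remark \ref{enlarge}, which lets one compute $\Lambda_{2,G,\zeta}$ inside $G$ rather than inside $X$, yields $\Lambda_{2,G,\mathfrak q}(v)\ge \tfrac1{C r^2}\Lambda_{2,H,\nu}(v_0)$ on the claimed range, and the choice of $v_0$ finishes the proof.

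The main obstacle I anticipate is the first step: establishing the $L^2$-profile statement for the product chain with the correct dependence on $|I|$ in the exponent. The $L^1$ version (Proposition \ref{product}) follows cleanly from \cite[Theorem 1]{Erschler2006} because $L^1$-isoperimetry on the wreath product $\{H_i\}\wr I$ composes multiplicatively in a transparent way; for $L^2$ one must invoke the spectral-profile version of Erschler's inequality from \cite{Saloff-Coste2014} and check that, for small sets, the contribution of the base chain on the complete graph $I$ (whose $L^2$ profile is comparable to $1-u/|I|$, bounded below on sets of size $\le|I|/2$) does not spoil the bound, so that $\Lambda_{2,\{H_i\}\wr I,\bar p}(\tilde Y)$ is controlled below by $\Lambda_{2,H,\nu}(|\tilde Y|^{1/|I|})$ up to constants. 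Once that is in hand, the rest is the routine conjugation-and-comparison bookkeeping already used for Proposition \ref{volume-diameter} and its corollary, now carried out in $L^2$ with the extra square on the radius.
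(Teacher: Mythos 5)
There is a genuine gap in your proposal, and it arises from a double-counting of the factor $r^{2}$. Your plan pays $r^{-2}$ twice: once when you build the product chain from $\nu$ directly (so that the profile bound you get for $\Lambda_{2,X,\zeta}$ carries the factor $\Lambda_{2,H,\nu}(v_0)=1/r^2$), and again in the Dirichlet-form comparison $\mathcal{E}_{\mathfrak q}\gtrsim r^{-2}\mathcal{E}_\zeta$ coming from writing each $\zeta$-generator $(\mathbf 1_h^x,e_\Gamma)$, $x\in B_{\mathcal S}(o,r)$, as a word of length $O(r)$ in $\mathfrak q$-generators. As you yourself write, the end result is $\Lambda_{2,G,\mathfrak q}(v)\ge\frac1{Cr^2}\Lambda_{2,H,\nu}(v_0)$, and substituting $v_0=\Lambda_{2,H,\nu}^{-1}(1/r^2)$ gives $\frac1{Cr^4}$, not $\frac1{Cr^2}$. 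To see concretely that this is too weak, take $H=\Gamma=\mathbb Z$ with $\nu$ uniform on $\{\pm1\}$: the proposition's range $v\le C^{-1}r^{r/C}$ at which it asserts $\Lambda_{2,G}(v)\ge\frac1{Cr^2}$ matches the known $\Lambda_{2,\mathbb Z\wr\mathbb Z}(v)\asymp(\log\log v/\log v)^2$ up to constants, whereas your bound is off by an extra factor $r^{-2}$ at that scale.

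The paper avoids this by not putting $\nu$ into the product chain at all. It first invokes \cite[Theorem 4.7]{Saloff-Coste2014} to produce an \emph{auxiliary} measure $\eta_r$ on $H$ satisfying two things simultaneously: $\mathcal E_{H,\nu}\ge c r^{-2}\mathcal E_{H,\eta_r}$, and $\Lambda_{2,H,\eta_r}(v)\ge 1/2$ (a constant!) for all $v$ up to $\Lambda_{2,H,\nu}^{-1}(1/r^2)$. The $r^{-2}$ loss is paid \emph{once}, entirely in the Dirichlet-form comparison. The product chain is then built from $\eta_r$, and because $\eta_r$ has a uniform spectral-profile bound (hence by the Cheeger inequality (\ref{Cheeger}) a uniform $L^1$-profile bound) at the right volume scale, the ordinary $L^1$ Proposition \ref{product} already gives $\Lambda_{2,G,\zeta_r}(v)\ge 1/C$ — no $L^2$ version of Proposition \ref{product} is needed, nor does a second factor of $r^{-2}$ appear here. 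Transferring back via $\mathcal E_{G,\mathfrak q}\ge c'r^{-2}\mathcal E_{G,\zeta_r}$ yields the stated $\frac1{Cr^2}$. So the missing idea in your proposal is precisely the insertion of a measure $\eta_r$ that "pre-spreads" $\nu$ on $H$, absorbing the whole $r^2$ into the Dirichlet-form comparison and leaving a constant isoperimetric input for the product-chain step; without it, the two sources of $r^2$ in your argument do not cancel but compound.
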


\begin{proof}
For the symmetric probability measure $\nu$ on $H$ with 
$L^2$-isoperimetric profile $\Lambda_{2,H,\nu}$ and any $r>1$, 
\cite[Theorem 4.7]{Saloff-Coste2014} provides a symmetric probability measure 
$\eta_r$ (it depends very much on $\nu$ as well) such that
$$\mathcal E_{H,\nu}\ge cr^{-2} \mathcal E_{H,\eta_{r}} $$
and
$$\Lambda_{2,H,\eta_r}(v)\ge 1/2 
\mbox{ for 
all } v \le C^{-1} \left(\Lambda_{2,H,\nu}(1/r^2)\right)^{|B_{\mathcal S}(o,r)|/C}.$$
Consider the symmetric probability measure
\[
\zeta_r=\frac{1}{\left|B_{\mathcal{S}}(o,r)\right|}\sum_{x\in B_{\mathcal{S}}(o,r)}\left(\eta_{r}\right)_{x}.
\]
Proposition \ref{product} applied to  $\zeta_r$ provides  the estimate
$$\Lambda_{2,G,\zeta_r}(v)\ge 1/C  \mbox{ for 
all } v \le C^{-1} \left(\Lambda_{2,H,\nu}(1/r^2)\right)^{|B_{\mathcal S}(o,r)|/C}$$
for some large constant $C>$. Further,
(\ref{eq:conjugation}) and the property
$\mathcal E_{H,\nu}\ge cr^{-2} \mathcal E_{H,\eta_{r}}$ show that 
(with a different constant $c>0$)
\[
\mathcal{E}_{G,\mathfrak q}\ge c'r^{-2}\mathcal{E}_{G,\zeta_r}.
\]
Putting these two estimates together provides the desired conclusion.
\end{proof}

The next proposition applies the technique of this section in the context of 
finitely generated subgroups of the automorphism group of a rooted tree 
$\mathbb T_{\bar{d}}$.   

\begin{proposition}\label{rigidlamp-1}
Let $\Gamma$ be a finitely generated subgroup of 
$\mbox{\em Aut}\left(\mathbb{T}_{\bar{d}}\right)$ as in {\em Section \ref{sec-tree}}. Let
$u=x_{0}x_{1}...x_{n-1}$ be a vertex in level $n$ of the tree 
$\mathbb{T}_{\bar{d}}$,
and let $\mathcal{S}_{n}(u)$ be the orbital Schreier graph of
$u$ under action of $\Gamma$. Let $\rho_{n}\in\mbox{\em rist}_{\Gamma}(u)$
be a nontrivial element, $\rho_{n}\neq e_{\Gamma}$, of length 
$|\rho_n|_{\Gamma}$ in $\Gamma$. Then there exists a constant $C\ge 1$ such that
\[
\Lambda_{1,\Gamma}(v)\ge\frac{1}{C\max\left\{ \left|\rho_{n}\right|_\Gamma,r\right\} }\ \mbox{for all }v\le C^{-1} 2^{|B_{\mathcal{S}_{n}(u)}(u,r)|/C}.
\]
\end{proposition}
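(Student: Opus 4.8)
The plan is to reduce this statement to the permutation-wreath-product bound already packaged in the corollary to Proposition~\ref{volume-diameter} (and its proof via Proposition~\ref{product}), by exploiting the rigid stabilizer structure. The key observation is that an element $\rho_n\in\mbox{rist}_\Gamma(u)$ and its conjugates by elements moving $u$ along its orbit generate, inside $\Gamma$, a copy of a restricted direct sum $\oplus_{x\in\mathcal S_n(u)}\langle\rho_n^{(x)}\rangle$ — that is, a permutation-wreath-like subgroup $H\wr_{\mathcal S_n(u)}\Gamma'$ sitting inside $\Gamma$, where $H$ is the (nontrivial, hence containing $\mathbb Z/2\mathbb Z$) cyclic group generated by $\rho_n$ acting on the subtree $\mathbb T_{u,\bar d}$. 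More precisely, since $\rho_n$ fixes everything outside $\mathbb T_{u,\bar d}$, for any $g\in\Gamma$ the conjugate $g\rho_n g^{-1}$ is supported on $\mathbb T_{g\cdot u,\bar d}$, and conjugates corresponding to distinct vertices in the orbit of $u$ commute and generate their direct product. This is exactly the ``rigid stabilizers give embedded wreath products'' mechanism used in the study of branch groups.

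First I would fix the radius $r$, set $B=B_{\mathcal S_n(u)}(u,r)$, and for each $x\in B$ choose $g_x\in\Gamma$ with $g_x\cdot u = x$ and $|g_x|_\Gamma = d_{\mathcal S_n(u)}(u,x)\le r$; then $\rho_n^{(x)} := g_x\rho_n g_x^{-1}$ has length $|\rho_n^{(x)}|_\Gamma \le 2r + |\rho_n|_\Gamma \le 3\max\{|\rho_n|_\Gamma, r\}$. The subgroup $X=\prod_{x\in B}\langle\rho_n^{(x)}\rangle$ is a direct product of copies of a group $H$ with $H\supseteq\mathbb Z/2\mathbb Z$, so with $\eta$ the uniform measure on $\{e,\rho_n\}$ (pushed to each factor) we have $\Lambda_{1,H,\eta}(v_0)\ge s_0$ for suitable $v_0\ge 2$, $s_0\in(0,1/2]$ (for $\mathbb Z/2\mathbb Z$, $v_0=2$ works). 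Second, I would let $\zeta$ be the associated measure on $\Gamma$ as in~(\ref{zetaprod}), namely $\zeta = |B|^{-1}\sum_{x\in B}(\eta)_x$, which has support within the ball of radius $3\max\{|\rho_n|_\Gamma,r\}$ in $(\Gamma, S)$, and apply Proposition~\ref{volume-diameter} with $G=\Gamma$, $R = 3\max\{|\rho_n|_\Gamma, r\}$. That proposition directly yields
\[
\Lambda_{1,\Gamma,\mathbf u_1}(v)\ge \frac{1}{C_1|S|\,R} \ \ \text{for } v\le C_1^{-1}(v_0)^{|B|/C_1},
\]
and absorbing $|S|$ and constants gives $\Lambda_{1,\Gamma}(v)\ge \frac{1}{C\max\{|\rho_n|_\Gamma, r\}}$ for $v\le C^{-1}2^{|B_{\mathcal S_n(u)}(u,r)|/C}$, which is exactly the claim (using $v_0=2$ and that $\Lambda_{1,\Gamma}$ is the $\simeq$-class, so the comparison between $\mathbf u_1$ and any fixed generating measure is harmless, cf.\ Remark~\ref{enlarge}).

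The main obstacle is verifying that $X=\prod_{x\in B}\langle\rho_n^{(x)}\rangle$ is genuinely a \emph{direct} product inside $\Gamma$ and, more importantly, that the measure $\zeta$ on $\Gamma$ ``induces a product chain'' on $X$ in the sense needed to invoke Proposition~\ref{product}/\ref{volume-diameter} — i.e., that the action of the $\rho_n^{(x)}$ on $X$ (by left translation within $X$) decomposes coordinatewise. The directness is clear from disjoint supports: the vertices $x\in B$ are distinct elements of the orbit, so the subtrees $\mathbb T_{x,\bar d}$ are pairwise disjoint, and $\rho_n^{(x)}$ is supported on $\mathbb T_{x,\bar d}$; hence $\rho_n^{(x)}$ and $\rho_n^{(y)}$ commute for $x\ne y$ and no nontrivial relation links them. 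The product-chain property then follows because left multiplication by $\rho_n^{(x)}$ inside $X$ affects only the $x$-coordinate, so $\zeta$ acts as $|B|^{-1}\sum_x P_x$ with $P_x$ a kernel on the $x$-factor $\langle\rho_n\rangle\cong H$. One should also double-check the length bound $|g_x|_\Gamma\le d_{\mathcal S_n(u)}(u,x)$: since $\Gamma$ acts on $\mathcal S_n(u)$ via its generators $S$, a path of length $\ell$ from $u$ to $x$ in the Schreier graph lifts to a word of length $\ell$ in $S$ mapping $u$ to $x$, so such a $g_x$ exists. All of these are routine given the setup in Sections~\ref{sec-tree} and~\ref{sec-low}; the conceptual content is entirely the ``rigid stabilizer $\Rightarrow$ embedded permutation wreath product'' reduction.
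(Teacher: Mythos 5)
Your proposal is correct and takes essentially the same approach as the paper: choose $g^v$ realizing the Schreier-graph distance $d_{\mathcal S_n(u)}(u,v)$, observe that the conjugates $g^v\rho_n(g^v)^{-1}$ lie in the rigid stabilizers of distinct vertices $v$ and hence generate a direct product inside $\Gamma$, equip this with the measure $\zeta$ of the form~(\ref{zetaprod}), and invoke Proposition~\ref{product} (via Proposition~\ref{volume-diameter}) together with a comparison of Dirichlet forms. The only differences are cosmetic: the paper works with $X=\prod_v\mbox{rist}_\Gamma(v)$ and $\eta$ uniform on $\{\rho_n^{\pm1}\}$ (symmetric for all $\rho_n$), whereas you use $\prod_v\langle\rho_n^{(v)}\rangle$ and $\eta$ uniform on $\{e,\rho_n\}$ (symmetric only when $\rho_n^2=e$, and your parenthetical ``$H\supseteq\mathbb Z/2\mathbb Z$'' is off --- a nontrivial cyclic group need not contain $\mathbb Z/2\mathbb Z$ --- though what is actually used, $|H|\ge 2$, holds); these do not change the substance of the argument.
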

\begin{proof}
For every vertex $v\in\mathcal{S}_{n}(u)$, fix an element $g^{v}\in\Gamma$
such that $g^{v}.u=v$ and 
\[
\left|g^{v}\right|_\Gamma=d_{\mathcal{S}_{n}(u)}(u,v).
\]
In terms of sections and permutations
at level $n$ (see Section \ref{sec-tree}), $g^{v}$  is
$$g^{v} =\left(g_{x}^{v}\right)_{x\in\mathbb{T}^{n}}\sigma^{v}$$
where as
$$\rho_{n} = \left(\tilde{\rho}_x\right)_{x\in\mathbb{T}^{n}}\mbox{id} 
$$
where $\tilde{\rho}_x$ is the identity except for $x=u$ where 
$\tilde{\rho}_x=\rho_n(u)$.
Then 
\[
g^{v}\rho_{n}\left(g^{v}\right)^{-1}=\left(e,..,e,g_{\sigma^{v}(u)}^{v}\rho_{n}(u)g_{\sigma^{v}(u)}^{-1},e,...,e\right) \mbox{id} \]
 where the only nontrivial section is at site 
$\sigma^{v}(u)=v.$
Since $\rho_{n}$ is nontrivial, it follows that the conjugation $g^v_v
\rho_{n}(g^v_{v})^{-1}$
is also a nontrivial element in $\mbox{rist}_{\Gamma}(v)$. 

Now, let $\zeta$ be the symmetric probability measure on
the subgroup $$\left\langle g^{v}\rho_{n}\left(g^{v}\right)^{-1}:\ v\in B(u,r)\right\rangle $$
defined be
\[
\zeta(\gamma)=\frac{1}{2\left|B_{\mathcal S_n(u)}(u,r)\right|}
\sum_{v\in B_{\mathcal S_n(u)}(u,r)}\mathbf{1}_{\left\{ g^{v}\rho_{n}^{\pm1}\left(g^{v}\right)^{-1}\right\} }.
\]
Then $\zeta$ has the form (\ref{zetaprod}) on the product
\[
X=\prod_{v\in B_{\mathcal S_n(u)}(u,r)}\mbox{rist}_{\Gamma}(v).
\]
The desired result follows by comparison between simple random walk 
on $\Gamma$ and $\zeta$ together with Proposition \ref{product}.
\end{proof}

In \cite{ErschlerPiecewise}, Erschler uses vertex stabilizers to estimate the 
F\o lner function of certain groups. It is harder to reach a rigid 
stabilizer but, if one does, one can make use of the product 
structure as described in Proposition \ref{rigidlamp-1}. 
For a concrete application of Proposition \ref{rigidlamp-1}, 
see Corollary \ref{cor-RL}.

\section{Upper-bound for $\Lambda$} \setcounter{equation}{0}
\label{sec-UB}

In this section we present a technique that provides an upper-bound 
on the $L^2$-isoperimetric profile $\Lambda_{2,G,\mathfrak q}$ when 
$G= H\wr_{\mathcal S} \Gamma$
is a permutation wreath product with infinite amenable lamp group $H$
(e.g., $H=\mathbb Z$) and the action of $\Gamma$ on $\mathcal S$ is 
extensively amenable in the sense of \cite{JBMS}.

\begin{definition}[{\cite[Definition 1.1]{JBMS}}] 
The action of a group $\Gamma$ on a set $\mathcal S$ 
is extensively amenable if there is
a $\Gamma$-invariant mean on $\mathcal P_f (\mathcal S)$ 
giving full weight to the collection of subsets that contain
an arbitrary given element of $\mathcal P_f (\mathcal S)$.
\end{definition}

This property appears implicitly in \cite{Juschenko2013a} 
where it is used to prove the 
amenability of the topological full group of any minimal Cantor system. 
This argument was later extended in \cite{Juschenko2013,JBMS}. 

Here we use a basic version of this idea to obtain explicit 
upper-bound on the $L^2$-isoperimetric profile $\Lambda_{2,G,\mathfrak q}$.
Qualitatively, this is equivalent to giving upper bound on 
the F\o lner function.

\subsection{A unifying framework\label{sub:A-unifying-framework}}

Let $\Gamma$ be a finitely generated group acting on a space $X$.
In the examples we consider, $X$ will be a countable set, 
the vertex set of a graph. Let 
$$S=\{s_1^{\pm1},\dots,s_k^{\pm 1}\}$$
be a finite symmetric generating set 
of $\Gamma$. In what follows we need to also consider the finite alphabet
of $2k$ distinct letters 
$$\mathbf S=\{s_1^{\pm 1},\dots,s^{\pm 1}_k\}.$$
Note that we do use the same notation $s_i^{\pm 1}$ for the 
letters and their evaluation in $\Gamma$. 
We let
$$\mathbf S^{(\infty)}$$
be the set of all finite words on $\mathbf S$.

Let
$o$ be a point in $X$ chosen as a reference point. Let $\mathcal{S}$
denote the orbital Schreier graph of $o$ under the action of $\Gamma$.
It is well understood that the inverted orbits of certain points in $X$
play a key role in understanding the group $\Gamma$, see for example
\cite{Bartholdi2012}. In some examples we will need to keep track
of inverted orbits of more than one point. 

\begin{notation}[Inverted orbit, $\mathcal{O}(w;J),$ $\Omega(J,A,B)$]
Let $w=w_{1}w_{2}...w_{l}$, $w_{i}\in \mathbf S$, 
be an arbitrary word of length $l$.
\begin{itemize}
\item Given $x\in X$, the inverted orbit $\mathcal{O}(w;x)$
of $x$ under $w$ is the subset of $X$ defined by
\[
\mathcal{O}(w;x)
=\left\{ {w}_{1}\cdots {w}_{l}\cdot x,\ {w}_{1}\cdots
{w}_{l-1}\cdot x,\dots, {w}_{1}\cdot x,\ x\right\} .
\]
\item More generally, given a countable subset $J$ of $X$, set 
\[
\mathcal{O}(w;J)=\bigcup_{x\in J}\mathcal{O}(w,x).
\]

\item For any two subsets 
$J\subset B\subset X$, define
$\Omega(J,B)$ to be the subset of $\mathbf S^{(\infty)} $ defined by
\[
\Omega(J,B)
:=\left\{ w\in \mathbf S^{(\infty)}:\ \mathcal{O}(w,J)\subset B\right\} .
\]
Let $\overline{\Omega}\left(J,B\right)\subset \Gamma$ 
be the image of $\Omega\left(J,B\right)\subset \mathbf S^{(\infty)}$  
under evaluation in $\Gamma$. 
\end{itemize}
\end{notation} 

\begin{definition}[{$(J,B)$-admissible function}]\label{adm-F}
Equip $\mathcal{P}_{f}(X)$ with the usual counting
measure. Given subsets $J\subset B$ of $X$, we say that a function 
$$F:\mathcal{P}_{f}(X)\to\mathbb{R}$$ is
$(J,B)$-admissible if there exist a subset $A\subset X$ such that
for any  $Y\in\mbox{supp} (F)$ we have 
$Y=g\cdot A$
for some $g\in G$ and $J\subset Y=g\cdot A\subset B$.
\end{definition}

\begin{definition}[Hypothesis ($\Omega$)] \label{def-Omega}
We say that $(\Gamma,X,o)$ satisfies  ($\Omega$) if 
the following conditions hold: 
\begin{enumerate}
\item There exists a sequence of couples $ ((J_{n},B_{n}))$ 
where $J_{n},B_{n}$ are finite subsets of $\mathcal{S}$ such that
\[
o\in J_{n}\subset B_{n}\ \mbox{and }\;\;\bigcup_{n}\overline{\Omega}(J_{n},B_{n})
=\Gamma.
\]

\item There exists a sequence $(\mathbf \Gamma^{n})$ of amenable groups  
and maps 
\[
\vartheta_{n}:\Omega(J_{n},B_{n})\to \mathbf \Gamma^{n}
\]
 such that \begin{itemize}
\item For any $s\in \mathbf S$ and integer $n$, 
we have $s,s^{-1}\in \Omega(J_n,B_n)$. 
\item For any pair $w_1,w_2\in\Omega(J_{n},B_{n})$ whose evaluations in 
$\Gamma$ are equal, we have 
$$\vartheta_{n}(w_1)=\vartheta_{n}(w_2).$$
\item  For any pair  $w_{1},w_{2}$ such that $w_1, w_2,w_{1}w_{2}\in\Omega\left(J_{n},B_{n}\right)$,  we have
\[
\vartheta_{n}(w_{1}w_{2})=\vartheta_{n}(w_{1})\vartheta_{n}(w_{2}).
\]
\end{itemize}
\end{enumerate}
\end{definition}

\begin{remark} Under Hypothesis $(\Omega)$, each of the map $\vartheta_n$
induces an injective map from $\overline{\Omega}(J_n,B_n)$ to
$\mathbf \Gamma^n$. For this reason we will refer to $\vartheta_n$ 
as the ``local embeddings'' provided by Hypothesis ($\Omega$).
\end{remark}

The point of the hypothesis ($\Omega$) is that the groups 
$\mathbf \Gamma^{n}$ are potentially much easier to understand
than the group $\Gamma$. Typically, the map $\vartheta_{n}$ 
cannot be extended globally to $\Gamma$, but in computations when 
we can restrict attention to the finite subset
$\overline{\Omega}(J_{n},B_{n})$ of $\Gamma$, 
elements there can be identified with
elements in $\mathbf\Gamma^{n}$ via the map $\vartheta_{n}$. This will allow us
to combine functions on $\mathcal{P}_f(\mathcal S)$ and
functions on $\mathbf\Gamma^{n}$ to bound the isoperimetric profile 
$\Lambda_{\mathbb{Z}\wr_{\mathcal{S}}\Gamma}$
of the permutation wreath product $\mathbb{Z}\wr_{\mathcal{S}}\Gamma$. 

\begin{example} \label{exa-Dinfty1}
Consider the Dihedral group $D_{\infty}=<s,t>$ of Example 
\ref{exa-Dinfty}
defined in terms of the Schreier graph of Figure \ref{D4}. We show how
hypothesis $(\Omega)$ is satisfies in this simple case. Set $J=\{o=0\}$ 
be the left-most vertex in $\mathcal S=\{0,1,2,\dots\}$ and set 
$B_n=\{0,\dots, 2^n-1\}$.  The set $\Omega(o,B_n)$ is the collection of 
words $w=w_1\dots w_i$ on $s^{\pm 1}$ and $t^{\pm 1}$ so that 
for any $j\le i$, the associated reduced word on $s,t$ (each of order two)
is   of length at most $2^{n}$ if the reduced  word ends in $t$ and $2^{n}-1$
if the reduced word end in $s$.  

For $\mathbf \Gamma^n$, take the finite dihedral group $D_{2^n}$ as discussed 
in Example \ref{exa-finiteD} 
and let $\vartheta_n$ be the natural projection map 
$\mathbf S^{(\infty)}\to D_\infty\to D_{2^n}$. The required properties are easily seen to be 
satisfied.   
\end{example}

Recall that the elements of the group $G=\mathbb Z\wr_{\mathcal S} \Gamma$  
are pairs $(\Upsilon,g)$ where $g\in \Gamma$ and $\Upsilon$ is a 
finitely supported $\mathbb Z$-lamp configuration  on $\mathcal S$. 
The action of $\Gamma$ on $X$ is easily extended to an action of $G$ by setting 
$(\Upsilon, g)\cdot x=g\cdot x$ for all $(\Upsilon,g)\in G$ and $x\in X$.

\begin{lemma}[Lamps control inverted
orbits] \label{controls-inverted}
Assume that $(\Gamma,X,o)$ satisfies Hypothesis $(\Omega)$. Fix 
a function $F_{n}$ on $\mathcal{P}_{f}(\mathcal{S})$
which is $\left(J_{n},B_{n}\right)$-admissible. Set
\[
\widetilde{U}_{n}=\left\{ f\in\oplus_{x\in\mathcal{S}}(\mathbb{Z})_{x}:
\mbox{\em supp}(f)\in\mbox{\em supp}(F_{n})\right\} .
\]
For  
each integer $n$, consider
$g_{n,0}=(\Upsilon_{n,0},e)\in \mathbb Z\wr_{\mathcal S} \Gamma$ 
and a sequence  $( g_{i}) _{i\ge 1}$ of group elements
in $\mathbb{Z}\wr_{\mathcal{S}}\Gamma$
with $g_{i}\in\left\{ (\pm\mathbf{1}_{1}^{o},e)\right\} \cup S $
for all $i\ge 1$. Set 
\[
g_{k}\dots g_{1}g_{n,0}  = \left(\Upsilon_{n,k},\gamma_{n,k}\right).\] 
Assume that for some $m$,
\[ \forall\, k=0,1,\dots,m,\;\;
\Upsilon_{n,k}\in\widetilde{U}_{n}.\] 
Then, considering the $g_j$ both as formal letters and group elements, 
\[\mathcal{O}\left(g_{m}\dots g_{1};J_{n}\right)\subset
\mbox{\em supp}( \Upsilon_{n,m})=
g_{m}\dots g_{1}\cdot \, \mbox{\em supp}(\Upsilon_0).
\]
\end{lemma}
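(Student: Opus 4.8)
The plan is to argue by induction on $k$, tracking simultaneously (i) the lamp configuration $\Upsilon_{n,k}$, (ii) the base group element $\gamma_{n,k}\in\Gamma$ obtained by evaluating the product $g_k\cdots g_1$ (the lamp letters $(\pm\mathbf 1_1^o,e)$ contribute trivially to the base), and (iii) the inverted orbit $\mathcal O(g_k\cdots g_1;J_n)$ of the seed set $J_n$ under the word formed by the $g_i$'s. The key bookkeeping identity is the standard wreath-product multiplication rule: when one multiplies on the left by a generator $s\in S$, the support of the lamp configuration is translated by $s$, i.e. $\mathrm{supp}(\Upsilon_{n,k+1})=s\cdot\mathrm{supp}(\Upsilon_{n,k})$, whereas multiplying by a lamp letter $(\pm\mathbf 1_1^o,e)$ can only add or remove the single site $\gamma_{n,k}^{-1}\cdot o$ — wait, more precisely, since we multiply on the left, the lamp at site $o$ gets modified, and after the accumulated base element acts, this corresponds to the site $g_k\cdots g_1\cdot o$ in the orbit picture. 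In either case $\mathrm{supp}(\Upsilon_{n,k+1})\subset\mathrm{supp}(\Upsilon_{n,k})\cup\{g_k\cdots g_1\cdot o\}$.

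First I would set up the induction hypothesis: for each $k\le m$, both $\mathcal O(g_k\cdots g_1;J_n)\subset\mathrm{supp}(\Upsilon_{n,k})$ and $\mathrm{supp}(\Upsilon_{n,k})=g_k\cdots g_1\cdot\mathrm{supp}(\Upsilon_{n,0})$. The base case $k=0$ is immediate since $\mathcal O(\varnothing;J_n)=J_n\subset B_n$ and, by $(J_n,B_n)$-admissibility of $F_n$, every $Y\in\mathrm{supp}(F_n)$ satisfies $J_n\subset Y$; the hypothesis $\Upsilon_{n,0}\in\widetilde U_n$ means $\mathrm{supp}(\Upsilon_{n,0})\in\mathrm{supp}(F_n)$, hence $J_n\subset\mathrm{supp}(\Upsilon_{n,0})$. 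For the inductive step, I distinguish the two types of left multipliers. If $g_{k+1}\in S$, then the lamp support is translated, $\mathrm{supp}(\Upsilon_{n,k+1})=g_{k+1}\cdot\mathrm{supp}(\Upsilon_{n,k})$, and correspondingly the new point in the inverted orbit is $g_{k+1}\cdots g_1\cdot o$, which lies in $g_{k+1}\cdot\mathcal O(g_k\cdots g_1;J_n)\subset g_{k+1}\cdot\mathrm{supp}(\Upsilon_{n,k})=\mathrm{supp}(\Upsilon_{n,k+1})$; the older points of $\mathcal O(g_{k+1}\cdots g_1;J_n)$ are exactly $g_{k+1}$ applied to the points of $\mathcal O(g_k\cdots g_1;J_n)$, so they are also contained. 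If instead $g_{k+1}=(\pm\mathbf 1_1^o,e)$, then $\gamma_{n,k+1}=\gamma_{n,k}$, no new orbit point is created (the inverted orbit of a word is unchanged by inserting a letter that acts trivially on $X$), and $\mathrm{supp}(\Upsilon_{n,k+1})\subset\mathrm{supp}(\Upsilon_{n,k})\cup\{o\}$, with $o\in J_n\subset\mathrm{supp}(\Upsilon_{n,k})$ by the induction hypothesis, so in fact the support can only shrink or stay the same while still containing $\mathcal O$; here I must be slightly careful that the support does not shrink below $\mathcal O(g_{k+1}\cdots g_1;J_n)$, but since we are assuming $\Upsilon_{n,k+1}\in\widetilde U_n$, its support lies in $\mathrm{supp}(F_n)$, which by admissibility contains $J_n$, and the orbit points already present are unaffected by a lamp operation at $o\in J_n$ — removing a lamp at $o$ does not remove $o$ from the support as long as... actually here one uses that $J_n\subset\mathrm{supp}(\Upsilon_{n,k+1})$ is forced by admissibility, and the other orbit points coincide with those of step $k$, which persist because lamp operations at $o$ do not touch them.

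The second displayed conclusion, $\mathrm{supp}(\Upsilon_{n,m})=g_m\cdots g_1\cdot\mathrm{supp}(\Upsilon_{n,0})$, is not quite what the naive translation argument gives, because lamp letters genuinely change the configuration (and hence potentially its support) while fixing the base element. The resolution is that this equality is again a consequence of $(J_n,B_n)$-admissibility: since each $\Upsilon_{n,k}\in\widetilde U_n$, its support is some $Y\in\mathrm{supp}(F_n)$, and by admissibility $Y=h\cdot A$ for a fixed set $A$ and some $h\in G$ with $J_n\subset Y\subset B_n$; combined with the fact that the base element accumulated after step $k$ is $\gamma_{n,k}$ and that the lamp-support is carried along by the base action between lamp operations, one deduces the support must equal $\gamma_{n,k}\cdot A=g_k\cdots g_1\cdot\mathrm{supp}(\Upsilon_{n,0})$. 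The hard part — and the step I expect to be the main obstacle — is precisely this reconciliation: making rigorous that the \emph{admissibility constraint} (support always of the shape $g\cdot A$ with $J_n\subset g\cdot A\subset B_n$) forces the support to track the base element exactly, so that lamp letters, which could a priori enlarge the support beyond a translate of $\mathrm{supp}(\Upsilon_{n,0})$, are in fact prevented from doing so as long as the trajectory stays in $\widetilde U_n$. Once that is pinned down, the inclusion $\mathcal O(g_m\cdots g_1;J_n)\subset\mathrm{supp}(\Upsilon_{n,m})$ follows by the induction above, and unwinding $\mathrm{supp}(\Upsilon_{n,m})=g_m\cdots g_1\cdot\mathrm{supp}(\Upsilon_{n,0})$ gives the stated identity.
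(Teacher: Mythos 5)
Your proposal runs the same induction with the same two-case split as the paper's proof, and the overall approach is sound. Your handling of the lamp-letter case is in fact a correct variant of the paper's: where the paper notes that $\widetilde{U}_n$-membership forces $|\mbox{supp}(\Upsilon_{n,k+1})| = |A_n| = |\mbox{supp}(\Upsilon_{n,k})|$, so that the one-site lamp move cannot alter the support, you instead observe that admissibility forces $o\in J_n\subset\mbox{supp}(\Upsilon_{n,k+1})$, which again pins the support down. Either argument closes that case.

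Two things need repair. First, your inverted-orbit recursion is off: the correct identity is $\mathcal{O}(g_{k+1}\cdots g_1;J_n) = J_n\cup g_{k+1}\cdot\mathcal{O}(g_k\cdots g_1;J_n)$, so the points appearing for the first time at step $k+1$ are all of $J_n$, not the single endpoint $g_{k+1}\cdots g_1\cdot o$ (that endpoint already lies in the translated previous orbit). As written, your $s\in S$ case silently drops $J_n$ from the inclusion; the fix is the admissibility fact you already invoke elsewhere, namely that $\Upsilon_{n,k+1}\in\widetilde{U}_n$ forces $J_n\subset\mbox{supp}(\Upsilon_{n,k+1})$. Relatedly, the bookkeeping claim $\mbox{supp}(\Upsilon_{n,k+1})\subset\mbox{supp}(\Upsilon_{n,k})\cup\{g_k\cdots g_1\cdot o\}$ in your opening paragraph should read $\cup\{o\}$, since left multiplication by $(\pm\mathbf 1_1^o,e)$ modifies the lamp at $o$; your inductive step does use the correct $\{o\}$, so this is an internal inconsistency rather than a fatal error. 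Second, your closing paragraph reopens the identity $\mbox{supp}(\Upsilon_{n,m}) = g_m\cdots g_1\cdot\mbox{supp}(\Upsilon_{n,0})$ as ``the hard part'' and then asserts it without deriving it, but your inductive case analysis has already established it: the $s\in S$ case translates the support by $g_{k+1}$, the lamp case leaves it unchanged (by the admissibility argument above), and lamp letters act trivially on $X$, so the product telescopes. That paragraph should be cut; as written it leaves the impression that the proof is incomplete when in fact it is not.
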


\begin{proof} In this lemma, $n$ is fixed so, for the proof, we drop
the reference to $n$. The proof is by induction on $m$. 
The claim is obviously true for $m=0$ given the
choice of $\Upsilon_{0}$. Assume that the property holds for length $m$. For
$m+1$, we discuss two cases separately.

In the first case, when $g_{m+1}\in\left\{ \pm\mathbf{1}_{1}^{o}\right\} $,
the move $g_{m+1}$ does not change the inverted orbit of the trajectory, 
that is,
\[
\mathcal{O}\left(g_{m+1}\dots g_{1};J\right)=\mathcal{O}\left(g_{m}\dots g_{1};J\right).
\]
By the induction hypothesis
$$\mathcal{O}\left(g_{m}\dots g_{1};J\right)\subset
\mbox{ supp } \Upsilon_{m}=
g_{m}\dots g_{1}\cdot\, \mbox{supp}(\Upsilon_0).$$
From the definition of $\widetilde{U}$ and since, by assumption, 
$\Upsilon_{m+1}\in\widetilde{U}$, we must have 
$\mbox{supp}(\Upsilon_{m+1})= g\cdot\, \mbox{supp}(\Upsilon_0)$ 
for some $g\in \Gamma$.  
This implies that $|\mbox{supp}(\Upsilon_{m+1})|=|\mbox{supp}(\Upsilon_m)|$
and it follows that
$g_{m+1}$ did not produce any change in the support of the lamp configuration, 
that is
$$\mbox{supp} (\Upsilon_{m+1})=\mbox{supp}(\Upsilon_m)=
g_{m}\dots g_{1}\cdot\, \mbox{supp}(\Upsilon_0)
=g_{m+1}\dots g_{1}\cdot\, \mbox{supp}(\Upsilon_0).$$
It follows that
\[\mathcal{O}\left(g_{m+1}\dots g_{1};J\right)\subset
\mbox{ supp } \Upsilon_{m+1}=
g_{m+1}\dots g_{1}\cdot\, \mbox{supp}(\Upsilon_0)\]
as desired.

In the second case, when $g_{m+1}\in S$, the induction hypothesis gives 
\[
\mathcal{O}\left(g_{m}\dots g_{1};J\right)\subset\mbox{supp}(\Upsilon_{m})
=g_m\dots g_1\cdot\, \mbox{supp}(\Upsilon_0).
\]
Also, we have
\[
\mathcal{\mathcal{O}}\left(g_{m+1}\dots g_{1};J_{n}\right)=
J\cup g_{m+1}\cdot\,\mathcal{\mathcal{O}}\left(g_{m}\dots g_{1};J\right)\]
and\[
\ \mbox{supp}(\Upsilon_{m+1})=g_{m+1}\cdot\,\mbox{ supp}(\Upsilon_{m}).
\]
Since $F$ is $\left(J,B\right)$-admissible  and by assumption,
$\Upsilon_{m+1}=g_{m+1}\cdot \Upsilon_{m}\in\widetilde{U}$, it follows that
$g_{m+1}\cdot\mbox{ supp} (\Upsilon_{m})\in\mbox{supp}(F)$ and
$J\subset g_{m+1}\cdot \mbox{ supp}(\Upsilon_{m})$. 
Therefore, we have
\begin{eqnarray*}
\mathcal{\mathcal{O}}\left(g_{m+1}\dots g_{1};J_{n}\right) & = & J\cup
g_{m+1}\cdot\,
\mathcal{\mathcal{O}}\left(g_{m}\dots g_{1};J\right)\\
&\subset & J\cup \left(g_{m+1}\cdot \,\mbox{ supp} (\Upsilon_{m})\right)\\
&=&
\mbox{supp}(\Upsilon_{m+1})= g_{m+1}\dots g_1\cdot \, \mbox{supp}(\Upsilon_0).
\end{eqnarray*}
Combining the two cases, the lemma follows. 
\end{proof}

\subsection{Comparison of two local graph structures}

In this section we compare two graph structures, one on 
$G=\mathbb{Z}\wr_{\mathcal{S}}\Gamma$
and the other on product set
$\left(\oplus_{x\in\mathcal{S}}(\mathbb{Z})_{x}\right)\times \mathbf \Gamma^{n}$. 
Our goal is
to show that certain certain subgraphs in these two graphs are isomorphic.

In $G=\mathbb{Z}\wr_{\mathcal{S}}\Gamma$,
let $Q=\left\{ \left(\pm \mathbf{1}_{1}^{o},e_{\Gamma}\right)\right\} \cup S$
be the standard switch-or-walk generating set (recall that $S$ is a finite
symmetric generating set of $\Gamma$). The (left) directed Cayley graph of $G$
with respect to $Q$ is obtained by putting a directed edge $\left(g,qg\right)$
between $g\in G$ and $qg$ where $q\in Q$. The edge $(g,qg)$ is
labeled by $q$, where $q\in Q$ is either a move induced by 
$\mathbf{1}_{1}^{o}$ in the lamp configuration, or a move induced by $s\in S$. 
We write $\left(G,Q\right)$
for this directed labeled Cayley graph of $G$ with respect to generating
set $Q$.

On the product 
$\left(\oplus_{x\in\mathcal{S}}(\mathbb{Z})_{x}\right)\times \mathbf \Gamma^{n}$,
define a graph structure by connecting vertices with one of the following
two types of edges: 
\begin{itemize}
\item connect $(f,\gamma)$ and $(f\pm \mathbf{1}_{1}^{o},\gamma)$
with a directed edge labeled with $\pm \mathbf{1}_{1}^{o}$; 
\item connect
$\left(f,\gamma\right)$ and $\left(s^{-1}\cdot f,\vartheta_{n}(s)\gamma\right)$
with a directed edge labeled with $s\in S$. 
\end{itemize}
We denote the resulting
directed graph by $\left(\left(\oplus_{x\in\mathcal{S}}(\mathbb{Z})_{x}\right)\times 
\mathbf \Gamma^{n},P\right)$. Note that it is not necessarily connected. 

On both graphs edges are labeled by either with  $\mathbf{1}_{1}^{o}$ 
(which corresponds to changing the lamp value at $o$) 
or with a generator $s\in S$.
Given a word $\omega=p_{l}...p_{1}$ in the alphabet 
$Q$,
it makes sense to follow $\omega$ as paths on both graphs. Starting
from the same initial lamp configuration on both graphs, from definition
of the graph structures, following the same word $\omega$, the trajectories
of the lamp function $\Upsilon_{k}$ are exactly the same on the two
graphs.

\begin{notation}[Definition of $\mathcal{G}_{n}\left(\Upsilon_{0}\right)$
and $\mathbf \Gamma^{n}\left(\Upsilon_{0}\right)$]\label{component}
Let $F_n$ be an $(J_n,B_n)$-admissible function. Let 
$\Upsilon_{0}\in\oplus_{x\in\mathcal{S}}(\mathbb{Z})_{x}$ be 
such that $\mbox{ supp}(\Upsilon_{0})\in\mbox{ supp}(F_{n})$. 
\begin{itemize}
\item On the graph $(G,Q)$ consider the subgraph with vertex set 
$$\left\{ \left(f,\gamma\right):\ \mbox{ supp}(f)\in
\mbox{ supp}(F_{n})\right\} $$
and edge set made of those original edges of $(G,Q)$ that connects points in
the selected vertex set. Let $\mathcal{G}_{n}\left(\Upsilon_{0}\right)$
be the connected component of this subgraph that contains the
vertex $\left(\Upsilon_{0},e_{G}\right)$. 
\item On the graph $\left(\left(\oplus_{x\in\mathcal{S}}(\mathbb{Z})_{x}\right)\times \mathbf \Gamma^{n},P\right)$,
consider the subgraph with vertex set 
$$\left\{ \left(f,\gamma\right):\ \mbox{ supp}(f)\in\mbox{ supp}(F_{n})\right\} $$
and edge set made of those original edges of $\left(\left(\oplus_{x\in\mathcal{S}}(\mathbb{Z})_{x}\right)\times \mathbf \Gamma^{n},P\right)$ that connects points in
the selected vertex set. Let $\mathbf \Gamma^{n}\left(\Upsilon_{0}\right)$ 
be the connected
component of this subgraph that contains the vertex 
$\left(\Upsilon_{0},e_{\Gamma}\right)$. 
\end{itemize}
\end{notation}

Next we show that the graphs $\mathcal{G}_{n}\left(\Upsilon_{0}\right)$
and $\mathbf \Gamma^{n}\left(\Upsilon_{0}\right)$ can be identified. 

\begin{lemma}\label{graph-iden}
Suppose $(G,X,o)$ satisfies  $(\Omega)$.
Let the subgraphs $\mathcal{G}_{n}\left(\Upsilon_{0}\right)$ and
$\mathbf \Gamma^{n}\left(\Upsilon_{0}\right)$ be defined as above. 
\begin{description}
\item [{(i)}] The map $\vartheta_{n}$ induces a graph isomorphism 
\[
\Theta_{n}:\mathcal{G}_{n}\left(\Upsilon_{0}\right)\to
\mathbf \Gamma^{n}\left(\Upsilon_{0}\right).
\]

\item [{(ii)}] For any vertex $g\in\mathcal{G}_{n}\left(\Upsilon_{0}\right)$,
$g$ is connected by a labeled edge with an exterior vertex in 
$\mathcal{G}_{n}\left(\Upsilon_{0}\right)^{c}$
in $\left(G,Q\right)$ if and only if $\Theta_{n}(g)$ is connected
to $\mathbf \Gamma^{n}\left(\Upsilon_{0}\right)^{c}$ by an edge with
the same label. 
\end{description}
\end{lemma}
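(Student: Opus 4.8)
The natural candidate is the ``follow the same word'' map. By definition of connected component, any vertex $g=(\Upsilon,\gamma)$ of $\mathcal{G}_{n}(\Upsilon_0)$ is reached from $(\Upsilon_0,e_{G})$ by a path in $(G,Q)$ along which every vertex has lamp configuration supported in a member of $\mbox{supp}(F_n)$. Let $\omega$ be the word over $Q$ labelling this path, and let $w$ be the word over $\mathbf S$ obtained from $\omega$ by deleting the lamp letters $(\pm\mathbf{1}_{1}^{o},e)$. I would define $\Theta_{n}(g)=(\Upsilon,\vartheta_{n}(w))$; equivalently, $\Theta_n(g)$ is the endpoint of the path obtained by following $\omega$ in the $P$-graph starting from $(\Upsilon_0,e_{\Gamma})$. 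Since the two graph structures move the lamp configuration in exactly the same way (the observation made just before Notation \ref{component}), that $P$-path stays inside the prescribed vertex set, so its endpoint lies in $\mathbf\Gamma^{n}(\Upsilon_0)$, and its $\mathbf\Gamma^{n}$-coordinate is precisely $\vartheta_{n}(w)$, because each $s$-edge left-multiplies that coordinate by $\vartheta_{n}(s)$ while lamp edges leave it unchanged.

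The one substantial point is that $\Theta_n$ is well defined, and this is exactly where Lemma \ref{controls-inverted} enters. Along such a path every lamp configuration $\Upsilon_k$ lies in $\widetilde U_n$ (the set introduced in Lemma \ref{controls-inverted}), so applying that lemma to each prefix of the path gives $\mathcal{O}(w';J_n)\subset\mbox{supp}(\Upsilon_k)$ for every prefix $w'$ of $w$; since $\mbox{supp}(\Upsilon_k)\in\mbox{supp}(F_n)$ and $F_n$ is $(J_n,B_n)$-admissible, $\mbox{supp}(\Upsilon_k)\subset B_n$, hence every prefix $w'$ lies in $\Omega(J_n,B_n)$. As each letter of $\mathbf S$ lies in $\Omega(J_n,B_n)$, the partial multiplicativity in $(\Omega)$ yields $\vartheta_n(w)=\vartheta_n(w_1)\cdots\vartheta_n(w_{|w|})$; combining this with the fact that $\vartheta_n(w)$ depends only on the evaluation $\overline w\in\Gamma$ shows that $\Theta_n$ is independent of the chosen path and is in fact of the form $(\Upsilon,\gamma)\mapsto(\Upsilon,\overline\vartheta_n(\gamma))$, where $\overline\vartheta_n\colon\overline\Omega(J_n,B_n)\to\mathbf\Gamma^n$ is the induced ``local embedding'' from the remark after Definition \ref{def-Omega}. (Every vertex of $\mathcal{G}_{n}(\Upsilon_0)$ has $\Gamma$-coordinate in $\overline\Omega(J_n,B_n)$, by the same argument.)

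Once the map is in hand, part (i) is formal. It is a label-preserving graph homomorphism: the $q$-edge out of $(\Upsilon,\gamma)$ is carried to the $q$-edge out of $(\Upsilon,\overline\vartheta_n(\gamma))$, using $\overline\vartheta_n(s\gamma)=\vartheta_n(s)\overline\vartheta_n(\gamma)$ (the path to $s\gamma$ is the path to $\gamma$ extended by $s$, so all of its prefixes still lie in $\Omega(J_n,B_n)$) and again the identical evolution of the lamp coordinate. It is onto $\mathbf\Gamma^{n}(\Upsilon_0)$, since any vertex there is reached from $(\Upsilon_0,e_{\Gamma})$ by a $P$-path, following the same word in $(G,Q)$ keeps one inside $\mathcal{G}_{n}(\Upsilon_0)$ (the defining condition involves only the lamp coordinate), and the resulting vertex maps to the given one. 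It is one-to-one, because $\Theta_n(\Upsilon,\gamma)=\Theta_n(\Upsilon',\gamma')$ forces $\Upsilon=\Upsilon'$ (the lamp coordinate is read off any path) and $\overline\vartheta_n(\gamma)=\overline\vartheta_n(\gamma')$ with $\gamma,\gamma'\in\overline\Omega(J_n,B_n)$, whence $\gamma=\gamma'$ by injectivity of $\overline\vartheta_n$ on $\overline\Omega(J_n,B_n)$. As the construction is symmetric in the two graphs, $\Theta_n$ is an isomorphism. For part (ii): since $g$ and its neighbour $qg$ are joined by an edge of $(G,Q)$ and $g\in\mathcal{G}_{n}(\Upsilon_0)$, the neighbour $qg$ fails to lie in $\mathcal{G}_{n}(\Upsilon_0)$ precisely when it is not a vertex of the defining subgraph at all, i.e.\ when the lamp coordinate of $qg$ has support outside $\mbox{supp}(F_n)$; that lamp coordinate depends only on $q$ and on the lamp coordinate of $g$. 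Exactly the same description holds on the $P$-side for $\Theta_n(g)$ and the edge with the same label $q$, and the lamp coordinates of $g$ and $\Theta_n(g)$ agree, so the two exterior-edge conditions coincide label by label.

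I expect the only genuinely non-formal step to be the well-definedness of $\Theta_n$: one must know that every $\mathbf S$-word tracing a path inside $\mathcal{G}_{n}(\Upsilon_0)$, together with all of its prefixes, lies in $\Omega(J_n,B_n)$, which is precisely the combination of Lemma \ref{controls-inverted} with $(J_n,B_n)$-admissibility of $F_n$; the rest is bookkeeping about the two graph structures.
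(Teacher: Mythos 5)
Your proposal is correct and follows essentially the same route as the paper: define $\Theta_n$ by tracking paths and deleting lamp letters, invoke Lemma \ref{controls-inverted} together with $(J_n,B_n)$-admissibility to put the resulting $\mathbf{S}$-word in $\Omega(J_n,B_n)$, use the two evaluation/multiplicativity clauses of Hypothesis~$(\Omega)$ for well-definedness, injectivity and edge-preservation, use the identical evolution of the lamp coordinate on both graphs for surjectivity, and note that part (ii) is automatic because the boundary criterion is phrased purely in terms of the lamp coordinate. The only cosmetic difference is that you expand $\vartheta_n(w)$ as a product over single letters, which is slightly more than is needed — the second bullet of $(\Omega)$ (that $\vartheta_n$ depends only on the evaluation in $\Gamma$) already gives well-definedness directly — but this does no harm.
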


\begin{proof}
To simplify notation and since as $\Upsilon_{0}$ is fixed,
we omit the reference to $\Upsilon_{0}$ in
$\mathcal{G}_{n}\left(\Upsilon_{0}\right)$ and $\mathbf \Gamma^{n}
\left(\Upsilon_{0}\right)$. 

Define the map $\Theta_{n}$ as follows. First, set 
$\Theta_{n}\left(\Upsilon_{0},e_{\Gamma}\right)=\left(\Upsilon_{0},e_{H}\right)$.
For a vertex $\left(f,\gamma\right)$ in $\mathcal{G}_{n}$, 
let $g_{k}\ldots g_{1}$
denote a path inside $\mathcal{G}_{n}$ that starts at 
$\left(\Upsilon_{0},e_{\Gamma}\right)$
and ends at $(f,\gamma)$, that is 
\begin{align*}
g_{k}\ldots g_{1}\left(\Upsilon_{0},e_{\Gamma}\right) & =(f,\gamma),\\
g_{j}\ldots g_{1}\left(\Upsilon_{0},e_{\Gamma}\right)\in\mathcal{G}_{n} & \mbox{ for all }1\le j\le k.
\end{align*}
Let $\omega$ be the word $\omega=\widehat{g}_{k}\ldots\widehat{g}_{1}$
where $\widehat{g}_{i}=g_{i}$ if $g_{i}\in S$, and $\widehat{g_{i}}=e$
if $g_{i}\in\left\{ \left(\pm\mathbf{1}_{1}^{o},e_{\Gamma}\right)\right\} $,
that is generators in $S$ are kept and generators in the lamp direction
are removed. With this notation, set
\[
\Theta_{n}\left(f,\gamma\right)=\left(f,\vartheta_{n}(\omega)\right).
\]
Since $g_{k}\ldots g_{1}$ is a path inside $\mathcal{G}_{n}$, Lemma
\ref{controls-inverted} implies that 
\[
\mathcal{O}\left(g_{k}\ldots g_{1};J_{n}\right)=\mathcal{O}\left(\omega;J_{n}\right)\subseteq\mbox{supp}(f)\subset B_{n}.
\]
Thus $\omega\in\Omega\left(J_{n},B_{n}\right)$ and, consequently, $\omega$ belongs to the domain of
$\vartheta_{n}$. To show that the map $\Theta_{n}$ is well defined, suppose
$g'_{l}\ldots g'_{1}$ is another path inside $\mathcal{G}_{n}(\Upsilon_0)$ that
starts at $\left(\Upsilon_{0},e_{\Gamma}\right)$ and ends at $(f,\gamma)$,
then the corresponding $\omega'=\widehat{g}'_{l}\ldots\widehat{g}'_{1}$
is an element in $\Omega\left(J_{n},B_{n}\right)$ as well, and $\omega$ and 
$\omega'$ have the same evaluation in $\Gamma$.
Then Hypothesis ($\Omega$) 
implies that $\vartheta_{n}(\omega)=\vartheta_{n}(\omega')$. That is, $\Theta_{n}(f,\gamma)$ does not depend on the choice of path as desired. 

Hypothesis ($\Omega$) guarantees that $\Theta_{n}$
is injective. To show that $\Theta_{n}$ is surjective, for any vertex
$(f,\gamma)$ in $\mathbf\Gamma^{n}(\Upsilon_0)$, let $p_{l}...p_{1}$ be a path inside
$\mathbf \Gamma^{n}(\Upsilon_0)$ that starts at $\left(\Upsilon_{0},e_{H}\right)$
and ends at $(f,\gamma)$.  If one follows the same path in $(G,Q)$
starting from $\left(\Upsilon_{0},e_{\Gamma}\right)$, by definition
of the two graph structures, the lamp component $\Upsilon_{k}$
along the two paths are exactly the same. Thus the path defined by 
$p_{l}...p_{1}$ in $(G,Q)$ and starting at 
$\left(\Upsilon_{0},e_{\Gamma}\right)$ remains in $\mathcal{G}_{n}(\Upsilon_0)$. It ends
at some element $(f,\gamma)$. Then, by the definition of $\Theta_{n}$,
we have 
$\Theta_{n}\left(f,\gamma\right)=(f,\gamma)$. This proves that 
$\Theta_n$ is surjective.

By Hypothesis $(\Omega$), when $\omega_{1},\omega_{2},\omega_{1}\omega_{2}$
are all in $\Omega\left(J_{n},B_{n}\right)$, we have 
\[
\vartheta_{n}(\omega_{1}\omega_{2})=\vartheta_{n}(\omega_{1})\vartheta_{n}(\omega_{2}).
\]
Hence, if there is a directed edge $\left((f,\gamma),(f',\gamma')\right)$
in $\mathcal{G}_{n}$, then there is a directed edge $\left(\Theta_{n}(f,\gamma),\Theta_{n}(f',\gamma')\right)$
in $\mathbf \Gamma^{n}$ with the same label. Therefore $\Theta_{n}$
is a bijection between $\mathcal{G}_{n}$ and $\mathbf \Gamma^{n}$ that
preserves edge relations. It is a graph isomorphism as desired.

Part (ii) is an immediate consequence of the fact that the lamp
component changes in the same way on both graphs $(G,Q)$ and 
$\left(\left(\oplus_{x\in\mathcal{S}}(\mathbb{Z})_{x}\right)
\times \mathbf \Gamma^{n},P\right)$ and that the boundaries of 
$\mathcal{G}_{n}$ and 
$\mathbf \Gamma^{n}$ are defined in terms of the lamp component only, 
without referring to $\gamma$. 
\end{proof}

\subsection{Test functions} \label{sec-Test}

We now consider transition kernels and test functions
on the graphs  $(G,Q)$ and $\left(\left(\oplus_{x\in\mathcal{S}}(\mathbb{Z})_{x}\right)\times \mathbf \Gamma^{n},P\right)$. 

We equip $G=\mathbb{Z}\wr_{\mathcal{S}}\Gamma$ with the right-invariant 
Markov transition kernel associated with the
switch-or-walk measure $\mathfrak{q}=\frac{1}{2}(\eta+\mu)$ where
$\eta$ is uniform on $\left(\pm\mathbf{1}_{1}^{o},e_{\Gamma}\right)$
and $\mu$ is uniform on $S$. 

On the graph $\left(\left(\oplus_{x\in\mathcal{S}}(\mathbb{Z})_{x}\right)
\times \mathbf \Gamma^{n},P\right)$,
define a Markov transition kernel $\mathfrak{p}$ as 
\[
\mathfrak{p}\left((f,\gamma),(f',\gamma')\right)=
\frac{1}{4}\mathbf{1}_{\left\{ f\pm\mathbf{1}_{1}^{o}\right\} }(f')+\frac{1}{2}\sum_{s\in S}\mathbf{1}_{\{\vartheta_{n}(s)\gamma\}}(\gamma')\mathbf{1}_{s^{-1}\cdot f\}}(f')\mu(s).
\]
That is $\mathfrak{p}$ either changes the configuration at $o$ by
$\pm1$, or translate $(f,\gamma)$ by a generator $s\in S$.
It follows that for $(f,\gamma)$, $(f',\gamma')$ in $\mathcal{G}_{n}$,
\[
\mathfrak p\left(\Theta_{n}(f,\gamma),\Theta_{n}(f',\gamma')\right)=
\mathfrak q\left(\left(f',\gamma'\right)(f,\gamma)^{-1}\right).
\]

We now focus on the subgraphs $\mathcal G_n(\Upsilon_0) $ and 
$\mathbf \Gamma^n(\Upsilon_0)$ introduced in Notation  \ref{component}. 
The ingredients we will use to build a test function supported on $\mathcal{G}_{n}\left(\Upsilon_{0}\right)$
include 
\begin{description}
\item [{(a)}] a $\left(J_{n},B_{n}\right)$-admissible function $F_{n}$
on $\mathcal{P}_{f}(X)$, 
\item [{(b)}] a function $\psi_{n}$ on $\mathbf \Gamma^{n}$ with finite support.
\end{description}
Given a symmetric probability measure $\mu$
supported on the generating set $S$, we introduce the Rayleigh quotient
of the functions $F_{n}$ and $\psi_{n}$ as 
\begin{align*}
\mathcal{Q}_{\mathcal{P}_{f}(\mathcal{S}),\mu}\left(F_{n}\right) & :=\frac{\sum_{s\in\Gamma}\mu(s)\left\Vert s\cdot F_{n}-F_{n}\right\Vert _{L^{2}(\mathcal{P}_{f}(X))}^{2}}{\left\Vert F_{n}\right\Vert _{L^{2}(\mathcal{P}_{f}(X))}^{2}}\\
\mathcal{Q}_{\mathbf \Gamma^{n},\mu}\left(\psi_{n}\right) & :=\frac{\sum_{s\in\Gamma,x\in \mathbf \Gamma^{n}}\mu(s)\left|\psi_{n}(x)-\psi_{n}\left(\vartheta_{n}(s)x\right)\right|^{2}}{\left\Vert \psi_{n}\right\Vert _{L^{2}(\mathbf \Gamma^{n})}^{2}}.
\end{align*}
We also set
\[
Q_{\mu}(n):=\mathcal{Q}_{\mathcal{P}_{f}(\mathcal{S}_{n}),\mu}\left(F_{n}\right)+\mathcal{Q}_{\mathbf\Gamma^{n},\mu}\left(\psi_{n}\right).
\] 

\begin{remark} Assuming that there is a sequence of $(J_n,B_n)$-admissible
functions $F_n$ with $\mathcal Q_{\mathcal P_f(\mathcal S),\mu}(F_n)$ 
tending to zero implies that the action of $\Gamma$ on $\mathcal S$ is 
extensively amenable. Lemma \ref{lem-ResA} in the Appendix below provide appropriate test functions $F_n$ based on resistance estimates.
\end{remark}

\begin{proposition}\label{function}
Assume that $(\Gamma,X,o)$ satisfies  $(\Omega)$.
Let $F_{n}$ be a $\left(J_{n},B_{n}\right)$-admissible function
on $\mathcal{P}_{f}(\mathcal{S})$ and $\psi_{n}$ a function on $\mathbf
\Gamma^{n}$
with finite support. Then for the standard switch-or-walk measure 
$\mathfrak q=\frac{1}{2}(\eta+\mu)$
on $G=\mathbb{Z}\wr_{\mathcal{S}}\Gamma$, we have
\[
\Lambda_{2,G,q}(v)\le  Q_{\mu}(n)\]
for all $v$ such that
\[v\ge |\vartheta_n(\Omega(J_n,B_n)) \cap \mbox{\em supp}(\psi_n)h_0^{-1}|
Q_{\mu}(n)^{-\left|B_{n}\right|}\]
for some $h_0\in \mbox{\em supp}(\psi_n)\subset
\mathbf\Gamma^n$.
\end{proposition}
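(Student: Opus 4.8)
The plan is to build an explicit test function $\Phi_n$ on $G=\mathbb Z\wr_{\mathcal S}\Gamma$ supported on the connected component $\mathcal G_n(\Upsilon_0)$ of Notation \ref{component}, using the factorization of that component provided by Lemma \ref{graph-iden}, and then to estimate its Rayleigh quotient and the size of its support. Concretely, I would fix a lamp configuration $\Upsilon_0$ with $\mathrm{supp}(\Upsilon_0)\in\mathrm{supp}(F_n)$ (the natural choice is to take $\Upsilon_0$ related to the ``reference set'' $A$ in Definition \ref{adm-F}, so that $J_n\subset\mathrm{supp}(\Upsilon_0)\subset B_n$), and define a function on the vertex set $\{(f,\gamma):\mathrm{supp}(f)\in\mathrm{supp}(F_n)\}$ of $\mathcal G_n(\Upsilon_0)$ by a product-type formula
\[
\Phi_n(f,\gamma)=F_n(\mathrm{supp}(f))\,\psi_n(\vartheta_n(\omega))\prod_{x\in\mathcal S}\theta(f(x)),
\]
where $\omega$ is any $\mathbf S^{(\infty)}$-word representing the reduction of a path from $(\Upsilon_0,e)$ to $(f,\gamma)$ inside $\mathcal G_n(\Upsilon_0)$ (well-defined by Hypothesis $(\Omega)$ and Lemma \ref{controls-inverted}, exactly as in the proof of Lemma \ref{graph-iden}), and $\theta$ is a fixed cutoff on $\mathbb Z$, e.g. $\theta=\mathbf 1_{[-1,1]}$ or a small triangular bump; the cutoff over the lamp values at sites of $B_n$ contributes the $|B_n|$-th power appearing in the volume bound. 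Via the graph isomorphism $\Theta_n$ of Lemma \ref{graph-iden} and the identity $\mathfrak p(\Theta_n(f,\gamma),\Theta_n(f',\gamma'))=\mathfrak q((f',\gamma')(f,\gamma)^{-1})$ from Section \ref{sec-Test}, the Dirichlet form $\mathcal E_{G,\mathfrak q}(\Phi_n,\Phi_n)$ restricted to edges inside $\mathcal G_n(\Upsilon_0)$ equals the Dirichlet form of the pulled-back function on $\mathbf\Gamma^n(\Upsilon_0)\subset(\oplus_x(\mathbb Z)_x)\times\mathbf\Gamma^n$ with respect to $\mathfrak p$.

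Next I would estimate that Dirichlet form. Moves of type $\pm\mathbf 1_1^o$ only change a single lamp value at $o$, so their contribution is controlled by the Dirichlet sum of $\theta$ against itself, which is a fixed constant (and since $\Phi_n$ carries the $\psi_n$- and $F_n$-factors unchanged along such moves, one gets a term $\lesssim\|F_n\|^2\|\psi_n\|^2\cdot(\text{const})$ — here I'd want a cheap bound rather than a sharp one, consistent with the form of the statement). Moves of type $s\in S$ simultaneously translate the lamp configuration by $s$ (affecting the $F_n$-factor and the product of cutoffs) and multiply the $\mathbf\Gamma^n$-coordinate by $\vartheta_n(s)$ (affecting the $\psi_n$-factor); expanding $|s\cdot\Phi_n-\Phi_n|^2$ and using the elementary bound $|ab-a'b'|^2\le 2|a-a'|^2|b|^2+2|a'|^2|b-b'|^2$ separates the sum into a piece governed by $\mathcal Q_{\mathcal P_f(\mathcal S),\mu}(F_n)$ and a piece governed by $\mathcal Q_{\mathbf\Gamma^n,\mu}(\psi_n)$, after dividing through by $\|\Phi_n\|_{L^2(G)}^2$. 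One must also check that the cutoff-product factor is essentially constant along an $s$-move (the support of a lamp configuration moves by at most one generator step, so at most $O(1)$ cutoff factors change, each by a bounded multiplicative amount), so it can be absorbed into the constants. Dividing everything by $\|\Phi_n\|_{L^2(G)}^2$ and recalling $Q_\mu(n)=\mathcal Q_{\mathcal P_f(\mathcal S),\mu}(F_n)+\mathcal Q_{\mathbf\Gamma^n,\mu}(\psi_n)$ yields $\mathcal E_{G,\mathfrak q}(\Phi_n,\Phi_n)\le C\,Q_\mu(n)\,\|\Phi_n\|_{L^2(G)}^2$, which is the isoperimetric bound $\Lambda_{2,G,\mathfrak q}(v)\le C\,Q_\mu(n)$ at volume $v=|\mathrm{supp}(\Phi_n)|$; the factor $C$ can be normalized away or absorbed since the statement is up to $\simeq$.

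Finally I would bound the support size. A vertex $(f,\gamma)\in\mathrm{supp}(\Phi_n)$ requires: $f(x)\in\mathrm{supp}(\theta)$ for all $x$ and $\mathrm{supp}(f)\in\mathrm{supp}(F_n)\subset\{g\cdot A\subset B_n\}$, which gives at most $|\mathrm{supp}(\theta)|^{|B_n|}$ choices of $f$; and $\vartheta_n(\omega)\in\mathrm{supp}(\psi_n)$ where $\omega\in\Omega(J_n,B_n)$, so $\gamma=\bar\omega$ lies in $\overline\Omega(J_n,B_n)$ with $\vartheta_n$-image in $\mathrm{supp}(\psi_n)$ — and since $\vartheta_n$ is injective on $\overline\Omega(J_n,B_n)$, the number of such $\gamma$ is at most $|\vartheta_n(\Omega(J_n,B_n))\cap\mathrm{supp}(\psi_n)h_0^{-1}|$ for a base-point $h_0\in\mathrm{supp}(\psi_n)$ (the translate by $h_0^{-1}$ is a harmless renormalization that places the empty word into the relevant set). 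Multiplying, $|\mathrm{supp}(\Phi_n)|\le |\vartheta_n(\Omega(J_n,B_n))\cap\mathrm{supp}(\psi_n)h_0^{-1}|\cdot|\mathrm{supp}(\theta)|^{|B_n|}$, and choosing $\theta$ so that $|\mathrm{supp}(\theta)|^{-1}\asymp Q_\mu(n)$ (or simply bounding $|\mathrm{supp}(\theta)|^{|B_n|}\le Q_\mu(n)^{-|B_n|}$ after a harmless rescaling, which is legitimate because $Q_\mu(n)$ will be small in applications) produces the stated range $v\ge|\vartheta_n(\Omega(J_n,B_n))\cap\mathrm{supp}(\psi_n)h_0^{-1}|\,Q_\mu(n)^{-|B_n|}$; by monotonicity of $\Lambda_{2,G,\mathfrak q}$ the bound then holds for all larger $v$. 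The main obstacle I anticipate is the bookkeeping in the Dirichlet-form estimate for the $s$-moves: one must verify carefully that the relabeling $\omega\mapsto\vartheta_n(\omega)$ is compatible with the multiplicative structure (which is exactly what Hypothesis $(\Omega)$ and Lemma \ref{graph-iden}(ii) are designed to guarantee, but the edges leaving $\mathcal G_n(\Upsilon_0)$ must be handled — Lemma \ref{graph-iden}(ii) ensures boundary edges correspond, so no spurious Dirichlet contribution is lost), and that the lamp-cutoff product genuinely changes by only a bounded factor under a single generator move, which relies on the admissibility of $F_n$ keeping $\mathrm{supp}(f)$ inside $B_n$.
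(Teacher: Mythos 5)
Your construction identifies the right ingredients — a product-type test function built from $F_n$, a lamp cutoff, and $\psi_n\circ\vartheta_n$; the transfer via $\Theta_n$ from Lemma \ref{graph-iden}; the support count $|\vartheta_n(\Omega(J_n,B_n))\cap\mathrm{supp}(\psi_n)h_0^{-1}|\,Q_\mu(n)^{-|B_n|}$; and a cutoff of support radius $\approx Q_\mu(n)^{-1}$ — and in these respects it matches the paper's proof closely. There is, however, a genuine gap in the Rayleigh-quotient estimate for the $\psi_n$-piece. You define $\Phi_n$ directly on a single connected component $\mathcal G_n(\Upsilon_0)\subset G$, which via $\Theta_n$ corresponds to a single component $\mathbf\Gamma^n(\Upsilon_0)$ of the auxiliary graph, so the $\mathbf\Gamma^n$-coordinates arising in your sums form a fixed proper subset $H_0\subset\mathbf\Gamma^n$. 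When you ``separate the sum into a piece governed by $\mathcal Q_{\mathbf\Gamma^n,\mu}(\psi_n)$,'' the $\psi_n$-piece you actually obtain is the restricted ratio
\[
\frac{\sum_{h\in H_0}\sum_{s}\bigl|\psi_n(h)-\psi_n(\vartheta_n(s)h)\bigr|^2\mu(s)}{\sum_{h\in H_0}\bigl|\psi_n(h)\bigr|^2},
\]
and restricting both numerator and denominator to $H_0$ does not in general preserve the bound $\mathcal Q_{\mathbf\Gamma^n,\mu}(\psi_n)$: one can make $\psi_n$ nearly constant on most of its support but oscillate precisely on $H_0$, inflating this ratio arbitrarily. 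The paper's proof avoids this by defining $\Phi_n$ on the whole product space $(\oplus_x(\mathbb Z)_x)\times\mathbf\Gamma^n$ first, where the $h$-sum runs over all of $\mathbf\Gamma^n$ and factorizes cleanly; it then notes that $\mathrm{supp}(\Phi_n)$ decomposes into $\mathfrak p$-connected components whose Rayleigh quotients average to the global one, so by pigeonhole \emph{some} component $\mathcal C$ has Rayleigh quotient at most $Q_\mu(n)$. One then right-translates $\mathcal C$ by $h_0^{-1}$ for a chosen point $(f_0,h_0)\in\mathcal C$ so that it passes through $(f_0,e)$, and pushes back to $\mathcal G_n(f_0)$ via $\Theta_n$. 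In short: you cannot fix $\Upsilon_0$ a priori; the base lamp configuration and the translate $h_0$ must be chosen \emph{after} the global energy estimate, and your outline omits exactly this selection step. Once the pigeonhole over components is reinstated, the rest of your argument (the bookkeeping of $\pm\mathbf 1_1^o$ moves and $s$-moves, and the support bound) goes through as in the paper.
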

Note that
\[|\vartheta_n(\Omega(J_n,B_n)) \cap \mbox{supp}(\psi_n)h_0^{-1}|\le 
\min\left\{ \left|\overline{\Omega}\left(J_{n},B_{n}\right)\right|,\left|\mbox{supp}(\psi_{n})\right|\right\}
\]
and also
\[|\vartheta_n(\Omega(J_n,B_n)) \cap \mbox{supp}(\psi_n)h_0^{-1}|\le 
\left|\overline{\Omega}\left(J_{n},B_{n}\right)\cap
\mbox{supp}(\psi_{n})\mbox{supp}(\psi_{n})^{-1}
\right|.
\]
Both upper bounds are independent of $h_0$.
\begin{proof}

The proof consists of two steps. First we find a test function whose
support is contained in the connected component 
$\mathbf \Gamma^{n}\left(\Upsilon_{0}\right)$;
in the second step the test function is transferred to 
$\mathcal{G}_n\left(\Upsilon_{0}\right)$
via the isomorphism $\Theta_{n}$ of Lemma \ref{graph-iden}. 

Recall that we are given a $(J_n,B_n)$-admissible function $F_n$ on
on $\mathcal{P}_{f}(\mathcal{S})$ and  a 
function $\psi_{n}$ with finite support on $\Gamma^{n}$. Using this data, 
we construct a test function
$\Phi_{n}$ on $\left(\oplus_{x\in\mathcal{S}}(\mathbb{Z})_{x}\right)\times 
\mathbf \Gamma^{n}$
by setting 
\[
\Phi_{n}\left(f,\gamma\right)=\psi_{n}(\gamma)F_{n}(\mbox{supp}f)\prod_{x\in\mbox{supp}f}\mathbf{1}_{\left[1,Q_{\mu}(n)^{-1}\right]}(f(x)).
\]
Since $F_n$ is $(J_n,B_n)$-admissible, there exists a finite set $A_n$ 
such that every set $A$ in the support of $F_n$ is of the form $A=g_A.A_n$.
It follows that 
\[
\left\Vert \Phi_{n}\right\Vert _{L^{2}\left(
\left(\oplus_{x\in\mathcal{S}}(\mathbb{Z})_{x}\right)\times \mathbf 
\Gamma^{n}\right)}^{2}=
\left\Vert \psi_{n}\right\Vert _{L^{2}\left(\mathbf
\Gamma^{n}\right)}^{2}\left\Vert F_{n}\right\Vert _{L^{2}\left(\mathcal{P}_{f}\left(\mathcal{S}\right)\right)}^{2}\left\lfloor Q_{\mu}(n)^{-1}\right\rfloor ^{\left|A_{n}\right|}.
\]
Setting $[1,Q_\mu(n)^{-1}]=I_n$,
we also note that
\begin{eqnarray*}
\lefteqn{\sum_{(f,\gamma)}
\left(\psi_{n}(\gamma)F_{n}(\mbox{supp}f)\right)^{2}\prod_{x\in\mbox{supp}f}\mathbf{1}_{I_n}(f(x))\mathbf{1}_{I_n^c}(f(o)+1)} &&\\
& =&
\left\Vert \psi_{n}\right\Vert _{L^{2}\left(\mathbf\Gamma^{n}\right)}^{2}\left\Vert F_{n}\right\Vert _{L^{2}\left(\mathcal{P}_{f}\left(\mathcal{S}\right)\right)}^{2}\left\lfloor Q_{\mu}(n)^{-1}\right\rfloor ^{\left|A_{n}\right|-1}.
\end{eqnarray*}

the energy of the function $\Phi_{n}$ with respect to the transition
kernel $\mathfrak{p}$ can be estimated as
\begin{eqnarray*}
2\lefteqn{\mathcal{E}_{\mathfrak p}\left(\Phi_{n},\Phi_{n}\right) =
\sum_{(f,h),(f'h')}\left(\Phi_{n}(f,h)-\Phi_{n}(f',h')\right)^{2}\mathfrak{p}((f,h),(f',h'))}&&
\\
&\le & \frac{1}{2}
\sum_{(f,h)}\sum_{\epsilon= \pm1 }\left(\psi_{n}(h)F_{n}(\mbox{supp}f)\right)^{2}\prod_{x\in\mbox{supp}f}\mathbf{1}_{I_n}(f(x))\mathbf{1}_{I_n^c}(f(o)+\epsilon)\\
&+ & \sum_{(f,h)}\sum_{s\in S}\left(\psi_{n}(h)-\psi_{n}\left(\vartheta_{n}(s)h\right)\right)^{2}F_{n}(\mbox{supp}f)^{2}\prod_{x\in\mbox{supp}f}\mathbf{1}_{I_n}(f(x))\mu(s)\\
&+ & \sum_{(f,h)}\sum_{s\in S}\psi_{n}(h)^{2}\left(F_{n}(\mbox{supp}f)-F_{n}(s.\mbox{supp}f)\right)^{2}\prod_{x\in\mbox{supp}f}\mathbf{1}_{I_n}(f(x))\mu(s)\\
&\le & 2 Q_{\mu}(n)
\left\Vert \Phi_{n}\right\Vert _{
L^{2}\left(\left(\oplus_{x\in\mathcal{S}}(\mathbb{Z})_{x}\right)\times \mathbf\Gamma^{n}
\right)}^{2}
\end{eqnarray*}

The Markov chain with transition kernel $\mathfrak{p}$ decomposes
the space $\left(\oplus_{x\in\mathcal{S}}(\mathbb{Z})_{x}\right)\times 
\mathbf \Gamma^{n}$
into connected components. There must exist a $P$-connected subgraph
$\mathcal C=\mathcal{C}_{n}^{P}$ with vertex set contained in 
$\mbox{supp}(\Phi_{n})$
such that the restriction of $\Phi_{n}$ to $\mathcal{C}$,
denoted by $\Phi_{n}^{\mathcal{C}}$, satisfies 
\[
\frac{\mathcal{E}_{\mathfrak p}\left(\Phi_{n}^{\mathcal{C}},\Phi_{n}^{\mathcal{C}}\right)}{\left\Vert \Phi_{n}^{\mathcal{C}}\right\Vert _{2}^{2}}\le\frac{\mathcal{E}_{p}\left(\Phi_{n},\Phi_{n}\right)}{\left\Vert \Phi_{n}\right\Vert _{2}^{2}}.
\]
Pick some  $(f_{0},h_{0})\in\mathcal{C}$ and translate
the $\mathbf\Gamma^{n}$ component on the right by $h_{0}^{-1}$ by setting
\begin{align*}
\mathcal{C}_{h_{0}} & =\left\{ \left(f,hh_{0}^{-1}\right):\ (f,h)\in\mathcal{C}
\right\} ,\\
\Phi_{n,h_{0}}^{\mathcal{C}}(f,h) & =\begin{cases}
\Phi_{n}^{\mathcal{C}}(f,hh_{0}) & \mbox{if }(f,h)\in\mathcal{C}_{h_{0}}\\
0 & \mbox{otherwise}.
\end{cases}
\end{align*}
If the component $\mathcal{C}$ contains an element of the
form $\left(f_{0},e_{\mathbf\Gamma^{n}}\right)$, we can pick that point and
$\mathcal{C}, \Phi_n^{\mathcal C}$ stay as they were. 
In any case, one readily checks that this
right translation does not change the Rayleigh quotient of the function,
since the transition kernel acts on the left. Since 
$\mbox{supp}(f_{0})\in\mbox{supp}(F_{n})$, we have obtained a test
function $\Phi_{n,h_{0}}^{\mathcal{C}}$ whose support is contained
in the subgraph $\mathbf\Gamma^{n}\left(f_{0}\right)$ given by Notation 
\ref{component}. This test function satisfies
$$\mathcal{E}_{\mathfrak p}\left(\Phi^{\mathcal C}_{n,h_0},
\Phi^{\mathcal C}_{n,h_0}\right)
\le  Q_{\mu}(n)
\left\Vert \Phi^{\mathcal C}_{n,h_0}
\right\Vert _{L^{2}\left(\left(\oplus_{x\in\mathcal{S}}(\mathbb{Z})_{x}\right)\times 
\mathbf\Gamma^{n}
\right)}^{2}.$$

We now proceed with the second step of the proof. 
Lemma \ref{graph-iden} describes a graph isomorphism
$\Theta_{n}:\mathcal{G}_{n}\left(f_{0}\right)
\to\mathbf \Gamma^{n}\left(f_{0}\right)$.
We can then define a test function $\Psi_{n}$ supported on the subgraph
$\mathcal{G}_{n}\left(f_{0}\right)$ in the Cayley graph of $G$ by
\[
\Psi_{n}\left(f,\gamma\right)=\Phi_{n,h_{0}}^{\mathcal{C}}\left(\Theta_{n}\left(f,\gamma\right)\right).
\]
Since the domain of $\Phi_{n,h_{0}}^{\mathcal{C}}$ is contained in
the range of $\Theta_{n}$, it follows that 
\[
\Phi_{n,h_{0}}^{\mathcal{C}}(f,h)=\begin{cases}
\Psi_{n}\left(\Theta_{n}^{-1}\left(f,h\right)\right) & \mbox{if }(f,h)\in\Gamma^{n}(f_{0})\\
0 & \mbox{otherwise}.
\end{cases}
\]
Using Lemma \ref{graph-iden}, one checks that indeed
\begin{align*}
\mathcal{E}_{q}\left(\Psi_{n},\Psi_{n}\right) & =\mathcal{E}_{p}\left(\Phi_{n,h_{0}}^{\mathcal{C}},\Phi_{n,h_{0}}^{\mathcal{C}}\right)\\
\left\Vert \Psi_{n}\right\Vert _{L^{2}\left(G\right)}^{2} & =\left\Vert \Phi_{n,h_{0}}^{\mathcal{C}}\right\Vert _{L^{2}\left(\left(\oplus_{x\in\mathcal{S}}(\mathbb{Z})_{x}\right)\times \mathbf \Gamma^{n}\right)}^{2}.
\end{align*}
It follows that 
\[
\frac{\mathcal{E}_{q}\left(\Psi_{n},\Psi_{n}\right)}{\left\Vert \Psi_{n}\right\Vert _{L^{2}\left(G\right)}^{2}}\le Q_{\mu}(n).
\]

To obtain the estimate on $\Lambda_{2,G,\mathfrak q}(v)$ stated in the 
proposition, it suffices to use an upper bound on 
the volume of the support of $\Psi_{n}$. By Lemma \ref{controls-inverted},
we have
\begin{eqnarray*}
\left|\mbox{supp}(\Psi_{n})\right|  &=&
\left|\mbox{supp}(\Phi_{n,h_{0}}^{\mathcal{C}})\right|\\
&\le & |\vartheta_n(\Omega(J_n,B_n)) \cap \mbox{supp}(\psi_n)h_0^{-1}|
\, Q_{\mu}(n)^{-\left|B_{n}\right|}.
\end{eqnarray*}
Here we also used the fact that, since $F_n$ is $(J_n,B_n)$-admissible, 
$|A_n|\le |B_n|$. \end{proof}

\begin{example}[The $L^2$-isoperimetric profile of $\mathbb Z\wr D_\infty$]
\label{exa-Dinfty2}
To illustrate how the technique described above works in a simple case, 
we consider the toy example of the infinite dihedral group 
$D_{\infty}=\left\langle s,t|s^{2}=t^{2}=1\right\rangle $ of Example 
\ref{exa-Dinfty}. Think of it has defined by the Schreier graph
of Figure \ref{D4}. In Example \ref{exa-Dinfty1}, we noted that Hypothesis 
$(\Omega)$ is satisfied with $J_n=\{o=0\}$, $B_n=\{0,\dots,2^{n}-1\}$
and $\mathbf \Gamma^n= D_{2^n}$.  
Let $\mu$ be the uniform probability measure on 
$\{s,t\}$. To construct our test function as in Proposition 
\ref{function}, we set $\psi_n\equiv 1$ on $\Gamma^n$ so that 
$Q_{\Gamma^n,\mu}(\psi_n)=0$. 

The function $F_n:\mathcal P_f(\mathcal S)\to \mathbb R$ 
must be $(J_n,B_n)$-admissible which means it is supported 
on subsets  $Y$ of the form $Y=g\cdot A_n$ with $o\in Y \subset B_n$ 
for some $A_n$.  Let
$$A_n= \{0,\dots, 2^{n-1}-1\}.$$
Define $F_n$ so that $F_n(Y)=0$ unless $Y=g\cdot A_n$ where $g$ is a 
reduced word in $s,t$ of length $\ell\le 2^{n-1}$ which terminates with $t$, in which case
$$F_n(Y)= 1-  2^{-n+1}l.$$
We need to verify that $F_n$ is $(J_n,B_n)$-admissible. This follows
by inspection because
$g \cdot A_{n}$ is 
\[\{0,...,2^{n-1}-1-l\}\cup\{2^{n-1}-l-1+2k:\ 1\le k\le l\} \]
when $0\le l\le2^{n-1}-1$ and
$\{2k-1:\ 1\le k\le l\}$
when $l= 2^{n-1}$.

The Rayleigh quotient of $F_{n}$ can be computed
\[
\mathcal{Q}_{\mathcal{P}_{f}(\mathcal{S}),\mu}\left(F_{n}\right)\sim
\frac{3}{\left(2^{n-1}\right)^{2}}.
\]

To apply Proposition \ref{function}, observe that 
$|\overline{\Omega}(J_n,B_n)|$, $|\mbox{supp}(\psi_n)| $ and $|B_n|$ are all 
equal to $2^n$. This yields 
\[
\Lambda_{2,\mathbb{Z}\wr_{\mathcal{S}}D_{\infty},q}(v)\le\frac{C}{\left(2^n\right)^{2}},\ \mbox{for any }v\ge C^{-1} 2^{n2^{n}/C},
\]
that is (with a different $C$),
\[\Lambda_{2,\mathbb{Z}\wr_{\mathcal{S}}D_{\infty},q}
(v)\lesssim 
\left(\frac{\log\log v}{\log v}\right)^2.\]
This gives the return probability lower bound 
\[
\Phi_{\mathbb{Z}\wr_{\mathcal{S}}D_{\infty}}(n)\gtrsim
\exp\left(-n^{\frac{1}{3}}\log^{\frac{2}{3}}n\right).
\]
By Example \ref{exa-Dpro-low} and the Cheeger inequality (\ref{Cheeger}), 
have a matching lower bound for $\Lambda_{2,\mathbb{Z}\wr_{\mathcal{S}}D_{\infty},q}$ 
so that 
\[\Lambda_{2,\mathbb{Z}\wr_{\mathcal{S}}D_{\infty},q}
(v)\simeq
\left(\frac{\log\log v}{\log v}\right)^2.\]
and
\[
\Phi_{\mathbb{Z}\wr_{\mathcal{S}}D_{\infty}}(n)\simeq
\exp\left(-n^{\frac{1}{3}}\log^{\frac{2}{3}}n\right).
\]

\end{example}

\section{Bubble groups } \setcounter{equation}{0}
\label{sec:Explicit-estimates}\label{sec-bubble}
This section is devoted to a family of groups considered in \cite{Kotowski2014} 
where the name ``bubble group'' is used.   The marked Schreier graph pictured in 
Figure \ref{B1} defines a finite bubble group.

\begin{figure}[h]
\begin{center}\caption{The marked Schreier graph of the finite 
bubble group with $\mathbf a= (2,5,9), \mathbf b=(3,3)$. 
The generators $a,b$ each acts clockwise along its respective cycle. 
Each vertex of degree $2$ carries a self-loop marked $b$ which is not shown 
except at the root. }\label{B1}
\begin{picture}(200,100)(0,0) 

\put(-10,50){\line(1,0){10}}
\put(-10,50){\circle*{3}}

\put(10,50){\circle{20}}
\put(0,50){\circle*{3}}
\multiput(54.5,21.1)(0,-44.2){2}{\put(10,50){\circle{20}}}
\put(54,19.5){\put(3,66){\makebox(0,0){$b$}}
\put(1,60){\vector(2,1){10}}
}
\put(54,-45){\put(3,60){\makebox(0,-5){$b$}}
\put(9,59){\vector(-2,1){10}}
}

\put(-13,50){\circle{6}}
\put(-20,50){\makebox(0,0){$b$}}
\put(-6,54){\makebox(0,0){$a$}}

\put(13,50){\makebox(0,0){$b$}}
\put(17,54){\vector(0,-1){9}}
\put(3,66){\makebox(0,0){$b$}}
\put(0,58.5){\vector(2,1){10}}
\put(5,35){\makebox(0,0){$b$}}
\put(10,36.5){\vector(-2,1){10}}
\put(34,73){\makebox(0,0){$a$}}
\put(29,68){\vector(1,0){9}}
\put(34,25){\makebox(0,0){$a$}}
\put(38,30){\vector(-1,0){9}}

\put(38,60){\oval(40,10)} 
\multiput(19,65)(9.2,0){5}{\circle*{3}}
\multiput(19,55)(9.2,0){5}{\circle*{3}}

\put(0,-22){\put(38,60){\oval(40,10)} 
\multiput(19,65)(9.2,0){5}{\circle*{3}}
\multiput(19,55)(9.2,0){5}{\circle*{3}}}

\multiput(-1,0)(0,-44){2}{\multiput(111,82)(0,-22){2}{\oval(77,10)}
\multiput(73,65)(9.25,0){9}{\circle*{3}}
\multiput(73,55)(9.25,0){9}{\circle*{3}}
\multiput(73,77)(9.25,0){9}{\circle*{3}}
\multiput(73,87)(9.25,0){9}{\circle*{3}}
}

\put(105,95){\makebox(0,0){$a$}}
\put(100.5,90){\vector(1,0){9}}
\put(105,3){\makebox(0,0){$a$}}
\put(109,7){\vector(-1,0){9}}


\end{picture}\end{center}\end{figure}
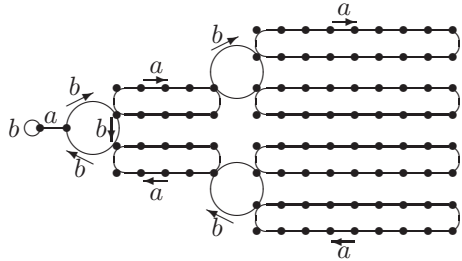

\subsection{The general bubble group}

\begin{figure}[h]
\begin{center}\caption{A piece of the marked Schreier graph of an infinite 
bubble group with $\mathbf a= (a_1,a_2,\dots), \mathbf b=(3,3,3\dots)$. }\label{B2}

\begin{picture}(340,100)(0,0) 

\put(25,50){\oval(170,15)[r]}
\put(120,50){\circle{20}}
\put(110,50){\circle*{3}}
\multiput(127,56.5)(0,-13){2}{\circle*{3}}
\multiput(105,57.5)(-20,0){5}{\circle*{3}}
\multiput(105,42.5)(-20,0){5}{\circle*{3}}
\multiput(105,60.5)(-20,0){5}{\circle{6}}
\multiput(105,39.5)(-20,0){5}{\circle{6}}

\multiput(30,57.5)(-10,0){3}{\line(-1,0){6}}
\multiput(30,42.5)(-10,0){3}{\line(-1,0){6}}

\put(225,64){\oval(200,15)}
\multiput(127,71.5)(19.4,0){11}{\circle*{3}}
\multiput(127,74.5)(19.4,0){10}{\circle{6}}
\multiput(146.4,56.5)(19.4,0){10}{\circle*{3}}
\multiput(146.4,53.5)(19.4,0){10}{\circle{6}}
\put(329.3,79){\oval(20,20)[bl]} 

\put(333,82){\oval(20,20)[bl]}
\put(323,82){\vector(0,1){2}}
\put(328,82){\makebox(0,0){$b$}}

\put(225,36){\oval(200,15)}
\multiput(146,42.5)(19.4,0){10}{\circle*{3}}
\multiput(146,45.5)(19.4,0){10}{\circle{6}}
\multiput(127.4,28)(19.4,0){11}{\circle*{3}}
\multiput(127.4,25)(19.4,0){10}{\circle{6}}
\put(330,21){\oval(18,18)[tl]}

\put(333,18){\oval(18,18)[tl]}
\put(333,27){\vector(1,0){2}}
\put(328,20){\makebox(0,0){$b$}}

\put(95,63){\makebox(0,0){$a$}}
\put(90,59.5){\vector(1,0){10}}

\put(156,22){\makebox(0,0){$a$}}
\put(161,26){\vector(-1,0){10}}

\put(111.5,58.5){\put(3,6.5){\makebox(0,0){$b$}}
\put(0,0){\vector(2,1){10}}}

\put(154,77){\makebox(0,0){$a$}}
\put(152,74){\vector(1,0){9}}
\put(186,82){\makebox(0,0){$b$}}

\end{picture}\end{center}\end{figure}
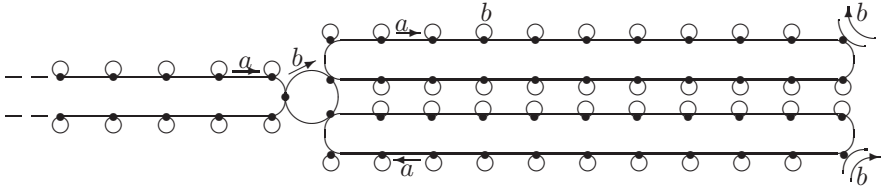

Let $\mathbf{a}=(a_{1},a_{2},...)$ and $\mathbf{b}=\left(b_{1},b_{2},..\right)$ 
be two natural integer sequences (finite  or infinite; if the sequence are 
finite, $\mathbf b $ is one element shorter than $\mathbf a$). 
The ``bubble group'' 
$\Gamma_{\mathbf a,\mathbf b}$ is associated with the tree like bubble graph 
$X_{\mathbf a,\mathbf b}$ 
were $X_{\mathbf a,\mathbf b}$ is obtained from the 
rooted tree $\mathbf T_{\mathbf b}$ 
with forward degree sequence $(1,b_1-1,b_2-1,\dots)$  as follows. 
Each edge at level $k\ge 1$ in the tree (we make the convention that
the level of an edge is the level of the child on that edge)
is replaced by a cycle of length $2a_k$ called a bubble. 
Each vertex at level $k\ge 1$ (we ignore the root which is now part of a circle 
of length $2a_1$) is blown-up 
to a $b_k$-cycle with each vertex of this cycle inheriting one of the 
associated $2a_{k+1}$-cycle. These $b_k$-cycles are called branching cycles. 
Finally, 
at each vertex which belong only to a bubble (but not to a branching cycle), 
we add a self loop.  The vertex set of the graph $X_{\mathbf a,\mathbf b}$ 
can be parametrized  using pairs $(w,u)$ with $w$ a finite word in 
$$\{\emptyset\} \cup (
\cup _{k=1}^\infty \{1,\dots,b_1-1\}\times\{1,\dots,b_2-1\}\times \cdots \times \{1,,\dots,b_k-1\})$$
and $u\in \{0,\dots,2a_{k+1}-1\}$ if $|w|=k$.  By definition, the vertex 
$o=\emptyset$ is the root. 

In the graph  $X_{\mathbf a,\mathbf b}$, we call ``level k'' 
the set of all the vertices $(w,u)$ with $|w|=k-1$, $0\le u\le 2a_{k}-1$.
If all the $a_k$ are distinct, this is the set of all vertices that 
belong to a bubble of length $2a_k$.  We say that a branching cycle
is at ``level k'' if it is attached  at the far end (i.e., 
furthest away from $o$) of a level-$k$ bubble.
Note that the vertices of any branching cycle at level $k$ are 
parametrized as follows:  \begin{itemize}
\item
$(w', a_k)$ with $|w'|=k-1$ for the vertex closest to the root $o$, a vertex 
which also belongs  to a level-$k$ bubble,
\item
$(w'z,0)$ with $z\in \{1,\dots, b_k-1\}$ for the other vertices on 
that branching cycle, each of which also belongs to a level-$(k+1)$ bubble.
\end{itemize}
  We let 
$$\mathfrak b(w')=\{(w',a_k),(w'1,0),\dots,(w'(b_k-1),0)\}$$  
denote the branching cycle at $(w',a_k)$. 

Having chosen an orientation along each cycle (say, clockwise), we label each 
edge of the bubble with the letter $a$ and each edge of the branching 
cycle with the letter $b$.

The group $\Gamma_{\mathbf a,\mathbf b}$ is a subgroup of 
the (full) permutation group of the vertex set of $X_{\mathbf a,\mathbf b}$ 
generated 
by two elements $\alpha$ and $\beta$.  Informally, $\alpha$ rotates the 
bubbles whereas $\beta$ rotates the branching cycles.  Formally, the action of 
the permutation $\alpha$ (resp. $\beta$) on any vertex $x$ in 
$X_{\mathbf a,\mathbf b}$ 
is indicated by the oriented labeled edge at $x$ marked with an $a$ 
(resp. a $b$). Obviously, we can replace the edge labels $a, b$ with the 
group elements $\alpha,\beta$, once these are defined.   

\begin{lemma} \label{lem-bubexp}
For any choice of the sequence $\mathbf a, \mathbf b$
with $b_i\ge 3$ for all $i$, the group
$\Gamma_{\mathbf a,\mathbf b}$ has exponential volume growth.
\end{lemma}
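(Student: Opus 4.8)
The plan is to prove the lemma by exhibiting a free sub-semigroup of $\Gamma_{\mathbf a,\mathbf b}$. Recall that if $g,h\in\Gamma_{\mathbf a,\mathbf b}$ generate a free sub-semigroup, then, setting $L=\max\{|g|,|h|\}$ (word length with respect to any fixed finite generating set), the $2^{m}$ pairwise distinct positive words of length $m$ in $g,h$ all lie in the ball of radius $mL$, so $|B(n)|\ge 2^{n/L}$ along a subsequence and $\Gamma_{\mathbf a,\mathbf b}$ has exponential growth; since exponential growth is independent of the choice of finite generating set, it suffices to produce two such elements. I would build $g,h$ from $\alpha$ and $\beta$ and verify freeness by a ping-pong argument on the action of $\Gamma_{\mathbf a,\mathbf b}$ on its Schreier graph $X_{\mathbf a,\mathbf b}$: it is enough to find $g,h\in\Gamma_{\mathbf a,\mathbf b}$ and disjoint non-empty subsets $A,B\subset X_{\mathbf a,\mathbf b}$ with $g(A\cup B)\subseteq A$ and $h(A\cup B)\subseteq B$, after which the usual argument (using only $A\cap B=\emptyset$, $A,B\neq\emptyset$, and that each generator acts injectively) shows that distinct positive words in $g,h$ act differently on $A\cup B$.

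The hypothesis $b_i\ge 3$ is precisely what makes such a configuration plausible. The first branching cycle $\mathfrak b(\emptyset)$ has $b_1\ge 3$ vertices $v_0=(\emptyset,a_1),v_1=(1,0),\dots,v_{b_1-1}=(b_1-1,0)$, and the $b_1-1\ge 2$ vertices $v_1,\dots,v_{b_1-1}$ carry \emph{disjoint} infinite subtrees $T_1,\dots,T_{b_1-1}$ of $X_{\mathbf a,\mathbf b}$. Since $\beta$ rotates $\mathfrak b(\emptyset)$, the element $\alpha\beta$ carries the apex $v_0$ into $T_1$ while $\alpha\beta^{-1}$ carries it into $T_{b_1-1}$; thus $g=\alpha\beta$ and $h=\alpha\beta^{-1}$ (or suitable powers of them, or two conjugates $g$ and $fgf^{-1}$ of one element, chosen so that the two ping-pong sets become translates of one another) push the basepoint towards two \emph{distinct} ends of the underlying tree, and one would take $A$ (resp. $B$) to be a suitable forward-invariant ``neighbourhood'' of the first end (resp. the second).

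The main obstacle is verifying the ping-pong inclusions $g(A\cup B)\subseteq A$ and $h(A\cup B)\subseteq B$. The complication is that \emph{every} element of $\Gamma_{\mathbf a,\mathbf b}$ acts on a single bubble (a $2a_k$-cycle) by a rotation, which is periodic and non-contracting, so the dynamics of $g$ and $h$ on $X_{\mathbf a,\mathbf b}$ are not monotone ``downward along a ray''; in fact $\alpha\beta$ has many invariant sub-regions, so $A$ and $B$ cannot be naive cones towards two ends. They must be defined with care — for instance as the forward $g$- and $h$-orbits of a well-chosen point, closed up appropriately, or as unions of bubble-cycles lying along the two rays together with an explicit prescription at the branching-cycle vertices — so that they are genuinely forward invariant under the respective elements. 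Carrying this out, which I expect to be an essentially combinatorial bookkeeping of how $g$ and $h$ move a point through bubbles and branching cycles, is the real content of the proof. As a check on scope: when $\mathbf a$ and $\mathbf b$ are bounded the lemma is immediate, since descending one level of the defining tree then costs only a bounded number of edges, so $X_{\mathbf a,\mathbf b}$ itself has exponential growth and the growth of $\Gamma_{\mathbf a,\mathbf b}$ dominates that of this orbit; the delicate case is $a_n\to\infty$, where by Proposition \ref{pro-am} the group is amenable and the orbit of $o$ has sub-exponential growth, which is exactly why a free-semigroup argument rather than an orbit count is required. Alternatively, one may simply appeal to the analysis of these groups carried out in \cite{Kotowski2014}.
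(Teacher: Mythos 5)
Your proposal does not complete the proof: you propose to exhibit a free sub-semigroup by ping-pong, but you never construct the ping-pong sets $A,B$ nor verify the inclusions $g(A\cup B)\subseteq A$, $h(A\cup B)\subseteq B$, and you explicitly flag that ``carrying this out \dots is the real content of the proof.'' That is precisely the gap. Your own analysis of the difficulty is accurate — every element of $\Gamma_{\mathbf a,\mathbf b}$ acts on each bubble by a finite rotation, so there is no monotone drift towards an end and naive cones are not forward invariant — but identifying the obstruction is not the same as overcoming it. Nothing in the sketch rules out that $g=\alpha\beta$ and $h=\alpha\beta^{-1}$ simply admit no ping-pong pair at all; and when $a_n\to\infty$ the group is amenable (so no free subgroup), so the existence of a free sub-semigroup is a nontrivial fact that must be proved, not assumed. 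Falling back on a citation to \cite{Kotowski2014} is not a proof. (Your treatment of the bounded-$\mathbf a$ case via $|B_\Gamma(r)|\ge |B_{\mathcal S}(o,r)|$ and exponential Schreier-graph growth is fine and matches the paper's ``obvious'' dismissal of that case; note also that the genuinely needed remaining case is $\mathbf a$ unbounded, not just $a_n\to\infty$.)

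The paper avoids the issue entirely by using a lighter tool. It does not seek a free sub-semigroup; it simply writes down $2^{n-1}$ words of length $O(n)$, namely $\alpha^{n-1}\beta^{\epsilon_1}\alpha^{-1}\beta^{\epsilon_2}\alpha^{-1}\cdots\beta^{\epsilon_{n-1}}$ with $\epsilon_i\in\{\pm1\}$, and shows by induction on $n$ that their images in $\Gamma_{\mathbf a,\mathbf b}$ are pairwise distinct. The separating tool is a single point evaluation on the Schreier graph: since $\mathbf a$ is unbounded one chooses a level $i$ with $a_i$ large enough and a point $x=(1^{i-1},k+1)$ on the corresponding bubble; if the two sign sequences differ in the last coordinate, the two words carry $x$ into bubbles at distinct levels ($i-1$ versus $i$), hence the words are distinct, and if the last signs agree one strips that letter off and invokes the induction hypothesis. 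This only requires comparing the action on one carefully chosen point, not constructing forward-invariant regions, and it yields $|B_\Gamma(Cn)|\ge 2^{n-1}$, i.e.\ exponential growth, directly. If you want to make your ping-pong approach rigorous you would need to actually build $A$ and $B$ (e.g.\ as carefully controlled unions of bubble arcs at each level, which amounts to a nontrivial bookkeeping exercise), whereas the paper's point-evaluation argument is a much cheaper substitute that proves exactly what the lemma needs and nothing more.
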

\begin{remark} If $\mathbf a,\mathbf b$ are constant sequences with
$a_i=a$ and $b_i=2$ for all  $i$ then the group $\Gamma_{\mathbf a,\mathbf 2}$
actually falls into the class of Neumann-Segal type groups discussed 
in Section \ref{sec-NS}.  In particular, it is a subgroup of the 
automorphism group of the tree $\mathbb T_{\overline{d}}$ with $\overline{d}=(d_i)_1^\infty$, $d_1=2a$, $d_{i}=2$, $i\ge 2$. This description 
of the group $\Gamma_{\mathbf a ,\mathbf 2}$ shows that it is a subgroup of
the group $D_\infty\wr (\mathbb Z/ 2a\mathbb Z)$. Hence $\Gamma_{\mathbf a ,\mathbf 2}$ has polynomial volume growth of degree at most $2a$.
\end{remark}
\begin{proof} If  $\mathbf a$ is bounded
and all $b_i\ge 3$, it is obvious that 
$\Gamma_{\mathbf a,\mathbf b}$ has exponential volume growth. If 
$\mathbf a$ is unbounded we show that the words
$$ \prod_1^{n} \alpha^{n-i}\beta^{\epsilon_i} \alpha^{n-i}
= \alpha^{n-1}\beta^{\epsilon_1} \alpha^{-1}\beta^{\epsilon_2}\alpha^{-1}\dots 
\beta^{\epsilon_{n-1}} ,\;\;\epsilon_i\in \{\pm 1\}$$
are all distinct.
We proceed by induction on $n$. If $n=k=1$ the property is obviously satisfied.
Assume $k\ge 1$ and the property is true  for all $n\le k$. Pick to elements
$g,g$ of the form above with $n=k+1$ with respective sequence 
$(\epsilon_j)_1^n,(\epsilon'_j)_1^n$.  Since $a_i$ is unbounded, 
we can find $i$ such that $a_i\ge k+2$. 
Let $w=1\dots 1$ of length $|w|=i-1$, $u=k+1$, and consider the point 
$x=(w,u)$ on the Schreier graph $X_{\mathbf a,\mathbf b}$. If 
$\epsilon_n\neq \epsilon'_n$, direct inspection shows that the elements 
 $g,g'$ move $x$
to two different bubbles, one at level $i-1$ and the other at level $i$. 
If $\epsilon_n=\epsilon'_n$, apply the induction hypothesis to 
$$ \prod_2^{n} \alpha^{n-i}\beta^{\epsilon_i} \alpha^{n-i},\;\;
\prod_2^{n} \alpha^{n-i}\beta^{\epsilon'_i} \alpha^{n-i}.$$
This concludes the proof.
\end{proof}

We also consider the associated 
finite bubble groups $\Gamma^{k}_{\mathbf a,\mathbf b}$
defined by action on truncated bubble
graphs. Let $X_{\mathbf{a},\mathbf{b}}^{k}$ denote the first $k$
levels of the bubble graph $X_{\mathbf{a,\mathbf{b}}}$. The finite bubble group 
$\Gamma_{\mathbf a,\mathbf b}^k$ is generated by $\alpha_{k}$ and $\beta_{k}$,
where $\alpha_{k}$ acts in the same way as $\alpha$ by rotating
the long bubbles, and $\beta_{k}$ acts in the same way as $\beta$
except that, at level $k$, $\beta_{k}$ stabilizes
the end points instead of moving them along the branching cycle. 
See Figure \ref{B1}.

\subsection{Local embedding under Assumption (A)}

\begin{definition}[Assumption (A)] \label{sequenceAssumption}
We say that assumption (A) is satisfied if 
the sequence $\mathbf b=(b_1,b_2,\dots)$ is constant 
(i.e., $b_i=b$ for all $i=1,\dots)$ and 
the scaling sequence $\mathbf{a}=(a_{1},a_{2},...)$
is monotone increasing to $\infty$, that is 
\[
1\le a_{1}\le a_{2}\le...,\ \lim_{n\to\infty}a_{n}=\infty.
\]
\end{definition}

Without the assumption that the sequence $a_n$ increases to infinity, 
the group $\Gamma_{\mathbf a,\mathbf b}$ may be non-amenable. For 
instance, consider the case when $\mathbf a$ is the constant sequence $a_i=2$
and $b=3$. Then $\Gamma_{\mathbf a,\mathbf b}= \mathbb Z_2*\mathbb Z_3$. 
More generally, 
$\Gamma_{\mathbf a,\mathbf b}$ is non-amenable whenever both sequences 
$\mathbf a,\mathbf b$ are bounded. For an arbitrary sequence $\mathbf b$, 
the condition $\liminf a_i=\infty$  suffices to imply that 
$\Gamma_{\mathbf a,\mathbf b}$ is amenable. See Section \ref{sec-am}.

The method of this section still 
applies if we modify Assumption(A) by replacing the hypothesis that 
$\mathbf b$ is constant by the requirement that it is bounded (hence, 
take only finitely many values). The length of the intervals 
between different occurrences of a complete collection of the values taken by 
$\mathbf b$ play a key role in the form of the estimates that can be 
produced by this method.  In the case of periodic (or quasi-periodic) 
$\mathbf b$, the estimates are the same as in the case of a constant sequence 
$\mathbf b$ but if a certain value, say $5$, appears only along a very sparse 
sequence of indices then the estimates deteriorate. See Section \ref{sec-am}.

In this section, we work under Assumption (A). We drop the explicit references 
to the sequences $\mathbf{a},\mathbf {b}$ and write
$$\Gamma_{\mathbf a,\mathbf b}=\Gamma,\;\;X_{\mathbf a,\mathbf b}=X.$$
Set
$$\mathfrak N (w,r)=\{x\in X: d(x,\mathfrak b(w)) \le r\},\;w\in 
\{0,\dots,b-1\}^{(\infty)},\;r>0 .$$
For any $k\le j$, $w$ of length $|w|=j$ 
and $0\le r \le a_{k-1}-1$, we have an obvious bijective map
$$\iota^w_k: \mathfrak N (w,r)\mapsto \mathfrak N(1^{k-1},r)$$ 
which can be used to identify these vertex sets.

In order to fit the bubble group $\Gamma$
into the general framework introduced in Section \ref{sec-UB}, 
we need to explain how to choose the sets $J_k, B_k$ and 
the approximation groups $\mathbf \Gamma^k$ so that  
the associated local embeddings $\vartheta_k$ required for Hypothesis 
($\Omega$)  
can be proved to exists. See Definition \ref{def-Omega}. 

Let $\mathbf S$ be the alphabet $\mathbf S=\{\alpha^{\pm1 },\beta^{\pm 1}\}$. 
Given subsets $J,B$ of $X$ with $J\subset B$,
recall that the set $\Omega\left(J,B\right)\subset\mathbf{S}^{(\infty)}$
is defined as 
\[
\Omega\left(J,B\right)=\left\{ 
\omega \in\mathbf{S}^{(\infty)}:\ 
\mathcal{O}\left(\omega,J\right)\subset B\right\} .
\]

For a given level $k$, we set 
$$\mathfrak{m}_k=(1^{k-1},a_{k}/2),\;\;J_k=\{\mathfrak{m}_k\}$$ 
and 
$$B_k(l)=\left\{ x\in X:\ d\left(x,\mathfrak{m}_k\right)\le 
l\right\}, \;0\le l\le (a_{k}/2)-1.$$

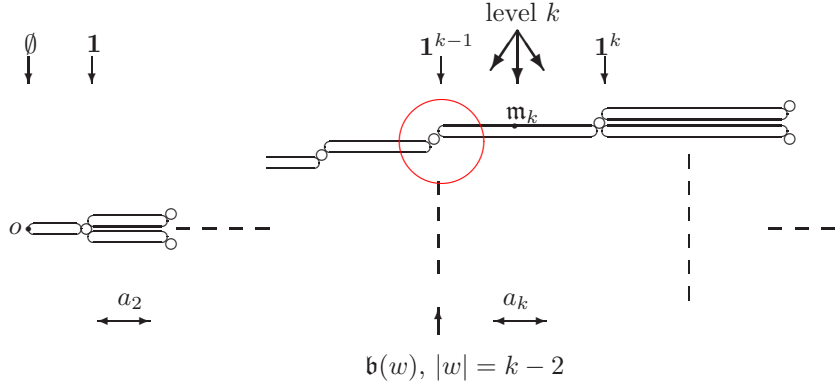
\begin{figure}[h]
\begin{center}\caption{Sketch 
of the Schreier graph $X$: levels, $\mathfrak b(w)$, $\mathfrak m_k$.
Details of the red circle region are shown in Figure \ref{BW2}.}\label{BW0}

\begin{picture}(320,150)(0,0) 

\put(10,0){\put(-5,50){\makebox(0,0){$o$}}
\put(0,50){\circle*{2}}
\put(10,50){\oval(20,4.2)}
\put(22,50){\circle{4}}

\put(37.5,53){\oval(30,4.2)}
\put(37.5,47){\oval(30,4.2)}
\put(54,55.5){\circle{4}}
\put(54,44.2){\circle{4}}

\multiput(56,50)(10,0){4}{\line(1,0){5}} 

\put(24,115){\vector(0,-1){10} 
\makebox(-5,10){$\mathbf 1$}}

\put(0,115){\vector(0,-1){10} 
\makebox(-5,10){$\emptyset$}}

\put(30,30){\makebox(17,-15){$a_2$} 
\put(-11,-15){\vector(1,0){10}} \put(-11,-15){\vector(-1,0){10}} 
}
}

\put(100,75){\put(0,0){\oval(40,4.2)[r]}
\put(21,3){\circle{4}}
}

\put(142,81){\put(0,0){\oval(40,4.2)}
\put(21.5,3.05){\circle{4}}
}

\multiput(165,68)(0,-10){4}{\line(0,-1){5}} 
\put(160,-5){\makebox(30,5){$\mathfrak b(w)$, $|w|=k-2$}} 
\put(165,10){\vector(0,1){10} 
}

\put(195,87){\put(0,0){\oval(60,4.2)}
\put(31,3){\circle{4}}}

\put(262,92.5){\put(0,1){\oval(70,4.2)}
\put(36,4){\circle{4}}
\put(0,-5.5){\oval(70,4.2)}
\put(36,-8.5){\circle{4}}

}

\multiput(260,78)(0,-10){6}{\line(0,-1){5}} 

\multiput(290,50)(10,0){3}{\line(1,0){5}}

\put(166,115){\vector(0,-1){10} 
\makebox(-2,10){$\mathbf 1^{k-1}$}}

\put(173,30){\makebox(43,-15){$a_k$} 
\put(-20,-15){\vector(1,0){10}} \put(-20,-15){\vector(-1,0){10}} 
}
\put(228,115){\vector(0,-1){10} 
\makebox(-4,10){$\mathbf 1^{k}$}}

\put(195,91){\makebox(4,4){$\mathfrak m_k$}}
\put(194,89){\circle*{2}}

\put(195,130){\makebox(4,4){level $k$}}
{\thicklines \put(195,125){\vector(2,-3){10}}
\put(195,125){\vector(0,-1){20}}
\put(195,125){\vector(-2,-3){10}}
}

{\color{red}  \put(163,83){\circle{30}}}

\end{picture}\end{center}\end{figure}

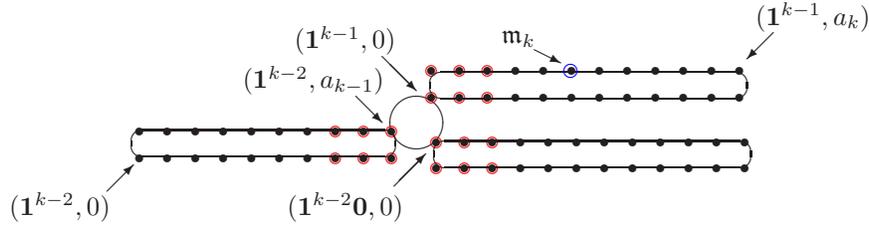
\begin{figure}[h]
\begin{center}\caption{Sketch 
showing 
$\color{red}\mathfrak N(\mathbf 1^{k-2},r)$,
$\mathfrak m_k$}\label{BW2}

\begin{picture}(320,150)(0,0)

\put(150,70){\circle{20}}
\multiput(140.5,66.5)(-10.6,0){10}{\circle*{3}}
\multiput(140.5,56.5)(-10.6,0){10}{\circle*{3}}
{\color{red}\multiput(140.5,66.5)(-10.6,0){3}{\circle{3.5}}
\multiput(140.5,56.5)(-10.6,0){3}{\circle{3.5}}}

\multiput(155.5,79.5)(10.6,0){12}{\circle*{3}}
\multiput(155.5,89.5)(10.6,0){12}{\circle*{3}}
{\color{red}\multiput(155.5,79.5)(10.6,0){3}{\circle{3.5}}
\multiput(155.5,89.5)(10.6,0){3}{\circle{3.5}}}

\multiput(157.5,62.5)(10.6,0){12}{\circle*{3}}
\multiput(157.5,52.5)(10.6,0){12}{\circle*{3}}
{\color{red} \multiput(157.5,62.5)(10.6,0){3}{\circle{3.5}}
\multiput(157.5,52.5)(10.6,0){3}{\circle{3.5}}}

\put(92,62){\oval(100,10)}
\put(216.5,58){\oval(120,10)}
\put(215,84){\oval(120,10)}

\put(96,81){\makebox(30,10){$(\mathbf 1^{k-2},a_{k-1})$}}
\put(127,80){\vector(1,-1){10}}

\put(108,96){\makebox(30,10){$(\mathbf 1^{k-1},0)$}}
\put(137,95){\vector(1,-1){13}}

\put(108,33){\makebox(30,10){$(\mathbf 1^{k-2}\mathbf 0,0)$}}
\put(141.5,46){\vector(1,1){12}}

\put(0,33){\makebox(30,10){$(\mathbf 1^{k-2},0)$}}
\put(33,44){\vector(1,1){10}}

\put(120.6,23.5){
\put(53,73){\makebox(30,10){$\mathfrak m_k$}}
\put(74,74){\vector(2,-1){10}}
{\color{blue}\put(87.6,66){\circle{5}}}}

\put(285,105){\makebox(30,10){$(\mathbf 1^{k-1},a_k)$}}
\put(285,104){\vector(-1,-1){10}}

\end{picture}\end{center}\end{figure}

\begin{lemma} \label{lem-com}
Fix $0\le t\le (a_{k}/2)-1$. Let $\omega=\gamma_1\dots\gamma_q$ 
be a word of length $q$ such that $\gamma_j\dots \gamma_q \cdot 
\mathfrak m_k\in B_k(t)$ for each $1\le j\le q$. Then there exists $s_\omega$
such that 
$$\gamma_1\dots \gamma_q\cdot \mathfrak m_k=\alpha^{s_\omega} \cdot \mathfrak m _k
\mbox{ and } \gamma_1^{-1} \dots \gamma_q^{-1} \cdot \mathfrak m_k= 
\alpha^{-s_\omega}\cdot \mathfrak m_k.$$
Further, let $s$ be an integer  such that $|s|+t\le (a_{k}/2)-1$
and set $x = \alpha^s \mathfrak m_k$.  Then
$$ \alpha ^s \omega \cdot \mathfrak m_k= \omega \alpha^s \cdot \mathfrak m_k=
\omega \cdot x.$$
In particular, we have
$$d(\omega\cdot \mathfrak m_k, \omega\cdot x)=d(\mathfrak m_k,x).$$
\end{lemma}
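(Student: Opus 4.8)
The plan is to exploit the constraint $t\le(a_k/2)-1$, which forces the ball $B_k(t)$ to be a genuinely ``one-dimensional'' object: it sits inside the plain interior of the single level-$k$ bubble through $\mathfrak m_k=(1^{k-1},a_k/2)$. So I would begin by recording the local combinatorial picture. A level-$k$ bubble is a cycle of length $2a_k$ whose only two non-plain vertices are the cycle-positions $0$ and $a_k$ (the points it shares with branching cycles), both at distance $a_k/2$ from $\mathfrak m_k$, which sits at position $a_k/2$; every position $p$ with $1\le p\le a_k-1$ is a plain vertex carrying a $b$-self-loop and only the two $a$-edges of the cycle. Since $t<a_k/2$, no path of length $\le t$ out of $\mathfrak m_k$ can reach a non-plain vertex, so $B_k(t)=\{(1^{k-1},p):a_k/2-t\le p\le a_k/2+t\}$, and on this segment the generators act by the rigid rule: $\beta^{\pm1}$ fixes each vertex (self-loop), while $\alpha^{\pm1}$ sends $(1^{k-1},p)$ to $(1^{k-1},p\pm1)$ along the cycle. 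I would also note the elementary fact that, from any vertex within distance $t$ of $\mathfrak m_k$, reaching either end of the bubble costs at least $a_k/2-t$ further steps, so any path between two such vertices that leaves the bubble is strictly longer than the arc joining them inside it; hence the distance in $X$ between two plain vertices of this bubble at positions $p,p'$ equals $|p-p'|$ as soon as $|p-p'|\le a_k/2-1$. This last remark is what will yield the two distance identities in the statement.

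For the first assertion, set $\epsilon_i$ to be $+1$ if $\gamma_i=\alpha$, $-1$ if $\gamma_i=\alpha^{-1}$, and $0$ if $\gamma_i\in\{\beta,\beta^{-1}\}$, and put $s_j=\epsilon_j+\dots+\epsilon_q$, with $s_\omega:=s_1$. A downward induction on $j$ (from $j=q+1$, where $y_{q+1}=\mathfrak m_k$, down to $j=1$) along the inverted-orbit trajectory $y_j=\gamma_j\cdots\gamma_q\cdot\mathfrak m_k$, using the one-dimensional rule together with the hypothesis $y_j\in B_k(t)$ (which both excludes any wrap-around around the cycle and forces $|s_j|\le t$), gives $y_j=(1^{k-1},a_k/2+s_j)=\alpha^{s_j}\cdot\mathfrak m_k$. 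Taking $j=1$ yields $\gamma_1\cdots\gamma_q\cdot\mathfrak m_k=\alpha^{s_\omega}\cdot\mathfrak m_k$. For the word $\gamma_1^{-1}\cdots\gamma_q^{-1}$ (each letter inverted, same order), each $\gamma_i^{-1}$ contributes $-\epsilon_i$, and the identical induction applied to $z_j=\gamma_j^{-1}\cdots\gamma_q^{-1}\cdot\mathfrak m_k$ gives $z_j=(1^{k-1},a_k/2-s_j)$; the one point that needs attention here is certifying that the $z$-trajectory also never leaves the plain part of the bubble, which comes for free from the bound $|s_j|\le t\le a_k/2-1$ already obtained from the $y$-trajectory. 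Hence $\gamma_1^{-1}\cdots\gamma_q^{-1}\cdot\mathfrak m_k=\alpha^{-s_\omega}\cdot\mathfrak m_k$.

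For the second assertion the extra input is that $\alpha^s$ itself acts as the length-$s$ shift on the slightly wider segment $\{(1^{k-1},p):|p-a_k/2|\le t+|s|\}$, which still lies in the plain part of the bubble because $t+|s|\le a_k/2-1$. I would then compute both sides: on one hand $\omega\alpha^s\cdot\mathfrak m_k=\omega\cdot x$ with $x=(1^{k-1},a_k/2+s)$, and tracing $x$ under $\omega$ by the induction above with the enlarged budget $t+|s|$ gives $\omega\cdot x=(1^{k-1},a_k/2+s+s_\omega)$; on the other hand $\alpha^s\omega\cdot\mathfrak m_k=\alpha^s\cdot(\omega\cdot\mathfrak m_k)=\alpha^s\cdot(1^{k-1},a_k/2+s_\omega)=(1^{k-1},a_k/2+s+s_\omega)$, the intermediate positions of this shift again staying in the plain part. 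The two values coincide, so $\alpha^s\omega\cdot\mathfrak m_k=\omega\alpha^s\cdot\mathfrak m_k=\omega\cdot x$. Finally $\omega\cdot\mathfrak m_k$ and $\omega\cdot x$ are plain vertices of the bubble at positions $a_k/2+s_\omega$ and $a_k/2+s+s_\omega$, whose $X$-distance is $|s|$ by the distance remark of the first paragraph, and $d(\mathfrak m_k,x)=|s|$ for the same reason, so the two distances agree.

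The main obstacle — in fact the only subtlety — is keeping every vertex of every trajectory strictly inside the one-dimensional region: the whole description collapses at cycle-positions $0$ and $a_k$, where $\alpha$ moves off the bubble and $\beta$ is no longer a self-loop. The proof is organized so that the budget hypotheses ($t\le a_k/2-1$ for the first assertion, $t+|s|\le a_k/2-1$ for the second) force the confinement to hold along every trajectory; the single delicate borrowing is that the bound $|s_j|\le t$ extracted from the forward trajectory $y_j$ is precisely what is needed to keep the reversed-inverse trajectory $z_j$, and later the shift $\alpha^s$, from escaping.
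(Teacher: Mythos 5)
Your proof is correct and follows essentially the same strategy as the paper: a descending induction on the position index $j$ that exploits the fact that, under the budget $t\le (a_k/2)-1$ (respectively $t+|s|\le (a_k/2)-1$), every point visited lies in the one-dimensional plain interior of the level-$k$ bubble, where $\beta^{\pm1}$ is a self-loop and $\alpha^{\pm1}$ is a unit shift. The only cosmetic difference is that you track the explicit cycle-position $a_k/2+s_j$ throughout, whereas the paper phrases the inductive step for the second assertion more abstractly as the local commutativity $\alpha^s\gamma_j=\gamma_j\alpha^s$ on the relevant segment; the two formulations are interchangeable. Your observation that the confinement of the reversed-inverse trajectory $z_j$ is inherited from the bound $|s_j|\le t$ coming from the forward trajectory is exactly the right subtlety to single out.
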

\begin{proof} Writing $\mathfrak m=\mathfrak m_k$, 
the first assertion is easily proved by considering
$\gamma_j\dots \gamma_q\cdot \mathfrak m$ and by descending induction 
on $j\le q$. 

The second assertion is also proved by descending induction on $j\le q$. 
The statement is obvious for 
$j=q$.  Assume the claim is true for $j+1$.  By hypothesis,
$d(\gamma_{j+1}\dots \gamma_q \cdot \mathfrak m 
,\mathfrak m )\le t$. 
Since $|s|+t\le (a_{k}/2)-1$, it follows that
the action of $\alpha^s \gamma_{j}$ on $\gamma_{j+1}\dots \gamma_q \cdot \mathfrak m $
 is the same 
as the action of $\gamma_{j}\alpha^s$.  Hence 
$$\alpha^s \cdot (\gamma_{j}\dots\gamma_q \cdot \mathfrak m)
= \gamma_j  \alpha^s \gamma_{j+1}\dots \gamma_q\cdot \mathfrak m
= \gamma_j  \dots\gamma_q \alpha^s 
 \cdot \mathfrak m .$$
\end{proof}

\begin{lemma} \label{lem-com2}
Assume that $\omega=\gamma_1\dots\gamma_p \in \Omega_k(l)= 
\Omega\left(\mathfrak{m}_k,B_k(l)\right)$ with $l\leq (a_{k}/4)-1$.
Then, for any $1\le j\le p$, we have $\gamma_1\dots \gamma_j=\alpha^{s_j}
\mathfrak m_k$ with $|s_j|\le l$ and
$$\gamma_1\dots \gamma_{j-1}\gamma_j\cdot \mathfrak m_k
=\gamma_j\gamma_{j-1}\dots\gamma_1\cdot \mathfrak m_k.$$
Further, for every subword $u=\gamma_{i}...\gamma_{j}$
of $w$,  $$u\cdot \mathfrak m_k= \alpha^{s_j-s_{i-1}}\cdot \mathfrak m_k 
\mbox{ and }
 u^{-1}\cdot \mathfrak m_k= \alpha^{-s_j+s_{i-1}}\cdot \mathfrak m_k.$$
\end{lemma}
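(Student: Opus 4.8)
The plan rests on the following transparent description of how the generators act near $\mathfrak{m}_k$. Since $l\le (a_k/4)-1\le (a_k/2)-1$, the ball $B_k(l)$ — and, with room to spare, even $B_k(2l)$ — sits strictly inside the level-$k$ bubble through $\mathfrak{m}_k$, away from its two endpoints; on that bubble, a fact already used implicitly in the proof of Lemma \ref{lem-com}, $\beta^{\pm1}$ fixes every vertex (each carries a $\beta$-self-loop) while $\alpha^{\pm1}$ acts as the rotation of the $2a_k$-cycle. Hence $B_k(l)=\{\alpha^s\cdot\mathfrak{m}_k:\ |s|\le l\}$ and $s\mapsto\alpha^s\cdot\mathfrak{m}_k$ is injective for $|s|\le (a_k/2)-1$, so the exponent is recoverable. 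Put $e(\alpha^{\pm1})=\pm1$, $e(\beta^{\pm1})=0$, $s_i=\sum_{n=1}^i e(\gamma_n)$, $s_0=0$. Since $\omega\in\Omega_k(l)$ says precisely that $\gamma_1\cdots\gamma_i\cdot\mathfrak{m}_k\in B_k(l)$ for all $i$, we get for free that $\gamma_1\cdots\gamma_i\cdot\mathfrak{m}_k=\alpha^{\sigma_i}\cdot\mathfrak{m}_k$ for a unique $\sigma_i$ with $|\sigma_i|\le l$; the content of the first displayed identity is that $\sigma_i=s_i$.

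I would prove $\sigma_i=s_i$ together with the subword identity $\gamma_m\cdots\gamma_r\cdot\mathfrak{m}_k=\alpha^{s_r-s_{m-1}}\cdot\mathfrak{m}_k$ by a single induction on $j$, the statement $P(j)$ asserting that both hold for all $1\le m\le r\le j$ (so in particular $|s_i|\le l$ for $i\le j$, since $s_i=\sigma_i$). In the inductive step, peel off the last letter $\gamma_j$. If $\gamma_j\in\{\beta^{\pm1}\}$ it fixes $\mathfrak{m}_k$ and everything follows by inspection. If $\gamma_j=\alpha^{\epsilon}$, write $\gamma_1\cdots\gamma_j\cdot\mathfrak{m}_k=(\gamma_1\cdots\gamma_{j-1})\cdot(\alpha^{\epsilon}\cdot\mathfrak{m}_k)$ and slide $\alpha^{\epsilon}$ across the prefix by the second assertion of Lemma \ref{lem-com}; this is legitimate because, by $P(j-1)$, every suffix $\gamma_q\cdots\gamma_{j-1}$ of that prefix sends $\mathfrak{m}_k$ to $\alpha^{s_{j-1}-s_{q-1}}\cdot\mathfrak{m}_k\in B_k(2l)$, and $1+2l\le (a_k/2)-1$ exactly when $l\le (a_k/4)-1$ — this is where the hypothesis enters. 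Sliding then gives $\gamma_1\cdots\gamma_j\cdot\mathfrak{m}_k=\alpha^{\epsilon}\cdot(\gamma_1\cdots\gamma_{j-1}\cdot\mathfrak{m}_k)=\alpha^{s_{j-1}+\epsilon}\cdot\mathfrak{m}_k=\alpha^{s_j}\cdot\mathfrak{m}_k$, so $\sigma_j=s_j$ and $|s_j|\le l$. The new subwords $\gamma_m\cdots\gamma_j$ with $m>1$ are then dispatched by a short descending induction on $m$: from $\gamma_{m+1}\cdots\gamma_j\cdot\mathfrak{m}_k=\alpha^{s_j-s_m}\cdot\mathfrak{m}_k\in B_k(2l)$, applying $\gamma_m$ on the left (trivially, or as rotation by $e(\gamma_m)=s_m-s_{m-1}$, still inside $B_k(2l)$) yields $\alpha^{s_j-s_{m-1}}\cdot\mathfrak{m}_k$. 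The $m=1$, $r=j$ instance of $P(j)$ is precisely the first assertion of the lemma.

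The symmetry statement $\gamma_1\cdots\gamma_j\cdot\mathfrak{m}_k=\gamma_j\gamma_{j-1}\cdots\gamma_1\cdot\mathfrak{m}_k$ and the inverse-subword identity I would obtain by a direct letter-by-letter trajectory computation rather than by citing Lemma \ref{lem-com} (whose hypothesis concerns forward trajectories, not the inverted orbits controlled by $\Omega_k(l)$). For the reversed prefix $\gamma_j\gamma_{j-1}\cdots\gamma_1$, an easy induction on $i$ shows that the partial products $\gamma_i\gamma_{i-1}\cdots\gamma_1\cdot\mathfrak{m}_k$ equal $\alpha^{s_i}\cdot\mathfrak{m}_k$: at each step the current point $\alpha^{s_i}\cdot\mathfrak{m}_k$ is interior ($|s_i|\le l$), so $\gamma_{i+1}$ carries it to $\alpha^{s_{i+1}}\cdot\mathfrak{m}_k$ (fix, or rotate by $e(\gamma_{i+1})$); at $i=j$ this is $\alpha^{s_j}\cdot\mathfrak{m}_k=\gamma_1\cdots\gamma_j\cdot\mathfrak{m}_k$. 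For $u=\gamma_m\cdots\gamma_r$, applying $u^{-1}=\gamma_r^{-1}\cdots\gamma_m^{-1}$ to $\mathfrak{m}_k$ passes through the points $(\gamma_m\cdots\gamma_{m+k-1})^{-1}\cdot\mathfrak{m}_k$, and the same one-step induction — using $|s_{m+k-1}-s_{m-1}|\le 2l\le (a_k/2)-1$ — identifies these with $\alpha^{-(s_{m+k-1}-s_{m-1})}\cdot\mathfrak{m}_k$, whence $u^{-1}\cdot\mathfrak{m}_k=\alpha^{-(s_r-s_{m-1})}\cdot\mathfrak{m}_k$.

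The delicate point — and the main obstacle — is this bookkeeping of which partial products (forward, reversed, inverted) stay inside the bubble interior: prefixes of $\omega$ remain in $B_k(l)$ by hypothesis, general subwords only in $B_k(2l)$, and sliding one more $\alpha^{\pm1}$ across a word forces one into $B_k(2l+1)\subseteq B_k((a_k/2)-1)$, which is exactly why the statement is confined to $l\le (a_k/4)-1$. One must also arrange the work within each inductive step in the right order, since the prefix identity $\sigma_j=s_j$ is what supplies the bound $|s_j|\le l$ used to control the subwords. Finally, the degenerate case $l=0$, in which every $\gamma_i$ is forced to be a power of $\beta$ and acts trivially near $\mathfrak{m}_k$, is immediate and is best handled separately.
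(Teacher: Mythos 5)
Your plan is correct, and its core coincides with the paper's: both run a single induction on the word length, proving the prefix formula $\gamma_1\cdots\gamma_j\cdot\mathfrak{m}_k=\alpha^{s_j}\cdot\mathfrak{m}_k$ together with the general subword formula $\gamma_m\cdots\gamma_r\cdot\mathfrak{m}_k=\alpha^{s_r-s_{m-1}}\cdot\mathfrak{m}_k$, and both obtain the inductive step by sliding the last letter $\alpha^{\pm1}$ across the prefix via the second assertion of Lemma \ref{lem-com} --- the arithmetic $1+2l\le(a_k/2)-1$, i.e.\ $l\le(a_k/4)-1$, is the same in both. The one genuine divergence is in how the reversal $\gamma_1\cdots\gamma_j\cdot\mathfrak{m}_k=\gamma_j\cdots\gamma_1\cdot\mathfrak{m}_k$ and the inverse-subword identity $u^{-1}\cdot\mathfrak{m}_k=\alpha^{-(s_r-s_{m-1})}\cdot\mathfrak{m}_k$ are reached: the paper factors the subword $\gamma_j\cdots\gamma_{p+1}$ as $(\gamma_1\cdots\gamma_{j-1})^{-1}\gamma_1\cdots\gamma_{p+1}$ and slides $\alpha$-powers past the inverse prefix by invoking both parts of Lemma \ref{lem-com} again, whereas you compute the forward trajectories of $\gamma_j\cdots\gamma_1$ and of $\gamma_r^{-1}\cdots\gamma_m^{-1}$ one letter at a time, using only that interior bubble vertices carry $\beta$-self-loops and are translated by $\alpha$. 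Your route is a bit more elementary, uses Lemma \ref{lem-com} fewer times, and cleanly sidesteps the point you correctly flag: that Lemma \ref{lem-com}'s hypothesis bounds \emph{suffix} trajectories $\gamma_q\cdots\gamma_{j-1}\cdot\mathfrak{m}_k$, which $\Omega_k(l)$ does not supply directly and which both proofs must extract from the subword part of the induction hypothesis. Defining $s_i$ explicitly as the signed count of $\alpha$'s in the prefix is a harmless clarification of an exponent the paper only describes implicitly. No gap.
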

\begin{proof} We set $\mathfrak m=\mathfrak m_k$.
The first statement is proved by induction on the
length $p$ of the word $\omega$. 
The desired property is obviously true for $p=1$.  
Assume that it has been proved for all words of length at most $p$. 

Consider $ \omega=\gamma_1\dots \gamma_{p+1}\in \Omega_k(l)$ and write
$$\gamma_1\dots \gamma_{p+1}\cdot \mathfrak m
= (\gamma_1\dots \gamma_p) \cdot( \gamma_{p+1}\cdot \mathfrak m).$$
By hypothesis, $\gamma_1\cdot \mathfrak m, \gamma_1\gamma_2 \cdot \mathfrak m, \dots, \gamma_1\dots \gamma_p\cdot \mathfrak m$ are in $B(l)$. 
By the induction hypothesis,
$\gamma_{i}\dots \gamma_p \cdot \mathfrak m = 
\alpha^{s_p-s_{i-1}}\cdot\mathfrak m$, $1\le i\le p$.
Hence Lemma \ref{lem-com} and the induction hypothesis give
\begin{eqnarray*}
\gamma_1  \dots\gamma_p  \gamma_{p+1} 
 \cdot \mathfrak m  &=& \gamma_{p+1}\cdot (\gamma_1\dots\gamma_p \cdot \mathfrak m)\\
&=& \gamma_{p+1}\cdot (\gamma_p\dots\gamma_1 \cdot \mathfrak m)\\
&=& \gamma_{p+1} \gamma_p\dots\gamma_1 \cdot \mathfrak m.
\end{eqnarray*}

We need to show that, for any subword 
$\omega'= \gamma_j\dots\gamma_{p+1}$,
$$\omega' \cdot \mathfrak m= \alpha^{s_{p+1}-s_{j-1}}\cdot \mathfrak m
\mbox{ and }
(\omega')^{-1} \cdot \mathfrak m= \alpha^{-s_{p+1}+s_{j-1}}\cdot \mathfrak m
$$
We write
\begin{eqnarray*} \omega' \cdot \mathfrak m &=&
(\gamma_1\dots \gamma_{j-1} )^{-1}\gamma_1 \dots \gamma_{p+1} \cdot \mathfrak m\\
&=& (\gamma_1\dots \gamma_{j-1} )^{-1}\alpha^{s_{p+1}} \cdot \mathfrak m
\end{eqnarray*}
and
\begin{eqnarray*} (\omega')^{-1} \cdot \mathfrak m &=&
\gamma^{-1}_{p+1}\dots \gamma_1^{-1} (\gamma_{1}\dots \gamma_{j-1}) 
\cdot \mathfrak m\\
&=& 
\gamma^{-1}_{p+1}\dots \gamma_1^{-1} \alpha ^{s_{j-1}} 
\cdot \mathfrak m.
\end{eqnarray*}
We have proved that  $\gamma_{i} \dots \gamma_1 \cdot \mathfrak m \in B(l)$
for each $1\le i\le p+1$.  Lemma \ref{lem-com} shows that 
$\gamma_{i}^{-1} \dots \gamma_1^{-1} \cdot \mathfrak m \in B(l)$. 
By the second part of Lemma \ref{lem-com}, we have
\begin{eqnarray*} 
\omega' \cdot \mathfrak m &=&
(\gamma_1\dots \gamma_{j-1} )^{-1}\alpha^{s_{p+1}} \cdot \mathfrak m
= \alpha ^{s_{p+1}} \alpha^{-s_{j-1}} \cdot \mathfrak m.
\end{eqnarray*}
and
\begin{eqnarray*} 
(\omega')^{-1} \cdot \mathfrak m &=&
(\gamma_{p+1}^{-1}\dots \gamma_{1} ^{-1}) \alpha^{s_{j-1}} \cdot \mathfrak m
= 
\alpha^{s_{j-1}} \alpha ^{-s_{p+1}}  \cdot \mathfrak m.
\end{eqnarray*}

\end{proof}

The following observation regarding repetition of orbits is straightforward.
Given a word $w=w_{1}...w_{p}$, consider the orbit $\left\{ w_{s}^{-1}...w_{1}^{-1} \cdot x\right\} _{1\le s\le p}$
of $x$. Note that this can be thought of as a forward orbit, it behaves
very differently from the inverted orbit $\mathcal{O}\left(w;x\right)$. 

\begin{lemma}[Identification of orbits]\label{identification}
Fix $k\ge 0$ and $0\le l\le (a_{k}/4)-1$.
For any $t\ge k+1$ and any vertex $x=(w,u)$ with $|w|=t-1$ 
and $0\le u\le 2a_t-1$
(i.e., any vertex at level $t$), let $w'$ be the parent of $w$ and set
\[
\hat{x}=\left\{ \begin{array}{ccl}
\mathfrak{m}_k & \mbox{if } & d\left(x, \{(w,0),(w,a_t))\right)>l,\\
\iota_{k}^{w'}(x) & \mbox{if } & d\left(x, (w,0)\right)\le l,\\
 \iota_{k}^{w}(x) & \mbox{if } & d\left(x, (w,a_t)\right)\le l.\\ 
\end{array}\right.
\]
In the first case, define $\hat{\iota}=\hat{\iota}_x$ 
to  be the obvious map taking
the segment $\{(w,v): |v-u|\le l\}$ to the segment 
$\{(1^{k-1}, v)): |v-(a_{k}/2)| \le l\}$. In the second and third cases, 
set $\hat{\iota}= \iota^{w'}_{k}$ and $\hat{\iota}= \iota^{w}_{k}$, 
respectively. Let $\omega=\gamma_1\dots\gamma_p$ be a word that belongs to $\Omega_k(l)$.
Then, for any $1\le j\le p$,
$$ \hat{\iota}_x(\gamma^{-1}_j\dots \gamma_1^{-1}\cdot x)=\gamma^{-1}_j\dots \gamma_1^{-1}\cdot \hat{x}.$$
\end{lemma}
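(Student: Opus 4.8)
The plan is to prove the identity by induction on $j$, in fact in the stronger form: the whole forward trajectory $y_j:=\gamma_j^{-1}\dots\gamma_1^{-1}\cdot x$ stays in the domain of $\hat\iota_x$, and $\hat\iota_x$ carries it onto $\hat z_j:=\gamma_j^{-1}\dots\gamma_1^{-1}\cdot\hat x$. The base case $j=0$ is just the defining relation $\hat\iota_x(x)=\hat x$. For the inductive step the structural input needed is that, in each of the three cases, $\hat\iota_x$ is the ``obvious'' label- and orientation-preserving isomorphism between two small pieces of $X$ --- a segment of a bubble in the first case, a ``spider'' consisting of a $b$-cycle with $2b$ outgoing half-legs in the second and third --- so it automatically commutes with the partial action of any generator $\gamma_j^{\pm1}$ at a vertex both of whose $\gamma_j^{\pm1}$-translates lie in the piece (the $b$-self-loops at degree-two vertices are matched to $b$-self-loops, so $\beta^{\pm1}$ is intertwined too). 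Hence the only thing to verify is that neither trajectory leaves its piece, and this is where the hypothesis $\omega\in\Omega_k(l)$ is used.

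The driving fact is Lemma~\ref{lem-com2}: taking $i=1$ in its last formula and using $s_0=0$, every prefix satisfies $\gamma_j^{-1}\dots\gamma_1^{-1}\cdot\mathfrak m_k=(\gamma_1\dots\gamma_j)^{-1}\cdot\mathfrak m_k=\alpha^{-s_j}\cdot\mathfrak m_k$ with $|s_j|\le l$; and examining the action at the deep bubble vertex $\mathfrak m_k$ shows that $s_j-s_{j-1}\in\{-1,0,1\}$, the value $0$ occurring exactly when $\gamma_j\in\{\beta,\beta^{-1}\}$ (a $b$-self-loop at $\mathfrak m_k$). Case~1, where $d(x,\{(w,0),(w,a_t)\})>l$ and $\hat x=\mathfrak m_k$, then follows at once: since $|s_{j-1}|\le l$, the vertex $y_{j-1}$ lies on the level-$t$ bubble at distance $|s_{j-1}|$ from $x$, hence at positive distance from both branch vertices, so it carries a $b$-self-loop and $\beta^{\pm1}$ fixes it just as at $\mathfrak m_k$ while $\alpha^{\pm1}$ rotates that bubble; a one-line induction gives $y_j=\alpha^{-s_j}\cdot x$, which lies in the segment $\{(w,v):|v-u|\le l\}$ because $|s_j|\le l$, and applying the coordinate-shift map $\hat\iota_x$ yields $\hat\iota_x(y_j)=\alpha^{-s_j}\cdot\mathfrak m_k=\hat z_j$.

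Cases~2 and~3 run on the same template with $\hat\iota_x=\iota_k^{w'}$, resp.\ $\iota_k^w$, which identifies the spider around the branching cycle through $(w,0)$, resp.\ $(w,a_t)$, with the spider around $\mathfrak b(1^{k-1})$ and so intertwines $\langle\alpha,\beta\rangle$ on its domain. The main obstacle is to bound the wandering of the trajectory. I would track along it the pair (current half-leg, distance to the $b$-cycle): $\alpha^{\pm1}$ changes the distance coordinate by $\pm1$ according to the orientation of the current half-leg, $\beta^{\pm1}$ leaves it unchanged unless the point sits on the $b$-cycle, and a $\beta^{\pm1}$ applied on the $b$-cycle merely rotates to another leg-foot. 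Comparing this with the step rule for $s_\bullet$ above, within each maximal sojourn on a single half-leg the distance coordinate equals its value at the start of that sojourn shifted by the increment of $s_\bullet$ over the sojourn; since the first sojourn starts at distance $d(\hat x,\mathfrak b(1^{k-1}))\le l$ with $s_\bullet=0$, every later sojourn starts at distance $0$, and $|s_j|\le l$ throughout, the trajectory never leaves $\mathfrak N(1^{k-1},2l)$. As $2l<a_k$ and every bubble incident to the branching cycles occurring here has half-length at least $a_k$ (all levels involved being $\ge k$), the maps $\iota_k^{w'},\iota_k^w$ are label-preserving isomorphisms on this neighbourhood, so the intertwining step of the induction goes through verbatim and produces $\hat\iota_x(\gamma_j^{-1}\dots\gamma_1^{-1}\cdot x)=\gamma_j^{-1}\dots\gamma_1^{-1}\cdot\hat x$ for all $1\le j\le p$. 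The genuinely delicate point is exactly this last containment near the branching cycles; the rest is a bookkeeping induction.
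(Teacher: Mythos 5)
Your proof is correct and structurally matches the paper's: both reduce to showing that the forward orbit $\gamma_j^{-1}\cdots\gamma_1^{-1}\cdot x$ stays inside a fixed small piece of the Schreier graph on which $\hat\iota_x$ is a label-preserving isomorphism, and both obtain the required containment from the invariant $|s_j|\le l$ of Lemma \ref{lem-com2}. Where the paper argues by contradiction (first exit time, then extraction of a sub-trajectory of the $\mathfrak m_k$-orbit of length $l+1$ in Case~1, respectively of length $2l+1$ in Cases~2--3, using $\mathfrak N(w',3l+1)$), you give a direct tracking argument: the signed displacement $v_j$ along the current bubble satisfies $v_j-v_{j_0}=-(s_j-s_{j_0})$ within each sojourn, every sojourn after the first begins at the branching cycle, so $d(\hat z_j,\mathfrak b(1^{k-1}))\le|v_{j_0}|+|s_j-s_{j_0}|\le 2l$. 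This is a bit cleaner and yields the sharper containment $\mathfrak N(\cdot,2l)$ in place of $\mathfrak N(\cdot,3l+1)$, though the mathematical content and the reliance on Lemma \ref{lem-com2} are the same. Two small imprecisions worth tightening if you write this up: the phrase ``the distance coordinate equals its value at the start shifted by the increment of $s_\bullet$'' glosses over the sign ambiguity (which is harmless for the bound because $d_j=|v_j|$ and only $|s_j-s_{j_0}|$ enters), and the intertwining step implicitly needs that both $\hat z_{j-1}$ and $\hat z_j$ lie in the piece where $\hat\iota_x$ is an isomorphism so that the induction hypothesis can be pushed forward --- your containment estimate supplies exactly that, but it should be stated as part of the inductive invariant.
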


\begin{proof} As before, we write $\mathfrak m=\mathfrak m_k$.
For a point $x$ at distance $>l$ from any branching cycle, we show
that the orbit $\left\{ w_{s}^{-1}\dots w_{1}^{-1}\cdot x\right\} _{1\le s\le p}$
up to time $p$ is contained in $B(x,l)$. If not, let $s$ be
the first time that $d\left(w_{s}^{-1}\dots w_{1}^{-1}\cdot x,x\right)=l+1$.
Since, up to time $s$, the orbit must remain within in the bubble containing 
$x$, it can be identified (using $\hat{i}$) with 
$\left\{ w_{j}^{-1}\dots w_{1}^{-1}\cdot \mathfrak{m}\right\} _{1\le j\le s}.$
It follows that 
$d\left(w_{s}^{-1}\dots w_{1}^{-1}\cdot \mathfrak{m},\mathfrak{m}\right)=l+1$.
By lemma \ref{lem-com2}, we have
 $d\left(\mathfrak{m},w_{1}\dots w_{s}\cdot \mathfrak{m}\right)=l+1$. This
contradicts the assumption that 
$w\in\Omega\left(\mathfrak{m},B(\mathfrak{m},l)\right)$.
Therefore the whole orbit up to time $p$ is in $B(x,l)$.

Next, let $x=(w,u)$ be within distance $l$ from one of the branching
cycle, say $d(x,(w,0))\le l$ (the other case is treated in the same way).
We claim that the orbit
$\left\{ w_{s}^{-1}\dots w_{1}^{-1}\cdot x\right\} _{1\le s\le p}$ cannot
leave the set  
$$\mathfrak N(w',3l+1)
= \left\{ z:\ d\left(z,\mathfrak{b}(w')\right)
\le 3l+1\right\} .$$
Suppose on the contrary that the orbit exits $\mathfrak N(w',3l+1)$. 
Let $t\le p$ be the first time such that 
$d\left(w_{t}^{-1}\dots w_{1}^{-1}\cdot x,\mathfrak{b}(w')\right)=3l+2$.
Let $y=w_{t}^{-1}\dots w_{1}^{-1}\cdot x$, since the orbits starts at $x$ 
and $t$ is the first time $d(w_{t}^{-1}\dots w_{1}^{-1}\cdot x,
\mathfrak{b}(w'))=3l+2$, there must be a largest time $s\le t$ such that
$d(w_{s}^{-1}\dots w_{1}^{-1}\cdot x,\mathfrak{b}(w'))=l+1$ and $
d(w_{s}^{-1}\dots w_{1}^{-1}\cdot x,y)=2l+1$. Set 
$z= \gamma_s^{-1}\dots\gamma_1^{-1}\cdot x$. 
By definition of $s$ and $t$, $z$ orbit 
$$\{ \gamma_{s+i}^{-1}\dots \gamma_{s+1}^{-1}\cdot z : 0\le i\le t-s\}$$
remains in the segment between $y$ and $z$ and we must have
\[
2l+1=d\left(z,\gamma_{t}^{-1}\cdots \gamma_{s+1}^{-1} \cdot z\right)=
d\left(\mathfrak{m},\gamma_{t}^{-1}\cdots \gamma_{s+1}^{-1}\cdot \mathfrak{m}\right),
\]
By Lemma \ref{lem-com2}, this contradicts the assumption that 
$\omega\in\Omega_k(l)$.
\end{proof}

Recall that $\mathbf S=\{\alpha^{\pm1 },\beta^{\pm 1}\}$ and that we
have the evaluation map
\[
\theta: \mathbf S^{(\infty)}\to\Gamma_{\mathbf a,\mathbf b},\;\;
\theta_{k}:\mathbf S^{(\infty)}\to\Gamma^{k}_{\mathbf a,\mathbf b}.
\]

\begin{lemma}[Local embeddings]\label{localembddings}
Fix $k\ge 0$.
Let $l$ be an integer such that $0\le l<(a_{k}/4)-1$.
The restriction of the map $\vartheta_k=\theta_{k+1}:\mathbf{S}^{(\infty)}
\to \mathbf G^k= \Gamma^{k+1}_{\mathbf a,\mathbf b}$
to the set $\Omega_k(l)$ satisfies
the conditions in part 2 of Hypothesis $\left(\Omega\right)$.
\end{lemma}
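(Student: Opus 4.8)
The plan is to verify the three bullet conditions of part 2 of Hypothesis $(\Omega)$ for the map $\vartheta_k = \theta_{k+1}$ restricted to $\Omega_k(l)$. The first bullet, that $\alpha^{\pm 1},\beta^{\pm 1}$ all lie in $\Omega_k(l)$, is immediate: a single generator moves $\mathfrak m_k$ by distance at most $1 < a_k/4 - 1$, so its inverted orbit stays in $B_k(l)$ for every $l \ge 1$ (and the case $l=0$ can be handled or excluded as needed; since we assume $l \ge 0$ with the relevant use being $l\ge 1$, this is harmless). The substantive content is in the second and third bullets, both of which are consequences of the orbit-identification results already established, namely Lemma \ref{lem-com2} and Lemma \ref{identification}.

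For the third bullet (the homomorphism property on $\Omega_k(l)$), suppose $w_1, w_2, w_1 w_2 \in \Omega_k(l)$. I would argue that $\theta_{k+1}(w_1 w_2) = \theta_{k+1}(w_1)\theta_{k+1}(w_2)$ by showing that the actions of $w_1$ and $w_2$, when read as words over $\mathbf S$ and evaluated in the \emph{finite} group $\Gamma^{k+1}_{\mathbf a,\mathbf b}$, agree with their evaluations in $\Gamma_{\mathbf a,\mathbf b}$ on the region of $X$ that is actually explored. The key geometric fact is that, because the inverted orbit of $\mathfrak m_k$ under $w_1 w_2$ is confined to $B_k(l) \subset B_k((a_k/4)-1)$, the relevant prefixes never push any point close enough to a level-$k$ (or shallower) branching cycle to "see" the difference between the infinite graph $X$ and its truncation $X^{k+1}$; the truncation only alters the action of $\beta$ at level $k+1$, i.e. at branching cycles attached to level-$(k+1)$ bubbles, and these are never reached from $\mathfrak m_k$ within the allowed distance. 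This is precisely the content encoded by Lemma \ref{identification}, which provides, for each vertex $x$, an identification $\hat\iota_x$ intertwining the forward orbit of $x$ under (inverses of) prefixes of $\omega \in \Omega_k(l)$ with the forward orbit of $\hat x$. Since group multiplication $\vartheta_k(w_1 w_2) = \vartheta_k(w_1)\vartheta_k(w_2)$ in $\Gamma^{k+1}$ is a statement about composing permutations of the finite vertex set $X^{k+1}$, and since composition is compatible with the ambient composition in $\Gamma$ on the explored region, the identity follows once one checks the concatenated word $w_1 w_2$ and its prefixes stay admissible — which is exactly the hypothesis $w_1, w_2, w_1 w_2 \in \Omega_k(l)$.

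For the second bullet (well-definedness: if $w_1, w_2 \in \Omega_k(l)$ evaluate to the same element of $\Gamma$, then $\vartheta_k(w_1) = \vartheta_k(w_2)$), I would argue contrapositively or directly via the same localization principle: two words with the same evaluation in $\Gamma$ act identically on all of $X$, hence in particular identically on $X^{k+1}$ after the identification, so their images under $\theta_{k+1}$ coincide. Here the only subtlety is that $\theta_{k+1}$ is a priori only a map on $\mathbf S^{(\infty)}$, and two words equal in $\Gamma$ need not be equal in $\Gamma^{k+1}$ in general — but the admissibility constraint $w_i \in \Omega_k(l)$ forces the relevant portion of their action to be detected already on the truncated graph, and there equality in $\Gamma$ forces equality. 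I would make this precise by invoking Lemma \ref{identification} to transfer the equality of actions on the explored bubbles and using that $\Gamma^{k+1}$ acts faithfully on $X^{k+1}$.

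The main obstacle I anticipate is bookkeeping rather than conceptual: one must carefully track how the distance bound $l < (a_k/4) - 1$ (as opposed to the weaker $l \le (a_k/2)-1$ appearing in Lemma \ref{lem-com}) interacts with the "$3l+1$" neighborhoods of branching cycles appearing in Lemma \ref{identification}, and confirm that these neighborhoods never extend past a level-$(k+1)$ branching cycle — so that the truncation defining $\Gamma^{k+1}$ genuinely does not affect any move made. In other words, the crux is checking that the factor of $4$ in the hypothesis $l < (a_k/4)-1$ is exactly what is needed to guarantee that the orbit-confinement lemmas apply simultaneously to $w_1$, $w_2$, and their concatenation, and that all the resulting motion is insensitive to truncating the tree one level below $\mathfrak m_k$'s level. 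Once this geometric containment is in hand, the three algebraic conditions follow formally from the fact that $\Gamma^{k+1}_{\mathbf a,\mathbf b}$ acts faithfully on the finite set $X^{k+1}_{\mathbf a,\mathbf b}$ and that this action agrees with the ambient $\Gamma$-action on the explored region.
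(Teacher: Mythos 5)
Your plan identifies the right tools --- Lemma~\ref{identification} to match orbits in $X$ with orbits in $X^{k+1}_{\mathbf a,\mathbf b}$, and faithfulness of $\Gamma^{k+1}_{\mathbf a,\mathbf b}$ acting on $X^{k+1}_{\mathbf a,\mathbf b}$ --- and this is indeed the paper's strategy. Two points need attention, however. First, the third bullet requires no argument at all: by definition $\vartheta_k=\theta_{k+1}$ is the evaluation map from the free monoid $\mathbf S^{(\infty)}$ to $\Gamma^{k+1}_{\mathbf a,\mathbf b}$, hence multiplicative on \emph{all} pairs of words, admissible or not. Your paragraph arguing this via orbit agreement is unnecessary and suggests you overlooked that the map is globally a homomorphism.

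Second, and more substantively, your treatment of the second bullet glosses over the place where the real work lies. Your claim that the truncation region ``is never reached from $\mathfrak m_k$ within the allowed distance'' is a statement about the inverted orbit of $\mathfrak m_k$; what must actually be controlled are the forward orbits of \emph{arbitrary} points $x\in X^{k+1}_{\mathbf a,\mathbf b}$ under prefixes of the word --- in particular points at the deepest level sitting within distance $l$ of an endpoint $(w,a_{k+1})$ with $|w|=k$, which is precisely where the dynamics of $X$ and $X^{k+1}_{\mathbf a,\mathbf b}$ genuinely differ (in $X$, $\beta$ pushes past the branching cycle $\mathfrak b(w)$ to deeper levels; in $X^{k+1}_{\mathbf a,\mathbf b}$, $\beta_{k+1}$ stabilizes the endpoint). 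The paper's proof splits into cases: away from these endpoints, the $X$- and $X^{k+1}$-orbits literally coincide via Lemma~\ref{identification}; near an endpoint, Lemma~\ref{identification} confines the $X$-orbit to $\mathfrak N(w,3l+1)$, and one then checks by inspection that a word returning $x$ to itself in $X$ also returns $x$ to itself in $X^{k+1}_{\mathbf a,\mathbf b}$. Your proposal does not isolate this boundary case, which is exactly where the factor $l<(a_k/4)-1$ earns its keep. A secondary observation: the paper in fact proves the two-way equality $\ker(\vartheta_k)\cap\Omega_k(l)=\ker(\theta)\cap\Omega_k(l)$, supplying the injectivity invoked later in the proof of Lemma~\ref{graph-iden}; your sketch only addresses the well-definedness direction stated in the hypothesis.
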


\begin{proof}
Let $\theta:\mathbf S^{(\infty)}$ be the evaluation map in $\Gamma$.
By the definition of $\vartheta_{k}$, it remains to show
that for any $w,w'\in\Omega_k(l)$, $\theta(w)=\theta(w)$ if and only if
$\vartheta_{k}(w)=\vartheta_{k}(w')$.
In other words, setting $\ker (\vartheta_k)=\{\omega\in \mathbf S^{(\infty)}: 
\vartheta_k(\omega)=e\}$,
we need to show
\[
\ker\left(\vartheta_{k}\right)\cap\Omega_k(l)=
\ker(\theta)\cap\Omega_k(l).
\]

We first show that 
\[
\ker\left(\vartheta_{k}\right)\cap\Omega_k(l)
\subset\ker(\theta)\cap\Omega_k(l).
\]
Suppose there exists $\omega\in\ker\left(\vartheta_{k}\right)\cap\Omega_k(l)$
such that $\omega\neq e_{\Gamma}$ in $\Gamma$. Then there exists some point
$x\in X$ such that $\omega\cdot x\neq x$.  If $x=(w,u)$
with $|w|\le k$, we set $\hat{x}=x$. Otherwise, $\hat{x}$ is given by 
Lemma \ref{identification}.

From Lemma
\ref{identification}, we know that $\hat{x}$ is also moved by 
$\omega$.
By Lemma \ref{identification}, the constraint
$\omega\in\Omega_k(l)$ implies that
the orbit $\left\{ \gamma_{j}^{-1}\dots\gamma_{1}^{-1}\cdot \widehat{x}\right\}
_{1\le j\le p}$
never touches any point of the form $(w,a_{k+1})$ with $|w|=k$ (that is, any of 
the end points of level $k+1$). 
Therefore the orbit of $\hat{x}$ in $X$
is exactly the same as the orbit in the finite bubble graph 
$X^{k+1}$.
In particular, they will end at the same place, so that
$\vartheta_{k}(\omega)\cdot \hat{x}\neq\hat{x}$. This obviously contradicts 
the assumption that $\vartheta_k(\omega)$ is trivial.

In the other direction, we assume that there exists $\omega\in
\ker\left(\theta\right)\cap \Omega_k(l)$
such that $\vartheta_k (\omega)$ is non-trivial. In such case, there exists
a point $x=(w,u)\in X^{k+1}$, $|w|\le k$, 
such that $\vartheta_k(\omega)\cdot x\neq x$. If $|w|\le k-1$ or $|w|=k$ and
$d(x, (w,a_{k+1}))> l$ then (following the same line of reasoning as before) 
the orbit 
$$\{ \theta_k(\gamma_j^{-1}\dots \gamma_1^{-1})\cdot x\}_{1\le j\le p}$$ 
in $X^{k+1}$ can readily be 
identified with the orbit of $(w,u)$ in $X$. This gives a contradiction.
If $|w|=k$ and $d(x, (w,a_k))\le l$, then 
the condition $\omega\in \Omega_k(l)$ implies that the orbit
$\{ \pi(\gamma_j^{-1}\dots \gamma_1^{-1})\cdot (w,u)\}_{1\le j\le p}$ in $X$
stays in $\mathfrak N(w,3l+1)$. Since
$ \pi(\gamma_p^{-1}\dots \gamma_1^{-1})\cdot (w,u) =(w,u)$ in $X$,
it now follows by inspection that
$ \vartheta_k(\gamma_p^{-1}\dots \gamma_1^{-1})\cdot x =x$ in $X^{k+1}$. 
This is the desired contradiction.
\end{proof}

\subsection{Isoperimetric profiles of bubble groups}

In this section, we prove and illustrate the following Theorem.

\begin{theorem} \label{bubblebound}
Let $\Gamma=\Gamma_{\mathbf{a},\mathbf b}$ be a bubble group with scaling
sequence $\mathbf{a}$ and branching sequence $\mathbf{b}$ that satisfy
Assumption {\em (A)}. Let $G=\mathbb Z\wr_X \Gamma$ be the associated
permutation wreath product.  There exist a constant $C$ such 
that for all $r>1$,
\[
\Lambda_{1,G}(v)\ge \frac{1}{Cr} \mbox{ for all } v \le C^{-1} r^{
|B_X(o,r)|/C}.
\]
Furthermore, we also have
\[
\Lambda_{2,G}\left(v\right)\le\frac{C}{r^{2}} \mbox{ for all } v \ge 
C\left((|X^{k(r)-1}_{\mathbf a,\mathbf b}|
+ (b-1)^{k(r)}r/2)!\right)
\]
where $k(r)= \min\{k : a_{k}>2r\}$. 
\end{theorem}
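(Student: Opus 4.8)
The plan is to prove the two halves of Theorem \ref{bubblebound} separately, since they use disjoint machinery developed earlier in the paper.

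\textbf{Lower bound for $\Lambda_{1,G}$.} This is the easy half and follows directly from the general comparison technique of Section \ref{sec-low}. Recall that $G=\mathbb Z\wr_X\Gamma$ is a permutation wreath product with $H=\mathbb Z$ and base group $\Gamma=\Gamma_{\mathbf a,\mathbf b}$, acting on its defining Schreier graph $X=X_{\mathbf a,\mathbf b}$. First I would invoke Example \ref{exa-Dpro-low} (the case $\widetilde G=\mathbb Z\wr_{\mathcal S}\Gamma$): taking $\eta_r$ to be the uniform probability measure on $\mathbb Z\cap[-r,r]$, so that $\Lambda_{1,\mathbb Z,\eta_r}^{-1}(1/2)\asymp r$, the corollary following Proposition \ref{volume-diameter} yields exactly
\[
\Lambda_{1,G}(v)\ge\frac{1}{Cr}\ \text{ for all }v\le C^{-1}r^{|B_X(o,r)|/C}.
\]
So essentially no work is needed here beyond citing the already-established lower-bound apparatus; the only point to check is that the standard switch-or-walk generating set used in Theorem \ref{bubblebound} is comparable to the $\mathfrak q$-measure appearing there, which is routine.

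\textbf{Upper bound for $\Lambda_{2,G}$.} This is the substantive half and it relies on Proposition \ref{function} together with the verification of Hypothesis $(\Omega)$ carried out in Lemmas \ref{lem-com}--\ref{localembddings}. The plan is as follows. Fix $r>1$ and set $k=k(r)=\min\{k:a_k>2r\}$; choose the level index in Hypothesis $(\Omega)$ to be $k-1$ (so that $r$ is comparable to $a_{k-1}/4$, making the identification-of-orbits lemmas applicable with parameter $l\asymp r$). Take $J_{k-1}=\{\mathfrak m_{k-1}\}$, $B_{k-1}=B_{k-1}(l)$ with $l\asymp r$, and $\mathbf\Gamma^{k-1}=\Gamma^{k}_{\mathbf a,\mathbf b}$ the finite bubble group, with $\vartheta_{k-1}=\theta_{k}$ as in Lemma \ref{localembddings}. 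Next I would supply the two test-function ingredients required by Proposition \ref{function}: (a) a $(J_{k-1},B_{k-1})$-admissible function $F$ on $\mathcal P_f(X)$, built from a resistance/distance profile on the one-dimensional bubble segment $B_{k-1}(l)$ exactly as in Example \ref{exa-Dinfty2} (a tent function on the segment, with admissible set $A$ of size $\asymp r$), giving $\mathcal Q_{\mathcal P_f(X),\mu}(F)\asymp r^{-2}$; and (b) the constant function $\psi\equiv 1$ on the finite group $\mathbf\Gamma^{k-1}$, so that $\mathcal Q_{\mathbf\Gamma^{k-1},\mu}(\psi)=0$. Then $Q_\mu(k-1)\asymp r^{-2}$, and Proposition \ref{function} gives $\Lambda_{2,G,\mathfrak q}(v)\le C/r^2$ for all
\[
v\ge\bigl|\overline\Omega(J_{k-1},B_{k-1})\cap\operatorname{supp}(\psi)\operatorname{supp}(\psi)^{-1}\bigr|\,Q_\mu(k-1)^{-|B_{k-1}|}.
\]
Since $\psi\equiv1$ on the finite group $\Gamma^{k}_{\mathbf a,\mathbf b}$, the first factor is at most $|\Gamma^{k}_{\mathbf a,\mathbf b}|$, which is bounded by the order of the full permutation group of $X^{k}_{\mathbf a,\mathbf b}$, i.e.\ $(|X^{k}_{\mathbf a,\mathbf b}|)!$; the exponent $|B_{k-1}|$ is $\asymp r$. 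Bookkeeping on the vertex count of $X^{k}_{\mathbf a,\mathbf b}$ (first $k-1$ full levels of the bubble graph, plus the partial level-$k$ material within distance $\asymp r$ of the relevant branching cycles, of which there are $(b-1)^{k}$ copies each contributing $\asymp r$ vertices) produces the stated bound $v\ge C((|X^{k(r)-1}_{\mathbf a,\mathbf b}|+(b-1)^{k(r)}r/2)!)$; the factorial dominates the $Q_\mu(k-1)^{-|B_{k-1}|}=r^{O(r)}$ factor, so only the factorial term survives in the final statement.

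\textbf{Main obstacle.} The genuinely delicate part is not the test-function construction (which mirrors the toy dihedral case) but ensuring that Hypothesis $(\Omega)$ genuinely holds with these choices of $(J_{k-1},B_{k-1},\mathbf\Gamma^{k-1})$ — that is, that the local embedding $\vartheta_{k-1}=\theta_k$ restricted to $\Omega_{k-1}(l)$ is well defined and multiplicative. This is precisely the content of Lemma \ref{localembddings}, whose proof in turn rests on the orbit-identification Lemma \ref{identification} and the commutation Lemmas \ref{lem-com}--\ref{lem-com2}; these require $l<(a_{k-1}/4)-1$, which is exactly why one must work at level $k-1$ rather than level $k$ and why $k(r)$ is defined via $a_k>2r$. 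I would carefully align the definitions so that the hypotheses of those lemmas are met (in particular checking $r\le (a_{k(r)-1}/4)-1$, which may require a harmless adjustment of constants or a short argument handling small $r$), and then the quoted volume bound follows by counting vertices of $X^{k(r)}_{\mathbf a,\mathbf b}$ as indicated.
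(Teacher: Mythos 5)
The lower bound half is handled correctly and follows the paper's route (Example \ref{exa-Dpro-low} / Proposition \ref{volume-diameter} specialised to $H=\mathbb Z$); nothing to add there. The upper bound, however, has several genuine gaps in the execution.

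\textbf{Level indexing.} You propose to apply Lemma \ref{localembddings} at level index $k(r)-1$, asserting that then "$r$ is comparable to $a_{k-1}/4$." But the definition $k(r)=\min\{k:a_k>2r\}$ gives only the one-sided bound $a_{k(r)-1}\le 2r$; nothing forces $a_{k(r)-1}$ to be large. The lemma at level $k(r)-1$ requires $l<(a_{k(r)-1}/4)-1$, and for a sequence like $\mathbf a=(1,2,3,10^6,\dots)$ with $r=1000$ you would get $k(r)=4$ and $a_{k(r)-1}=3$, so $l\le 0$ and the construction collapses. The paper works at level $k(r)$, where $a_{k(r)}>2r$ does guarantee that $(a_{k(r)}/4)-1>(r/2)-1$, so $l=r/4$ is allowed; this is precisely why $k(r)$ is defined via $a_k>2r$ and why the finite group used is $\Gamma^{k(r)+1}$, not $\Gamma^{k(r)}$.

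\textbf{Choice of $J$ and $B$.} Hypothesis $(\Omega)$ requires $o\in J_n$. Your choice $J=\{\mathfrak m_{k-1}\}$ omits $o$, which is needed because the lamp moves in $\mathbb Z\wr_X\Gamma$ act at $o$, and the $(J,B)$-admissibility forces every set in the support of $F$ to contain $J\ni o$, which is what makes the energy argument in Proposition \ref{function} close up. Consequently $B$ cannot be a one-dimensional segment around $\mathfrak m$: it must contain $o$, the whole path up to level $k(r)$, and neighborhoods of all the branching cycles at that level. This is exactly what the paper's set $W(k(r),r/4)$ does, and the admissible set is $A_r=W(k(r),r/8)$, not a single bubble arc.

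\textbf{The volume count.} This is the most serious gap. You bound $|\vartheta_n(\Omega(J,B))\cap\operatorname{supp}(\psi)\operatorname{supp}(\psi)^{-1}|$ by $|\Gamma^{k}_{\mathbf a,\mathbf b}|\le(|X^{k}_{\mathbf a,\mathbf b}|)!$, and then attempt bookkeeping to equate $|X^k|$ with $|X^{k-1}|+(b-1)^k r/2$. But by definition $X^k_{\mathbf a,\mathbf b}$ contains all the \emph{full} level-$k$ bubbles, which have length $2a_k>4r$, so $|X^k|\ge(b-1)^{k-1}\cdot 2a_k$ is far larger than $|X^{k-1}|+(b-1)^k r/2$ when $a_k>2r$. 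Your bound is therefore not the one in the theorem but a strictly weaker one. The paper obtains the stated factorial only by exploiting Lemma \ref{identification}: any $\gamma\in\overline\Omega(\mathfrak m_{k(r)},B(\mathfrak m_{k(r)},r/4))$ is determined by its restriction to the small set $A_r$, and moreover $\gamma\cdot A_r\subset W(k(r),r/2)$; hence $|\overline\Omega|\le|W(k(r),r/2)|!\le(|X^{k(r)-1}|+(b-1)^{k(r)}r/2)!$. Without that identification step, the crude bound $|\Gamma^{k(r)+1}|$ is not enough. This is the key insight that your proposal skips.

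So while the architecture (Proposition \ref{function} plus Lemmas \ref{lem-com}--\ref{localembddings} plus a tent function) is the right one, the proof as written would not yield the stated inequalities: the identification lemmas would not apply at the level you chose, Hypothesis $(\Omega)$ would be violated, and the final factorial would be too large by a factor of roughly $((b-1)^{k(r)}a_{k(r)})!/((b-1)^{k(r)}r)!$.
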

\begin{remark} In the above statement, $\Lambda_{i,G}$ must be interpreted 
as a given representative of the equivalence class $\Lambda_{i,G}$, for instance,
$\Lambda_{i,G,\mathbf u}$ where $\mathbf u$ is the uniform measure on a fixed 
finite symmetric set of generators of the group $G$, and the constant 
$C$ involved in these inequalities  depends on the representative. We 
will use this convention throughout.
\end{remark}
\begin{proof}
The lower bound follows directly from the volume-diameter bound. See Example 
\ref{exa-Dpro-low}. Recall that $\mathfrak{m}_k=(1^{k-1},a_{k}/2)$ 

For the upper bound, we apply Proposition \ref{function}. Indeed, 
Lemma \ref{localembddings} shows that assumption 
$(\Omega)$ is satisfied with the choice 
$$J_{r}=\{o,\mathfrak m_{k(r)}\},
B_r=W(k(r),r/4)
\mbox{ and }\mathbf \Gamma^r=\Gamma^{k(r)+1}_{\mathbf a,\mathbf b}$$ where
\begin{eqnarray*}
W(k,t)&=&\{(w,u): |w|\le k-2, u\in \{1\dots, 2a_{|w|+1}-1\} \} \\
&&\bigcup \left(\bigcup_{|w|=k-2} \mathfrak N( w,t)\right)\;  \bigcup B(\mathfrak m_{k},t),
\end{eqnarray*}
and $\Gamma^{k+1}_{\mathbf a,\mathbf b}$ is
the finite bubble group
acting on the first $k+1$ levels of the bubble graph. Note that it 
follows from the various definitions that
$$\Omega(J_r,B_r)\subset \Omega(\mathfrak m_{k(r)}, B(\mathfrak m_{k(r)}, r/4))
.$$  Note that we abuse notation a little here by indexing $J_r,B_r,\Gamma^r$ 
by $r$ instead of an integer as in Definition \ref{def-Omega}. Also, 
in Lemma \ref{localembddings}, the group $\mathbf \Gamma^r$ 
is denoted by $\mathbf \Gamma^{k(r)}$. 

\begin{figure}[h]
\begin{center}\caption{The set $W(k,t)$ on a sketch 
of the Schreier graph $X$ }\label{BW}

\begin{picture}(320,150)(0,0) 
\put(10,0){
\put(-5,50){\makebox(0,0){$o$}}
{\color{red}
\put(0,50){\circle*{2}}
\put(10,50){\oval(20,4.2)}
\put(22,50){\circle{4}}

\put(37.5,53){\oval(30,4.2)}
\put(37.5,47){\oval(30,4.2)}
\put(54,55.5){\circle{4}}
\put(54,44){\circle{4}}

\multiput(56,50)(10,0){4}{\line(1,0){5}} }}

{\color{red}
\put(100,75){\put(0,0){\oval(40,4.2)[r]}
\put(21,3){\circle{4}}
}

\put(142,81){\put(0,0){\oval(40,4.2)}
\put(21,3){\circle{4}}
}

{\thicklines\put(170,87){\oval(10,4.2)[l]}}
\multiput(165,68)(0,-10){4}{\line(0,-1){5}} 
\put(160,-5){\makebox(30,5){$\mathfrak b(w)$, $|w|=k-2$}} 
\put(165,10){\vector(0,1){10} 
}}

\put(195,87){\put(0,0){\oval(60,4.2)}
\put(31,3){\circle{4}}}

\put(262,92.5){\put(0,1){\oval(70,4.2)}
\put(36,4){\circle{4}}
\put(0,-6){\oval(70,4.2)}
\put(36,-9){\circle{4}}

}

\multiput(260,78)(0,-10){6}{\line(0,-1){5}} 

\multiput(290,50)(10,0){3}{\line(1,0){5}} 

\put(10,0){\put(24,115){\vector(0,-1){10} 
\makebox(-5,10){$\mathbf 1$}}

\put(30,30){\makebox(17,-15){$a_2$} 
\put(-11,-15){\vector(1,0){10}} \put(-11,-15){\vector(-1,0){10}} 
}}

\put(166,115){\vector(0,-1){10} 
\makebox(-2,10){$\mathbf 1^{k-1}$}}

\put(173,30){\makebox(43,-15){$a_{k}$} 
\put(-20,-15){\vector(1,0){10}} \put(-20,-15){\vector(-1,0){10}} 
}
\put(228,115){\vector(0,-1){10} 
\makebox(-4,10){$\mathbf 1^{k}$}}

\put(195,94){\makebox(4,4){$\mathfrak m_k$}}
\put(195,89){\circle*{2}}
{\color{red}\thicklines \put(190,89){\line(1,0){10}}}

\end{picture}\end{center}\end{figure}
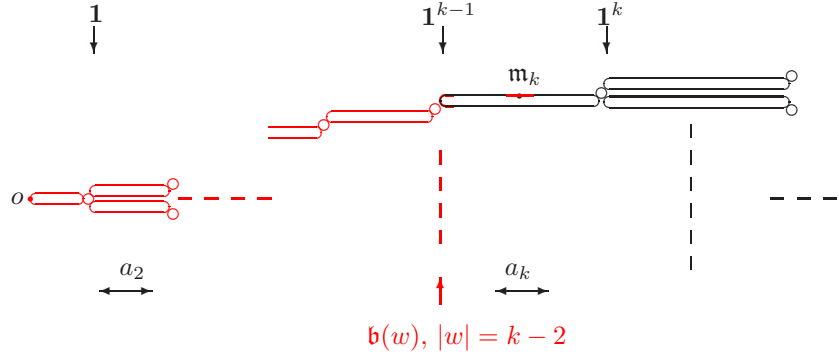

To apply Proposition \ref{function} we need a $(J_r,B_r)$-admissible 
function $F$ charging translates of a fix set $A_r$ together with a test function $\psi_r$ on $\mathbf \Gamma^r$.  We pick 
$$A_r=W(k(r),r/8)$$
and observe that if $ |t|\le r/8$ then
$$ \beta \alpha^t\cdot A_r= \alpha^t\cdot A_r.$$
Set 
\begin{equation}\label{BFr}
F_r(Y)=\left\{\begin{array}{cl} 1- 8|t|/r &\mbox{ if } Y=\alpha^t\cdot A_r\\
0& \mbox{ otherwise}.
\end{array}\right.
\end{equation}
This is a $(J_r,B_r)$-admissible function and, obviously,
$$\mathcal Q_{\mathcal P_f(X),\mu}(F_r) \le C/r^2.$$
For the function $\psi_r$ on $\mathbf \Gamma^{r}$, 
we simply take $\psi_r\equiv 1$.

To finish the proof, we need to estimate the size of the set
$$\overline{\Omega}(J_r,B_r) \subset \overline{\Omega}(\mathfrak m_{k(r)},
B(\mathfrak m_{k(r)},r/4))
.$$ By Lemma \ref{identification}, 
any element $\gamma\in 
\overline{\Omega}(\mathfrak m_{k(r)},
B(\mathfrak m_{k(r)},r/4)$ is determined by its action on $A_r$ and 
the image of $A_r$ is contained in $W(k(r),r/2)$. Consequently,
$$|\overline{\Omega}(\mathfrak m_{k(r)},
B(\mathfrak m_{k(r)},r/4)|\le |W(k(r),r/2)|!\le 
\left(|X^{k(r)-1}_{\mathbf a,\mathbf b}|+ (b-1)^{k(r)}r/2\right)!.$$
\end{proof}

\subsection{Bubble group examples}
We now discuss these results for a variety of examples of sequences 
$\mathbf a=(a_n)_1^\infty$ when $b_i=3$ for all $i$. The quality of the 
results depends on whether  $s_k=\sum_1^k a_i \asymp  a_k$ or not. 
The partial sum $s_k$ which is equal, essentially, to the distance between 
the root $o$ and the branching cycles $\mathfrak b(w)$ with $|w|=k-1$, 
that is, the branching cycles  at level $k$.  If 
$a_k$ is much smaller than $s_k$ then the upper and lower bounds for 
$\Lambda_{1,G}$ and $\Lambda_{2,G}$ in Theorem \ref{bubblebound}
do not match. 
See the first example below. If, $a_k\asymp s_k$ then the bound matches and we 
obtain good results.

\begin{example}Assume that $a_k=k$. Then $s_k= k(k+1)/2$.  If 
$r \in [s_k, s_{k+1}]$, $|B_X(o,r)|\asymp (1+k)2^k$. 
If $\log v\asymp |B_X(o,r)|\asymp (1+k)2^k$, we obtain 
$$C(\log\log v)^{-1}\ge \Lambda_{1,G}(v)\ge (C\log\log v)^{-2}$$ and 
$$(C\log\log v)^{-4}\le \Lambda_{2,G}(v)\le C(\log\log v)^{-2}.$$   
\end{example}

\begin{example}\label{exa-bubb1}
Fix $\beta>0$, take $a_{k}=\left\lfloor 2^{\beta k}\right\rfloor $. Then
$s_k\asymp a_k\asymp 2^{\beta k}$ and, for
$r\asymp 2^{\beta k}$, we have $|B_{X}(o,t)|\asymp 2^{(\beta+1)k}.$ It follows 
that Theorem \ref{bubblebound} gives
$$ \Lambda_{1,G}^2(v)\asymp \Lambda_{2,G}(v)\asymp \left(\frac{\log\log v}
{\log v}\right)^{2\beta/(\beta+1)}.$$
From this estimate on $\Lambda_{2,G}$, we deduce also that
\[
\Phi_G(n)\simeq\exp\left(-n^{\frac{\beta+1}{3\beta+1}}(\log n)^{\frac{2\beta}{3\beta+1}}\right).
\]
In particular, as $\beta$ varies in $(0,\infty)$, the exponent
$\frac{\beta+1}{3\beta+1}$ 
varies in $\left(\frac{1}{3},1\right)$. 
\end{example}

\begin{example} \label{exa-bubb2}
Assume that $a_k=\lfloor e^{f(k)}\rfloor$ where $f$ is positive 
increasing function such that $f^{-1}$ is a regularly varying function
of index strictly less than $1$ (including, possibly, $0$).  
This implies
that $s_k\asymp a_k$ and for $r\in [a_k/4, 4a_{k+1}]$ with $k$ large enough,
we have $|B_X(o,r)| \asymp  r2^k \asymp r2^{f^{-1}(\log r)}$.
Hence, if $\log v \asymp  r 2^{f^{-1}(\log r)} \log r$, we have
$$\Lambda_{1,G}(v)\ge \frac{1}{Cr}\ge \frac{c2^{f^{-1} (c\log\log v)}
\log\log v}{\log v}.
$$

Also, for $k$ large enough, we must have
$k(r)\le k+1$. It follows that
$$[|X^{k(r)-1}_{\mathbf a,\mathbf b}|+ (b-1)^{k(r)}r/2]\asymp 2^k r $$   
and
$$(|X^{k(r)-1}_{\mathbf a,\mathbf b}|+ (b-1)^{k(r)}r/2)!\le e^{ C(\log r )2^k r}.$$   
Hence, assuming again  that $\log v \asymp  r 2^{f^{-1}(\log r)} \log r$,
$$\Lambda_{2,G}(v)\le \frac{C}{r^2}\le C'\left(\frac{2^{f^{-1} (C'\log\log v)}
\log\log v}{\log v}\right)^2.
$$
In particular, if $2^{f^{-1}}$ is regularly varying, then we have
$$\Lambda_{1,G}(v)^2\asymp \Lambda_{2,G}(v)\asymp
\left(\frac{2^{f^{-1} (\log\log v)}
\log\log v}{\log v}\right)^2$$
and
$$\Phi_G(n) \simeq  \exp\left( -n^{1/3} 
\left(2^{f^{-1}(\log n)} \log n \right)^{2/3}\right).$$

As an explicit example, take $2^{f^{-1}(t)}= t^\kappa$, $\kappa>0$, (that is, 
$f(t)=2^{\kappa^{-1} t}$). In this case, we have
$$\Phi_G(n) \simeq  \exp\left( -n^{1/3} (\log n)^{2(1+\kappa)/3}\right).$$

Finally, if $f(t) =t^\kappa$ with $\kappa>1$, a slightly more careful 
computation is needed but the end result is that, in that case,
$$\Lambda_{1,G}(v)^2\asymp \Lambda_{2,G}(v)\asymp
\left(\frac{2^{(\log\log v)^{1/\kappa}}
\log\log v}{\log v}\right)^2$$
and
$$\Phi_G(n) \simeq  \exp\left( -n^{1/3}(\log n)^{2/3} 
2^{\frac{2}{3}(\log n)^{1/\kappa}} \right).$$

\end{example}

\subsection{Amenability} \label{sec-am}

In this section we prove the following statement.

\begin{proposition}\label{pro-am}
Assume that the sequence $\mathbf a = (a_n)$ satisfies
$\liminf a_n = \infty$. 
For an arbitrary $\mathbf b=(b_i)_1^\infty$, $b_i\ge 2$, 
the bubble group $\Gamma_{\mathbf a,\mathbf b}$ 
is amenable.
\end{proposition}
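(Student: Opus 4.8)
The plan is to place $\Gamma=\Gamma_{\mathbf a,\mathbf b}$ inside the framework of extensively amenable actions developed in \cite{JBMS,Juschenko2013,Juschenko2013a}. First observe that $\liminf a_n=\infty$ is the same as $a_n\to\infty$; in particular, for every $m$ there is $k_0(m)$ with $a_k>4m$ for all $k\ge k_0(m)$. This is the only feature of the hypothesis that will be used, so neither the monotonicity of $\mathbf a$ nor the constancy of $\mathbf b$ in Assumption~(A) is needed here. I would establish two ingredients: (i) the action of $\Gamma$ on the bubble graph $X$ is extensively amenable, and (ii) $\Gamma$ embeds into a permutation-wreath-type group $(\oplus_{X}A)\rtimes Q$ with $A$ finite and $Q$ amenable, in such a way that the $\Gamma$-action on $X$ is the one induced from $Q\curvearrowright X$. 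Granting both, $(\oplus_{X}A)\rtimes Q$ is amenable, since extensive amenability of $Q\curvearrowright X$ is exactly what makes the wreath extension by the amenable group $Q$ stay amenable; hence its subgroup $\Gamma$ is amenable.

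For (i), the idea is to exploit that, because the bubble lengths $2a_k$ tend to infinity, $X$ is ``one dimensional at every scale''. A word $w$ of length $\ell$ in $\alpha^{\pm1},\beta^{\pm1}$ displaces every vertex by at most $\ell$, so once $a_k>4\ell$ the part of the inverted orbit $\mathcal O(w;x)$ contained in a level-$k$ bubble through $x$ coincides, via the identifications of Lemmas~\ref{lem-com}, \ref{lem-com2} and \ref{identification}, with the inverted orbit of a cyclic rotation of that bubble --- a one-dimensional, hence recurrent, dynamics. I would then build the required $\Gamma$-invariant mean on $\mathcal P_f(X)$ as a weak-$*$ cluster point of the probability measures spreading uniformly over long sub-arcs of the bubbles at each level, using the permanence properties of extensive amenability to assemble these local recurrences into extensive amenability of $\Gamma\curvearrowright X$. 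This is where $a_n\to\infty$ is essential: it forces the branching of $X$ to occur on a set of vanishing density, so that the ``skeleton'' along which one propagates almost-invariance is recurrent rather than, as in the bounded case (recall that $\Gamma_{\mathbf a,\mathbf b}$ is non-amenable when both $\mathbf a$ and $\mathbf b$ are bounded), an exponentially branching tree.

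For (ii), the model $Q$ would be assembled from the finite bubble groups: by Lemma~\ref{localembddings}, for $l<(a_k/4)-1$ the map $\vartheta_k=\theta_{k+1}$ is a partial homomorphism on $\Omega_k(l)$ into the \emph{finite} (hence amenable) group $\Gamma^{k+1}_{\mathbf a,\mathbf b}$, and since $a_k\to\infty$ the sets $\overline\Omega_k(l_k)$ with $l_k=\lfloor a_k/4\rfloor-2$ exhaust $\Gamma$; these compatible partial embeddings into finite groups, together with the residual bubble-rotation data (which is abelian), are combined into an amenable group $Q$ through which the action on $X$ factors, the finite lamp group $A$ recording the finitary discrepancies. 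I expect the main obstacle to be step~(i): since $X$ has exponential growth one cannot simply quote recurrence of the full Schreier graph, and the delicate point is to show that the branching --- sparse but present at every level --- does not destroy the invariant mean on $\mathcal P_f(X)$. This should reduce to a careful accounting of inverted orbits of exactly the type already carried out in Lemmas~\ref{lem-com}--\ref{identification}.
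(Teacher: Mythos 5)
Your observations that Assumption~(A) is not needed and that the driving mechanism is the one-dimensional, recurrent geometry of the Schreier graph at every scale are both correct, and your step~(i) is the right heuristic. But the argument as written has two genuine gaps.

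The more serious one is step~(ii). You want to assemble an amenable group $Q$ from the finite bubble groups $\Gamma^{k+1}_{\mathbf a,\mathbf b}$ via the partial embeddings $\vartheta_k$ and then exhibit $\Gamma$ as a subgroup of $(\oplus_X A)\rtimes Q$ whose $X$-action factors through $Q$. But the $\vartheta_k$ are partial maps, defined only on finite subsets $\Omega_k(l)$ of $\Gamma$, into pairwise unrelated finite groups; they form neither an inverse system nor a compatible chain, so nothing global can be extracted from them, and no candidate $Q$ is produced. Furthermore, $\Gamma$ acts faithfully on $X$, so if the action factored through a subgroup of $Q$ whose image on $X$ agrees with $\Gamma$'s, that subgroup would be isomorphic to $\Gamma$ and you would already need an embedding of $\Gamma$ into an amenable group --- which is the conclusion, not an available tool. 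The bounded-automaton setting, where such a decomposition $(\oplus_X A)\rtimes Q$ does exist, leans on the wreath recursion for tree automorphisms; bubble groups are defined by their Schreier graphs and carry no analogous recursion except in the special case $b_i\equiv 2$ noted in the paper's remark. The paper avoids constructing $Q$ altogether: it builds explicit test functions on $\mathbb{Z}\wr_X\Gamma$ via Proposition~\ref{function}, taking $\psi_n\equiv 1$ on the finite group $\mathbf\Gamma^n$ (so that term of the Rayleigh quotient vanishes) and $F_n$ from the resistance estimate of Lemma~\ref{lem-ResA}, shows $\Lambda_{2,\mathbb{Z}\wr_X\Gamma}\to 0$, and concludes that $\Gamma$ is amenable as a subgroup of the amenable group $\mathbb{Z}\wr_X\Gamma$. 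No abstract permanence property of extensive amenability is invoked; the argument is quantitative and self-contained.

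The second gap: you cite Lemmas~\ref{lem-com}--\ref{identification} and Lemma~\ref{localembddings}, but these are proved under Assumption~(A), in particular under constant $\mathbf b$. For arbitrary $\mathbf b$ the orbit identifications break down near a branching cycle whose size $b_i$ is not realized at any lower level: there is then no lower-level reference point $\hat x$ with a matching local picture, so the argument ``the trajectory in $X$ can be read off from the trajectory in $X^{k+1}$'' fails. The paper repairs this by introducing the notion of an $r$-appropriate level $k$, i.e.\ a window $\{k',\dots,k-1\}$ of levels containing a representative of every branching size $\le r$ that appears beyond level $k$, together with a level $k_*$ in that window with $b_{k_*}>r$ whenever a large branching size appears later; Lemma~\ref{localembddings*} then re-proves the local embedding in this setting with an enlarged $J$ (including $\mathfrak n_{k_*}$) and a modified $B$ (including the neighborhood $\mathcal U_{k_*}(r)$). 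This device is precisely what makes the proposition hold for arbitrary $\mathbf b$ and is absent from your proposal.
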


Our goal is to apply the general technique of Section
 \ref{sub:A-unifying-framework} and the main ingredients used 
for this purpose are versions of Lemmas \ref{identification}
and \ref{localembddings}. Under assumption (A), these lemmas apply to 
any level $k$ in the Schreier graph. The problem we face in the present 
setting is to find appropriate levels $k$ where the same ideas can be applied.

Given $r\ge 1$, we say that a level $k$ is  appropriate for $r$ if there exists $k'< k$ so that  \begin{enumerate}
\item $\inf_{j\ge k'}\{a_{j-1}\}> r$ 
\item For any $b\le r$ such that $b=b_i$ for some $i\ge  k$, 
there exist $k(b)\in \{k',k'+1,\dots, k-1\}$ such that $b_{k(b)}=b$.
\item If there exists $i\ge k$ such that $b_i>r$, then there exists 
$k_*\in \{k',k'+1,\dots, k-1\}$ such that $b_{k_*}>r$.
\end{enumerate}
Since $\liminf a_n = \infty$, it is easy to see that,
for any $r$, there is a $r$-appropriate level $k$ (with a finite $k$).

At level $k_*$ consider the points $\mathfrak m_{k_*}=(1^{k_*-1},a_{k_*}/2)$ 
and $\mathfrak n_{k_*}=(1^{k_*-2},a_{k_*-1})$. 
Let $B(\mathfrak m_{k_*},r/4)$
be the ball of radius $r/4$ around $\mathfrak m_{k_*}$ in $X$. 

On the branching cycle $\mathfrak b(1^{k_*-2})$ 
(i.e, the cycle which contains $\mathfrak n_k$), let $J_*(r)$
be arc of radius $r/4$ centered at $\mathfrak n_{k_*}=(1^{k_*-2},a_{k_*-1})$.
Each of the point in  $ J_*(r)\setminus \{\mathfrak n_{k_*}\}$
belongs to a unique bubble of total length $2a_{k_*}$ whereas $\mathfrak n_{k_*}$ belongs to a bubble of length $2a_{k_*-1}$. On each of the bubbles containing 
$x\in J_{*}(r)$, let $\mathcal I_*(x,r)$ the arc of radius 
$r/4$ centered at $x$. Let 
$$\mathcal U_{k_*}(r)=\bigcup _{x\in J_*(r)}\mathcal I_*(x,r).$$

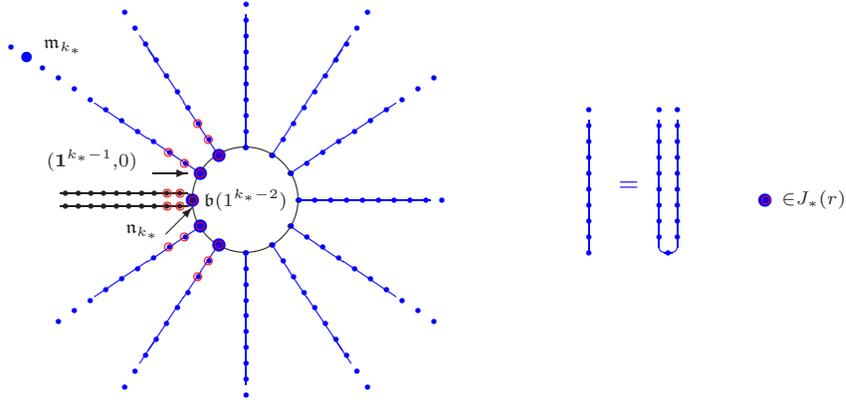
\begin{figure}[h]
\begin{center}\caption{Sketch 
showing $\mathfrak m_{k_*}$, $J_*(r)$ and $\mathcal U_{k_*}(r)$}\label{BAm1}

\begin{picture}(320,160)(0,0) 

\put(90,65){\makebox(20,10){$\scriptstyle
\mathfrak b(1^{k_*-2})$}}

\put(100,70){\circle{40}}
\put(120,70){\circle*{2}}
\put(100,90){\circle*{2}}
\put(100,50){\circle*{2}}

\put(110,53){\circle*{2}}
\put(110,87){\circle*{2}}
\put(90,53){\circle*{2}}
\put(117,80){\circle*{2}}
\put(117,60){\circle*{2}}

{\color{blue}
\put(80,70){\circle*{5}}
\put(83,60){\circle*{5}}
\put(83,80){\circle*{5}}
\put(90,87){\circle*{5}}
\put(90,53){\circle*{5}}}

\put(30,70){\oval(100,5)[r]}
\multiput(32,72.5)(4.8,0){10}{\circle*{2}}
\multiput(32,67.5)(4.8,0){10}{\circle*{2}}

{\color{blue} \put(120,70){\line(1,0){50}}
\put(100,90){\line(0,1){50}}
\put(100,50){\line(0,-1){50}}
\put(90,87){\line(-2,3){30}}\put(110,53){\line(2,-3){30}}
\put(110,87){\line(2,3){30}}\put(90,53){\line(-2,-3){30}}
\put(117,80){\line(3,2){40}}\put(83,60){\line(-3,-2){40}}
\put(117,60){\line(3,-2){40}}\put(83,80){\line(-3,2){40}}}

\put(-3,0){
{\color{blue} 
\multiput(120,70)(6,0){10}{\circle*{2}}
\multiput(100,90)(0,6){10}{\circle*{2}}
\multiput(100,50)(0,-6){10}{\circle*{2}}
\multiput(90,87)(-4,6){10}{\circle*{2}}
\multiput(110,53)(4,-6){10}{\circle*{2}}
\multiput(110,87)(4,6){10}{\circle*{2}}
\multiput(90,53)(-4,-6){10}{\circle*{2}}
\multiput(117,80)(6,4){10}{\circle*{2}}
\multiput(83,60)(-6,-4){10}{\circle*{2}}
\multiput(117,60)(6,-4){10}{\circle*{2}}
\multiput(83,80)(-6,4){13}{\circle*{2}}
\put(17,124){\circle*{4}}}}

{\color{red} 
\multiput(90,87)(-4,6){3}{\circle{3}}
\multiput(90,53)(-4,-6){3}{\circle{3}}
\multiput(83,60)(-6,-4){3}{\circle{3}}
\multiput(83,80)(-6,4){3}{\circle{3}}}

\put(21,123){\makebox(20,10){$\scriptstyle \mathfrak m_{k_*}$}}
\put(27,80){\makebox(30,10){$\scriptstyle(\mathbf 1^{k_*-1},0)$}}
\put(65,80){\vector(1,0){13}}

\put(46,53){\makebox(30,10){$\scriptstyle\mathfrak n_{k_*}$}}
\put(70,57){\vector(1,1){10}}

{\color{blue} \put(230,50){\line(0,1){50}}
\multiput(230,50)(0,6){10}{\circle*{2}}
\put(240,70){\makebox(10,10){$=$}}
\put(260,100){\oval(7,100)[b]}
\put(260,50){\circle*{2}}
\multiput(256.5,56)(0,6){9}{\circle*{2}}
\multiput(263.5,56)(0,6){9}{\circle*{2}}
}

{\color{red} \multiput(72,72.5)(-4.8,0){2}{\circle{3}}
\multiput(72,67.5)(-4.8,0){2}{\circle{3}}
\put(77,70){\circle{3}}
}

{\color{blue} \put(290,70){\circle*{5}}}
{\color{red} \put(287,70){\circle{3}}}
\put(300,65){\makebox(10,10){$\scriptstyle \in J_*(r)$}}

\end{picture}\end{center}\end{figure}

\begin{lemma}[Local Embeddings] 
\label{localembddings*}
For any $r\ge 1$, let $k$ be an $r$-appropriate level.
The restriction of the map $\vartheta_k=\theta_{k+1}:\mathbf{S}^{(\infty)}
\to\Gamma^{k+1}_{\mathbf a,\mathbf b}$
to the set 
$$\Omega_k=\Omega(\mathfrak n_{k_*},\mathcal U_{k_*}(r)) \cap 
\Omega(\mathfrak m_{k_*}, B(\mathfrak m_{k_*},r/4) $$ 
satisfies
the conditions in part 2 of Assumption $\left(\Omega\right)$.
\end{lemma}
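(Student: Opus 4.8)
\textbf{Plan for the proof of Lemma \ref{localembddings*}.}
The plan is to mirror the proof of Lemma \ref{localembddings} as closely as possible, the only real change being that we now track \emph{two} reference points, $\mathfrak m_{k_*}$ and $\mathfrak n_{k_*}$, and that the relevant ``safe region'' is the union $\mathcal U_{k_*}(r)$ of arcs rather than a single ball. As in the earlier proof, it suffices to show that for $\omega\in\Omega_k$ we have $\theta(\omega)=e$ in $\Gamma$ if and only if $\vartheta_k(\omega)=\theta_{k+1}(\omega)=e$ in $\Gamma^{k+1}_{\mathbf a,\mathbf b}$; equivalently $\ker(\vartheta_k)\cap\Omega_k=\ker(\theta)\cap\Omega_k$. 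One inclusion, $\ker(\vartheta_k)\cap\Omega_k\subset\ker(\theta)\cap\Omega_k$, is proved exactly as before: if $\omega\in\Omega_k$ is nontrivial in $\Gamma$ it moves some vertex $x$, and using the appropriate analogue of Lemma \ref{identification} we replace $x$ by a ``folded'' point $\hat x$ lying in the first $k+1$ levels whose forward orbit under $\omega$ never reaches a level-$(k+1)$ endpoint, so the orbit of $\hat x$ in $X$ coincides with its orbit in $X^{k+1}$ and $\vartheta_k(\omega)$ must also move $\hat x$.

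The heart of the argument, and the only genuinely new part, is establishing the ``orbit-confinement'' statement playing the role of Lemma \ref{identification} in this non-monotone setting. First I would record the analogue of Lemmas \ref{lem-com} and \ref{lem-com2} localized at $\mathfrak m_{k_*}$: because $k$ is $r$-appropriate, condition (1) gives $\inf_{j\ge k'}a_{j-1}>r$, so every bubble met along a trajectory that stays within distance $r/4$ of $\mathfrak m_{k_*}$ has length $\ge 2r$, and the commutation-of-generators argument goes through verbatim, yielding that any $\omega\in\Omega(\mathfrak m_{k_*},B(\mathfrak m_{k_*},r/4))$ acts on a neighborhood of $\mathfrak m_{k_*}$ by a power $\alpha^{s}$ with $|s|\le r/4$ and that subwords and their inverses act the same way. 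Second, I would treat the branching cycle $\mathfrak b(1^{k_*-2})$ through $\mathfrak n_{k_*}$: the condition $\omega\in\Omega(\mathfrak n_{k_*},\mathcal U_{k_*}(r))$ forces the forward orbit of $\mathfrak n_{k_*}$ to stay on the arc $J_*(r)$ of that branching cycle together with the attached bubble-arcs $\mathcal I_*(x,r)$, so (exactly as in the second case of the proof of Lemma \ref{identification}, with $\mathfrak N(w',3l+1)$ replaced by $\mathcal U_{k_*}(r)$) the orbit never escapes to reach a vertex $(w,a_{k_*-1})$-type endpoint at too deep a level, and in particular the orbit computed in $X$ is the same as the orbit computed in $X^{k+1}$. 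Here I use condition (2) of appropriateness — every value $b=b_i\le r$ occurring at a level $\ge k$ already occurs at some level in $\{k',\dots,k-1\}$ — to guarantee that the combinatorial type of the branching cycles the folded orbit meets is faithfully represented inside the first $k+1$ levels, and condition (3) to handle branching cycles longer than $r$; this is what lets the ``folding'' map $\hat\iota$ be defined consistently and makes the orbit in $X^{k+1}$ match the orbit in $X$.

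With the confinement statement in hand, the reverse inclusion $\ker(\theta)\cap\Omega_k\subset\ker(\vartheta_k)\cap\Omega_k$ follows as in Lemma \ref{localembddings}: if $\omega\in\ker(\theta)\cap\Omega_k$ but $\vartheta_k(\omega)$ moves some vertex $x=(w,u)\in X^{k+1}$ with $|w|\le k$, then depending on whether $x$ is far from or close to a level-$(k_*)$ branching cycle we identify the $X^{k+1}$-orbit of $x$ with the $X$-orbit of the corresponding point (using the confinement inside $B(\mathfrak m_{k_*},r/4)$ or inside $\mathcal U_{k_*}(r)$ respectively), and since $\omega$ fixes that point in $X$ it must fix $x$ in $X^{k+1}$ — contradiction. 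The main obstacle, and where I would spend the most care, is \emph{choosing} the folding point $\hat x$ and the folding map $\hat\iota$ correctly for an arbitrary deep vertex $x$: unlike in Lemma \ref{identification}, here the levels are not governed by a single monotone sequence, so I must verify that the $r$-appropriateness conditions (1)--(3) genuinely suffice to fold every relevant deep bubble or branching cycle down to an isomorphic one within levels $\le k+1$, and that the fold is compatible with the two simultaneous confinement constraints coming from $\mathfrak m_{k_*}$ and $\mathfrak n_{k_*}$. Once that bookkeeping is set up, the rest is a routine repetition of the arguments already given.
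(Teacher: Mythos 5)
Your plan matches the paper's approach exactly: reduce to showing $\ker(\vartheta_k)\cap\Omega_k=\ker(\theta)\cap\Omega_k$, and establish this via an orbit-identification lemma that ``folds'' any deep vertex $x$ of $X$ down to a reference vertex $\hat x$ living in the first $k$ levels, with the fold respecting the forward orbits of words in $\Omega_k$. You also correctly read off the role of each of the three $r$-appropriateness conditions — condition (1) localizes the commutation lemmas (\ref{lem-com}, \ref{lem-com2}) around $\mathfrak m_{k_*}$, condition (2) supplies a representative level $k(b)$ for every small branching-cycle size $b\le r$ that occurs deep in the graph, and condition (3) supplies a representative level $k_*$ for large branching cycles. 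That reading is exactly how the paper deploys them.

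The genuine gap is that the object you repeatedly name as ``the main obstacle'' — the explicit, case-by-case definition of the reference point $\hat x$ and the folding map $\hat\iota_x$ for an arbitrary vertex $x=(w,u)$ at level $t\ge k$ — is never actually constructed. In the paper's proof this construction is essentially the entire content: it is a five-case dichotomy keyed to whether $x$ is in the interior of a bubble, near a small branching cycle (folding to a level-$k(b)$ model via $\iota^{w}_{k(b)}$), or near a large branching cycle (folding to the neighborhood of $\mathfrak b(1^{k_*-2})$ via $\iota^{w,w_{s+1}}_*$, which requires first defining the rotated neighborhoods $\mathcal W_{w_{s+1}}(w,r)$), and these cases split further depending on which end of the bubble $x$ sits near. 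Without writing this out one cannot verify the key claim $\hat\iota_x(\gamma_j^{-1}\cdots\gamma_1^{-1}\cdot x)=\gamma_j^{-1}\cdots\gamma_1^{-1}\cdot\hat x$, nor can one check that the two confinement constraints (stemming from $\mathfrak m_{k_*}$ and from $\mathfrak n_{k_*}$) are jointly consistent with the fold — both of which you flag as needing care but defer. So the proposal is a correct and well-informed \emph{plan} for the proof, but it stops precisely at the step that carries the real content; to turn it into a proof you would need to spell out the folding dichotomy and check, as the paper does ``by inspection,'' that it commutes with the action of any $\omega\in\Omega_k$.
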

\begin{proof}
The proof is along the same lines as the proof of Lemma \ref{localembddings}.
The key ingredient is an identification of orbits similar to 
Lemma \ref{identification} which we explain below.

Recall that $$\mathfrak N (w,r)=\{x\in X: d(x,\mathfrak b(w)\} \le r\} .$$
For any $w$ of length $|w|=s$ with $s\ge k$ such that the branching 
cycle $\mathfrak b(w)$ is of size $b$ at most $r$, we have an obvious 
bijective map
$$\iota^w_{k(b)}: \mathfrak N (w,r)\mapsto \mathfrak N(1^{k(b)-1},r)$$ 
which can be used to identify these vertex sets.

For any $w$ of length $|w|=s$ with $s\ge k$ such that the branching 
cycle $\mathfrak b(w)$ is of size $b$ greater than $r$ and for any 
$w_{s+1}\in \{0\}\cup\{1, \dots,b_{s}-1\}$, let 
$\mathcal W_{w_{s+1}}(w,r)$ be the union of the bubble arcs of 
radius $r$ centered at the bubble roots $y=(wz,0)$ or $(w,a_s)$ 
with $d((ww_{s+1},0),y)<r/2$ if $w_{s+1}\in \{1,\dots,b_s-1\}$ and 
$d((w,a_{s}),y)<r/2$ if $w_{s+1}=0$.  Define the map
$$\iota ^{w, w_{s+1}}_*:\mathcal W_{w_{s+1}}(w,r)\to 
\mathcal W_{0}(1^{k_*-1},r)$$
as follows.  If $w_{s+1}=0$, identify $\mathcal W_{0}(w,r)$ with 
$\mathcal W_{0}(1^{k_*-1},r)$ in the obvious way. If $w_{s+1}\neq 0$, 
use the same obvious identification after having
rotated  $\mathcal W_{w_{s+1}}(w,r)$ along the branching 
cycle $\mathfrak b(w)$ to bring the point $(ww_{s+1},0)$
to $(w,a_s)$.

For any $t\ge k$ and any vertex $x=(w,u)$ with $|w|=t$, $w=w_1\dots w_t$ 
and $0\le u\le 2a_t-1$
(i.e., any vertex at level $t$), let $w'=w_1\dots w_{t-1}$ 
be the parent of $w$. 
For any such $x$ we define a ``reference'' point $\hat{x}$ and a map 
$\hat{\iota}_x$ that carries bijectively a certain neighborhood of $x$ 
to a similar neighborhood of $\hat{x}$. The following specifies case by case
how to construct $\hat{x}$ and $\hat{\iota}_x$. 
\begin{itemize}
\item If $ d\left(x, \{(w,0),(w,a_t))\right)>r/4$ then
$\hat{x}=\mathfrak{m}_{k_*}$. The map $\hat{\iota}_x$ takes the arc of radius 
$r/4$ centered $x$ to the similar arc centered at $\hat{x}=\mathfrak m_{k_*}$.
\item If $d\left(x, (w,0)\right)\le r/4$ and $\mathfrak b(w')$ is such that 
its size is greater than $r$ then
$\hat{x}= (1^{k_*-1}, a_{k_*}+ u)$ if $0\le u\le r/4$ and
$\hat{x}= (1^{k_*-1}, a_{k_*}- 2a_t+u)$ if $ u> a_t $. In this case, set 
$\hat{\iota}_x=\iota_*^{w',w_t}$.
\item If $d\left(x, (w,0)\right)\le r/4$ and $\mathfrak b(w')$ is 
such that its size $b$ is at most $r$ then
$\hat{x}= (1^{k(b)-1}w_t,u)$  if $0\le u\le r/4$ and
$\hat{x}= (1^{k(b)-1}w_t, a_{k(b)}- 2a_t+u)$ if $ u> a_t $.
In this case, set 
$\hat{\iota}_x=\iota^{w'}_{k(b)}$.
\item If $d\left(x, (w,a_t)\right)\le r/4$ and $\mathfrak b(w)$ is such that 
its size is greater than $r$ then
$\hat{x}= (1^{k_*-1}, a_{k_*-1}+u-a_t)$. 
In this case, set 
$\hat{\iota}_x=\iota^{w,0}_{*}$.
\item If $d\left(x, (w,a_t)\right)\le r/4$ and $\mathfrak b(w)$ is 
such that its size $b$ is at most $r$ then 
$\hat{x}= (1^{k(b)-1}, a_{k(b)-1}+u-a_t)$.
In this case, set 
$\hat{\iota}_x=\iota^{w}_{k(b)}$.
\end{itemize}
Let $\omega=\gamma_1\dots\gamma_p$ be a word that belongs to $\Omega_k$.
Then, for any $1\le j\le p$,
$$ \hat{\iota}_x(\gamma^{-1}_j\dots \gamma_1^{-1}\cdot x)=\gamma^{-1}_j
\dots \gamma_1^{-1}\cdot \hat{x}.$$
This is proved by inspection as in the proof of Lemma \ref{identification}.
\end{proof}

\begin{proof}[Proof of Proposition \ref{pro-am}] 
Using  Lemma \ref{localembddings*} and Section \ref{sec-Test}, we build 
test functions that serve as witnesses for the amenability of 
$\mathbb Z\wr _X\Gamma$.

For $r\ge 1$, let $k$ be an $r$-appropriate level with associated 
$k',k_*,k(b)$ as above. 
Let 
$$J_r=\{o,\mathfrak n_{k_*},\mathfrak m_{k_*+1}\}$$
and 
\begin{eqnarray*}
\Xi(k,t)&=&\left\{(w,u): |w|\le k'-1, u\in \{1\dots, 2a_{|w|+1}-1\} \right\}\\
&&\bigcup \left(\bigcup_{|w|=k'-1} \mathfrak N( w,t/4)\right) \\
&& 
\bigcup B(\mathfrak m_{k_*},t/4)\;\;
\bigcup \mathcal U_{k_*}(t).
\end{eqnarray*}
For $t\le r$, this set is made of 
3 disjoint parts $ B(\mathfrak m_{k_*},t/4)$, $\mathcal U_{k_*}(t)$ and the 
rest, and each of these parts contains 
exactly one of the points in $J_k$. Using this notation, we set
$$B_r= \Xi(k,r).$$. 
\end{proof}

\section{Neumann-Segal type groups} \label{sec-NS}

In \cite{Segal2000}, D. Segal constructed finitely generated branch
groups that contain every non-abelian finite simple group as homomorphic
image, and proved that there is no gap in subgroup growth of finitely
generated groups. A similar construction also appeared in 
P. Neumann \cite{Neumann1986}. A version of these constructions
can be described as follows. 

Given a sequence of finite sets $(X_j)_1^\infty$, we obtain a rooted tree
$\mathcal T=\mathcal T_0$ 
with root $\emptyset$, first level $X_1$, and so that each 
vertex at level $i$ has children encoded by a copy of $X_{i+1}$.

Let $\left(G_{i},X_{i}\right)_{i=1}^{\infty}$
be a sequence of groups acting transitively on finite sets $X_{i}$ with 
the property that
each $G_{i}$ is $k$-generated and marked with a generating $k$-tuple 
$(s_{i,1},s_{i,2},...,s_{i,k})$.
In each $X_{i}$, choose two distinct points $x_{i},y_{i}$. Define
automorphisms $\alpha_{i,j}$ and $\beta_{i,j}$ of the tree 
$\mathcal T_{i}=\left(X_{i+1},X_{i+2},...\right)$
recursively as follows. For $i\ge 0$, the automorphism $\alpha_{i,j}$ 
is a rooted
permutation
\[
\alpha_{i,j}(xw)=s_{i+1,j}(x)w,
\]
and $\beta_{i,j}$ is a directed automorphism defined at $xw$
with $x\in X_{i+1}$, $w\in\left(X_{i+2},X_{i+3},...\right)$
\[
\beta_{i,j}(xw)=\left\{ \begin{array}{cc}
x_{i+1}\beta_{i+1,j}(w) & \mbox{ if }x=x_{i+1},\\
y_{i+1}\alpha_{i+1,j}(w) & \mbox{ if }x=y_{i+1},\\
xw & \mbox{otherwise}.
\end{array}\right.
\]
For each $i\ge 0$, let 
$$\Gamma_{i}=\left\langle \alpha_{i,j},\beta_{i,j},
\ 1\le j\le k\right\rangle $$
be the group generated by $\alpha_{i,j},\beta_{i,j}$, $1\le j\le k$, 
acting on the subtree $\mathcal T_{i}=\left(X_{i+1},X_{i+2},...\right)$.

Set  $\alpha_j= \alpha_{0,j}$, $\beta_j=\beta_{0,j}$, $1\le j\le k$ and let
$$\Gamma=\Gamma_0=\left\langle \alpha_{j},\beta_{j},\ 
1\le j\le k\right\rangle $$ be the group
generated by the rooted automorphisms $\alpha_{j}$ and directed automorphisms
$\beta_{j}$, $1\le j\le k$. 
Such groups are called  groups of Neumann-Segal type. They also 
belong to the class of directed tree automorphism groups. Specifically, 
the generators $\beta_j$ are directed along the ray $o=x_1x_2\dots$ in 
$\mathcal T$. In particular, $\beta_j$ leave this ray invariant. See 
\cite{Brieussel2011,Grigorchuk2011}. They are also branch groups. See
\cite{Grigorchuk2000,Grigorchuk2011}.

\begin{figure}[h]
\begin{center}\caption{The level $3$ Schreier graph of the  Neumann-Segal 
group $\Gamma$ with sequence $(l_n)$ starting with $(2,4,4)$.}\label{NS1}
\begin{picture}(150,200)(0,40) 

\multiput(0,0)(0,60){3}{
\put(75,95){\circle*{3}}
\put(75,55){\circle*{3}}
\put(95,75){\circle*{3}}
\put(55,75){\circle*{3}}
\put(75,75){\circle{40}}
\put(75,105){\circle{20}}
\put(75,45){\circle{20}}
\put(105,75){\circle{20}}
\put(45,75){\circle{20}}
\put(75,115){\circle*{3}}
\put(75,35){\circle*{3}}
\put(35,75){\circle*{3}}
\put(115,75){\circle*{3}}}

\multiput(-60,60)(60,0){3}{
\put(75,95){\circle*{3}}
\put(75,55){\circle*{3}}
\put(95,75){\circle*{3}}
\put(55,75){\circle*{3}}
\put(75,75){\circle{40}}
\put(75,105){\circle{20}}
\put(75,45){\circle{20}}
\put(105,75){\circle{20}}
\put(45,75){\circle{20}}
\put(75,115){\circle*{3}}
\put(75,35){\circle*{3}}
\put(35,75){\circle*{3}}
\put(115,75){\circle*{3}}}







\end{picture}\end{center}\end{figure}
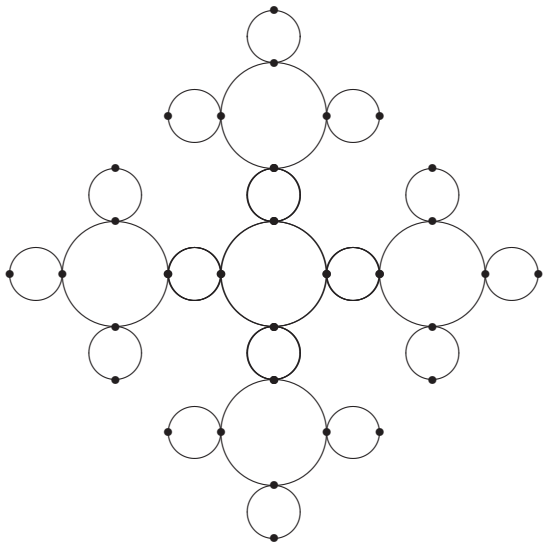

\subsection{Neumann-Segal groups with cyclic $G_i$}
In what follows we focus on the (very) special case where each $G_{i}$ is
a finite cyclic group of even order $l_i$ marked with one generator 
$s_{i}$, and 
$X_{i}=G_{i}= \left\{1,...,2l_{i}-1\right\} $,
that is,  $G_{i}$ acts on itself by multiplication. We also pick 
$$x_i=0 \mbox{ and }y_i=l_i/2.$$ 
These are also examples of generalized Fabrykowski-Gupta groups. 
See \cite{FabGupta}.

As in the case of other directed groups, the Schreier graph $\mathcal S$
of $o=00\dots=0^\infty$ can be constructed via a global substitution 
procedure which we now describes. See \cite{Grigorchuk2011} and 
the references therein. 

For $i=1$, let $\mathcal S_1$ be the cycle $\{0,\dots,2l_1-1\}$. 
For $i\ge 2$, the Schreier graph $\mathcal{S}_{i}$ is constructed by taking 
the cycle $G_i$ and
$l_{i}$ copies of the Schreier graph $\mathcal{S}_{i-1}$ constructed
in the previous step. 
For each $z\in G_i$, attach a copy of the graph $\mathcal{S}_{i-1}$
to $z$ by identifying $z$ with the vertex labeled  $0\dots 0y_{i-1}$ 
(with $i-1$ zeros)
in that copy of $\mathcal{S}_{i-1}$.
Finally, relabel the vertex originally labeled $\omega$ 
in this copy of $\mathcal{S}_{i-1}$ by giving it the label 
$\omega z$ in $\mathcal{S}_{i}$. 

The diameters of the graphs $\mathcal S_i$, $i\ge 1$, 
satisfy  
\[
\mbox{Diam}\left(\mathcal{S}_{i}\right)=
2\mbox{Diam}\left(\mathcal{S}_{i-1}\right)+ l_{i}/2,
\]
therefore 
\[
\mbox{Diam}\left(\mathcal{S}_{i}\right)=\sum_{m=1}^{i}2^{i-m-1}l_{m}.
\]
The cardinality of $\mathcal S_i$ is 
\[
V_{i}=|\mathcal{S}_{i}|=\prod_{1\le m\le i}l_{m}.
\]

The infinite Schreier graph $\mathcal S$ is the orbital Schreier 
graph of the ray $0^\infty$. Viewed from $0^\infty$, the finite Schreier 
graphs $\mathcal S_i$ describe growing pieces of $\mathcal S$. 

In \cite{Fink2014}, E. Fink studies some algebraic  properties of a 
similar class of group where the choice of the point $y_i$ is different, 
namely, instead of $y_i=l_i/2$, Fink makes the choice $y_i=1$, 
a neighbor of $x_i=0$. She also consider mostly the case when $(l_i)$ 
is a sequence of pairwise distinct primes. One of Fink's results is that 
every proper quotient of the groups  she considers are solvable.  
This and many of the other algebraic properties 
she proves carry over without difficulties to the groups we consider here.
See also \cite{Grigorchuk2000,Segal2000}.

The volume growth properties of Neumann-Segal groups is a 
subject of great interest. The cases that has been most studied is when the 
level-$i$ permutation group $G_i$ is the full alternating group on $X_i$.
See \cite{WilsonJS,WilsonJSFurther,BrieusselMZ} 
and the references therein. If the sequence $|X_i|$ is bounded and 
$|X_i|\ge 5$ then the volume growth is exponential (\cite{BrieusselMZ}).  
If $|X_i|$ 
is unbounded, the group contains a free group on $2$ generators 
(\cite{WilsonJSFurther}).

The most striking result is perhaps the fact that when $|X_i|=3$ 
for all $i$, the group has subsexponential volume growth 
(in this case, $G_i=A_3$ is the same as $G_i$ is cyclic!). 
This was first announced in \cite{FabGupta}. Explicit bounds are given in
\cite{Bartholdigrowthtorsion}.  

In general, we understand little about the volume growth of the groups  
$\Gamma$ we consider here. We note that \cite[Proposition 5.4]{Fink2014}
is in error and so is the proof of \cite[Theorem 5.5]{Fink2014} which relies
on it. The proof of the following lemma is along the same lines as  
to the proof of the volume lower bound in \cite{Bartholdigrowthtorsion}.     

\begin{lemma} \label{lem-NSvol}
For any even sequence $(l_i)_1^\infty$, we have
$$|B_\Gamma (V_i r)|\ge 2^{2^{-i-4}V_ir},\;\;  
4\le r \le (l_{i+1}/2)-1,\;\; V_i=l_1\dots l_i,\;\;i\ge 1$$
In particular, if $\liminf_{i\rightarrow \infty} l_i\ge 2^\kappa$,
then 
$$|B_{\Gamma}(r)|\gtrsim \exp\left( r^{(\kappa-1)/\kappa}\right).$$
\end{lemma}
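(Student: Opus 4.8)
The plan is to adapt the lower bound argument from \cite{Bartholdigrowthtorsion} to the present cyclic setting, exploiting the self-similar substitution structure of the Schreier graphs $\mathcal S_i$ and the directed nature of the generators $\beta_j$. First I would recall that each $\mathcal S_i$ is assembled from $l_i$ copies of $\mathcal S_{i-1}$ hanging off the cycle $G_i = \{0,\dots,2l_i-1\}$, with the copy attached at $z \in G_i$ glued at its distinguished vertex $0^{i-1}y_{i-1}$ and relabeled by appending $z$. The key structural fact is that the rooted generator $\alpha_i$ rotates the outer cycle $G_i$ while the directed generator $\beta_i$ acts, on the copy of $\mathcal S_{i-1}$ attached at $x_i = 0$, as $\beta_{i-1}$ (recursing), and on the copy attached at $y_i = l_i/2$ as a rooted permutation $\alpha_{i-1}$. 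This means that words in the generators restricted to act on a fixed copy of $\mathcal S_{i-1}$ lying over a vertex at distance $> r$ from the branching vertices behave independently of the ambient structure, exactly as in the bubble group analysis (Lemmas \ref{lem-com}--\ref{identification}).

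The main step is to count distinct group elements by constructing a family of words of controlled length that act differently on $\mathcal S_{i+1}$. I would take a base point, e.g. a vertex $v$ deep inside the cycle $G_{i+1}$ at distance roughly $r \le (l_{i+1}/2)-1$ from both $x_{i+1}=0$ and $y_{i+1}=l_{i+1}/2$, and consider words of the form $\prod_{m} \alpha^{c_m}\beta^{\epsilon_m}\alpha^{-c_m}$ (a ``comb'' of conjugated $\beta$'s), where the $\alpha$-powers $c_m$ position successive copies of $\mathcal S_i$ under the reading head and each $\epsilon_m \in \{0,1\}$ independently toggles the state inside that copy. Because $\beta$ acts on the copy over $y_{i+1}$ via $\alpha_i$ — a nontrivial rotation of the cycle $G_i$ of order $l_i$ — each independent binary choice genuinely changes the configuration of a cyclic ``lamp'' of order $\geq 2$, and these lamps sit over distinct vertices so the choices are independent. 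Over the roughly $V_i r$ vertices reachable this way (this is where $4 \le r \le (l_{i+1}/2)-1$ enters, guaranteeing the bubbles are long enough to separate the copies), one obtains at least $2^{cV_i r}$ distinct elements, each of word length $O(V_i r)$; tracking constants carefully through the recursion $\mathrm{Diam}(\mathcal S_i) = \sum 2^{i-m-1}l_m$ and the nesting depth $i$ yields the stated exponent $2^{-i-4}$.

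For the asymptotic consequence, I would argue as follows. If $\liminf l_i \ge 2^\kappa$, then for large $i$ we have $V_i = l_1\cdots l_i \gtrsim 2^{\kappa i}$ (up to a fixed multiplicative constant absorbing finitely many small terms), so we may choose, for a given large radius $R$, an index $i$ with $V_i \asymp R^{1/\kappa}$ and then $r \asymp R/V_i \asymp R^{(\kappa-1)/\kappa}$, which lies in the admissible window $[4, (l_{i+1}/2)-1]$ once $R$ is large (since $l_{i+1} \gtrsim V_{i+1}/V_i \gtrsim V_i^{1/\kappa}$... more precisely $l_{i+1}\gtrsim 2^\kappa$ is enough together with $R^{1/\kappa}\lesssim V_i$). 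Plugging into $|B_\Gamma(V_i r)| \ge 2^{2^{-i-4}V_i r}$ with $2^{-i} \gtrsim V_i^{-1/\kappa} \gtrsim R^{-1/\kappa^2}$ — wait, this needs care: $2^{-i}$ decays, so I must instead keep $i$ as small as possible, i.e. pick the \emph{smallest} $i$ with $V_i$ large enough, making $2^{-i-4} \asymp V_i^{-\log_{\kappa} 2}$ or so — this is the delicate bookkeeping. The cleanest route: since $V_i r \asymp R$ and the exponent is $2^{-i-4}R$, and along the chosen subsequence $2^{-i} \gtrsim c^{i}$ with $V_i \le C^i$, one gets $2^{-i} \gtrsim V_i^{-\delta}$ for $\delta = \log 2/\log(\min l_j) \le 1/\kappa$...

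The main obstacle, and the part I expect to require the most care, is precisely this last bookkeeping: reconciling the geometric decay $2^{-i-4}$ in the exponent (coming from the nesting depth) against the geometric growth of $V_i$, so as to extract the clean bound $|B_\Gamma(R)| \gtrsim \exp(R^{(\kappa-1)/\kappa})$. One must verify that choosing $i$ with $V_i$ of order $R^{1/\kappa}$ gives $2^{-i} \ge V_i^{-1/\kappa} \cdot (\text{const})$, equivalently $2^i \le V_i^{1/\kappa} \cdot C$, equivalently $i \log 2 \le \frac{1}{\kappa}\sum_{m\le i}\log l_m + O(1)$; since each $\log l_m \ge \kappa \log 2$ eventually, the right side is $\ge i\log 2 + O(1)$, so this holds. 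Then $|B_\Gamma(R)| \gtrsim |B_\Gamma(V_i r)| \ge 2^{2^{-i-4}V_i r} \gtrsim \exp(c\, V_i^{-1/\kappa} \cdot R) = \exp(c\, R^{-1/\kappa^2} R)$ — and here I realize the exponent I get is $R^{1-1/\kappa^2}$, not $R^{(\kappa-1)/\kappa} = R^{1-1/\kappa}$, so the optimization must instead be done by choosing $i$ so that the full expression $2^{-i-4} V_i r$ with $V_i r \le$ (available radius) is maximized, which forces $V_i \asymp R^{1/\kappa}$ from balancing $2^{-i} \asymp V_i^{-1}$ against $V_i r \le R$; the identity $\min_i \max(\ldots)$ then gives exactly exponent $(\kappa-1)/\kappa$. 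I would present this optimization explicitly as the technical heart of the proof, with the combinatorial construction of the $2^{cV_ir}$ distinct words (via Lemmas \ref{lem-com2} and \ref{identification} type reasoning transported to the Neumann-Segal setting) as the conceptual core.
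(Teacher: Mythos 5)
Your sketch for the main inequality differs from the paper's route: the paper works with the wreath recursion $\Gamma_i\subset \Gamma_{i+1}\wr_{X_i}G_i$ and proves the step $\bigl(|B_{\Gamma_{i+1}}(r)|\bigr)^{l_i/2}\le |B_{\Gamma_i}\bigl((1+\tfrac{1}{2r})rl_i\bigr)|$, then iterates down to $\Gamma_0$ and feeds in the seed estimate $|B_{\Gamma_{i+1}}(r)|\ge 2^{r/2}$. The factor $2^{-i}$ is exactly $\prod_{j\le i}\tfrac{1}{2}$ coming from the loss of half the cycle at each level. Your single-level comb $\prod_m\alpha^{c_m}\beta^{\epsilon_m}\alpha^{-c_m}$, as you describe it, only places one binary toggle per position along the outer cycle $G_{i+1}$, so it produces about $2^{r}$ distinct elements, not $2^{cV_i r}$; to get the full $V_i r/2^{i+O(1)}$ independent degrees of freedom you must recurse into every copy of $\mathcal S_j$ down to level~$1$, which you gesture at (``tracking constants through the recursion'') but never actually carry out. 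In this sense your plan isn't a different proof so much as an underspecified version of the same iterated-wreath count. This is a genuine gap: the claim ``$2^{cV_ir}$ distinct elements, each of word length $O(V_ir)$'' does not follow from the one-level construction you wrote down.

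The second gap is in the deduction of $|B_\Gamma(R)|\gtrsim \exp(R^{(\kappa-1)/\kappa})$, and here your own waffling already signals the problem. You propose choosing $i$ with $V_i\asymp R^{1/\kappa}$, which forces $r=R/V_i\asymp R^{(\kappa-1)/\kappa}$; but the hypothesis only gives $l_{i+1}\ge 2^\kappa$, a constant, so $r\le (l_{i+1}/2)-1$ fails badly once $R$ is large, and indeed you compute the exponent as $R^{1-1/\kappa^2}$. No ``optimization over $i$'' fixes this; for each $R$ the admissible $i$ is essentially forced. The correct and much shorter argument is: given $R$, take $r$ near its minimum, say $R\approx 4V_i$ with $r\approx 4$ (adjust $i$ and use monotonicity of $|B_\Gamma|$ to absorb the bounded multiplicative gaps between consecutive windows $[4V_i,((l_{i+1}/2)-1)V_i]$). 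Then $\liminf l_j\ge 2^\kappa$ gives $V_i\gtrsim 2^{\kappa i}$, so from $R\gtrsim V_i$ one gets $2^{\kappa i}\lesssim R$, i.e.\ $2^{-i}\gtrsim R^{-1/\kappa}$, and hence
\[
\log_2 |B_\Gamma(R)| \;\ge\; 2^{-i-4}V_i r \;\asymp\; 2^{-i}R \;\gtrsim\; R^{1-1/\kappa}=R^{(\kappa-1)/\kappa}.
\]
The point is that the constraint $R\gtrsim V_i$ (equivalently $r\ge 4$) is what pins down $2^i\lesssim R^{1/\kappa}$; you do not get to pick $V_i$ freely, and no balancing of $2^{-i}$ against $V_i r$ is involved.
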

\begin{proof}

First we show that, for any $r\ge 1$,
$$ (|B_{\Gamma_{i+1}}(r)|^{l_i/2}\le  |B_{\Gamma_i}( (1+1/(2r))r l_i).$$
This will follow if we can show that there are elements 
$(g_x)_{0\le x\le l_i-1}\sigma$
in  $B_{\Gamma_i}(r l_i)$ with
$(g_x)_{0\le x\le (l_i/2)-1}$ arbitrarily chosen in $B_{\Gamma_{i+1}}(r)^{l_i/2}$.
Recall that 
$\Gamma_i \subset
\Gamma_{i+1}\wr_{X_i} G_i$ where $G_i=\{0,\dots,l_i-1\}$ 
is the cyclic group  of order $l_i$ with $\alpha_i=(1,\dots, 1)\sigma_i$ 
where $\sigma_i$ the cyclic 
permutation and $\beta_i=(\beta_{i+1},1\,\dots,1, \alpha_{i+1}, 1, \dots, 1)\mbox{ id}$ with $\alpha_{i=1}$ in position $l_i/2$. The key point is that $l_i$ 
is even with $\beta_{i_+1}$ and $\alpha_{i+1}$ at opposite locations. 
By inspection, it is easy to produce the desired elements while 
using $\alpha_i$ to move along the cycle clockwise. 
For instance, consider the case when 
$$g_0=\alpha^{n^0_0}_{i+1}\beta^{n^0_1}_{i+1},\;\; 
g_{(l_i/2)-1}=\beta_{i+1}^{n^{(l_i/2)-1}_1}$$ and  $g_x=1$ if $0<x<(l_i/2)-1$. It takes
$$(l_i/2)+|n_0^0| + (l_i/2) +|n^0_1| + ((l_i/2)-1) +|n^{(l_i/2)-1}_1|$$  
where  each $(l_i/2)$ or $(l_1/2)-1$ 
represent a run  of cycling move using $\alpha_i$ and the $|n^x_j|$ counts
uses of $\beta_i$ to insert either $\alpha_{i+1}$ (when $j$ is even) 
or $\beta_{i+1}$ (when $j$ is odd). 
The total number of moves for this example  is $(l_i/2) + 2((l_i/2) -1
\sum_x |g_x|$.  In general, an arbitrary
$(g_x)_{0\le x\le l_i/2}$ with $|g_x|\le r$ and a maximal number of switches 
from $\alpha_{i+1}$ to $\beta_{i+1}$ in any $g_x$, $0\le x\le (l-i/2)-1$ 
equal to $m$ can be produce in at most 
$$(l_1/2)+ (l_i/2)m +\sum_0^{l_i/2-1}|g_x| -1.$$
This is less than  $$(l_i/2)+ l_ir= (1+ 1/(2r))rl_i$$
as desired. By induction, this gives
$$ |B_{\Gamma}( R_i)|\ge |B_{\Gamma_{i+1}}(r)|^{2^{-i-1}v_i}$$
where
$$R_i= \left(\prod_1^{i}(1+1/(2 r w_{j,i}))l_j\right) r,\;\;
w_{j,i}= l_{j+1}\dots l_i,\;\; w_{i,i}=1.$$
Note that, since $l_i\ge 2$ and $r\ge 1$, we have
$\prod_1^i(1+1/(2rw_{j,i}))\le  e$.  Hence $R_i\le e V_i r$ and we have
$$|B_\Gamma (eV_ir)|\ge |B_{\Gamma_{i+1}}(r)|^{2^{-i-1}V_i}.$$
Further, a simple version of the previous argument shows that
$$|B_{\Gamma_{i+1}}(r)|\ge 2^{r/2},\;\; 1\le r\le (l_{i+1}/2)-1.$$
Hence, we obtain 
$$|B_\Gamma (V_ir)|\ge 2^{2^{-i-4}V_ir},\;\;   4\le r \le (l_{i+1}/2)-1,\;\; i=1,2,\dots..$$

\end{proof}

\subsection{Lower bound on the isoperimetric profile}
\label{sub:Neumann-Segal-Lowerbound}

By applying Proposition \ref{volume-diameter} in the case of 
the above group $\Gamma$, we obtain a lower bound on the isoperimetric 
of $\mathbb Z\wr _{\mathcal S} \Gamma$.
\begin{corollary}[{Corollary of Proposition \ref{volume-diameter}}] 
\label{cor-VD} For any sequence of even integers $l_i$ with 
$\limsup l_i=\infty$, We have
\[
\Lambda_{1,\mathbb Z\wr_{\mathcal S}\Gamma}(v)\ge\frac{1}{Cr } \;\;
\mbox{for all }v\le C^{-1} r^{\left|B_{\mathcal{S}}(0^{\infty},r)\right|/C}.
\]
\end{corollary}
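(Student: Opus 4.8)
The plan is to obtain this as a direct application of Proposition \ref{volume-diameter}; in fact the statement is just the specialization to the present group $\Gamma$ of the estimate recorded in Example \ref{exa-Dpro-low} for wreath products $\mathbb Z\wr_{\mathcal S}\Gamma$. First I would fix once and for all a finite symmetric generating set $S$ of $G=\mathbb Z\wr_{\mathcal S}\Gamma$ (for instance the generators $\alpha_j,\beta_j$ of $\Gamma$ together with the switch $(\mathbf 1^{o}_{\pm1},e_\Gamma)$ of the lamp at the root) and let $\mathbf u_1$ be the uniform probability measure on $\{e\}\cup S$; by the comparison remarks in Section \ref{sec-isoandreturn}, $\Lambda_{1,G,\mathbf u_1}$ is a legitimate representative of $\Lambda_{1,\mathbb Z\wr_{\mathcal S}\Gamma}$. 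Given $r\ge 1$, I would take $H=\mathbb Z$, index set $I=B_{\mathcal S}(o,r)$ (finite, since $\mathcal S$ is locally finite), and the subgroup $X=\prod_{x\in B_{\mathcal S}(o,r)}(\mathbb Z)_x$ of $G$ sitting inside $\oplus_{\mathcal S}\mathbb Z\le G$; finally I would take $\eta=\eta_r$ to be the uniform probability measure on $\mathbb Z\cap[-r,r]$ and $\zeta$ the measure attached to $\eta_r$ by (\ref{zetaprod}), namely $\zeta=|B_{\mathcal S}(o,r)|^{-1}\sum_{x\in B_{\mathcal S}(o,r)}(\eta_r)_x$.

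Next I would check the two numerical hypotheses of Proposition \ref{volume-diameter}. For the isoperimetric input on $H=\mathbb Z$: if $\Omega\subset\mathbb Z$ has $|\Omega|=v$ then, since each jump value $y$ carries mass $(2r+1)^{-1}$ and the values $y$ with $x+y\in\Omega$ are distinct, $\sum_{y:\,x+y\notin\Omega}\eta_r(y)\ge 1-v/(2r+1)$ for every $x$, whence $\Lambda_{1,\mathbb Z,\eta_r}(v)\ge 1-v/(2r+1)$; taking $v_0=r$ gives $\Lambda_{1,\mathbb Z,\eta_r}(v_0)\ge (r+1)/(2r+1)>1/2=:s_0$. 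For the diameter input: the support of $\zeta$ consists of the elements $(\mathbf 1^{x}_h,e_\Gamma)$ with $x\in B_{\mathcal S}(o,r)$ and $h\in\mathbb Z\cap[-r,r]$, and the conjugation identity (\ref{eq:conjugation}) together with the length bound (\ref{eq:length}) give $|(\mathbf 1^{x}_h,e_\Gamma)|_{(G,S)}\le 2\,d_{\mathcal S}(o,x)+|h|\le 3r$, so one may take $R=3r$.

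Plugging $v_0=r$, $s_0=1/2$, $R=3r$ and $|I|=|B_{\mathcal S}(o,r)|$ into Proposition \ref{volume-diameter} yields a constant $C_1$ with
\[
\Lambda_{1,G,\mathbf u_1}(v)\ \ge\ \frac{1}{3\,C_1\,|S|\,r}\qquad\mbox{for all }v\le C_1^{-1}\,r^{\,|B_{\mathcal S}(o,r)|/C_1}.
\]
Since $|S|$ is a fixed finite number, absorbing $3|S|$ and enlarging $C_1$ into a single constant $C$ gives precisely $\Lambda_{1,\mathbb Z\wr_{\mathcal S}\Gamma}(v)\ge 1/(Cr)$ for $v\le C^{-1}r^{|B_{\mathcal S}(0^\infty,r)|/C}$, as asserted. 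I do not foresee a genuine obstacle here: the only points needing care are the bookkeeping of constants and the (minor) observation that the argument uses nothing about the sequence $(l_i)$ beyond evenness, the hypothesis $\limsup l_i=\infty$ being inherited from the ambient discussion rather than genuinely used.
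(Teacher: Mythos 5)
Your proof is correct and is essentially the paper's own argument: the paper derives Corollary \ref{cor-VD} by invoking Proposition \ref{volume-diameter} exactly as in Example \ref{exa-Dpro-low} (lamps $\mathbb Z$ over $B_{\mathcal S}(o,r)$, $\eta_r$ uniform on $[-r,r]$, length bound $R\asymp r$ from the conjugation identity). Your observation that the hypothesis $\limsup l_i=\infty$ is not used in this particular bound is also accurate.
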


\begin{example} \label{exa-wreathNS}
Let $l_n= 2n^d$, $n\ge 1$, for some integer $d$.
Diameter and volume are given by $\mbox{Diam}(\mathcal S_n)\asymp 2^n$
and $|\mathcal S_n|= V_n=2^{n}(n!)^d$.  This gives
\[
\Lambda_{1,\mathbb Z\wr_{\mathcal S}\Gamma}(v)\gtrsim 
c\, 2^{-\frac{1}{d}\frac{\log\log v}{\log\log\log v}\left(1+
2\frac{\log\log\log \log v}{\log\log\log v}\right)
}
\]

If instead we assume that $l_n=2^{1+\lfloor n ^\gamma\rfloor}$ with 
$\gamma>0$ then
$\mbox{Diam}(\mathcal S_{n})
\asymp 2^n$ if $\gamma\in (0,1)$,
$\mbox{Diam}(\mathcal S_{n})\asymp n 2 ^{n}$ if $\gamma=1$ and 
$\mbox{Diam}(\mathcal S_n)\asymp 2 ^{n^\gamma}$ if $\gamma>1$.
Also, $\log_2( V_{n})\sim (1+\gamma)^{-1} n^{1+\gamma}$.  
The bound of the previous corollary
yields
\[
\Lambda_{1,\mathbb Z\wr \Gamma}(v)\ge\frac{1}{C 2^{n^{\max\{1,\gamma\}}}} \;\;
\mbox{for all }v\le C^{-1} \exp\left( 2^{ [(1+\epsilon) (1+\gamma)]^{-1}n^{\gamma+1}}\right),
\]
for any $\epsilon>0$. That is,
\[
\Lambda_{1,\mathbb Z\wr \Gamma}(v)\ge c 2^{- (1+\epsilon)[(1+\gamma)\log_2\log v ]^{\max\{1,\gamma\}
/(1+\gamma)}}.
\]
\end{example}

\subsubsection{First lower bound for $\Gamma$ itself}

This subsection shows how the very general Proposition \ref{rigidlamp-1} 
can be applied to the Neumann-Segal groups $\Gamma=<\alpha,\beta>$, 
 under certain hypotheses on the length sequence $(l_n)_1^\infty$.  

In order to apply Proposition \ref{rigidlamp-1}, for each level $n$, we need 
to find an element $\rho_n$ in $\Gamma$ that belongs to the rigid stabilizer 
$\mbox{rist}_\Gamma(u_n)$ of a vertex $u_n$ at level $n$ and to control the 
length $|\rho_n|_\Gamma$ of $\rho_n$ in $\Gamma$. 
For this purpose, we assume that $\overline{\lim}_{n\to\infty}l_{n}=\infty$. Let 
\[
u_n=0^{n} \in \mathcal S_{n}
\]
and
\[
\rho_{n}=\beta ^{M_{n+1}},
\]
where $M_{n+1}$ is a chosen common multiple of 
$\left\{ l_{2},\dots,l_{n+1}\right\}$.
By direct inspection, we have 
$$\rho_{n}=(\beta_{n+1}^{M_{n+1}},1,\dots,1)     \mbox{id}$$
and $\beta_{n+1}^{M_{n+1}}$ is non-trivial because $\beta$ is of infinite order
thanks to the assumption that $(l_i)$ is unbounded. 

\begin{corollary}[{Corollary of Proposition \ref{rigidlamp-1}}] 
\label{cor-RL}
Suppose $\overline{\lim}_{n\to\infty}l_{n}=\infty$,
let $M_{n+1}$ denote a chosen common multiple of 
$\left\{ l_{2},\dots,l_{n+1}\right\} $. Then there exists a constant $C\ge 1$ such that
\[
\Lambda_{1,\Gamma}(v)\ge\frac{1}{C\max\left\{ M_{n+1},r\right\} } \;\;
\mbox{for all }v\le C^{-1} 2^{\left|B_{\mathcal{S}_{n}}(u_n,r)\right|/C}.
\]
\end{corollary}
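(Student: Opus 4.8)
The plan is to apply Proposition \ref{rigidlamp-1} directly, so the only work is to verify its hypotheses for the data $u_n = 0^n$ and $\rho_n = \beta^{M_{n+1}}$ introduced just above the statement. Proposition \ref{rigidlamp-1} requires: (i) that $\mathcal S_n(u_n)$ be the orbital Schreier graph of $u_n$ under $\Gamma$ — this is exactly the finite Schreier graph $\mathcal S_n$ of the construction, since $\Gamma$ acts transitively on level $n$; (ii) that $\rho_n$ be a nontrivial element of $\mbox{rist}_\Gamma(u_n)$; and (iii) an estimate on $|\rho_n|_\Gamma$. Once these are in place, Proposition \ref{rigidlamp-1} yields
\[
\Lambda_{1,\Gamma}(v)\ge \frac{1}{C\max\{|\rho_n|_\Gamma, r\}}\ \mbox{ for all } v\le C^{-1}2^{|B_{\mathcal S_n(u_n)}(u_n,r)|/C},
\]
and since $|\rho_n|_\Gamma \le M_{n+1}$ (it is a power of the single generator $\beta$), this is precisely the claimed inequality.

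The main point to check is (ii). First I would use the wreath recursion at level $n$: since $\beta = \beta_0$ is the directed automorphism along the ray $0^\infty$, an easy induction on $n$ (unwinding the recursion $\beta_i(0w) = 0\,\beta_{i+1}(w)$ and $\beta_i(xw)=xw$ for $x \ne x_{i+1}, y_{i+1}$) shows that the section of $\beta^{M_{n+1}}$ at the vertex $0^n$ is $\beta_n^{M_{n+1}}$ and that \emph{all other sections at level $n$ are trivial} — indeed, off the ray $0^\infty$ the only effect of $\beta$ happens at the children $y_i$ where it acts as a rooted permutation $\alpha_{i+1}$, and raising to the multiple $M_{n+1}$ of all the relevant $l_j$ kills every such rooted permutation $\alpha_j^{l_j \cdot (\text{something})} = e$ for $j \le n+1$. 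Hence $\rho_n = (\ldots, e, \beta_n^{M_{n+1}}, e, \ldots)\,\mathrm{id}$ with the single nontrivial entry at $u_n = 0^n$, which is exactly the form needed to conclude $\rho_n \in \mbox{rist}_\Gamma(u_n)$. Nontriviality of $\beta_n^{M_{n+1}}$ follows from the hypothesis $\overline{\lim} l_i = \infty$: this forces $\beta = \beta_0$ (equivalently every $\beta_i$) to have infinite order, because the directed automorphism acts on deeper and deeper levels with rooted components $\alpha_j$ of unbounded order $l_j$, so no finite power can be the identity. I should spell out this infinite-order argument carefully, perhaps noting that $\beta_n^{k}$ restricted to level $m \gg n$ involves the rooted permutation $\alpha_m^{k}$ which is nontrivial as soon as $l_m \nmid k$, and by $\overline{\lim} l_i = \infty$ such $m$ exists for any fixed $k = M_{n+1}$.

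For (iii), the bound $|\rho_n|_\Gamma = |\beta^{M_{n+1}}|_\Gamma \le M_{n+1}$ is immediate from the word-length definition with respect to the generating set $\{\alpha^{\pm1}, \beta^{\pm1}\}$ (or more precisely the $k$-tuples; in the cyclic case $k=1$). Then Proposition \ref{rigidlamp-1} gives the stated inequality with $\max\{M_{n+1}, r\}$ in place of $\max\{|\rho_n|_\Gamma, r\}$, which is weaker and hence still valid. The one genuine obstacle is the section computation in (ii): one must be careful that conjugation by the translating elements $g^v$ in the proof of Proposition \ref{rigidlamp-1} keeps the resulting elements inside $\Gamma$ and nontrivial — but this is handled abstractly inside Proposition \ref{rigidlamp-1} itself (it only needs $\rho_n \in \mbox{rist}_\Gamma(u_n)$ nontrivial), so nothing further is required here. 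I expect the whole argument to be short: identify $\rho_n$'s level-$n$ decomposition, invoke infinite order of $\beta$, and quote Proposition \ref{rigidlamp-1}.
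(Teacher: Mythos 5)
Your proposal is correct and follows essentially the same route as the paper: the paper chooses the same $u_n=0^n$ and $\rho_n=\beta^{M_{n+1}}$, asserts by direct inspection that the level-$n$ decomposition has a single nontrivial section at $u_n$ (which you justify in more detail), uses the unboundedness of $(l_i)$ to deduce that $\beta$ has infinite order so the section $\beta_n^{M_{n+1}}$ is nontrivial, bounds $|\rho_n|_\Gamma\le M_{n+1}$, and quotes Proposition~\ref{rigidlamp-1}. The only cosmetic discrepancy is an off-by-one in the paper's subscript for the sections (it writes $\beta_{n+1}$ where, under its own recursion, $\beta_n$ and $\alpha_n$ appear at level $n$, with $\alpha_n$ of order $l_{n+1}$); your indexing is the careful one and the conclusion is unaffected.
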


\begin{remark} \label{rem-cor}
We can always take 
$$M_{n+1}=V_{n+1}=l_1\dots l_{n+1} \mbox{ and } 
r=\mbox{Diam}(\mathcal S_{n}(u_n)) \le V_{n+1}.$$ This gives
\[
\Lambda_{1,\Gamma}(v)\ge\frac{1}{CV_{n+1}} \;\;
\mbox{for all }v\le C^{-1} 2^{V_{n}/C}
\]
but this estimate is too weak to be useful because $V_{n+1}/V_{n}=l_{n+1}$ 
is unbounded.
\end{remark}

\begin{example} Assume that $l_n=2^{1+ \lfloor n ^\gamma\rfloor}$, $n\ge 1$, with 
$\gamma>0$. This gives
$\mbox{Diam}(\mathcal S_{n})
\asymp 2^n$ if $\gamma\in (0,1)$,
$\mbox{Diam}(\mathcal S_{n})\asymp n 2 ^{n}$ if $\gamma=1$ and 
$\mbox{Diam}(\mathcal S_{n})\asymp 2 ^{n^\gamma}$ if $\gamma>1$.
Also, $\log_2( V_{n})\asymp (1+\gamma)^{-1}n^{1+\gamma}$ and 
$M_{n+1}=l_{n+1}=2^{1+ \lfloor (n+1) ^\gamma\rfloor}$.  
The bound of the previous corollary
yields (with a different constant $C$ depending of $\gamma>0$)
\[
\Lambda_{1,\Gamma}(v)\ge\frac{1}{C 2^{(n+1)^{\max\{1,\gamma\}}}} \;\;
\mbox{for all }v\le C^{-1} \exp\left( 2^{(1-\epsilon)(1+\gamma)^{-1}n^{\gamma+1}}
\right).
\]
That is,
\[
\Lambda_{1,\Gamma}(v)\ge  c 2^{- (1+\epsilon)[(1+\gamma) \log_2\log v ]^{\max\{1,\gamma\}
/(1+\gamma)}}.
\]
Note that this is the same bound we obtained for 
$\Lambda_{1,\mathbb Z\wr \Gamma}$ at the end of Example \ref{exa-wreathNS}.
Because of the appearance of the quantity $M_{n+1}$ in Corollary \ref{cor-RL},
when $l_n=2n^d$ as in the first part of Example \ref{exa-wreathNS}, 
we cannot give a lower bound similar to that obtained 
for $\Lambda_{1,\mathbb Z\wr \Gamma}$. 
\end{example}

\subsubsection{Improved lower bound for $\Gamma$ itself}

The main drawback of Corollary \ref{cor-RL} is the fact that
the bound involves the quantities 
$M_n$ and $|B_{\mathcal S_{n}}(u_n,r)|$ instead of 
$M_{n-1}$ and $|B_{\mathcal S_{n}}(u_n,r)|$.  See Remark \ref{rem-cor}.
In this section we show that in the special case of the group $\Gamma$ 
studied in this section, a slightly 
sharper version of Proposition \ref{rigidlamp-1} 
can be obtained and that fixes this drawback. 

\begin{proposition}[{Improved version of Corollary \ref{cor-RL}}] 
\label{pro-RL}
Suppose $\overline{\lim}_{n\to\infty}l_{n}=\infty$,
let $M_{n}$ denote a chosen common multiple of 
$\left\{ l_{2},...,l_{n}\right\} $. 
Then there exists a constant $C\ge 1$ such that
\[
\Lambda_{1,\Gamma}(v)\ge\frac{1}{C\max\left\{ M_{n},r\right\} } \;\;
\mbox{for all }v\le C^{-1} 2^{\left|W_{n}(r)\right|/C}.
\]
\end{proposition}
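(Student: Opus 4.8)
The plan is to refine the argument behind Proposition~\ref{rigidlamp-1} and Corollary~\ref{cor-RL}. In Corollary~\ref{cor-RL} the power $M_{n+1}$ appears because $\beta^{M}$ lands in $\mbox{rist}_{\Gamma}(0^{n})$ only once $M$ is divisible by $l_{2},\dots,l_{n+1}$. The point is that $\beta^{M}$ already lies in $\mbox{rist}_{\Gamma}(0^{n-1})$ as soon as $M$ is divisible by $l_{2},\dots,l_{n}$ alone, and the product-chain argument of Proposition~\ref{rigidlamp-1} can then be run \emph{one level higher}, at cost $M_{n}$ instead of $M_{n+1}$. So I would fix $n$, let $M_{n}$ be a common multiple of $\{l_{2},\dots,l_{n}\}$, set $\rho=\beta^{M_{n}}$, and let $W_{n}(r)$ be the set of level-$(n-1)$ vertices at distance at most $r$ from $0^{n-1}$ in the level-$(n-1)$ Schreier graph $\mathcal S_{n-1}$ (or, for a sharper constant, a suitable neighbourhood of that ball, in the spirit of the set $W(k,t)$ of the bubble-group section).

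The heart of the matter is the structural claim that $\rho=\beta^{M_{n}}$ belongs to $\prod_{x\in X_{n}}\mbox{rist}_{\Gamma}(0^{n-1}x)\subseteq\mbox{rist}_{\Gamma}(0^{n-1})$ and has infinite order. I would prove it by unwinding the wreath recursion of Section~\ref{sec-NS}: since $\beta=\beta_{0}$ is directed along $0^{\infty}$ it has trivial rooted part, so $\beta=(\beta_{1},1,\dots,\alpha_{1},\dots)\,\mbox{id}$ with the rooted shift $\alpha_{1}$, of order $l_{2}$, placed at $l_{1}/2$; iterating, the only sections of $\beta^{M}$ that occur off the ray $0^{\infty}$ at the first $n-1$ levels are the powers $\alpha_{i}^{M}$ of rooted shifts of order $l_{i+1}$, for $1\le i\le n-1$. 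As $M_{n}$ is divisible by $l_{2},\dots,l_{n}$, every one of these vanishes, so $\beta^{M_{n}}$ fixes all vertices outside $\mathbb T_{0^{n-1}}$; its restriction to $\mathbb T_{0^{n-1}}$ equals $\beta_{n-1}^{M_{n}}$, which again has trivial rooted part and therefore splits over the children of $0^{n-1}$, with child-$0$ section $\beta_{n}^{M_{n}}$ and child-$(l_{n}/2)$ section $\alpha_{n}^{M_{n}}$. Infinite order follows because $\overline{\lim}_{i\to\infty}l_{i}=\infty$ forces $\beta_{n}$ (directed along $0^{\infty}$ in the subtree) to have infinite order, whence $\beta_{n}^{M_{n}}\ne e$. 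This computation is the one genuinely substantial step; it uses the cyclic structure of the $G_{i}$, the choices $x_{i}=0$, $y_{i}=l_{i}/2$ with $l_{i}$ even, and the hypothesis $\overline{\lim}l_{i}=\infty$.

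Granting the claim, the rest follows the proof of Proposition~\ref{rigidlamp-1}. For each $w\in W_{n}(r)$ fix $g^{w}\in\Gamma$ with $g^{w}\cdot 0^{n-1}=w$ and $|g^{w}|_{\Gamma}=d_{\mathcal S_{n-1}}(0^{n-1},w)\le r$; then $\rho_{w}:=g^{w}\rho(g^{w})^{-1}\in\mbox{rist}_{\Gamma}(w)$ has infinite order, and since rigid stabilisers of distinct vertices at a common level commute and are supported on disjoint subtrees, $X:=\langle\rho_{w}:w\in W_{n}(r)\rangle=\prod_{w}\langle\rho_{w}\rangle$ is a subgroup of $\Gamma$ isomorphic to $\mathbb Z^{|W_{n}(r)|}$. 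Take $\eta=\frac12(\delta_{1}+\delta_{-1})$ on $\mathbb Z$, so that $\Lambda_{1,\mathbb Z,\eta}(2)=\frac12$, let $\zeta$ be the measure attached to this data by (\ref{zetaprod}), and observe that every element of $\mbox{supp}(\zeta)$ has $\Gamma$-length at most $2r+|\rho|_{\Gamma}\le 2r+M_{n}\le 3\max\{M_{n},r\}$. Proposition~\ref{volume-diameter}, applied with $H=\mathbb Z$, $v_{0}=2$, $s_{0}=\frac12$, $R=3\max\{M_{n},r\}$, $|I|=|W_{n}(r)|$ and $|S|$ a fixed constant, then gives $\Lambda_{1,\Gamma}(v)\ge\frac{1}{C\max\{M_{n},r\}}$ for all $v\le C^{-1}2^{|W_{n}(r)|/C}$, which is the assertion. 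The only remaining points, all routine once the structural claim is available, are that the $g^{w}$ really do have $\Gamma$-length $\le r$ (a geodesic in $\mathcal S_{n-1}$ is a word in the generators), that the $\rho_{w}$ generate an honest direct product, and the exact combinatorial description of $W_{n}(r)$.
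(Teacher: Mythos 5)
You correctly identify the key improvement over Corollary~\ref{cor-RL}: replacing $M_{n+1}$ by $M_{n}$. Your wreath-recursion computation (that $\beta^{M_{n}}$ has trivial root permutation at level $n$ and its only nontrivial level-$n$ sections are $\beta_{n}^{M_{n}}$ at $0^{n}$ and $\alpha_{n}^{M_{n}}$ at $0^{n-1}(l_{n}/2)$, hence $\beta^{M_{n}}\in\mbox{rist}_{\Gamma}(0^{n-1})$) agrees with what the paper does, and the invocation of Proposition~\ref{volume-diameter} at the end is fine.

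The gap is in the choice of index set. You take $W_{n}(r)$ to be a ball in $\mathcal{S}_{n-1}$, so $|W_{n}(r)|\le V_{n-1}$. The paper's $W_{n}(r)$ lives in $\mathcal{S}_{n}$ and can reach size $\asymp V_{n}/4$. To build the direct product at level $n$ rather than at level $n-1$, the paper exploits precisely the two-section structure you observe but do not use: it conjugates $\varrho_{n}=\beta^{M_{n}}$ by a minimal-length element $g^{v}$ taking $u_{n}=0^{n}$ to a level-$n$ vertex $v=z_{1}\cdots z_{n-1}z_{n}$; the two nontrivial supports are then carried to $v$ and the ``antipodal'' vertex $\bar v=z_{1}\cdots z_{n-1}(z_{n}+l_{n}/2)$ (because a geodesic word moves $u_{n}$ and $\bar u_{n}=0^{n-1}(l_n/2)$ in parallel); and the restriction $z_{n}\le\lfloor l_{n}/4\rfloor$ in the definition of $W_{n}$ forces the pairs $\{v,\bar v\}$ to be pairwise disjoint as $v$ ranges over $W_{n}(r)$. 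That is what yields a product with $|W_{n}(r)|$ infinite cyclic factors, with $W_n(r)$ a subset of $\mathcal S_n$, not $\mathcal S_{n-1}$.

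Why this matters: your version has an unbounded mismatch of exactly the type that Remark~\ref{rem-cor} flags as ``too weak to be useful.'' You obtain $\Lambda_{1,\Gamma}(v)\ge (C\max\{M_{n},r\})^{-1}$ for $v\le C^{-1}2^{V_{n-1}/C}$, but $\max\{M_{n},r\}$ is typically of order $V_{n}$ while your exponent only reaches $V_{n-1}=V_{n}/l_{n}$, and the ratio $l_{n}$ is unbounded. In particular the corollary that follows the proposition ($\Lambda_{1,\Gamma}(v)\ge c/\log v$ on $[2^{c^{2}V_{n}},2^{cV_{n}}]$) requires $|W_{n}(r)|\gtrsim V_{n}$ and does not follow from your estimate (nor from applying it with $n$ shifted). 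So although your argument is internally correct and captures half the idea (drop one divisibility condition on $M$), it misses the other half (index the product at level $n$ using disjoint antipodal pairs of supports), and therefore proves a strictly weaker statement than the one the paper needs.
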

\begin{proof} In $\mathcal S_{n}=\mathcal S_{n}(u_{n})$, $u_n=0^{n}$, 
consider the set $W_{n}$ of those vertices 
$v=z_1 z_{n-1}z_{n}$ with $0\le z_{n}\le \lfloor l_{n}/4\rfloor$.
Set
$$W_{n}(r)= W_{n}\cap B_{\mathcal S_{n}}(u_{n},r).$$
Note that $B_{\mathcal S_{n}}(u_{n},r)\subset W_{n}$ if and only if $r\le 
\mbox{Diam}( \mathcal S_{n-1})+l_{n}/4$.

For each such $v$, pick $g^v$ such that $g^vu_{n}=v$ and
$|g^v|=d_{\mathcal S_{n}}(u_{n},v)$.
Set $\varrho_{n}= \beta^{M_{n}}$. By construction
$$\varrho_{n}=(\tilde{\varrho}_x)_{x\in \mathcal T^{n}}\mbox{ id}$$
where $\tilde{\varrho}_x$ is the identity except at
$x=u_{n}$ and $x=\bar{u}_{n}=0^{n-1}(l_{n}/2)$. Further,
$$\tilde{\varrho}_{u_{n}}=\beta_{n+1}^{M_{n}} \mbox{ and } 
\tilde{\varrho}_{\bar{u}_{n-1}}=\alpha_{n+1}^{M_{n}}.$$ 
For $v\in W_{n}$, write $g^v=(g^v_x)_{x\in \mathcal T^{n}}\sigma^v$. We have
$\sigma^v(u_{n})=v$. Because a minimal length representation of $g^v$ as a word in $\alpha,\beta$ provides a geodesic from $u_{n}$ to $v$ in 
$\mathcal S_{n}$, we also have $\sigma^v(\bar{u}_{n})
=z_1\dots z_{n-1}
\bar{z}_{n}$ where $\bar{z}_{n}=z_{n}+l_{n}/2$. 
Set
$\overline{W}_{n}=\{\sigma^v(\bar{u}_{n}): v\in W_{n}\}$ and observe 
that $W_{n}$ and $\overline{W}_{n}$ are disjoint subsets of 
$\mathcal S_{n}$. Then 
$$g^{v}\varrho_{n}\left(g^{v}\right)^{-1}=(\theta^v_x)_{x\in \mathcal T^{n}}
\mbox{id}
$$
where all $\theta_x^v$ are trivial except two, namely,
$$\theta^v_v=\theta^v_v=
g_v^{v}\beta_{n+1}^{M_{n}}(g^v_{v})^{-1} \mbox{ and }
\theta^v_{\bar{v}}=g_{\bar{v}}^{v}\alpha_{n+1}^{M_{n}}(g^v_{\bar{v}})^{-1}$$
where $\bar{v}=\sigma^v(\bar{u}_{n})$.

Now, let $\zeta$ be the symmetric probability measure on
the subgroup 
$$\left\langle g^{v}\varrho_{n}\left(g^{v}\right)^{-1}:\ 
v\in W_{n}\right\rangle $$
defined be
\[
\zeta(\gamma)=\frac{1}{2|W_{n}(r)|}
\sum_{v\in W_{n}(r)}
\mathbf{1}_{\left\{ g^{v}\varrho_{n}^{\pm1}\left(g^{v}\right)^{-1}\right\}}(\gamma).
\]
As $\zeta$ has the form (\ref{zetaprod}) on the product of cyclic groups
\[
X= \prod_{v\in W_{n}(r)}<g^{v}\varrho_{n}\left(g^{v}\right)^{-1}>,
\]
Comparison of $\zeta$ with simple random walk on $\Gamma$
and Proposition \ref{product} gives the desired result.
\end{proof}

\begin{corollary}
Suppose $\overline{\lim}_{n\to\infty}l_n=\infty$. Then, for $c\in (0,1)$ 
small enough,  we have
\[
\Lambda_{1,\Gamma}(v)\ge \frac{c}{\log v} \mbox{ for } v\in [2^{c^2 V_{n}}, 
2^{c V_{n}}]. \]
\end{corollary}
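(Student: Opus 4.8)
The plan is to apply Proposition~\ref{pro-RL} at level $n$, with the two free parameters chosen as follows: take $M_n=\mbox{lcm}(l_2,\dots,l_n)$ (a legitimate common multiple of $\{l_2,\dots,l_n\}$), and take the radius $r=\mbox{Diam}(\mathcal S_n)$, so that $B_{\mathcal S_n}(u_n,r)=\mathcal S_n$ and hence $W_n(r)=W_n$. Two elementary smallness estimates will do the job. First, $\mbox{Diam}(\mathcal S_n)=\sum_{m=1}^n 2^{n-m-1}l_m$ and $V_n=\prod_{m=1}^n l_m\ge 2^{n-1}l_m$ for each $m$ (the other $n-1$ factors are each $\ge2$), so $2^{n-m-1}l_m\le 2^{-m}V_n$ and therefore $\mbox{Diam}(\mathcal S_n)\le V_n\sum_{m\ge1}2^{-m}<V_n$; keeping the two largest of $l_1,\dots,l_n$ in $\prod_{k\ne m}l_k$ even gives $\mbox{Diam}(\mathcal S_n)\le (2/L^{(2)}_n)\,V_n$ with $L^{(2)}_n\to\infty$ under $\overline{\lim}\,l_n=\infty$. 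Second, since every $l_i$ is even, $\sum_{i=2}^n v_2(l_i)\ge n-1$ while $v_2(M_n)=\max_{2\le i\le n}v_2(l_i)$, so $M_n\mid l_2\cdots l_n$ and in fact $l_2\cdots l_n/M_n\ge 2^{n-2}$, i.e. $M_n\le 2^{1-n}V_n$. In particular $\max\{M_n,\mbox{Diam}(\mathcal S_n)\}=\varepsilon_n V_n$ with $\varepsilon_n\to0$.

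Next I would record the volume bound for $W_n$. By the substitution description of $\mathcal S_n$ (a cycle of length comparable to $l_n$ carrying a copy of $\mathcal S_{n-1}$ at each of its vertices, each such copy relabelled so that its vertices acquire a common last coordinate), the number of vertices of $\mathcal S_n$ with any fixed last coordinate is $V_{n-1}$; hence $|W_n|=(\lfloor l_n/4\rfloor+1)V_{n-1}\ge V_n/4$. Feeding $M_n$, $r=\mbox{Diam}(\mathcal S_n)$ and $|W_n(r)|=|W_n|\ge V_n/4$ into Proposition~\ref{pro-RL} yields, with the constant $C$ from that proposition,
\[
\Lambda_{1,\Gamma}(v)\ \ge\ \frac{1}{C\,\max\{M_n,\mbox{Diam}(\mathcal S_n)\}}\qquad\text{for all }\ v\le C^{-1}2^{V_n/(4C)}.
\]
Now fix $c\in(0,\tfrac{1}{8C})$. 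Because $\max\{M_n,\mbox{Diam}(\mathcal S_n)\}=\varepsilon_nV_n$ with $\varepsilon_n\to0$, for all $n$ large enough we have simultaneously $\varepsilon_n\le c/C$ and $V_n\ge 8C\log_2 C$; the latter gives $2^{cV_n}\le 2^{V_n/(8C)}\le C^{-1}2^{V_n/(4C)}$, so the displayed estimate applies on the whole interval $[2^{c^2V_n},2^{cV_n}]$ and produces $\Lambda_{1,\Gamma}(v)\ge \tfrac{1}{C\cdot cV_n/C}=\tfrac{1}{cV_n}$ there. Finally, $v\ge 2^{c^2V_n}$ forces $\log v\ge c^2V_n$ (logarithm to base $2$; a harmless constant change absorbs the natural logarithm), whence $\tfrac{c}{\log v}\le \tfrac{c}{c^2V_n}=\tfrac1{cV_n}\le\Lambda_{1,\Gamma}(v)$, which is exactly the asserted bound. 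For the finitely many small $n$ the interval $[2^{c^2V_n},2^{cV_n}]$ sits near $v=1$ and is irrelevant to the behaviour of $\Lambda_{1,\Gamma}$, so it may be discarded.

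The only place where genuine (though still short) work is needed is the claim that both $M_n$ and $\mbox{Diam}(\mathcal S_n)$ are negligible next to $V_n=l_1\cdots l_n$: this is what upgrades the mere inequality $\max\{M_n,\mbox{Diam}(\mathcal S_n)\}\le V_n$ into the bound $\Lambda_{1,\Gamma}(v)\ge 1/(cV_n)$ that is strong enough at the \emph{left} endpoint $2^{c^2V_n}$ of the interval. The diameter estimate is immediate from its closed form, and the estimate $M_n\le 2^{1-n}V_n$ relies crucially on each $l_i$ being even, so that the prime $2$ is vastly over-counted in the product $l_2\cdots l_n$ compared with the least common multiple. Everything else — the identity $W_n(\mbox{Diam}(\mathcal S_n))=W_n$, the count $|W_n|\ge V_n/4$ from the self-similar structure of $\mathcal S_n$, and Proposition~\ref{pro-RL} itself (which is where $\overline{\lim}\,l_n=\infty$ is used, to guarantee the rigid-stabilizer element $\beta^{M_n}$ is nontrivial) — is already available.
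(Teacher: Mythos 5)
Your argument is correct, and since the paper gives no proof of this corollary it is a complete derivation from Proposition~\ref{pro-RL}. The two facts that make it work are exactly the ones you isolated, and both are essential: (i) taking $M_n=\mathrm{lcm}(l_2,\dots,l_n)$ rather than the product $l_2\cdots l_n$ is forced, because with the product you only get $\max\{M_n,r\}\le V_n$ and the final inequality $c^2/C\ge c$ would require $c\ge C\ge1$; your $2$-adic observation, that each even $l_i$ contributes a factor of $2$ to the product but only the maximal $v_2(l_i)$ survives in the lcm, yields $M_n\le 2^{1-n}V_n$ and is precisely the needed smallness. (ii) The bound $\mathrm{Diam}(\mathcal S_n)/V_n\le 2/L^{(2)}_n$ is correct (for any $m$, the product $\prod_{k\ne m}l_k$ contains $n-2$ factors $\ge 2$ and one factor $\ge L^{(2)}_n$, so it is $\ge 2^{n-2}L^{(2)}_n$, and summing $2^{1-m}/L^{(2)}_n$ over $m$ gives $2/L^{(2)}_n$), and $\limsup l_n=\infty$ indeed forces $L^{(2)}_n\to\infty$. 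This is what lifts Remark~\ref{rem-cor}'s obstruction. The volume count $|W_n(\mathrm{Diam}(\mathcal S_n))|=|W_n|=(\lfloor l_n/4\rfloor+1)V_{n-1}\ge V_n/4$ also checks out against the substitution description of $\mathcal S_n$.

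One small caveat is worth recording explicitly rather than waving away: the inequality cannot hold for every $n$ once $c$ is taken small, because at $v=2^{c^2V_n}$ the target $c/\log v\asymp 1/(cV_n)$ blows up as $c\to 0$ with $n$ fixed, while $\Lambda_{1,\Gamma}$ is bounded by $\Lambda_{1,\Gamma}(1)$. So the corollary must be read, as your proof in fact delivers, as holding for all $n\ge N(c)$; this is a harmless imprecision in the statement, not a gap in your argument.
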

Note that a better lower bound would be available 
if we knew that $\Gamma$ has exponential volume growth.

\subsection{Upper bound on the isoperimetric profile}

In this section, we provide  upper bounds on $\Lambda_{2,\mathbb Z\wr \Gamma}$
by applying the general method explained in Section \ref{sec-UB}.
\begin{proposition} \label{pro-NS2}
There exists a constant $C$ such that, 
for any sequence $(l_i)_1^\infty$ of 
even natural numbers and  any $n$, we have
$$\Lambda_{2,\mathbb Z\wr _{\mathcal S}\Gamma}(v)\le \frac {1}{C R_n}
\mbox{ for any }\;\;v\ge C R_n^{CV_n } $$
where $R_n= \sum_1^n 2^{n-j-2}l_j$ and $V_n=l_1\dots l_n$.
\end{proposition}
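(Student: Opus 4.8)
The plan is to apply Proposition \ref{function} with an appropriate choice of the data $(J_n, B_n)$, the approximating group $\mathbf{\Gamma}^n$, the admissible function $F_n$, and the auxiliary function $\psi_n$, after verifying that Hypothesis $(\Omega)$ holds for the Neumann-Segal action along the ray $o=0^\infty$. The natural choice is to take $B_n$ to be the finite Schreier graph $\mathcal{S}_n$ (viewed as a subset of the infinite Schreier graph $\mathcal{S}$, using that $\mathcal{S}_n$ describes a growing neighborhood of $o=0^\infty$), $J_n = \{o\}$, and $\mathbf{\Gamma}^n = \Gamma_n$, the finite Neumann-Segal group acting on the first $n$ levels. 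The point is that the action of $\Gamma$ on the finite ball $\mathcal{S}_n$ around $o$ coincides with the action of $\Gamma_n$: words $\omega$ whose inverted orbit $\mathcal{O}(\omega; o)$ stays inside $\mathcal{S}_n$ act on $\mathcal{S}_n$ exactly as their image under the natural projection $\theta_n: \mathbf{S}^{(\infty)} \to \Gamma_n$. This gives the local embedding $\vartheta_n = \theta_n$, and the required multiplicativity and well-definedness properties of Hypothesis $(\Omega)$ follow from the corresponding properties of $\theta$ and $\theta_n$, together with the fact that an element of $\Gamma$ whose inverted orbit action stays in $\mathcal{S}_n$ is determined on $\mathcal{S}_n$ by its $\Gamma_n$-image. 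The amenability of $\Gamma_n$ (it is a finite group) is trivially satisfied.

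Next I would choose the test functions. For $\psi_n$ on $\mathbf{\Gamma}^n = \Gamma_n$, the simplest choice $\psi_n \equiv 1$ makes $\mathcal{Q}_{\mathbf{\Gamma}^n, \mu}(\psi_n) = 0$; then $|\mathrm{supp}(\psi_n)| = |\Gamma_n|$, but the sharper bound $|\vartheta_n(\Omega(J_n,B_n)) \cap \mathrm{supp}(\psi_n) h_0^{-1}| \le |\overline{\Omega}(J_n,B_n)|$ will be used. For $F_n$, the key idea is to build a $(J_n,B_n)$-admissible function whose Rayleigh quotient is of order $R_n^{-2}$, where $R_n = \sum_1^n 2^{n-j-2} l_j$ is (up to a constant) the diameter of $\mathcal{S}_n$. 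The natural candidate is a ``tent'' function centered at $o$: pick a reference set $A_n$ (for instance, a ball or half-ball around $o$ in $\mathcal{S}_n$ of radius comparable to $R_n$) and define $F_n(Y) = \max\{0, 1 - d(o, gA_n\text{-displacement})/R_n\}$ on translates $Y = g \cdot A_n$ with $o \in Y \subset B_n$. The admissibility amounts to checking that the relevant translates of $A_n$ stay inside $\mathcal{S}_n$ and contain $o$, and the estimate $\mathcal{Q}_{\mathcal{P}_f(\mathcal{S}),\mu}(F_n) \le C/R_n^2$ follows from the standard argument that a generator $\alpha$ or $\beta$ moves $o$ (and hence the displacement parameter) by a bounded amount along the graph. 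This yields $Q_\mu(n) \le C/R_n^2$, which is the first part of the claimed bound, with $1/(CR_n)$ replaced by $C/R_n^2$; rewriting, $\Lambda_{2,G,\mathfrak{q}}(v) \le C/R_n^2$, and since the statement asks for $\le 1/(CR_n)$ the bound as written in the proposition is actually weaker than what this argument gives, so it suffices. (One should double-check the exponent convention; most likely $R_n$ in the proposition should be read as $R_n^2$, consistent with Example \ref{exa-Dinfty2}, and the proof I describe produces the $R_n^{-2}$ bound directly.)

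The volume bound is the other half. Proposition \ref{function} requires $v \ge |\overline{\Omega}(J_n,B_n)| \cdot Q_\mu(n)^{-|B_n|}$. Here $|B_n| = |\mathcal{S}_n| = V_n$, and $Q_\mu(n)^{-|B_n|} \le (C R_n^2)^{V_n}$. For $|\overline{\Omega}(J_n,B_n)|$: an element of $\overline{\Omega}(J_n,B_n) \subset \Gamma_n$ is a permutation of $\mathcal{S}_n$, so crudely $|\overline{\Omega}(J_n,B_n)| \le |\mathcal{S}_n|! = V_n!$, which is at most $R_n^{C V_n}$ (since $V_n \le 2^{R_n}$ or more simply $V_n! \le V_n^{V_n} \le (CR_n)^{CV_n}$). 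Combining, the condition $v \ge C R_n^{C V_n}$ suffices, matching the proposition. The main obstacle I anticipate is the careful verification that Hypothesis $(\Omega)$ holds — specifically, the claim that a word whose inverted orbit of $o$ remains in $\mathcal{S}_n$ acts on $\mathcal{S}_n$ through $\Gamma_n$, and conversely that an element of $\overline{\Omega}(J_n, B_n)$ is determined by its $\Gamma_n$-image. This requires understanding how the directed generators $\beta_j$ interact with the substitutional structure of $\mathcal{S}$: the generator $\beta$ acts along the ray $0^\infty$ and ``reaches deeper'' into the tree the longer the inverted orbit, so one must confirm that bounding the inverted orbit inside $\mathcal{S}_n$ prevents $\beta$ from seeing below level $n$. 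This is analogous to Lemma \ref{identification}/\ref{localembddings} for the bubble groups but is cleaner here because the nested structure of the $\mathcal{S}_i$'s is explicit; still, it is the step requiring genuine care.
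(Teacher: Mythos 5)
Your plan is not the paper's argument, and the two places where it differs are exactly the places where it breaks.

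\textbf{The choice of approximating group.} You take $\mathbf{\Gamma}^n = \pi_n(\Gamma)$ (the finite quotient acting on $\mathcal S_n$) and $\vartheta_n = \pi_n$, and justify Hypothesis $(\Omega)$ by noting that $\pi_n$ is a homomorphism. But the framework of Section~4 requires more: Lemma~\ref{graph-iden} needs the induced map $\overline{\Omega}(J_n,B_n)\to\mathbf{\Gamma}^n$ to be \emph{injective} (so that $\Theta_n$ is a graph isomorphism and the test function can be pulled back). With $\vartheta_n=\pi_n$ this fails: take $m$ equal to the order of $\pi_n(\beta)$; then $\beta^m\in\ker\pi_n$ is nontrivial (it has nontrivial level-$n$ sections), yet it fixes $o$, so the word $\beta^m$ belongs to $\Omega(\{o\},\mathcal S_n)$. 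Thus $\beta^m$ and $e_\Gamma$ are distinct elements of $\overline{\Omega}$ with the same $\vartheta_n$-image, and $\Theta_n$ is not injective. Your parenthetical justification --- ``an element of $\Gamma$ whose inverted orbit stays in $\mathcal S_n$ is determined on $\mathcal S_n$ by its $\Gamma_n$-image'' --- is a tautology (it says the action on $\mathcal S_n$ determines the action on $\mathcal S_n$), not the needed statement that such an element of $\Gamma$ itself is determined. The paper instead takes $B_n$ to be only the ``half'' of $\mathcal S_n$ closer to $o$ and uses $\mathbf{\Gamma}^{n+1}=[\langle a_{n+1}\rangle\times\langle b_{n+1}\rangle]\wr_{\mathcal S_n}\pi_n(\Gamma)$, with $\vartheta_n$ recording both $\pi_n(g)$ and the (abelianized) level-$n$ sections $g_x$. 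For words in $\Omega(J_n,B_n)$ these sections at $x\in B_n$ are powers of $\beta_{n+1}$ and at $x\in\overline B_n$ powers of $\alpha_{n+1}$, so the abelianization loses nothing and $\vartheta_n$ is injective. The half-space choice of $B_n$ is not cosmetic; it is what forces this one-sided structure on the sections.

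\textbf{The tent function and the claimed exponent.} You claim $\mathcal Q_{\mathcal P_f(\mathcal S),\mu}(F_n)\lesssim R_n^{-2}$ for a tent centered at $o$, by analogy with the bubble groups. This does not carry over. In the bubble graph, the tent slides $A_r$ along a single long ($\asymp a_k\asymp$ diameter) cycle while $\beta$ stabilizes every translate; that one-dimensional structure is precisely what Assumption~(A) buys. In the Neumann-Segal Schreier graph, $\alpha$ only rotates the length-$l_1$ cycle at the root: a family $\alpha^t A_n$ with $o\in\alpha^t A_n$ is constrained to $|t|\lesssim l_1/2$, and at $t=l_1/2$ the generator $\beta$ moves the ball's center, producing a set that is no longer an $\alpha$-translate. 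So either the tent width is $\lesssim l_1$, giving $\mathcal Q\gtrsim l_1^{-2}$ independent of $n$ (useless), or the $\beta$-jump contributes an $O(1)$ term to the energy, and you are back to order $1/r$, not $1/r^2$. Your ``displacement parameter'' is not well defined because the orbit of $A_n$ under $\Omega(J_n,B_n)$ is not a path.

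\textbf{What the paper actually does.} The paper's $F_n$ comes from the resistance construction of Lemma~\ref{lem-ResA}, giving $\mathcal Q_{\mathcal P_f(\mathcal S),\mu}(F_n)\lesssim \mathcal R(J_n,B_n^c)^{-1}\asymp R_n^{-1}$ (a product/potential test function, which is the right generality here but only gives inverse-resistance, not inverse-resistance-squared). The missing factor is recovered by taking $\psi_n$ to be a product of boxes of radius $r\asymp\sqrt{R_n}$ on the abelian lamp coordinates of $\mathbf{\Gamma}^{n+1}$, giving $\mathcal Q_{\mathbf{\Gamma}^n,\mu}(\psi_n)\lesssim r^{-2}\asymp R_n^{-1}$. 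The nontrivial $\psi_n$ is therefore essential, and it is only available because the approximating group has the extra abelian layer. With $\psi_n\equiv 1$ on $\pi_n(\Gamma)$, as you propose, you have given that lever away. The exponent in the proposition is indeed $R_n^{-1}$, not $R_n^{-2}$ --- this is not a convention issue, and it is consistent with the fact that Proposition~\ref{pro-NS3} is proved separately precisely because it improves on Proposition~\ref{pro-NS2} in some regimes; if $R_n^{-2}$ were already accessible here, \ref{pro-NS3} would be largely redundant. Your volume estimate $|\mathcal S_n|!\le R_n^{CV_n}$ is the part of the sketch that is essentially right.
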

The quantity $R_n$ which appears here is (essentially) the resistance
between the root  $o=0^\infty$ and the set $\mathcal S_n^c$ in the Schreier graph
$\mathcal S$.

\begin{example}
Let $l_n= 2n^d$, $n\ge 1$ for some integer $d$.
Resistance and volume are given by $R_n\asymp 2^n$
and $|\mathcal S_n|= V_n=2^{n}(n!)^d$.  This gives
\[
\Lambda_{2,\mathbb Z\wr_{\mathcal S}\Gamma}(v)\lesssim 
C\, 2^{-\frac{1}{d}\frac{\log\log v}{\log\log\log v}
\left(1+\frac{\log\log\log\log v}{2\log\log\log v}\right)}
.\]

If instead we assume that $l_n=2^{1+ \lfloor n ^\gamma\rfloor}$ with 
$\gamma>0$ then
$R_n \asymp 2^n$ if $\gamma\in (0,1)$,
$R_n\asymp n 2 ^{n}$ if $\gamma=1$ and 
$R_n\asymp 2 ^{n^\gamma}$ if $\gamma>1$.
Also, $\log_2( V_{n})\asymp (1+\gamma)^{-1}n^{1+\gamma}$.  This gives
\[
\Lambda_{2,\mathbb Z\wr \Gamma}(v)\le \frac{C}{2^{ n^{\max\{1,\gamma\}}}} \;\;
\mbox{for all } v\ge C \exp\left( 2^{[(1-\epsilon)(1+\gamma)]^{-1}
n^{\gamma+1}}\right).
\]
That is,
\[
\Lambda_{2,\mathbb Z\wr \Gamma}(v)\le 2^{- (1-\epsilon)
[(1+\gamma)\log_2\log v ]^{\max\{1,\gamma\}
/(1+\gamma)}}.
\]
\end{example}

\begin{proposition} \label{pro-NS3}
There exists a constant $C$ such that, 
for any sequence $(l_i)_1^\infty$ of 
even natural numbers and  any $n$, we have
$$\Lambda_{2,\mathbb Z\wr_{\mathcal S} \Gamma}(v)\le \frac {C}{r^2}
\mbox{ for any }\;\;v\ge \exp\left( C V_{n-1} r\log r\right)  $$
where $r\in (0,l_n/4)$ and $V_{n-1}=l_1\dots l_{n-1}$.
\end{proposition}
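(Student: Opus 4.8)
The plan is to deduce Proposition~\ref{pro-NS3} from Proposition~\ref{function}, by producing, for each $n$ and each $r\in(0,l_n/4)$, an instance of Hypothesis~$(\Omega)$ adapted to ``scale $r$ at level $n$'', together with suitable test data. The whole construction parallels the bubble‑group case (Lemmas~\ref{lem-com}--\ref{localembddings} and the proof of Theorem~\ref{bubblebound}); the cyclic Neumann--Segal Schreier graph $\mathcal S$ is used here in the same spirit, exploiting that it contains long one–dimensional pieces (arcs of the cycles $G_n$).

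First I would fix the geometric data. In $\mathcal S$, the level‑$n$ cycle $G_n$ (the fresh cycle that appears when $\mathcal S_n$ is assembled from $l_n$ copies of $\mathcal S_{n-1}$) has $l_n$ vertices, each carrying a hanging copy of $\mathcal S_{n-1}$; the copy at the distinguished vertex $x_n=0$ contains the root $o=0^\infty$, and the copy at $y_n=l_n/2$ carries the deeper structure. Choose $\mathfrak m_n$ to be a vertex of $G_n$ at distance $\ge l_n/4$ along $G_n$ from both $x_n$ and $y_n$; since $r<l_n/4$, the radius‑$r$ neighbourhood of $\mathfrak m_n$ meets $G_n$ in a genuine interval of $\sim 2r$ vertices and never reaches a distinguished vertex. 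Set $J_n=\{o,\mathfrak m_n\}$, and let $B_n$ be the ball of radius $Cr$ about the geodesic from $o$ to $\mathfrak m_n$. Counting with the substitutional structure (and using $r<l_n/4$), the ball $B_n$ is, up to constants, a tube of $\sim r$ consecutive ``cells'', each of size $\le V_{n-1}$, so $|B_n|\le C\,V_{n-1}\,r$. For the approximation group I would take $\mathbf\Gamma^n=\Gamma^{(N)}$, the finite group of permutations of $\mathcal S_N$ induced by $\Gamma$ with $N=n+O(1)$, and $\vartheta_n$ the corresponding evaluation/truncation map $\theta_N:\mathbf S^{(\infty)}\to\Gamma^{(N)}$.

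The technical core is to check that this data satisfies part~2 of Definition~\ref{def-Omega}: that $\vartheta_n$ is well defined and multiplicative on $\Omega(J_n,B_n)$. This is done exactly as for bubble groups: one proves the analogue of Lemma~\ref{lem-com2} (for a word whose inverted orbit of $\mathfrak m_n$ stays in the radius‑$r$ interval, the moves along $G_n$ commute with nearby moves, so the corresponding element acts on that interval as a plain translation), and the analogue of Lemma~\ref{identification} (``identification of orbits'': for such words the orbit of \emph{any} vertex of $\mathcal S$ is faithfully mirrored by the orbit of an associated reference vertex lying in a controlled region of size $\le C\,V_{n-1}\,r$, and this region does not reach the vertices where $\mathcal S$ and its truncation $\mathcal S_N$ differ). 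The argument of Lemma~\ref{localembddings} then gives $\ker(\vartheta_n)\cap\Omega(J_n,B_n)=\ker(\theta)\cap\Omega(J_n,B_n)$, which is what is needed. Moreover, because $\mathfrak m_n$'s inverted orbit stays in $B_{\mathcal S}(\mathfrak m_n,r)$, the set $\bar\Omega(J_n,B_n)$ contains a ball of radius $\sim r$ in $\Gamma$, so $\bigcup_n\bar\Omega(J_n,B_n)=\Gamma$; also $s,s^{-1}\in\Omega(J_n,B_n)$ for each generator, which forces $B_n$ to be taken as a genuine graph ball rather than a tube following $G_n$ (since $\alpha$ moves a level‑$n$ vertex to a neighbouring depth‑one branch). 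This balance between keeping $B_n$ large enough that $s,s^{-1}\in\Omega(J_n,B_n)$ yet small enough that $|B_n|\le C V_{n-1} r$, while ensuring the orbits in $\mathcal S$ and in $\mathcal S_N$ agree, is the main obstacle, and it is precisely here that the condition $r<l_n/4$ is consumed.

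Finally I would feed into Proposition~\ref{function} the test data $\psi_n\equiv 1$ on $\Gamma^{(N)}$ (so $\mathcal Q_{\mathbf\Gamma^n,\mu}(\psi_n)=0$) and a $(J_n,B_n)$‑admissible ``tent'' function $F_n$ (in the sense of Definition~\ref{adm-F}) supported on the translates of a fixed thickened interval $A_n\subset B_n$, with $F_n$ decreasing linearly from $1$ to $0$ as the interval is translated a distance $r$ along $G_n$; a single base generator moves $A_n$ by one step, so $\mathcal Q_{\mathcal P_f(\mathcal S),\mu}(F_n)\le C/r^2$ and hence $Q_\mu(n)\le C/r^2$. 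Proposition~\ref{function} then gives $\Lambda_{2,\mathbb Z\wr_{\mathcal S}\Gamma}(v)\le C/r^2$ for all $v\ge |\bar\Omega(J_n,B_n)|\cdot Q_\mu(n)^{-|B_n|}$, and, estimating $Q_\mu(n)^{-|B_n|}\le (Cr^2)^{CV_{n-1}r}$ and $|\bar\Omega(J_n,B_n)|\le (CV_{n-1}r)!$ exactly as in the proof of Theorem~\ref{bubblebound}, one bounds this threshold by $\exp\!\left(CV_{n-1}\,r\log r\right)$, which is the assertion.
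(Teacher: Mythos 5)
Your approach is genuinely different from the paper's, and unfortunately it has a quantitative gap that makes it fall short of the claimed threshold. The paper does \emph{not} prove Proposition~\ref{pro-NS3} through the $(\Omega)$-framework of Proposition~\ref{function} at all. Instead it exploits Remark~\ref{enlarge}: using the wreath recursion at level $n-1$, it embeds $\Gamma$ into $\widetilde\Gamma=\Gamma_n\wr_{\mathcal S_{n-1}}\pi_{n-1}(\Gamma)$, builds a single explicit one-parameter test function $\phi_r$ on $\Gamma_n$ whose Rayleigh quotient is $\lesssim r^{-2}$ and whose support has size $(2r^2)^{2r}(2r-1)$, takes the product $\Phi(g)=\prod_{x\in\mathcal S_{n-1}}\phi_r(g_x)$, and reads off $|\mathrm{supp}(\Phi)|=\bigl[(2r^2)^{2r}(2r-1)\bigr]^{V_{n-1}}|\pi_{n-1}(\Gamma)|\le\exp(CV_{n-1}r\log r)$, then adds the lamp factor supported on $U_n(r)$. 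The point is that this product structure keeps the support under explicit control without ever invoking a permutation-group counting argument.

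The concrete gap in your proposal is the bound on $|\overline\Omega(J_n,B_n)|$. You import the factorial bound from Theorem~\ref{bubblebound}, $|\overline\Omega(J_n,B_n)|\le(CV_{n-1}r)!$, and then claim the threshold is $\exp(CV_{n-1}r\log r)$. But Stirling gives $(CV_{n-1}r)!=\exp\bigl(CV_{n-1}r\,[\log V_{n-1}+\log r+O(1)]\bigr)$, which exceeds $\exp(CV_{n-1}r\log r)$ by a factor $\exp(CV_{n-1}r\log V_{n-1})$. In the bubble groups this loss is harmless because the Schreier graph has polynomial growth at the relevant scale ($\log|B_r|\asymp\log r$). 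Here it is fatal: since $r<l_n/4$ while $V_{n-1}=l_1\cdots l_{n-1}$, one typically has $\log V_{n-1}\gg\log l_n>\log r$ (e.g. for $l_n=2^{1+\lfloor n^\gamma\rfloor}$, $\gamma>1$, one has $\log V_{n-1}\asymp n^{1+\gamma}$ while $\log r\le\kappa(n)\asymp n^\gamma$). So your argument, even granting every geometric lemma you sketch, only yields the strictly weaker threshold $\exp(CV_{n-1}r\log(V_{n-1}r))$, which is not the statement of Proposition~\ref{pro-NS3} and is not strong enough for Theorem~\ref{th-NSmain} or Corollary~\ref{cor-NSmain}. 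Taking $\psi_n\equiv1$ is precisely what loses the game: the only way Proposition~\ref{function} can beat the factorial is if $\psi_n$ has a \emph{small} finite support that cuts down $|\vartheta_n(\Omega)\cap\mathrm{supp}(\psi_n)h_0^{-1}|$, which is how the paper's proof of the weaker Proposition~\ref{pro-NS2} proceeds; but achieving the sharper $1/r^2$ together with the $\exp(CV_{n-1}r\log r)$ threshold required abandoning the $(\Omega)$-framework. As a secondary point, your assertion that a single base generator moves $A_n$ one step ``along $G_n$'' needs care: unlike the bubble group case, where $\beta$ acts as a self-loop on most of each bubble, here both $\alpha$ and $\beta$ move every vertex, so the orbit of $A_n$ under $\overline\Omega$ is not one-dimensional and the claimed Rayleigh quotient $\lesssim r^{-2}$ for a tent function indexed by $G_n$-displacement is not justified by the argument as written.
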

\begin{remark} Proposition \ref{pro-NS3} gives a better result than 
Proposition \ref{pro-NS2} when $l_n\gg \sqrt{R_n}$ 
(by inspection, we always have $R_n\ge l_n/4$). 
\end{remark}
\begin{example}
Consider the case when $l_n=2^{1+ \lfloor n ^\gamma\rfloor}$ with 
$\gamma>0$.  If $\gamma\in (0,1)$, Proposition \ref{pro-NS2} gives 
a better than Proposition \ref{pro-NS3} whereas, for $\gamma\ge 1$, 
Proposition \ref{pro-NS3} yields a  
much  better result. Namely, using $r=l_n/4$,
 Proposition \ref{pro-NS3} gives
\[
\Lambda_{2,\mathbb Z\wr \Gamma}(v)\le \frac{C}{2^{2 n^{\max\{1,\gamma\}}}} \;\;
\mbox{for all } v\ge C \exp\left( 2^{[(1-\epsilon)(1+\gamma)]^{-1}
n^{\gamma+1}}\right).
\]
That is,
\[
\Lambda_{2,\mathbb Z\wr \Gamma}(v)\le 2^{- 
2(1-\epsilon)[(1+\gamma)\log_2\log v ]^{\max\{1,\gamma\}
/(1+\gamma)}}.
\]
\end{example}
The next theorem applies to $\Gamma$ as well as to $\mathbb Z\wr_\mathcal S \Gamma$.
\begin{theorem} \label{theo-NSgamma}
For $l_n=2^{1+\lfloor n^\gamma\rfloor}$, $n\ge 1$, $\gamma\ge 1$,
we have, for any $\epsilon>0$ and constants $c,C$ that depends 
only on $\epsilon$,
$$   c 2^{- 
(1+\epsilon)
[(1+\gamma)\log_2\log v ]^{\gamma/(1+\gamma)}}\le 
\Lambda_{1,\Gamma}(v)  \le  C 2^{- (1-\epsilon)
[(1+\gamma)\log_2\log v ]^{\gamma/(1+\gamma)}}, $$
$$   c 2^{- 2(1+\epsilon)
[(1+\gamma)\log_2\log v ]^{\gamma/(1+\gamma)}}\le 
\Lambda_{2,\Gamma}(v)  \le  C 2^{- 2
(1-\epsilon)
[(1+\gamma)\log_2\log v ]^{\gamma/(1+\gamma)}}, $$
and
$$\frac{cn}{ 2^{ 2(1-\epsilon)
[(1+\gamma)\log_2 n ]^{\gamma/(1+\gamma)}}} \le 
-\log \Phi_{\Gamma}(n) \le  \frac{Cn}{ 2^{2(1+\epsilon)
[(1+\gamma)\log_2 n ]^{\gamma/(1+\gamma)}}}, $$ 
as well as the same estimates for $\Lambda_{1,\mathbb Z\wr_{\mathcal S}\Gamma}$,
$\Lambda_{2,\mathbb Z\wr_{\mathcal S}\Gamma}$ and $\Phi_{\mathbb Z\wr_{\mathcal S }\Gamma}$.
\end{theorem}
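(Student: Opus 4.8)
The plan is to collect the matching two-sided bounds for $\Gamma$ and for $\mathbb{Z}\wr_{\mathcal S}\Gamma$ that have already been assembled in the preceding propositions and corollaries, specialized to the sequence $l_n = 2^{1+\lfloor n^\gamma\rfloor}$, and then to run the standard translation between $L^1$/$L^2$ isoperimetric profiles and return probability. The key numerical inputs for this choice of $(l_n)$, which I would record first, are $V_n = l_1\cdots l_n$ with $\log_2 V_n \asymp (1+\gamma)^{-1} n^{1+\gamma}$, and $R_n \asymp l_n \asymp 2^{n^\gamma}$ when $\gamma\ge 1$ (here the largest term $l_n$ dominates the sum $R_n=\sum_1^n 2^{n-j-2}l_j$ precisely because $\gamma\ge 1$ forces $l_n/l_{n-1}\to\infty$ fast enough). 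Inverting $\log_2\log v \asymp \log_2 V_n \asymp (1+\gamma)^{-1} n^{1+\gamma}$ gives $n \asymp \big[(1+\gamma)\log_2\log v\big]^{1/(1+\gamma)}$, hence $2^{n^\gamma} \asymp 2^{[(1+\gamma)\log_2\log v]^{\gamma/(1+\gamma)}}$, which is the quantity appearing throughout the statement.

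First I would treat $\Gamma$ itself. For the lower bound on $\Lambda_{1,\Gamma}$, I invoke Proposition \ref{pro-RL} with $M_n = l_n$ (a valid common multiple of $\{l_2,\dots,l_n\}$ since $l_{n-1}\mid l_n$ is false in general — actually one takes $M_n=\mathrm{lcm}(l_2,\dots,l_n)$, but for this sequence $l_{n-1}\le l_n$ and $\log_2 M_n \asymp n^\gamma$ since each $l_j$ is a power of $2$, so $M_n = l_n = 2^{1+\lfloor n^\gamma\rfloor}$ works), and with $r = \mathrm{Diam}(\mathcal S_{n-1}) + l_n/4 \asymp 2^{n^{\max\{1,\gamma\}}} = 2^{n^\gamma}$ for $\gamma\ge 1$, so that $W_n(r) = W_n$ and $|W_n| \asymp V_n$. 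This yields $\Lambda_{1,\Gamma}(v)\ge 1/(C\max\{M_n,r\}) = 1/(C\,2^{n^\gamma})$ for $v\le C^{-1}2^{V_n/C}$, which after the inversion above gives the claimed lower bound on $\Lambda_{1,\Gamma}$ up to the $(1\pm\epsilon)$ slack (the $\epsilon$ absorbs the constants $C$ and the floor functions). For the upper bound on $\Lambda_{2,\Gamma}$: here I would note that Proposition \ref{pro-NS3} is stated for $\mathbb Z\wr_{\mathcal S}\Gamma$, but the construction in Section \ref{sec-UB} with $\psi_r\equiv 1$ produces a test function whose support involves only the $F_r$-part, and one checks that the same test function construction applied with $H$ trivial (i.e. directly on $\Gamma$, using the rigid-stabilizer product structure of Proposition \ref{rigidlamp-1}/\ref{pro-RL} together with the resistance-based test functions of Lemma \ref{lem-ResA}) gives $\Lambda_{2,\Gamma}(v)\le C/r^2$ on the same range $v\ge \exp(CV_{n-1}r\log r)$; taking $r=l_n/4\asymp 2^{n^\gamma}$ and inverting gives the stated upper bound on $\Lambda_{2,\Gamma}$, with the factor $2$ in the exponent coming from squaring $1/r$. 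The remaining bounds for $\Gamma$ then follow from the Cheeger-type inequality \eqref{Cheeger}: $\Lambda_{1,\Gamma}^2/2 \le \Lambda_{2,\Gamma} \le \Lambda_{1,\Gamma}$ upgrades the $\Lambda_1$ lower bound to a $\Lambda_2$ lower bound and the $\Lambda_2$ upper bound to a $\Lambda_1$ upper bound, so that all four one-sided bounds on $\Lambda_{1,\Gamma},\Lambda_{2,\Gamma}$ hold (with the exponent $2$ appearing for $\Lambda_2$ exactly as stated).

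For $\mathbb Z\wr_{\mathcal S}\Gamma$ the argument is parallel but uses the results tailored to the wreath product: the $\Lambda_1$ lower bound comes from Corollary \ref{cor-VD} (the volume–diameter bound) exactly as worked out in Example \ref{exa-wreathNS}, giving $\Lambda_{1,\mathbb Z\wr_{\mathcal S}\Gamma}(v)\ge 1/(Cr)$ with $r\asymp 2^{n^\gamma}$ on the range $v\le C^{-1}\exp(2^{[(1+\epsilon)(1+\gamma)]^{-1}n^{\gamma+1}})$, and the $\Lambda_2$ upper bound comes from Proposition \ref{pro-NS3} directly, with $r=l_n/4$. Again \eqref{Cheeger} fills in the other two sides. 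Finally, the return-probability estimates: one uses the standard equivalence (Coulhon–Grigor'yan–Pittet–Saloff-Coste, cited in the excerpt as \cite{CNash}, \cite[Section 2.1]{Saloff-Coste2014}, \cite{PSCstab}) relating $\Lambda_{2,G}$ and $\Phi_G$, namely $-\log\Phi_G(n) \asymp n/\Lambda_{2,G}^{-1}(\cdot)$ via the formula $\Phi_G(n)\asymp \exp(-\gamma(n))$ where $\gamma$ solves $\int_{}^{\gamma(n)} ds/(s\Lambda_{2,G}(s)) \asymp n$ (or equivalently, since $\Lambda_{2,G}$ here is of the form $1/\omega(\log v)^2$ with $\omega$ slowly varying, $-\log\Phi_G(n) \asymp n\,\Lambda_{2,G}(e^n) = n/\omega(n)^2$). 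Plugging in $\Lambda_{2,G}(v)\asymp 2^{-2[(1+\gamma)\log_2\log v]^{\gamma/(1+\gamma)}}$ gives $-\log\Phi_G(n)\asymp n\,2^{-2[(1+\gamma)\log_2 n]^{\gamma/(1+\gamma)}}$, which is exactly the claimed two-sided bound on $-\log\Phi_\Gamma(n)$ (and on $-\log\Phi_{\mathbb Z\wr_{\mathcal S}\Gamma}(n)$). I expect the main obstacle to be the bound for $\Gamma$ \emph{itself} rather than its wreath extension: the upper bound on $\Lambda_{2,\Gamma}$ requires checking that the test-function machinery of Section \ref{sec-UB}, which was set up for $\mathbb Z\wr_{\mathcal S}\Gamma$, really does descend to $\Gamma$ — concretely, that the rigid-stabilizer elements $g^v\varrho_n(g^v)^{-1}$ furnish enough "lamp-like" directions inside $\Gamma$ to run the same Rayleigh-quotient estimate, with $R_n = \mathrm{lcm}(l_2,\dots,l_n) \asymp l_n$ controlling the energy cost of each such generator; verifying that the relevant $\mathbf\Gamma^n$-quotients (finite Neumann–Segal groups $\Gamma^{n+1}$) satisfy Hypothesis $(\Omega)$ in this context, and that the resulting support bound $\exp(CV_{n-1}r\log r)$ is the right one, is where the real work lies. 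The rest is bookkeeping with slowly varying functions.
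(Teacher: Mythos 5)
Your overall outline is correct and matches the paper's (implicit) proof: the numerical bookkeeping $\log_2 V_n \asymp (1+\gamma)^{-1} n^{1+\gamma}$, $l_n\asymp 2^{n^\gamma}$, and the inversion $2^{n^\gamma}\asymp 2^{[(1+\gamma)\log_2\log v]^{\gamma/(1+\gamma)}}$ are exactly right, the lower bound for $\Lambda_{1,\Gamma}$ via Proposition~\ref{pro-RL} with $M_n=l_n$ and $r\asymp l_n$ is the intended one, and the use of~(\ref{Cheeger}) and the standard $\Lambda_{2}\!\leftrightarrow\!\Phi$ translation to fill in the remaining sides is what the paper has in mind. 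For $\mathbb Z\wr_{\mathcal S}\Gamma$ your citation of Corollary~\ref{cor-VD} (lower) and Proposition~\ref{pro-NS3} (upper) is also correct, and this is essentially a specialization of Theorem~\ref{th-NSmain} / Corollary~\ref{cor-NSmain}, as Remark~\ref{rem-exaNS} points out.

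The one substantive gap is in how you source the upper bound on $\Lambda_{2,\Gamma}$ for $\Gamma$ itself. You propose re-deriving it by somehow ``descending'' the $\mathbb Z\wr_{\mathcal S}\Gamma$ machinery of Section~\ref{sec-UB} to $\Gamma$, via ``the rigid-stabilizer product structure of Proposition~\ref{rigidlamp-1}/\ref{pro-RL} together with the resistance-based test functions of Lemma~\ref{lem-ResA}.'' This is not a valid route: Propositions~\ref{rigidlamp-1} and~\ref{pro-RL} produce \emph{lower} bounds on $\Lambda_1$ (not upper bounds on $\Lambda_2$), and Lemma~\ref{lem-ResA} supplies admissible test functions on $\mathcal P_f(\mathcal S)$ that are fed into Proposition~\ref{function}, which is specifically an estimate for $\mathbb Z\wr_{\mathcal S}\Gamma$ — none of these pieces, as stated, gives an upper bound on $\Lambda_{2,\Gamma}$. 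What you missed is that the paper's proof of Proposition~\ref{pro-NS3} \emph{already} establishes $\Lambda_{2,\Gamma}(v)\le C/r^2$ for $v\ge\exp(CV_{n-1}r\log r)$ as its first step, before turning to $\mathbb Z\wr_{\mathcal S}\Gamma$. The mechanism there is different from Section~\ref{sec-UB}: one embeds $\Gamma$ into $\widetilde\Gamma=\Gamma_n\wr_{\mathcal S_{n-1}}\pi_{n-1}(\Gamma)$ via the wreath recursion at level $n-1$, builds a product test function $\Phi=\prod_{\mathcal S_{n-1}}\phi_r(g_x)$ using a directly constructed function $\phi_r$ on $\Gamma_n$ (supported on configurations $(g_x)_{X_n}\alpha_n^t$ with $g_x$ a bounded power of $\alpha_{n+1}$ or $\beta_{n+1}$ near the pivot points), and uses Remark~\ref{enlarge} to transfer the bound back to $\Gamma$. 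No invocation of Hypothesis~$(\Omega)$, no admissible function on $\mathcal P_f(\mathcal S)$. Once you replace your sketch with a citation to that first half of the proof of Proposition~\ref{pro-NS3}, your argument is complete. A secondary remark: the lower bound from Proposition~\ref{pro-RL} gives $\Lambda_{1,\Gamma}(v)\ge c/l_n$ only for $\log v\in[V_{n-1}l_n,V_nl_n]$; on the complementary interval $[V_{n-1}l_{n-1},V_{n-1}l_n]$ one extends by monotonicity, and it is the hypothesis $\gamma\ge 1$ (so that $\log_2(l_n/l_{n-1})\asymp n^{\gamma-1}=o(n^\gamma)$) that makes this loss absorbable by the $(1\pm\epsilon)$ slack — this is precisely the point of Remark~\ref{rem-exaNS} and deserves an explicit sentence.
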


\begin{proof}[Proof of Proposition \ref{pro-NS2}]
First, we explain how we arrange for assumption $(\Omega)$ to be satisfied.
For each $n$, consider the copy of $\mathcal S_{n}$ in $\mathcal S$ 
which is anchored at $0^\infty$. In $\mathcal S$, let $B_n$ (resp. 
$\overline{B}_n$) be the set of 
those points that are (strictly) closer to $0^\infty$ than 
to $0^{n-1}(l_{n}/2)0^\infty$ (resp. closer to $0^{n-1}(l_{n}/2)0^\infty$ than 
to $0^\infty$). Set $J_n=\{0^\infty\}$. Property 1 in Definition 
\ref{def-Omega} is obviously satisfied. 

For each $n$, let $\pi_n(\Gamma)$ be the projection of $\Gamma$ at level $n$, 
that is the group defined naturally by the marked Schreier graph 
$\mathcal S_{n}$.  Consider the abelian group $<a_{n+1}>\times 
<b_{n+1}>$ with generators $a_{n+1},b_{n+1}$  where $a_{n+1}$ has order 
$l_{n+1}$ and $b_{n+1}$ has the same order as $\beta_{n+1}$ 
(possibly, infinity).  Set
$$\Gamma^{n+1}= [<a_{n+1}>\times <b_{n+1}>]\wr_{\mathcal S_{n}} 
\pi_n(\Gamma).$$

Let $\omega$ be a reduced word in $s_1=\alpha$, $s_2=\beta$ (an element in 
$\mathbf F= <\alpha> * <\beta>$) which belong to $\Omega(J_n,B_n)$. 
By construction, its projection $g\in \Gamma$ has the form 
$(g_x)_{x\in \mathbb T^{n}}\sigma $ where $\sigma=\pi_n(g)\in \pi_n(\Gamma)$.
Further, for all $x\in B_n$, $g_x$ is a power of $\beta_{n+1}$, for all 
$x\in \overline{B}_n$, $g_x$ is a power of $\alpha_{n+1}$ and for all 
$x\notin B_n\cup \overline{B}_n$, $g_x=1$.
For $\omega \in \Omega(J_n,B_n)$, set
$$\vartheta_n(\omega)=  ((\tilde{g}_x)_{x\in \mathcal S_{n}},\sigma)
\mbox{ with } \tilde{g}_x=\left\{\begin{array}{cc} b_{n+1}^q &\mbox{ if } 
x\in B_n \mbox{ and } g_x=\beta_{n+1}^q\\
a_{n+1}^q &\mbox{ if } 
x\in B_n \mbox{ and } g_x=\alpha_{n+1}^q.\end{array}\right.
 $$
By inspection and the definition of $\Omega(J_n,B_n)$, if 
$\omega_1,\omega_2$ and $\omega_1\omega_2$ are all in $\Omega(J_n,B_n)$,
we have
$\vartheta_n(\omega_1,\omega_2)=\vartheta_n(\omega_1)\vartheta_n(\omega)2)$ 
as desired. 

Having verified that assumption $(\Omega)$ holds, it remains to construct 
test functions $F_n, \psi_n$ as in Section \ref{sec-Test}.

Set
$$\psi_n(((\tilde{g}_{x})_{x\in \mathcal S_{n}},\sigma))= 
\prod_{x\in \mathcal S_{n}}\mathbf 1_{[-r,r]^2}(\tilde{g}_x).$$
Here $r$ is a parameter to be specified later and 
$[-r,r]^2$ is understood as the set 
$$ \{a_{n+1}^{-r},\dots,a_{n+1}^r\}\times 
\{b_{n+1}^{-r},\dots,b_{n+1}^{r}\}$$
in $<a_{n+1}>\times <b_{n+1}>$. Obviously,
$$Q_{\Gamma^n,\mu}(\psi_n)\lesssim r^{-2}.$$

The needed $(J_n,B_n)$-admissible function $F_n$ on finite subsets of 
$\mathcal S$ is provided by Lemma \ref{lem-ResA} and we 
have
$$Q_{\mathcal P_f(\mathcal S),\mu}(F_n)\lesssim 
\frac{1}{\mathcal R(J_n, B_n^c)}.$$
Here, we have $\mathcal R(J_n,\mathcal S_{n}^c)=
\sum_1^{n}2^{n-j}(l_j/4)$ and
$$\mathcal R(J_n, B_n^c)
=\mathcal R(J_n,\mathcal S_{n-1}^c)+\lfloor l_{n}/8\rfloor \asymp 
\mathcal R (J_n,\mathcal S_{n}^c).$$
Picking $r\asymp \sqrt{\mathcal R(J_n,\mathcal S^c_{n})}$ and 
applying Proposition \ref{function} 
\end{proof}

\begin{proof}[Proof of Proposition \ref{pro-NS3}]
To prove this proposition, we will estimate $\Lambda_{2,\Gamma, \mu}$
and later $\Lambda_{2,\mathbb Z\wr _{\mathcal S}\Gamma, \mu}$ using the random 
walk on the right driven by $\mu$. Here $\mu$ is the uniform probability on 
$\{\alpha^{\pm 1},\beta^{\pm 1}\}$.

We first explain how to prove the bound
$$\Lambda_{2,\Gamma,\mu}(v)\le \frac {C}{r^2}
\mbox{ for any }\;\;v\ge \exp\left( C V_{n-1} r\log r\right)  $$
where $r\in (0,l_n/4)$ and $V_{n-1}=l_1\dots l_{n-1}$ (the statement of 
Proposition \ref{pro-NS3} is for $\mathbb Z\wr_{\mathcal S} \Gamma$ instead of 
$\Gamma$ itself).
 
We embed $\Gamma$ in 
a larger group and use remark \ref{enlarge}.  
Using the wreath recursion 
to level $n-1$ in $\mbox{Aut}(\mathcal T)$, we embed $\Gamma$ in
$$\widetilde{\Gamma}=  \Gamma_n\wr_{\mathcal S_{n-1}} \pi_{n-1}(\Gamma).$$
Recall that $\Gamma_n=<\alpha_n,\beta_n>$ and $\pi_{n-1}(\Gamma)$ 
is the finite group corresponding to the projection of $\Gamma$ 
acting on the finite subtree up to level $n-1$.
We construct a test function $\Phi$ on $\widetilde{\Gamma}$. 

Namely, any element of 
$g\in \widetilde{\Gamma}$ can be written as
$$ g=(g_x)_{x\in \mathcal S_{n-1}}\sigma $$
with $g_x\in \Gamma_{n}$ and $\sigma\in \pi_{n-1}(\Gamma)$.
Pick a test function $\phi_r$ defined on $\Gamma_{n}$ and set
$$\Phi(g)= \prod_{\mathcal S_{n-1}}\phi_r ( g_x).$$
We have
$$ g \alpha  =(g_{x})_{x\in \mathcal S_{n-1}}\pi_{n-1} \sigma (\alpha)  $$
and
$$ g\beta =(g'_{x})_{x\in \mathcal S_{n-1}}\sigma \pi_{n-1}(\beta ) $$
with $g'_x=g_{x}$ except for $z_1=\sigma\cdot 0^{n-1}, z_2=\sigma \cdot 
0^{n-2}y_{n-1}$ where
 $g'_{z_1}= g_{z_1}\beta_n$ and $g'_{z_2}
=g_{z_2}\alpha_n $. If $\mu$ is the uniform measure on 
$\{\alpha^{\pm 1},\beta^{\pm 1}\}$ and $\mu_n$ is the uniform measure on 
$\{\alpha_n^{\pm 1},\beta_n^{\pm 1}\}$, this yields
$$\frac{\mathcal E^r_{\widetilde{\Gamma},\mu}(\Phi,\Phi)}{\|\Phi\|^2_{L^2(\widetilde{\Gamma})}}
\le 4 \frac{\mathcal E^r_{\Gamma_n,\mu_n}(\phi_r,\phi_r)}{\|\phi_r
\|^2_{L^2(\Gamma_n)}}.$$

Recall that our convention is that $\mathcal E^r$ is the Dirichlet form for 
the random walk on the right (The $r$ in $\mathcal E^r$ has nothing 
to do with the real parameter $r$). 

We know little about $\Gamma_n$ whose structure is similar to that of $\Gamma$
but we have
$$\Gamma_n \subset \Gamma_{n+1}\wr_{X_n} G_n.$$
where $G_n$ is the cyclic group of order $l_n$ and 
$X_n=\{0,\dots, l_{n}-1\}$ and every element in $\Gamma_n$ is of the form
$$g=(g_x)_{x\in X_n} \alpha_{n}^t, \;\;g_x\in \Gamma_{n+1},\; 
t\in \{-l_{n}+1,\dots, l_n-1\}$$
where $\alpha_n$ is understood as either $\alpha_n$ itself or as the 
corresponding element in $G_n$. 

Pick a parameter $r\in (0, l_{n}/4)$ and set $\psi_r(m)=(1- |m|/r)_+$.  
In $\prod_{x\in X_n}(\Gamma_{n+1})_x$, consider the set $\Sigma_r$ 
parametrized by
$$k_x\in \{-r^2,r^2\}, \;\;x\in \{0, r\}\cup \{l_n-r-1,\dots, l_n-1\}.$$
of 
those elements of the form  
$$g_x= \left\{\begin{array}{cl}\beta_{n+1}^{k_x} & \mbox{ for } 
x\in \{0, r\}\cup \{l_n-r-1,\dots, l_n-1\},\\
\alpha_{n+1}^{k_{x-l_n/2}} & \mbox{ for } 
x\in \{l_n/2, l_{n}/2+r\},\\
\alpha_{n+1}^{k_{x+l_n/2}} & \mbox{ for } 
x\in \{l_n/2-r, l_{n}/2-1\},\\
e_{\Gamma_{n+1}}& \mbox{ otherwise}\end{array}\right.
$$
Set 
$$\phi_r(g) = \mathbf 1_{\Sigma_r}((g_x)_{X_n})\psi_r(t),\;\;g=(g_x)_{X_n} \alpha_{n}^t. $$
Observe that for $g$ in the support of $\phi_r$,
$$g\alpha^{\pm 1}_n= (g_x)_{X_n} \alpha_{n}^{t\pm 1}$$
and
$$g\beta_{n}^{\pm 1}= (g'_x)\alpha_{n}^t$$
where $g'_x=g_x$ except at $z_1=\alpha_n^t(0)$ and $z_2=\alpha_n^t(l_n/2)$.
At this two locations, $g'_{z_1}= \beta_{n+1}^{k_{z_1}\pm 1}$ and
$g_{z_2}=\alpha_{n+1}^{k_{z_1}\pm 1}$. This implies that 
$$\mathcal E^r_{\Gamma_n,\mu_n}(\phi_r,\phi_r)
\le  C r^{-2} \|\phi_r\|^2_{L^2(\Gamma_{n})}.$$
Moreover, when $r$ is an integer,
$$|\mbox{supp}(\phi_r)|= (2r^2)^{2r}(2r-1).$$
Returning to the test function $\Phi$ on 
$\widetilde{\Gamma}=\Gamma_n\wr_{\mathcal S_{n-1}}\pi_{n-1}(\Gamma)$, we have
$$\frac{\mathcal E_{\widetilde{\Gamma},\mu}(\Phi,\Phi)}
{\|\Phi\|^2_{L^2(\widetilde{\Gamma})}}
\le Cr^{-2}$$
and
\begin{eqnarray*}
|\mbox{supp}(\Phi)| &=& [(2r^2)^{2r}(2r-1)]^{V_{n-1}} |\pi_n(\Gamma)|\\
&=& [(2r^2)^{2r}(2r-1)]^{V_{n-1}} \prod_1^{n-1}(l_j)^
{l_1\dots l_{j-1}} \\
&\le &  \exp\left( C V_{n-1} r\log r\right)
\end{eqnarray*}
for $r\in (0,l_n/4)$. 
This yields
$$\Lambda_{2,\Gamma}(v)\le  \frac{C}{r^2}  \mbox{ for all } v\ge 
\exp \left( C V_{n-1} r\log r\right) \mbox{ and } r\in (0,l_n/4).$$

Next, we prove the similar result on $\mathbb Z\wr_{\mathcal S}\Gamma$.
We observe that the test function $\Phi$ on $\widetilde{\Gamma}\supset \Gamma$
produces a test function $\Psi$ on $\Gamma$ itself with
$$\frac{\mathcal E^r_{\Gamma,\mu}(\Psi,\Psi)}{\|\Psi\|_{L^{2}(\Gamma)}^2}\le \frac{\mathcal E^r_{\widetilde{\Gamma},\mu}(\Phi,\Phi)}{\|\Phi
\|_{L^{2}(\widetilde{\Gamma})}^2}\le \frac{C}{r^2}$$
and, using the wreath recursion to level $n-1$, any $g\in\Gamma $ 
is of the form $(g_x)_{\mathcal \mathcal S_{ n-1}}\sigma$ with $g_x\in \Gamma_n$
and $\sigma\in \pi_{n-1}(\Gamma)$ and
 $$\mbox{supp}(\Psi) \subset \{g=(g_x)_{\mathcal S_{n-1}} \sigma: 
\phi_r (g_x)\neq 0, x\in \mathcal S_{n-1}\}$$    
For any $g\in \mbox{supp}(\Psi)$, we have $g\cdot 0^\infty\in U_n(r)$
where $U_n(r)$ is described (see Figure \ref{NS2})
using the tree indexing of the points in 
$\mathcal S$ as 
$$U_n(r)=\{ x_1\dots x_{n-1}y0^\infty: x_i\in X_i,  i\le n-1,y\in \{0,\dots, r\}\cup \{ l_n-r,\dots, l_n-1\}\}.$$

\begin{figure}[h]
\begin{center}\caption{Sketch of the set $U_n(r)$ (in black) with the root $o$
marked in red. Each little circle represents a copy of the graph $\mathcal S_{n-1}$. }\label{NS2}
\begin{picture}(100,80)(0,40)

{\color{red} \put(49.5,75){\circle*{2}}}

\put(75,98){\circle{5}}
\put(75,52){\circle{5}}
\put(98,75){\circle{5}}
\put(97,81){\circle{5}}
\put(97,68.5){\circle{5}}
\put(94.5,87){\circle{5}}
\put(94.5,63){\circle{5}}
\put(90.5,91.5){\circle{5}}
\put(90.5,58.5){\circle{5}}
\put(86.5,94.5){\circle{5}}
\put(86.5,55.5){\circle{5}}
\put(81,97){\circle{5}}
\put(81,53){\circle{5}}

\put(52.3,75){\circle*{5}}
\put(53,80.8){\circle*{5}}
\put(55,86){\circle*{5}}
\put(58.5,91){\circle*{5}}

\put(53,69.2){\circle*{5}}
\put(55,64){\circle*{5}}
\put(58.5,59){\circle*{5}}

\put(63,94.5){\circle{5}}
\put(68.5,97){\circle{5}}

\put(63,55.5){\circle{5}}
\put(68.5,53){\circle{5}}

\put(75,75){\circle{40}}

\end{picture}\end{center}\end{figure}
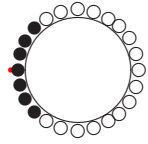

On $\mathbb Z\wr_{\mathcal S} \Gamma$, consider the test function
$$\Phi'((f,g))= \prod_{x\in U_n(r)}\mathbf 1_{[-r^2,r^2]}(f(x)) \Psi (g).$$
On $\mathbb Z\wr_{\mathcal S}\Gamma$ and for $(f,g)\in \mbox{supp}(\Phi')$,
we have
$$ (f,g) (\mathbf 1^{0^\infty}_{\pm 1}
,e_\Gamma)=   (f \mathbf 1^{g\cdot 0^\infty}_{\pm 1}   , g)$$
and
$$ (f,g) (\mathbf e^{\mathcal S}_{\mathbb Z},\gamma)
=   (f, g\gamma ), \gamma\in \Gamma.$$
This gives 
$$\mathcal E^r_{\mathbb Z\wr_{\mathcal S}\Gamma,\mathfrak q}(\Phi',\Phi')\le \frac{C}{r^2}$$
where $\mathfrak q =\frac{1}{2}(\nu+\mu)$ and $\nu$ is the uniform measure
on $\pm 1$. Finally,
$$|\mbox{supp}(\Phi')|=  (1+2r^2)^{|U_N(r)|}|\mbox{supp}(\Psi)| 
\le \exp\left( C V_{n-1} r\log r\right),$$
with $0\le r\le l_n/4$
The desired bound follows.
\end{proof}

\subsection{Concrete examples of the type $\mathbb Z\wr_{\mathcal S}\Gamma$}
This section is devoted to spelling out examples that illustrate our results 
in the case $G=\mathbb Z\wr_{\mathcal S}\Gamma$ when $\Gamma$ belongs to 
the family of a cyclic 
Neumann-Segal groups associated with sequences $(l_n)_1^\infty$ of 
even integers that are growing fast enough.  

\begin{theorem} \label{th-NSmain} Assume that $l_n=2^{\kappa(n)}$ with 
$\kappa(n+1)\ge \kappa(n)+1$.  
Then, for $r\in [l_{n-1},l_n]$, we have
$$\Lambda_{1,\mathbb Z\wr _{\mathcal S}\Gamma}(v)\le \frac{c}{r} \mbox{ for }
 v \le \exp(c V_{n-1} r \log r),$$ 
and
$$\Lambda_{2,\mathbb Z\wr _{\mathcal S}\Gamma}
(v)\le \frac{C}{r^2} \mbox{ for }
 v \ge \exp(C V_{n-1} r \log r).$$ 
Equivalently,
$$\Lambda^2_{1,\mathbb Z\wr_{\mathcal S}\Gamma}(v)\asymp
\Lambda_{2,\mathbb Z\wr_{\mathcal S}\Gamma}(v)\asymp \left\{\begin{array}{cl}
r^{-2} &\mbox{ for } \log v = V_{n-1}r\log r,\;\; r \in 
[l_{n-1},l_{n}]\\
l_n^{-2} & \mbox{ for } \log v\in [V_{n-1} l_n\log l_n, V_{n} l_n\log l_n].
\end{array}\right.$$
\end{theorem}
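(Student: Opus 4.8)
The plan is to obtain the two matching one-sided estimates from results already in hand and then glue them with the universal inequality $\frac{1}{2}\Lambda_{1}^2\le\Lambda_{2}\le\Lambda_{1}$. Following the strategy recalled in the introduction, it is enough to exhibit a single profile $\Lambda$ — the piecewise function displayed in the theorem, written in the parameter $r$ — such that $\Lambda\lesssim\Lambda_{1,\mathbb Z\wr_{\mathcal S}\Gamma}$ and $\Lambda_{2,\mathbb Z\wr_{\mathcal S}\Gamma}\lesssim\Lambda^2$; since $\Lambda_1^2\lesssim\Lambda_2$ always holds, this forces $\Lambda_{1}^2\simeq\Lambda_{2}\simeq\Lambda^2$, and unwinding the two inequalities in the variable $r$ gives both displayed lines of the statement.

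For the upper bound on $\Lambda_2$ I would simply feed Proposition \ref{pro-NS3} with $r$ slightly below $l_n/4$: it gives $\Lambda_{2,\mathbb Z\wr_{\mathcal S}\Gamma}(v)\le C/r^2$ as soon as $\log v\ge CV_{n-1}r\log r$, and since $l_n=2^{\kappa(n)}$ this already covers, up to fixed multiplicative constants, the range $r\in[l_{n-1},l_n]$ wanted here (replacing $r<l_n/4$ by $r\le l_n$ costs only a bounded factor in $r$ and in $\log v$). On the plateau ranges $\log v\in[V_{n-1}l_n\log l_n,V_n l_n\log l_n]$ (note $V_{n-1}l_n=V_n$) one uses monotonicity of $\Lambda_2$ together with the value $\lesssim l_n^{-2}$ already available at the left endpoint (take $r\asymp l_n$ in Proposition \ref{pro-NS3}, or use the cruder Proposition \ref{pro-NS2}, whose resistance satisfies $R_n\asymp l_n$ for these sequences).

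The substantive part is the lower bound on $\Lambda_1$. The only tool is Corollary \ref{cor-VD}, $\Lambda_{1,\mathbb Z\wr_{\mathcal S}\Gamma}(v)\ge 1/(Cr)$ for $v\le C^{-1}r^{|B_{\mathcal S}(0^\infty,r)|/C}$, so everything comes down to the Schreier-ball volume estimate $|B_{\mathcal S}(0^\infty,r)|\asymp V_{n-1}r$ for $r$ in a window comparable to $[l_{n-1},l_n]$. I would prove this from the substitution description of $\mathcal S$: the finite graph $\mathcal S_n$ is the cycle $G_n$ of length $\asymp l_n$ bearing $l_n$ disjoint copies of $\mathcal S_{n-1}$, each of size $V_{n-1}$ and diameter $\mathrm{Diam}(\mathcal S_{n-1})$, with $\mathrm{Diam}(\mathcal S_n)=2\,\mathrm{Diam}(\mathcal S_{n-1})+l_n/2$. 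Once $r\gtrsim\mathrm{Diam}(\mathcal S_{n-1})$ the ball $B_{\mathcal S}(o,r)$ contains the copy of $\mathcal S_{n-1}$ anchored at $o$ and the $\asymp r$ further copies attached to $G_n$ within cyclic distance $\asymp r$ (here $r\lesssim l_n$ keeps the ball inside level $n$); a positive fraction of those copies lie entirely in the ball, giving $|B_{\mathcal S}(o,r)|\gtrsim V_{n-1}r$, and the reverse count is immediate. Plugging this in, Corollary \ref{cor-VD} yields $\Lambda_{1,\mathbb Z\wr_{\mathcal S}\Gamma}(v)\ge 1/(Cr)$ whenever $\log v\le c\,V_{n-1}r\log r$; the plateaus are covered by running the same volume estimate one level up at radius $r\asymp l_n$, where $|B_{\mathcal S}(o,Cl_n)|\asymp V_n l_n$, so that $\Lambda_{1,\mathbb Z\wr_{\mathcal S}\Gamma}(v)\gtrsim 1/l_n$ for $\log v\lesssim V_n l_n\log l_n$. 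Combining the two one-sided estimates — adjusting $r$ by a bounded factor so the two constants overlap — gives $\Lambda_1(v)\asymp 1/r$ and $\Lambda_2(v)\asymp 1/r^2$ at $\log v\asymp V_{n-1}r\log r$, which is the asserted formula.

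The main obstacle is this volume estimate and, more precisely, the bookkeeping that the $r$-windows for successive $n$, transported through $\log v=V_{n-1}r\log r$, actually tile $(0,\infty)$ and that each window genuinely extends down to $\asymp l_{n-1}$. The subtle point is that $\kappa(n+1)\ge\kappa(n)+1$ only gives $\mathrm{Diam}(\mathcal S_{n-1})\le (n-1)l_{n-1}/2$, so in full generality the full-copy argument is valid for $r\in[\max\{l_{n-1},C\,\mathrm{Diam}(\mathcal S_{n-1})\},c\,l_n]$ rather than literally $[l_{n-1},l_n]$; for sequences with at least geometric growth one has $\mathrm{Diam}(\mathcal S_{n-1})\asymp l_{n-1}$ and the window is exactly $[l_{n-1},l_n]$, while for the concrete family $l_n=2^{1+\lfloor n^\gamma\rfloor}$, $\gamma\ge1$, of Theorem \ref{theo-NSgamma} one checks the covering by hand — which is also why that theorem carries an $\varepsilon$ in the exponent.
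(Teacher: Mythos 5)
Your approach is exactly the paper's: the proof given there is the single sentence ``This follows from Corollary \ref{cor-VD} and Proposition \ref{pro-NS3},'' and you have correctly filled in what that sentence suppresses — Proposition \ref{pro-NS3} with $r$ just below $l_n/4$ gives the $\Lambda_2$ upper bound (monotonicity covering the plateau), Corollary \ref{cor-VD} plus the Schreier-ball volume count gives the $\Lambda_1$ lower bound, and the Cheeger-type inequality $\tfrac12\Lambda_1^2\le\Lambda_2\le\Lambda_1$ glues the two into the displayed $\asymp$ statement.

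One correction to your final caveat, which is otherwise well placed. You assert that ``for sequences with at least geometric growth one has $\mathrm{Diam}(\mathcal S_{n-1})\asymp l_{n-1}$.'' This fails precisely at the borderline the hypothesis $\kappa(n+1)\ge\kappa(n)+1$ allows: if $l_m=2^m$, every term of $\mathrm{Diam}(\mathcal S_{n-1})=\sum_{m=1}^{n-1}2^{n-m-2}l_m$ equals $2^{n-3}$, so $\mathrm{Diam}(\mathcal S_{n-1})=(n-1)2^{n-3}\asymp n\,l_{n-1}$, not $\asymp l_{n-1}$. Uniform comparability $\mathrm{Diam}(\mathcal S_{n-1})\asymp l_{n-1}$ requires $\liminf l_n/l_{n-1}>2$ (strictly super-geometric). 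Accordingly, near the bottom of each window the full-copy count only yields $|B_{\mathcal S}(o,r)|\gtrsim V_{n-1}\,(r-C\,\mathrm{Diam}(\mathcal S_{n-1}))_+$, so $|B_{\mathcal S}(o,r)|\gtrsim V_{n-1}r$ kicks in only once $r\gtrsim\mathrm{Diam}(\mathcal S_{n-1})$, which may be $\asymp n\,l_{n-1}$ rather than $\asymp l_{n-1}$ under the stated hypothesis. Your instinct that the literal range $[l_{n-1},l_n]$ is optimistic in borderline cases — and that this is why Theorem \ref{theo-NSgamma} carries $\varepsilon$-losses — is sound; just replace ``at least geometric'' with ``strictly super-geometric'' in the sufficient condition.
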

\begin{proof}This follows from Corollary \ref{cor-VD} and Proposition 
\ref{pro-NS3}.
\end{proof}
\begin{corollary} \label{cor-NSmain}
Assume that $l_n=2^{\kappa(n)}$ with 
$\kappa(n+1)\ge \kappa(n)+1$.  
The random walk invariant $\Phi_{\mathbb Z\wr _{\mathcal S}\Gamma}$
is given as follows:
\begin{itemize}
\item For  $t\in [V_{n-1}l_{n-1}^3 \log l_{n-1},V_{n-1}l_{n}^3 \log l_{n}]$,
$$
-\log \left(\Phi_{\mathbb Z\wr _{\mathcal S}\Gamma}(t)\right)
\asymp V_{n-1}^{2/3}t^{1/3}\left(\log (t/V_{n-1})\right)^{2/3} .$$
\item For  $t\in
[V_{n-1}l_{n}^3 \log l_{n},V_{n}l_{n}^3 \log l_{n}]$,
$$ 
-\log \left(\Phi_{\mathbb Z\wr _{\mathcal S}\Gamma}(t)\right)
\asymp t/l_n^2.$$
\end{itemize} 
\end{corollary}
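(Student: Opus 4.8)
The plan is to deduce Corollary \ref{cor-NSmain} from Theorem \ref{th-NSmain} by inserting the two-sided estimate for $\Lambda_{2,\mathbb Z\wr_{\mathcal S}\Gamma}$ into the standard dictionary relating the $L^2$-isoperimetric profile to the return probability. Recall (see \cite{CNash}, \cite[Section 2.1]{Saloff-Coste2014} and \cite{PSCstab}) that, up to the equivalence $\simeq$, one has $\Phi_{G}(t)\simeq 1/N(t)$, where the increasing function $N$ is determined by
\[
t=\int_{1}^{N(t)}\frac{ds}{s\,\Lambda_{2,G}(s)}.
\]
Since $ds/s=d(\log s)$, it is convenient to set $L=\log s$ and write $\Lambda(L):=\Lambda_{2,\mathbb Z\wr_{\mathcal S}\Gamma}(e^{L})$, so that the relation becomes $t=\int_{0}^{\log N(t)}\Lambda(L)^{-1}\,dL$. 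Thus it suffices to evaluate this integral using the piecewise description of $\Lambda$ given by Theorem \ref{th-NSmain}, and then to invert.

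By Theorem \ref{th-NSmain}, as $L$ runs over $[0,\infty)$ the profile passes alternately through \emph{diagonal} blocks and \emph{flat} blocks: on the block $L=V_{n-1}r\log r$ with $r\in[l_{n-1},l_{n}]$ one has $\Lambda(L)^{-1}\asymp r^{2}$, and on the block $L\in[V_{n-1}l_{n}\log l_{n},\,V_{n}l_{n}\log l_{n}]$ one has $\Lambda(L)^{-1}\asymp l_{n}^{2}$. First I would compute a single diagonal contribution: changing variables from $L$ to $r$ via $dL\asymp V_{n-1}\log r\,dr$ gives a contribution $\asymp\int^{r}V_{n-1}s^{2}\log s\,ds\asymp V_{n-1}r^{3}\log r$, while a flat block at level $n$ contributes $\asymp l_{n}^{2}\,(L-V_{n-1}l_{n}\log l_{n})$. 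A short induction then shows that the running integral $\int_{0}^{L}\Lambda^{-1}$ is, up to constants, equal to the contribution of the block containing $L$: at the transition $L=V_{n-1}l_{n-1}\log l_{n-1}$ both the accumulated flat contribution $l_{n-1}^{2}(V_{n-1}-V_{n-2})l_{n-1}\log l_{n-1}$ and the new diagonal contribution $V_{n-1}l_{n-1}^{3}\log l_{n-1}$ are $\asymp V_{n-1}l_{n-1}^{3}\log l_{n-1}$ (using $V_{n-1}-V_{n-2}\asymp V_{n-1}$), and similarly at the diagonal-to-flat transitions, so no earlier scale ever dominates. Hence, for $\log N(t)=V_{n-1}r\log r$ with $r\in[l_{n-1},l_{n}]$ we get $t\asymp V_{n-1}r^{3}\log r$, and for $\log N(t)=L$ in the level-$n$ flat range we get $t\asymp l_{n}^{2}L$.

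It remains to invert these relations and to identify the $t$-ranges. In the flat regime, $t\asymp l_{n}^{2}\log N(t)$ gives at once $-\log\Phi_{\mathbb Z\wr_{\mathcal S}\Gamma}(t)\asymp t/l_{n}^{2}$, valid precisely when $L\in[V_{n-1}l_{n}\log l_{n},\,V_{n}l_{n}\log l_{n}]$, i.e. for $t\in[V_{n-1}l_{n}^{3}\log l_{n},\,V_{n}l_{n}^{3}\log l_{n}]$ — the second bullet. In the diagonal regime, from $t/V_{n-1}\asymp r^{3}\log r$ one has $\log(t/V_{n-1})=3\log r+\log\log r+O(1)\asymp\log r$ (for $r$ bounded below), so solving for $r$ gives $r\asymp (t/V_{n-1})^{1/3}\big(\log(t/V_{n-1})\big)^{-1/3}$ and therefore
\[
-\log\Phi_{\mathbb Z\wr_{\mathcal S}\Gamma}(t)\asymp V_{n-1}\,r\log r\asymp V_{n-1}^{2/3}\,t^{1/3}\,\big(\log(t/V_{n-1})\big)^{2/3},
\]
valid when $r\in[l_{n-1},l_{n}]$, i.e. for $t\in[V_{n-1}l_{n-1}^{3}\log l_{n-1},\,V_{n-1}l_{n}^{3}\log l_{n}]$ — the first bullet. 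Note that, since the block endpoints match across consecutive $n$, the two families of intervals together cover $[t_{0},\infty)$. The step I expect to require the most care is the bookkeeping in the second paragraph: verifying that the cumulative integral is always governed by the current block (so that contributions of smaller scales are harmless) and that the block endpoints line up, so that the stated $t$-ranges are exactly the ones produced by the inversion. Everything else is elementary algebra together with a direct appeal to the $\Lambda_{2}\leftrightarrow\Phi$ equivalence.
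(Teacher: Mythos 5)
Your proposal is correct and takes essentially the same route as the paper: the paper's proof is a one-line appeal to the standard $\Lambda_{2}\leftrightarrow\Phi$ dictionary followed by ``(somewhat lengthy) inspection'' of Theorem \ref{th-NSmain}, and your block-by-block evaluation of $\int \frac{dv}{v\Lambda_2(v)}$ and its inversion is precisely that inspection carried out. The endpoint-matching checks you flag (ensuring the cumulative integral is always governed by the current block and that the $t$-ranges line up across consecutive $n$) are the only non-trivial points, and you handle them correctly.
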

\begin{proof}
This follows by (somewhat lengthy) inspection from the previous theorem and 
the well-know relations between $\Lambda_2$ and $\Phi $ See, e.g., 
 \cite{BPS,Saloff-Coste2014} and the references therein.
\end{proof}

Corollary \ref{cor-NSmain} gives a complete picture of the behavior of 
the probability of return for simple random on the 
cyclic Neumann-Segal groups
$\Gamma$ considered here when $l_n=2^{\kappa(n)}$  with $\kappa $ growing 
at least linearly. The result provides a continuum of distinct explicit
behaviors for the random walk invariant $\Phi_{G}$ as well as for the  
profiles $\Lambda_{1,G}, \Lambda_{2,G}$. 

\begin{remark}\label{rem-exaNS}
Theorem \ref{th-NSmain} and Corollary \ref{cor-NSmain} and 
the examples described below are all concerned with the 
group $G=\mathbb Z\wr_{\mathcal S}\Gamma$ and not with $\Gamma$ itself. 
Since $\Gamma$ is a subgroup of $G$
we have $\Lambda_{1,\Gamma} \le \Lambda_{1,G}$, 
$\Lambda_{2,\Gamma} \le \Lambda_{2,G}$ and 
$-\log \Phi_{\Gamma} \lesssim -\log \Phi_G$.

Under the hypotheses of Theorem \ref{th-NSmain} and
Corollary \ref{cor-NSmain}, we also have that
$$\Lambda_{1,\Gamma}(v)\ge \frac{c}{l_n} \mbox{ for }
\log v\in [V_{n-1}l_{n},V_nl_n].$$
On the interval $[V_{n-1}l_{n_1},V_{n-1}l_n]$ which is not covered 
by these estimates, 
we have no better lower bounds than those obtained by monotonicity or 
by using the classical volume bound of \cite{CSCiso} together with 
Lemma \ref{lem-NSvol}.  This means that, in general, we are not able to 
provide matching two-sided bounds for the isoperimetric profiles of the group 
$\Gamma$ itself.

Nevertheless, there are cases 
where using monotonicity is sufficient to obtain a 
satisfactory result such as in Theorem \ref{theo-NSgamma} where $l_n=
2^{1+\lfloor n^\gamma\rfloor}$, with $\gamma>1$. Even in this case, 
Theorem \ref{th-NSmain} and Corollary \ref{cor-NSmain} gives 
more precise results on $\mathbb Z\wr _{\mathcal S} \Gamma$ than what we know 
for $\Gamma$.  Namely, a careful
application of Theorem \ref{th-NSmain} and Corollary \ref{cor-NSmain} show that,
when $l_n=
2^{1+\lfloor n^\gamma\rfloor}$ with $\gamma>1$, we have  
$$
\Lambda_{1,\Gamma}(v)^2 \asymp
\Lambda_{2,\Gamma}(v)  \asymp  \left(\frac{2^{[(1+\gamma)\log_2\log v ]^{\gamma/(1+\gamma)}}}{\log v}\right)^2. $$
and
$$-\log \left(\Phi_{\mathbb Z\wr _{\mathcal S}\Gamma}(t)\right)\asymp 
\frac{t}{2^{2((1+\gamma)\log_2 t)^{\gamma/(1+\gamma)}}}.$$
Compare with Theorem \ref{theo-NSgamma}.
\end{remark}

The following examples illustrate what happen when $\kappa$ grows faster
than $2^{1+\lfloor n^\gamma\rfloor}$, $\gamma>1$.  
In each of these examples, we describe the extreme behaviors of the 
function $-\log \left(\Phi_{\mathbb Z\wr _{\mathcal S}\Gamma}\right)$ which, 
according to  Corollary \ref{cor-NSmain}, 
are obtained at the points  
$t=V_{n-1}l_{n}^3 \log l_{n}$ and $t=V_{n}l_{n}^3 \log l_{n}$. Note that
Corollary \ref{cor-NSmain} provides complementary sharp estimates 
at all times.

\begin{example} \label{exa-01}
Let $l_n=2^{\varkappa^n}$, that is $\kappa(n)=\varkappa^n$,
$\varkappa>1$. 
In this case,  
$$\log_2 V_n= \frac{\varkappa}
{\varkappa-1}  (\varkappa^{n}-1), \;\;V_{n-1}l_{n}^3\asymp 
l_n^{\frac{3\varkappa-2}{\varkappa-1}}
, \;\;V_{n}l_{n}^3\asymp 
l_n^{\frac{4\varkappa-3}{\varkappa-1}} .$$ 
This gives:
\begin{itemize}
\item For $t\asymp V_{n-1}l_{n}^3\log l_n \asymp l_{n}^{\frac{3\varkappa-2}{\varkappa-1}}\log l_n$,
$$-\log \left(\Phi_{\mathbb Z\wr _{\mathcal S}\Gamma}(t)\right)
\asymp t^{\frac{\varkappa}{3\varkappa-2}} 
\left(\log t \right)^{\frac{2\kappa-2}{3\kappa-2}}   $$
\item For $t\asymp V_{n}l_{n}^3\log l_n \asymp 
l_{n}^{\frac{4\varkappa-3}{\varkappa-1}}\log l_n$,
$$-\log \left(\Phi_{\mathbb Z\wr _{\mathcal S}\Gamma}(t)\right)
\asymp t^{\frac{2\varkappa-1}{4\varkappa-3}}  
\left(\log t \right)^{\frac{2\kappa-2}{4\kappa-3}} .$$
\end{itemize}
In addition, the corollary also gives that, for all $t>1$,
$$
ct^{\frac{\varkappa}{3\varkappa-2}} 
\left(\log t \right)^{\frac{2\kappa-2}{3\kappa-2}} \le
-\log \left(\Phi_{\mathbb Z\wr _{\mathcal S}\Gamma}(t)\right)
\le C t^{\frac{2\varkappa-1}{4\varkappa-3}}  
\left(\log t \right)^{\frac{2\kappa-2}{4\kappa-3}}  .$$
\end{example}

\begin{example} \label{exa-O2}
Let $l_n$ be such that $l_n=2^{V_{n-1}}$. In this case,
$$ V_{n-1}l_n^3\log l_n \asymp  V_{n-1}^2 2^{3V_{n-1}},\;\;
V_nl_n^3\log l_n\asymp  V_{n-1}^2 2^{4V_{n-1}}.$$
This gives:
\begin{itemize}
\item For $t\asymp V_{n-1}l_{n}^3\log l_n 
\asymp V_{n-1}^2 2^{3V_{n-1}}$,
$$-\log \left(\Phi_{\mathbb Z\wr _{\mathcal S}\Gamma}(t)\right)
\asymp t^{\frac{1}{3}} 
\left(\log t \right)^{\frac{4}{3}} ;  $$
\item For $t\asymp V_{n}l_{n}^3\log l_n \asymp 
V_{n-1}^2 2^{4V_{n-1}}$,
$$-\log \left(\Phi_{\mathbb Z\wr _{\mathcal S}\Gamma}(t)\right)
\asymp t^{\frac{1}{2}}  
\log t   .$$
\end{itemize}
Further, for all $t>1$,  we have
$$c t^{\frac{1}{3}} 
\left(\log t \right)^{\frac{4}{3}} \le 
-\log \left(\Phi_{\mathbb Z\wr _{\mathcal S}\Gamma}(t)\right)
\le C t^{\frac{1}{2}}  
\log t   .$$
\end{example}

\begin{example} \label{exa-O3}
Suppose now we have a sequence of integer $n_i$ tending 
to infinity such that   
$$l_{n_i}=2^{V_{n_i-1}},  n_{i+1}=V_{n_i-1}+n_i+1   
\mbox{ and }  l_{n_i+j}=2l_{n_i+j-1}, \; 1\le j\le n_{i+1}-n_i-1
.$$
On the one hand, at time  $t_i\asymp V_{n_i-1}l_{n_i}^3\log l_{n_i}$, by the 
same computation as in the previous example, we have
$$-\log \Phi_{\mathbb Z\wr _{\mathcal S}\Gamma}(t_i) \asymp
t_i^{\frac{1}{3}} \left(\log t_i \right)^{\frac{4}{3}}. $$
On the other hand, 
\begin{eqnarray*}
V_{n_{i+1}-1}= V_{n_i-1} 
l_{n_i}\dots l_{n_{i+1}-1}
&=& 2^{(n_{i+1}-n_i)V_{n_i-1}+ \sum_1^{n_{i+1}-n_{i}-1}j} V_{n_i-1} \\
&= &2^{\frac{3}{2}(V_{n_i-1}^2+V_{n_i-1})}V_{n_i-1},
\end{eqnarray*}
$$l_{n_{i+1} -1}= 2^{2V_{n_i-1}}.$$
Hence, at time 
$$t'_i \asymp V^2_{n_{i}-1} 2^{\frac{3}{2}(V_{n_i-1}^2+\frac{15}{2}V_{n_i-1})}$$
we have
\begin{eqnarray*}
-\log \Phi_{\mathbb Z\wr _{\mathcal S}\Gamma}(t'_i) &\asymp &
V^2_{n_{i}-1} 2^{\frac{3}{2}(V_{n_i-1}^2+\frac{7}{2}V_{n_i-1})}\\
&\asymp& t'_i 2^{-4V_{n_i-1}} 
\asymp \frac{t'_i}{2^{4(\frac{2}{3}\log_2 t'_i)^{1/2}}}. 
\end{eqnarray*}
Also,  for all $t>1$, we have
$$
ct^{\frac{1}{3}} \left(\log t \right)^{\frac{4}{3}}
\le
-\log \left(\Phi_{\mathbb Z\wr _{\mathcal S}\Gamma}(t)\right)
\le C \frac{t}{2^{4(\frac{2}{3}\log_2 t)^{1/2}}} .$$
\end{example}

\begin{example} \label{exa-O4}
Suppose now we have a sequence of integer $n_i$ tending 
to infinity such that   
$$l_{n_i}=V_{n_i-1}^{\varkappa-1},  n_{i+1}=\log_2 V^{\varkappa-1}_{n_i-1}+n_i+1   
\mbox{ and }  l_{n_i+j}=2l_{n_i+j-1}, \; 1\le j\le n_{i+1}-n_i-1,
$$
for some $\varkappa>1$. By the 
same computation as in Example \ref{exa-O1}, we have
$$-\log \Phi_{\mathbb Z\wr _{\mathcal S}\Gamma}(t_i) \asymp
t_i^{\frac{\varkappa}{3\varkappa -2}} \left(\log t_i \right)^{\frac{2\varkappa -2}{
3\varkappa-2}}. $$
On the other hand, if we set $N_i= n_{i+1}-n_{i}-1= \log _2(V_{n_i-1}^{\varkappa-1})$,
\begin{eqnarray*}
V_{n_{i+1}-1}= V_{n_i-1} 
l_{n_i}\dots l_{n_{i+1}-1}
&=& V_{n_i-1}^{1+(\varkappa-1)(N_i+1)} 
2^{\frac{1}{2}N_i(N_i+1)} \\
&= & 2^{\frac{3}{2}N_{i}^2+  (\frac{1}{2}+\frac{\varkappa}{\varkappa-1})  N_{i}},
\end{eqnarray*}
$$l_{n_{i+1} -1}= V^{\varkappa-1}_{n_i-1}2^{N_i}=2^{2N_i}.$$
Hence, at time 
$$t'_i \asymp V_{n_{i+1}-1} l^3_{n_{i+1}-1} \log l_{n_{i+1}-1}\asymp
2N_i 2^{\frac{3}{2}N_{i}^2+  (6+\frac{1}{2}+\frac{\varkappa}{\varkappa-1})  N_{i}}, $$
we have
$$
-\log \Phi_{\mathbb Z\wr _{\mathcal S}\Gamma}(t'_i) \asymp 
 \frac{t'_i }{2^{-4N_i}} 
\asymp \frac{t'_i}{2^{4(\frac{2}{3}\log t'_i)^{1/2}}}. 
$$
In addition, Corollary \ref{cor-NSmain} also gives that, for all $t>1$,
$$
ct^{\frac{\varkappa}{3\varkappa-2}} \left(\log t \right)^{\frac{2\varkappa-2}{3\varkappa -2}}
\le
-\log \left(\Phi_{\mathbb Z\wr _{\mathcal S}\Gamma}(t)\right)
\le C \frac{t}{2^{4(\frac{2}{3}\log t)^{1/2}}} .$$
\end{example}

\appendix
\section{Appendix:  action on finite sets \label{sub:A-key-estimate}}

As in the core of the paper, let $\Gamma$ be a finitely generated group,
with generating set $S$,
acting on space $X$ with a reference point $o$ chosen in $X$.
Let $\mathcal{S}$ denote the orbital Schreier graph of $o$ under
the action of $G$. Let $\mu$ be a symmetric probability measure
on $\Gamma$. We are concern here with Consider the action of $\Gamma$ on
$\mathcal{P}_{f}(\mathcal{S})=\oplus_{\mathcal{S}}\mathbb{Z}_{2}$. 

\begin{definition} Let $J,B$ be  fixed finite subsets of $\mathcal S$ 
and $X$, respectively, with $J\subset B$. 
Set
$$ L^{2}(\mathcal{P}_{f}(X);J;B)=\{\Psi\in L^{2}(\mathcal{P}_{f}(X)):
 A\in\mbox{supp}\Psi \Rightarrow J\subseteq 
A\subseteq B\}$$
and 
\[
\lambda_{\mathcal{P}_{f}(\mathcal{S}),\mu}(B;J)
=\inf\left\{ \frac{\sum\mu(s)\left\Vert s\cdot \Psi-\Psi\right\Vert^2 _2}{\left\Vert \Psi\right\Vert _{2}^{2}}:\ 
0\neq \Psi\in L^{2}(\mathcal{P}_{f}(X);J;B)
\right\} .
\]
\end{definition}
Here as usual the action of $\Gamma$ on functions is given 
by $(g\cdot F)(A)=F(g^{-1}\cdot A)$,
$A\in\mathcal{P}_{f}(X)$. The requirement that $J\subset A$ for
every $A$ in the $\mbox{supp}\Psi$ needs some justification. If, instead,
we look at 
\[
\inf\left\{ \frac{\sum\mu(s)\left\Vert s\cdot
F-F\right\Vert _{L^{2}(\mathcal{P}_{f}(X))}^{2}}{\left\Vert F\right\Vert _{L^{2}(\mathcal{P}_{f}(X))}^{2}}:\ \mbox{supp}F\subseteq\{0,1\}^{B}\right\} ,
\]
then it agrees with the usual notion of Markov chain spectral gap 
(with Dirichlet boundary condition) on $B$.
But this infimum is $0$ because we can take the function
$F=1$ on the empty set (all $0$ configuration), and $F=0$ everywhere
else. The important additional requirement is
that every set in the support of $\Psi$ to contain a specific set
$J$. This requirement is also justified by the fact that
the action of $\mathbb{Z}_{2}\wr_{X}\Gamma$ on $\mathcal{P}_{f}(X)$ is
amenable if and only if the action of $\Gamma$ on $\mathcal{P}_{f}(X)$
admits an invariant mean giving full weight to the collection of sets
containing a fixed finite set, see \cite[Lemma 3.1]{Juschenko2013a}.
In the context of Section \ref{sub:A-unifying-framework}, $J=J_{n}$
is chosen as a set of ``special points'' which have the property that 
having control over their inverted orbit implies the existence of 
the local embedding $\vartheta_{n}$ in Definition \ref{def-Omega}.  

We now describe an upper bound on 
$\lambda_{\mathcal{P}_{f}(\mathcal{S}),\mu}(B;J)$
based on arguments inspired by \cite{Juschenko2013}. Here the notions of energy
of functions on the graph $\mathcal{E}_{\mathcal S,\mu}$, resistance 
$\mathcal{R}_{\mathcal S,\mu}(U\leftrightarrow V)$
are all standard. Namely,
\[
\mathcal{E}_{\mathcal S,\mu}(h,h)=\frac{1}{2}\sum_{g\in \Gamma}
\sum_{x\in \mathcal S}|h(x)-h(g\cdot x)|^{2}\mu(g),
\]
and, for $U\subset V\subset \mathcal S$,
\[
\mathcal{R}_{\mathcal S,\mu}(U\leftrightarrow V)^{-1}=
\inf\left\{ \mathcal{E}_{\mathcal S,\mu}(h,h):\ h=1  \mbox{ on }U,\ 
h=0\mbox{ on }V\right\} .
\]

\begin{lemma}[Compare to 
part of {\cite[Theorem 2.8]{Juschenko2013}}] \label{resistance}
Fix finite subsets $J\subset B\subset \mathcal S$.
Given a function $h:X\rightarrow[0,1]$
such that $h=1$ on $J$, $h=0$ on $B^{c}$ and 
$\mathcal{E}_{\mathcal S,\mu}(h,h)\le 1/2$,
there exists a function $F_{h}:\mathcal{P}_{f}(X)\rightarrow [0,1]$ such that 
$\left\Vert F_{h}\right\Vert _{L^{2}(\mathcal{P}_{f}(X))}^{2}=1$,
$A \in \mbox{\em supp}(F_{h}) \Rightarrow J\subseteq A\subseteq B$ and 
\[
\sum_{s\in \Gamma}\mu(s)\left\Vert s\cdot F_{h}-F_{h}
\right\Vert _{L^{2}(\mathcal{P}_{f}(X))}^{2}\le
\frac{\pi^{2}}{2}\mathcal{E}_{\mathcal S,\mu}(h,h).
\]
In particular, we have
\[
\lambda_{\mathcal{P}_{f}(\mathcal{S}),\mu}(J;B)\le
\frac{\pi^{2}/2}{\mathcal{R}_{\mathcal S,\mu}(J\leftrightarrow B^{c})}.
\]
\end{lemma}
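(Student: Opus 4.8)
The plan is to realise $F_h$ as (a square root of) a product Bernoulli-type measure on $\mathcal P_f(X)$, using a trigonometric parametrisation chosen precisely so that the two constraints ``$h\equiv 1$ on $J$'' and ``$h\equiv 0$ on $B^c$'' are encoded automatically. Explicitly, I would set
\[
F_h(A)=\prod_{x\in X}\Big(\sin\tfrac{\pi h(x)}{2}\,\mathbf 1_{x\in A}+\cos\tfrac{\pi h(x)}{2}\,\mathbf 1_{x\notin A}\Big),\qquad A\in\mathcal P_f(X).
\]
Since $h$ is supported in the finite set $B$, all but finitely many factors equal $1$, so this is a genuine finite product and $F_h$ takes values in $[0,1]$. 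As $\sin^2+\cos^2=1$ at each site, summing $F_h(A)^2$ over finite subsets factorises site by site and gives $\|F_h\|_{L^2(\mathcal P_f(X))}^2=1$. Moreover $\cos\tfrac{\pi h(x)}{2}=0$ for $x\in J$ (forcing $x\in A$ whenever $F_h(A)\neq0$) and $\sin\tfrac{\pi h(x)}{2}=0$ for $x\in B^c$ (forcing $x\notin A$), so every $A\in\mbox{supp}(F_h)$ satisfies $J\subseteq A\subseteq B$, as required.

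The heart of the argument is an exact evaluation of the overlap $\langle s\cdot F_h,F_h\rangle$. Using the convention $(s\cdot F_h)(A)=F_h(s^{-1}\cdot A)=F_{s\cdot h}(A)$ with $(s\cdot h)(x)=h(s^{-1}\cdot x)$, the product $F_{s\cdot h}(A)F_h(A)$ again factorises over sites (only subsets of the finite set $B\cup s\cdot B$ contribute), yielding
\[
\langle s\cdot F_h,F_h\rangle=\prod_{x\in X}\Big(\sin\tfrac{\pi (s\cdot h)(x)}{2}\sin\tfrac{\pi h(x)}{2}+\cos\tfrac{\pi (s\cdot h)(x)}{2}\cos\tfrac{\pi h(x)}{2}\Big)=\prod_{x\in X}\cos\Big(\tfrac{\pi\big((s\cdot h)(x)-h(x)\big)}{2}\Big).
\]
Hence $\|s\cdot F_h-F_h\|_2^2=2\big(1-\prod_x a_x^{(s)}\big)$ with $a_x^{(s)}=\cos\tfrac{\pi((s\cdot h)(x)-h(x))}{2}\in[0,1]$ (the cosine argument lies in $[-\pi/2,\pi/2]$ because $h$ is $[0,1]$-valued). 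I would then apply the elementary inequality $1-\prod_x a_x\le\sum_x(1-a_x)$ for $a_x\in[0,1]$ together with $1-\cos\theta=2\sin^2(\theta/2)\le\theta^2/2$ to obtain the pointwise bound
\[
\|s\cdot F_h-F_h\|_2^2\le 2\sum_x\big(1-a_x^{(s)}\big)\le\frac{\pi^2}{4}\sum_{x}\big|(s\cdot h)(x)-h(x)\big|^2 .
\]

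Averaging this against $\mu(s)$, then re-indexing $s\mapsto s^{-1}$ and invoking the symmetry of $\mu$, identifies $\sum_{s}\mu(s)\sum_x|(s\cdot h)(x)-h(x)|^2$ with $2\,\mathcal E_{\mathcal S,\mu}(h,h)$ (matching the factor $\tfrac12$ in the definition of the Dirichlet form), which gives exactly $\sum_{s}\mu(s)\|s\cdot F_h-F_h\|_2^2\le\frac{\pi^2}{2}\mathcal E_{\mathcal S,\mu}(h,h)$. For the ``in particular'' statement I would feed into this the capacitary potential $h$ between $J$ and $B^c$, i.e.\ the energy minimiser with $h=1$ on $J$ and $h=0$ on $B^c$, for which $\mathcal E_{\mathcal S,\mu}(h,h)=\mathcal R_{\mathcal S,\mu}(J\leftrightarrow B^c)^{-1}$; the hypothesis $\mathcal E_{\mathcal S,\mu}(h,h)\le1/2$ is exactly what makes this $h$ admissible, and then $F_h$ is a legitimate competitor in the variational definition of $\lambda_{\mathcal P_f(\mathcal S),\mu}(J;B)$, producing the asserted bound. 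The only genuinely delicate point — the main, if modest, obstacle — is the careful justification that the sums over finite subsets of $X$ factorise as honest products and that $F_h\in L^2$ with the stated norm; this is entirely a consequence of $h$ being supported in the finite set $B$, so that cofinitely many factors are $1$. Everything else is bookkeeping between the two normalisations of ``energy''.
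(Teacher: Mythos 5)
Your test function $F_h$, and the trigonometric identity giving $\langle s\cdot F_h,F_h\rangle=\prod_v\cos\bigl(\tfrac{\pi}{2}\bigl(h(s^{-1}\cdot v)-h(v)\bigr)\bigr)$, are identical to the paper's (the paper writes $\xi_{h(v)}(\eta(v))$ for what you write as the two-term expression $\sin\cdot\mathbf 1_{x\in A}+\cos\cdot\mathbf 1_{x\notin A}$), and the verification of $\|F_h\|_2=1$ and of the support constraint $J\subseteq A\subseteq B$ is the same. Where you diverge is in the final bound on $1-\prod_v\cos(\cdots)$. The paper bounds the product from below via $\cos x\ge e^{-x^2}$ on a small interval (so that $\prod\cos\ge\exp(-\tfrac{\pi^2}{4}\|s\cdot h-h\|_2^2)\ge1-\tfrac{\pi^2}{4}\|s\cdot h-h\|_2^2$), and it is at this step that it invokes a smallness hypothesis to keep the cosine arguments in a range where the exponential inequality is valid. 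You instead use the two unconditional inequalities $1-\prod_x a_x\le\sum_x(1-a_x)$ for $a_x\in[0,1]$ and $1-\cos\theta=2\sin^2(\theta/2)\le\theta^2/2$; the only thing you need is $a_x\ge0$, which follows from $h$ taking values in $[0,1]$. This is a genuine simplification: your main inequality $\sum_s\mu(s)\|s\cdot F_h-F_h\|_2^2\le\tfrac{\pi^2}{2}\mathcal E_{\mathcal S,\mu}(h,h)$ holds with no constraint on the size of $\mathcal E_{\mathcal S,\mu}(h,h)$, whereas the paper's argument as written leans on a pointwise bound $\|s\cdot h-h\|_2^2\le\tfrac12$ that isn't literally what the lemma hypothesizes (it hypothesizes $\mathcal E\le1/2$). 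Your route is thus both shorter and cleaner, and the smallness hypothesis in the statement is only relevant — as you correctly note — for feeding in the capacitary potential in the ``in particular'' step. The bookkeeping with the symmetry of $\mu$ and the factor of two in the definition of $\mathcal E_{\mathcal S,\mu}$ is handled correctly.
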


\begin{proof} 
Given a function $h:X\rightarrow[0,1]$, define 
$F_{h}:\mathcal{P}_{f}(X)\rightarrow[0,1]$ by setting
\[
F_{h}(\eta):=\prod_{v\in X}\xi_{h(v)}(\eta(v)),\;\;
\eta\in \mathcal P_f(X)=\oplus_X\mathbb Z_2
\]
where
\[
\xi_{a}(0):=\cos\left(\frac{\pi a}{2}\right),\ \xi_{a}(1):=\sin\left(\frac{\pi a}{2}\right).
\]

Note that if $\eta(u)=0$ for some $u\in J$, we must have $h(u)=1$ and 
it follows that
$\xi_{h(u)}(\eta(u))=\xi_{1}(0)=\cos(\pi/2)=0$. Therefore
$A\in\mbox{supp}(F_{h}) \Rightarrow  J\subseteq A$. Similarly, if
$\eta(v)=1$ for some $v\in B^{c}$ then $h(v)=0$, $\xi_{h(v)}(\eta(v))=\xi_{0}(1)=\sin(0)=0$. In particular,
we have $\mbox{supp}(F_{h})\subseteq\{0,1\}^{B}$.

To compute the relevant $L^{2}$ norms, write
\begin{eqnarray*}
\left\Vert F_{h}\right\Vert _{L^{2}(\mathcal{P}_{f}(X))}^{2}&=&
\sum_{\eta}F_{h}(\eta)^{2}=\sum_{\eta}\prod_{v\in X}\xi_{h(v)}(\eta(v))^{2}\\
&=&\prod_{v\in\Omega}\left[\xi_{h(v)}(1)^{2}+\xi_{h(v)}(0)^{2}\right]=1,
\end{eqnarray*}
and
$$
\left\Vert s\cdot F_{h}-F_{h}\right\Vert _{L^{2}(\mathcal{P}_{f}(X))}^{2}=
2\left\Vert F_{h}\right\Vert _{L^{2}(\mathcal{P}_{f}(X))}^{2}-
2\left\langle s\cdot F_{h},F_{h}\right\rangle _{L^{2}(\mathcal{P}_{f}(X))}^{2}.$$
Note that 
\begin{eqnarray*}
s\cdot F_{h}(\eta)&=&F_{h}(s^{-1}\cdot \eta)=\prod_{v\in X}\xi_{h(v)}(s^{-1}\cdot
\eta(v))\\
&=&\prod_{v\in X}\xi_{h(v)}(\eta(s\cdot v))=\prod_{v\in X}\xi_{h(s^{-1}\cdot 
v)}(\eta(v)).
\end{eqnarray*}
Therefore, we have
\begin{eqnarray*}
\lefteqn{\left\langle s\cdot F_{h},F_{h}\right\rangle=
\sum_{\eta}F_{h}(s^{-1}\cdot \eta)\cdot F_{h}(\eta)}\hspace{.2in}&&\\
&=&
\sum_{\eta}\prod_{v}\xi_{h(s^{-1}\cdot v)}(\eta(v))\xi_{h(v)}(\eta(v))
\\
&=&\prod_{v\in B}\left[\cos\left(\frac{\pi}{2}h(s^{-1}\cdot
v)\right)\cos\left(\frac{\pi}{2}h(v)\right)+\sin\left(\frac{\pi}{2}h(s^{-1}
\cdot v)\right)\sin\left(\frac{\pi}{2}h(v)\right)\right]\\
&=&\prod_{v\in B}\cos\left(\frac{\pi}{2}\left(h(s^{-1}
\cdot v)-h(v)\right)\right).
\end{eqnarray*}
Now, use the fact that $\cos(x)\ge e^{-x^{2}}$ if $|x|\le\pi/4$, together
with the assumption that 
$\left\Vert s\cdot h-h\right\Vert _{L^{2}(X)}^{2}\le\frac{1}{2}$,
to obtain
\begin{eqnarray*}
\prod_{v\in\Omega}\cos\left(\frac{\pi}{2}\left(h(s^{-1}
\cdot v)-h(v)\right)\right)
&\ge&
\prod_{v\in G}\exp\left(-\frac{\pi^{2}}{4}\left(h(s^{-1}
\cdot v)-h(v)\right)^{2}\right)
\\
&=&\exp\left(-\frac{\pi^{2}}{4}
\left\Vert s\cdot h-h\right\Vert _{L^{2}(X)}^{2}\right)\\
&\ge &1-\frac{\pi^{2}}{4}\left\Vert s.h-h\right\Vert _{L^{2}(X)}^{2}.
\end{eqnarray*}
We conclude that 
\[
\sum_{s\in \Gamma} \mu(s)\left\Vert s\cdot F_{h}-F_{h}\right\Vert _{L^{2}(\mathcal{P}_{f}(X))}^{2}\le\frac{\pi^{2}}{2}\mathcal{E}_{\mathcal S,\mu}(h,h).
\]
\end{proof}

\begin{remark}
The function $F_{h}$ is a product of functions at each point in $X$ and
the previous computation does not involve information about the relations 
between different orbits. Asking for a better function is related 
to the problem of finding a better method than using an union bound. 
As the action of $\Gamma$ on subsets of $X$ is usually quite intricate, 
it is a rather difficult question. 
But in simple cases such as the dihedral group
(Subsection \ref{dihedral}) and the bubble groups (Subsection \ref{bubble}),
one can find a better function by inspecting how certain subsets move
under the group action.
\end{remark}

\begin{lemma}\label{lem-ResA}
Let $(\Gamma,X, o)$ and
$J_n,B_n$ be as in {\em Definition \ref{def-Omega}}. For each $n$, 
there exists a $(J_n,B_n)$-admissible function $F_n$ such that
$$\mathcal Q_{\mathcal P(\mathcal S),\mu}(F_n)\le 
\frac{\pi^{2}/2}{\mathcal{R}_{\mathcal S,\mu}(J_n\leftrightarrow B_n^{c})}.$$
\end{lemma}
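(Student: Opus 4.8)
The statement is an ``admissible'' strengthening of Lemma \ref{resistance}: that lemma already builds, from a potential $h\colon X\to[0,1]$ with $h=1$ on $J_n$, $h=0$ on $B_n^c$ and small Dirichlet energy, a function $F_h$ on $\mathcal{P}_f(X)$ with $\|F_h\|_2=1$, with $\mathrm{supp}(F_h)\subseteq\{Y:\{h=1\}\subseteq Y\subseteq\{h>0\}\}\subseteq\{Y:J_n\subseteq Y\subseteq B_n\}$, and with $\mathcal{Q}_{\mathcal{P}_f(\mathcal{S}),\mu}(F_h)\le\frac{\pi^2}{2}\mathcal{E}_{\mathcal{S},\mu}(h,h)$; but this $F_h$ is not $(J_n,B_n)$-admissible in the sense of Definition \ref{adm-F}, since its support is an order interval in the subset lattice rather than a single $\Gamma$-orbit. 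The plan is therefore: first run Lemma \ref{resistance} on the energy-minimizing potential, and then extract from $F_h$ a one-orbit piece whose Rayleigh quotient is no larger. Concretely, I would take $h$ to be the $\mathcal{E}_{\mathcal{S},\mu}$-minimizer among functions equal to $1$ on $J_n$ and $0$ on $B_n^c$; it is finitely supported because $B_n$ is finite, and $\mathcal{E}_{\mathcal{S},\mu}(h,h)=\mathcal{R}_{\mathcal{S},\mu}(J_n\leftrightarrow B_n^c)^{-1}$, so Lemma \ref{resistance} yields $\mathcal{Q}_{\mathcal{P}_f(\mathcal{S}),\mu}(F_h)\le\frac{\pi^2/2}{\mathcal{R}_{\mathcal{S},\mu}(J_n\leftrightarrow B_n^c)}$. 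If the resistance is too small for the hypothesis $\mathcal{E}_{\mathcal{S},\mu}(h,h)\le 1/2$ of Lemma \ref{resistance}, I would first replace $\mu$ by a lazy measure $\mu'=(1-\varepsilon)\delta_e+\varepsilon\mu$: both $\mathcal{E}_{\mathcal{S},\mu'}$ and $\mathcal{Q}_{\mathcal{P}_f(\mathcal{S}),\mu'}$ scale by $\varepsilon$ while $\mathcal{R}_{\mathcal{S},\mu'}$ scales by $\varepsilon^{-1}$, so the final estimate for $\mu$ is unchanged.

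The key new step is the passage to a single orbit. I would decompose $\mathcal{P}_f(X)$ into the $\Gamma$-orbits $\mathcal{O}$ for the action on finite subsets and write $F_h=\sum_{\mathcal{O}}F_h^{\mathcal{O}}$ with $F_h^{\mathcal{O}}=F_h\cdot\mathbf{1}_{\mathcal{O}}$. Because every generator $s\in S$ acts on $X$ as a bijection, it maps each orbit $\mathcal{O}$ onto itself, so for each $s$ the functions $s\cdot F_h^{\mathcal{O}}-F_h^{\mathcal{O}}$ have pairwise disjoint supports (each sits inside $\mathcal{O}$), and the same is trivially true for the identity in the lazy case. Consequently
\[
\|F_h\|_2^2=\sum_{\mathcal{O}}\|F_h^{\mathcal{O}}\|_2^2,\qquad \sum_s\mu(s)\|s\cdot F_h-F_h\|_2^2=\sum_{\mathcal{O}}\ \sum_s\mu(s)\|s\cdot F_h^{\mathcal{O}}-F_h^{\mathcal{O}}\|_2^2,
\]
so $\mathcal{Q}_{\mathcal{P}_f(\mathcal{S}),\mu}(F_h)$ is the weighted average, with weights $\|F_h^{\mathcal{O}}\|_2^2$, of the numbers $\mathcal{Q}_{\mathcal{P}_f(\mathcal{S}),\mu}(F_h^{\mathcal{O}})$. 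Hence there is an orbit $\mathcal{O}^*$ with $\|F_h^{\mathcal{O}^*}\|_2>0$ and $\mathcal{Q}_{\mathcal{P}_f(\mathcal{S}),\mu}(F_h^{\mathcal{O}^*})\le\mathcal{Q}_{\mathcal{P}_f(\mathcal{S}),\mu}(F_h)$.

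Finally I would set $F_n:=F_h^{\mathcal{O}^*}/\|F_h^{\mathcal{O}^*}\|_2$ and let $A_n$ be any representative of the orbit $\mathcal{O}^*$. Every $Y\in\mathrm{supp}(F_n)\subseteq\mathcal{O}^*\cap\mathrm{supp}(F_h)$ is of the form $g\cdot A_n$ for some $g\in\Gamma$ (viewed inside $G$) and also satisfies $J_n\subseteq Y\subseteq B_n$ by the support property of $F_h$ recalled above; thus $F_n$ is $(J_n,B_n)$-admissible, and $\mathcal{Q}_{\mathcal{P}_f(\mathcal{S}),\mu}(F_n)=\mathcal{Q}_{\mathcal{P}_f(\mathcal{S}),\mu}(F_h^{\mathcal{O}^*})\le\pi^2/\bigl(2\,\mathcal{R}_{\mathcal{S},\mu}(J_n\leftrightarrow B_n^c)\bigr)$, which is the assertion. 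The only things needing genuine care are the orthogonality of the orbit decomposition for the Dirichlet form (this is exactly where one uses that the generators preserve orbits) and checking the normalization hypothesis of Lemma \ref{resistance} via the laziness device; I do not expect either to be a real obstacle, and the write-up should be short.
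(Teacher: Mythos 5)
Your proof is correct and follows essentially the same route as the paper: apply Lemma \ref{resistance} to the energy-minimizing potential and then pass to the best single $\Gamma$-orbit in the support via a weighted-average argument. The laziness trick you add to secure the normalization hypothesis of Lemma \ref{resistance} is a minor refinement that the paper's proof leaves implicit.
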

\begin{proof}
To obtain a function $F_n$ that is $(J_n,B_n)$-admissible as required 
in \ref{sub:A-unifying-framework} and which has small Rayleigh quotient, 
consider the function $F^*_n=F_h$ of 
Lemma \ref{resistance} associated with $J_n,B_n$ and the corresponding 
optimal choice of $h$ so that $\mathcal E _{\mathcal S,\mu}(h,h)=
\mathcal R_{\mathcal S,\mu}(J_n \leftrightarrow B_n^c)^{-1}$.
Observe that the support of $F^*_n$ can be partitioned into orbits
of certain finite subsets of $B_n$. In particular, there must exists a finite
subset $A_n$ such that the restriction of $F^*_n$ to the orbit of $A_n$
has Rayleigh quotient bounded above by 
$
\frac{\pi^2}{2} \mathcal R_{\mathcal S,\mu}(J_n \leftrightarrow B_n^c)^{-1}$.
 
In particular, the function
\[
F_{n}(Y)=\left\{ \begin{array}{cl}
F^*_n(Y)& \mbox{ if }Y=g\cdot A_n  \mbox{ for some } g\in G,\\
0 & \mbox{ otherwise,}
\end{array}\right.
\]
is $(J_n,B_n)$-admissible and satisfies
\[
\mathcal{Q}_{\mathcal{P}_{f}(\mathcal{S}),\mu}(F_{n})\le\frac{\pi^{2}/2}{\mathcal{R}_{\mu}(J_n\leftrightarrow B_n^{c})}
\]
as desired. \end{proof}

\bibliographystyle{plain}

\begin{thebibliography}{10}

\bibitem{Amir2009}
Gideon Amir, Omer Angel, and B{\'a}lint Vir{\'a}g.
\newblock Amenability of linear-activity automaton groups.
\newblock {\em Journal of the European Mathematical Society}, 15.3:705--730,
  2013.

\bibitem{Bartholdi2012}
Laurent Bartholdi and Anna Erschler.
\newblock Growth of permutational extensions.
\newblock {\em Inventiones Mathematicae}, 189(2):431--455, 2012.

\bibitem{Bartholdigrowthtorsion}
Laurent Bartholdi and Floriane Pochon.
\newblock On growth and torsion of groups.
\newblock {\em Groups Geom. Dyn.}, 3(4):525--539, 2009.

\bibitem{BPS}
Alexander Bendikov, Christophe Pittet, and Roman Sauer.
\newblock Spectral distribution and {$L^2$}-isoperimetric profile of {L}aplace
  operators on groups.
\newblock {\em Math. Ann.}, 354(1):43--72, 2012.

\bibitem{Bondarenko}
Ievgen~V. Bondarenko.
\newblock Growth of {S}chreier graphs of automaton groups.
\newblock {\em Math. Ann.}, 354(2):765--785, 2012.

\bibitem{Brieussel2011}
J.~Brieussel.
\newblock Behaviors of entropy on finitely generated groups.
\newblock 2011.

\bibitem{BrieusselMZ}
J{\'e}r{\'e}mie Brieussel.
\newblock Amenability and non-uniform growth of some directed automorphism
  groups of a rooted tree.
\newblock {\em Math. Z.}, 263(2):265--293, 2009.

\bibitem{CNash}
Thierry Coulhon.
\newblock Ultracontractivity and {N}ash type inequalities.
\newblock {\em J. Funct. Anal.}, 141(2):510--539, 1996.

\bibitem{CSCiso}
Thierry Coulhon and Laurent Saloff-Coste.
\newblock Isop\'erim\'etrie pour les groupes et les vari\'et\'es.
\newblock {\em Rev. Mat. Iberoamericana}, 9(2):293--314, 1993.

\bibitem{Erschleriso}
Anna Erschler.
\newblock On isoperimetric profiles of finitely generated groups.
\newblock {\em Geom. Dedicata}, 100:157--171, 2003.

\bibitem{Erschler2006}
Anna Erschler.
\newblock Isoperimetry for wreath products of {M}arkov chains and multiplicity
  of selfintersections of random walks.
\newblock {\em Probab. Theory Related Fields}, 136(4):560--586, 2006.

\bibitem{ErschlerPiecewise}
Anna Erschler.
\newblock Piecewise automatic groups.
\newblock {\em Duke Math. J.}, 134(3):591--613, 2006.

\bibitem{FabGupta}
Jacek Fabrykowski and Narain Gupta.
\newblock On groups with sub-exponential growth functions.
\newblock {\em J. Indian Math. Soc. (N.S.)}, 49(3-4):249--256 (1987), 1985.

\bibitem{Fink2014}
Elisabeth Fink.
\newblock A finitely generated branch group of exponential growth without free
  subgroups.
\newblock {\em Journal of Algebra}, 397:625--642, 2014.

\bibitem{Grigorchuk2000}
Rostislav~I Grigorchuk.
\newblock Just infinite branch groups.
\newblock In {\em New horizons in pro-p groups}, pages 121--179. Springer,
  2000.

\bibitem{Grigorchuk2011}
Rostislav~I Grigorchuk.
\newblock Some topics in the dynamics of group actions on rooted trees.
\newblock {\em Proceedings of the Steklov Institute of Mathematics},
  273(1):64--175, 2011.

\bibitem{Harper}
L.~H. Harper.
\newblock {\em Global methods for combinatorial isoperimetric problems},
  volume~90 of {\em Cambridge Studies in Advanced Mathematics}.
\newblock Cambridge University Press, Cambridge, 2004.

\bibitem{JBMS}
Kate Juschenko, Nicolás~Matte Bon, Nicolas Monod, and Mikael de~la Salle.
\newblock Extensive amenability and an application to interval exchanges.
\newblock {\em ArXiv e-prints}, March 2015.

\bibitem{Juschenko2013a}
Kate Juschenko and Nicolas Monod.
\newblock Cantor systems, piecewise translations and simple amenable groups.
\newblock {\em Annals of Mathematics}, 178(EPFL-ARTICLE-179598):775--787, 2013.

\bibitem{Juschenko2013}
Kate Juschenko, Volodymyr Nekrashevych, and Mikael de~la Salle.
\newblock Extensions of amenable groups by recurrent groupoids.
\newblock {\em arXiv preprint arXiv:1305.2637}, 2013.

\bibitem{Kotowski2014}
Micha{\l} Kotowski and B{\'a}lint Vir{\'a}g.
\newblock Non-{L}iouville groups with return probability exponent at most 1/2.
\newblock {\em arXiv preprint arXiv:1408.6895}, 2014.

\bibitem{LyonsPeres}
R.~Lyons and Y.~Peres.
\newblock {\em Probability on Trees and Networks}.
\newblock Cambridge Studies in Advanced Mathematics. Cambridge University
  Press, Cambridge, 2016.
\newblock To appear.

\bibitem{Neumann1986}
Peter~M. Neumann.
\newblock Some questions of {E}djvet and {P}ride about infinite groups.
\newblock {\em Illinois J. Math.}, 30(2):301--316, 1986.

\bibitem{Pittet2002}
C.~Pittet and L.~Saloff-Coste.
\newblock On random walks on wreath products.
\newblock {\em Ann. Probab.}, 30, no.2:948--977, 2002.

\bibitem{PSCstab}
Ch. Pittet and L.~Saloff-Coste.
\newblock On the stability of the behavior of random walks on groups.
\newblock {\em J. Geom. Anal.}, 10(4):713--737, 2000.

\bibitem{SCnotices}
Laurent Saloff-Coste.
\newblock Probability on groups: random walks and invariant diffusions.
\newblock {\em Notices Amer. Math. Soc.}, 48(9):968--977, 2001.

\bibitem{Saloff-Coste2014}
Laurent Saloff-Coste and Tianyi Zheng.
\newblock Random walks and isoperimetric profiles under moment conditions.
\newblock {\em preprint}, 2014.

\bibitem{Saloff-Coste2013b}
Laurent Saloff-Coste and Tianyi Zheng.
\newblock Random walks on free solvable groups.
\newblock {\em Math. Z.}, 279(3-4):811--848, 2015.

\bibitem{Segal2000}
Dan Segal.
\newblock The finite images of finitely generated groups.
\newblock {\em Proceedings of the London Mathematical Society},
  82(03):597--613, 2000.

\bibitem{Sidki2000}
Said Sidki.
\newblock Automorphisms of one-rooted trees: growth, circuit structure, and
  acyclicity.
\newblock {\em J. Math. Sci. (New York)}, 100(1):1925--1943, 2000.
\newblock Algebra, 12.

\bibitem{Tessera}
Romain Tessera.
\newblock Isoperimetric profile and random walks on locally compact solvable
  groups.
\newblock {\em Rev. Mat. Iberoam.}, 29(2):715--737, 2013.

\bibitem{WilsonJSFurther}
John~S. Wilson.
\newblock Further groups that do not have uniformly exponential growth.
\newblock {\em J. Algebra}, 279(1):292--301, 2004.

\bibitem{WilsonJS}
John~S. Wilson.
\newblock On exponential growth and uniformly exponential growth for groups.
\newblock {\em Invent. Math.}, 155(2):287--303, 2004.

\end{thebibliography}

\end{document}